\newcommand{\A}{\mathbf{A}}
\newcommand{\R}{\mathbb{R}}
\newcommand{\lb}{\left(}
\newcommand{\rb}{\right)}
\newcommand{\cov}{\operatorname{cov}}
\newcommand{\E}{\mathbb{E}}
\newcommand{\N}{\mathbb{N}}
\newcommand{\ntn}{\lfloor nt \rfloor}
\newcommand{\nt}{\lfloor nt_1 \rfloor}
\newcommand{\ntt}{\lfloor nt_2 \rfloor}
\newcommand{\cond}{\stackrel{\mathcal{D}}{\to}}
\newcommand{\conp}{\stackrel{\mathbb{P}}{\to}}
\newcommand{\tr}{\operatorname{tr}}
\newcommand{\D}{\mathbf{D}}
\newcommand{\rd}{\mathbf{r}}
\newcommand{\T}{\mathbf{T}}
\newcommand{\inv}{^{-1}}
\newcommand{\su}{\underline{s}}
\newcommand{\sq}{^{\frac{1}{2}}}
\newcommand{\im}{\operatorname{Im}}
\newcommand{\re}{\operatorname{Re}}
\newcommand{\PR}{\mathbb{P}}
\newcommand{\sut}{\tilde{\underline{s}}}
\newcommand{\bfSigma}{\mathbf{\Sigma}}
\newcommand{\bfx}{\mathbf{x}}
\newtheorem{theorem}{Theorem}[section]
\newtheorem{lemma}{Lemma}[section]
\newtheorem{proposition}{Proposition}[section]
\newtheorem{corollary}{Corollary}[section]
\newtheorem{example}{Example}[section]
\newtheorem{remark}{Remark}[section]
\numberwithin{equation}{section}
\begin{document}
\title{Linear spectral statistics of sequential 
sample covariance matrices 
}
% in high dimension with an application to monitoring sphericity}
% \title{Monitoring sphericity in large dimension via sequential processes of linear spectral statistics}
\author{
{\small Nina D\"ornemann, Holger Dette} \\
{\small Fakult\"at f\"ur Mathematik} \\
{\small Ruhr-Universit\"at Bochum} \\
{\small 44801 Bochum, Germany} \\
}
\maketitle

\begin{abstract}
Let $\mathbf{x}_1, \ldots , \mathbf{x}_n$ denote independent
$p$-dimensional vectors with independent complex or real valued entries
such that $\mathbb{E} [\mathbf{x}_i] = \mathbf{0}$, ${\rm Var } (\mathbf{x}_i) = \mathbf{I}_p$, $i=1, \ldots,n$, let $\mathbf{T }_n$ be a $p \times p$ Hermitian nonnegative definite matrix and $f  $ be a given function.  We prove that an approriately standardized version of the stochastic process   $  \big ( {\tr} ( f(\mathbf{B}_{n,t}) ) \big   )_{t \in [t_0, 1]}  $
corresponding to a linear spectral statistic
of the sequential empirical covariance estimator 
$$
\big ( \mathbf{B}_{n,t}  )_{t\in [ t_0 ,   1]}  =  \Big ( \frac{1}{n}    \sum_{i=1}^{\lfloor n t \rfloor}  \mathbf{T }^{1/2}_n   \mathbf{x}_i   \mathbf{x}_i ^\star \mathbf{T }^{1/2}_n \Big)_{t\in [ t_0 ,   1]}  
$$
converges weakly to a non-standard Gaussian process for $n,p\to\infty$. As an application we use these results 
to develop a novel approach for monitoring the sphericity assumption  in a high-dimensional framework, even if the dimension of the underlying data is larger than the sample size.

\end{abstract}
Keywords: linear spectral statistic, sequential sample covariance matrix, sequential process, 
sphericity test, Stieltjes transform, monitoring spherictiy.

AMS subject classification:  Primary 15A18, 60F17; Secondary 62H15

\section{Introduction} \label{sec1}

Estimation and testing  of a high-dimensional covariance matrix is a fundamental problem of statistical inference with numerous  applications in biostatistics, wireless communications and finance
 (see, e.g., \cite{fanli2006}, \cite{Johnstone} and the references therein). Linear spectral statistics are frequently used to construct 
 tests   for various hypotheses. For example,  \cite{mauchy1940}  proposes a likelihood ratio test for the hypothesis of sphericity (of a normal distribution), which 
  has been extended by \cite{guptaxu2006} to the non-normal case  and by  \cite{baietal2009}  and \cite{wang2013} to the high-dimensional case, where the dimension $p$ is of the same order as the sample size $n$, that is $p/n \to y \in (0,1)$ as $p,n \to \infty$ (see also  Theorem 9.12 in the monograph of \cite{yao2015} for a   further extension).
  Alternative tests based on  distances between the sample covariance matrix and a multiple of the identity matrix
  have been considered in  \cite{ledoit_wolf_2002} and  \cite{chenzhangzhong2010} among others. 
  \cite{fisher2010} suggest a generalization of 
 John's test  for sphericity, which is based on   a ratio of arithmetic means of   the eigenvalues of different powers of  the sample covariance matrix. 
  Among other testing problems such as sphericity, \cite{jiangyang2013} consider some classical $q$-sample testing problems under normality in a high-dimensional setting, which are further generalized in \cite{dettedoernemann2020} for an increasing number $q$ of groups.  
  Other authors concentrate on linear spectral statistics of $F$-matrices 
\citep[see, for example,][]{zheng2012,zhengbaiyao2017,botdetpar2019}, auto-cross covariance \citep{jinwangbaietal2014}, large-dimensional matrices with bivariate dependence measures as entries 
\citep{baolinpanzhou2015,li2019central} or information-plus-noise matrices \citep{banna2020}.
  
Because of its importance in statistics numerous authors have investigated the asymptotic properties of linear spectral statistics from a more general perspective.
An early reference is  \cite{johnson1982} and  in their pioneering paper, \cite{baisilverstein2004} proved a central limit theorem for linear spectral statistics of the form
$$
\sum_{i=1}^p  f(\lambda_i(\mathbf{B}_n)) 
$$
of the sample covariance matrices  $\mathbf{B}_n = \frac{1}{n}    \sum_{i=1}^n \mathbf{T }^{1/2}_n  \mathbf{x}_i   \mathbf{x}_i ^\star \mathbf{T }^{1/2}_n  $ under rather general conditions,  where   $ \mathbf{x}_1 , \ldots,  \mathbf{x}_n$  are independent   $p $-dimensional  random vectors 
with independent 
real or complex valued (centered) entries  $x_{ij}$,  $ \mathbf{T }_n$ is a $p \times p$ (non-random) Hermitian nonnegative definite matrix
and $\lambda_1(\mathbf{B}_n)  \leq  \ldots \leq   \lambda_p(\mathbf{B}_n)$ are the ordered eigenvalues of the matrix  $\mathbf{B}_n$.
Several authors have followed this line of research and tried to relax the assumptions for such statements
\citep[see][among others]{panzhou2008,lytovapastur2009,pan2014,zheng_et_al_2015,najimyao2016}.  

In this paper we will take a  different point of view on linear spectral statistics and study these objects from a sequential perspetive.  More precisely,
we consider a sequential version of the empirical   covariance estimator
\begin{equation} \label{1.1}
\mathbf{B}_{n,t}  
%=   {\tr} \big (  f(\mathbf{B}_{n,t})  \big ) 
= \frac{1}{n}    \sum_{i=1}^{\lfloor n t \rfloor}  \mathbf{T }^{1/2}_n  \mathbf{x}_i   \mathbf{x}_i ^\star \mathbf{T }^{1/2}_n   ~, ~~0 \leq t \leq 1,
\end{equation} 
and  investigate the probabilistic properties of the stochastic process corresponding to linear spectral statistics of $\mathbf{B}_{n,t} $, that is 
\begin{equation} \label{1.2}
S_{t}  =  \frac{1}{p} \sum_{i=1}^p  f(\lambda_i (\mathbf{B}_{n,t} ) )    ~, ~~0 \leq t \leq 1,
\end{equation} 
where $\lambda_1 (\mathbf{B}_{n,t} )  \leq  \ldots \leq   \lambda_p (\mathbf{B}_{n,t} )$ are the ordered eigenvalues of  the matrix $\mathbf{B}_{n,t} $. In particular, we prove that 
for any $0 < t_0 < 1$, an appropriately normalized  and centered version of the process $(S_{t})_{t \in [ t_0 , 1]}$ converges weakly to non-standard Gaussian process.

Our interest in these processes is  partially motivated by the fact that the sequential covariance estimator plays a central role in the construction of methodology for the detection of structural breaks in 
the covariance structure  \citep[see][among others]{Aue2009b,dettegoesmann2020}. In this field various functionals of the process $( \mathbf{B}_{n,t} )_{~0 \leq t \leq 1}$
%  such as $ \big ( {\rm tr }  ( \hat \Sigma_{n,t} - t \hat \Sigma_{n,1} )^2 \big )_{0 \leq t \leq 1}$
have been   studied  in the case of fixed dimension, and we expect that results on the weak convergence of the process $(S_{t})_{t \in [ t_0 , 1]}$ 
will be useful in the context of change-point analysis 
for high-dimensional covariance covariance matrices. In fact, we use the probabilistic results  presented in this paper  to develop a procedure for monitoring deviations from sphericity,
see Section \ref{sec2} for more details.  

Surprisingly, sequential processes of the form \eqref{1.2} have not found much attention in the literature. To our best knowledge we
are only aware of the work of  \cite{aristotile2000} and \cite{nagel2020}, who  considered 
sequential aspects of large dimensional random matrices  
from a different point of view. More precisely, \cite{aristotile2000} studied a sequential process generated from
the first $\lfloor nt \rfloor $ diagonal elements of a random  matrix chosen according to the Haar measure on the unitary group of $n \times n$ matrices and showed that this process converges weakly to  a standard complex-valued Brownian motion \citep[see also][for similar results]{aristotileetal2003}. Recently, \cite{nagel2020} proved a functional central limit theorem for   
the sum of the first $\lfloor nt \rfloor $ diagonal elements of  an $n \times n$  matrix $f(Z)$, where
$Z$ has an orthogonal or unitarily invariant distribution such that ${\rm tr} \big ( f(Z) \big )$ satisfies a CLT.
Compared to these contributions  the results of  the present paper are conceptually different, because - in contrast to the cited references - the parameter $t$ 
used in the definition of the process \eqref{1.1} also appears in the eigenvalues $\lambda_i (\mathbf{B}_{n,t})$. This ``non-linearity''  results in a substantially more complicated structure of the problem. In particular, the limiting processes 
of $(S_t)_{t \in [ t_0, 1]}$ are non-standard Gaussian processes (except in the case $f(x)=x$), and the proofs of our results 
(in particular the proof of tightness) require an extended machinery, which has so far not been considered in the literature on linear spectral statistics. As a consequence  we provide a substantial generalization of the classical CLT 
for linear spectral statistics \citep[see, for example,][]{bai2004}, which is obtained  from the process convergence of  $(S_{t})_{t \in [ t_0 , 1]}$  (appropriately standardized) via continuous mapping.

\newpage

\section{A sequential look at linear spectral statistics} \label{sec3}	
Let $\bfx_1, \ldots, \bfx_n $ be independent $p$-dimensional random vectors with  real or complex entries and covariance matrix given by the identity matrix $\mathbf{I} = \mathbf{I}_p \in \R^{p \times p}$ . We use the notation  $\mathbf{x}_j = (x_{1j}, \ldots, x_{pj})^\top$ for the components of $\mathbf{x}_j$ and assume that $\E[x_{ij}]=0$ and $\E[x_{ij}^2] =1$. 
When considering asymptotics, the dimension $p = p_n$ of the data is allowed to increase with the sample size $n \to \infty $ at same order, that is, $p/n \to y \in (0,\infty)$ as $n\to\infty$. 
	Recall the notation of the sequential covariance estimator 	$\mathbf{B}_{n,t}$ in \eqref{1.1} and consider the  
 corresponding   linear spectral statistic (as a function of $t$)
	\begin{align*}
S_t = \frac{1}{p} \tr  \big ( f(\mathbf{B}_{n,t})
\big ) 
= 		\frac{1}{p} \sum\limits_{j=1}^p
		f \lb \lambda_i (\mathbf{B}_{n,t}) \rb, ~ t\in[0,1],
	\end{align*}
	where $f$  is an appropriate function defined on a subset of the complex plane.
	For a given $t_0 \in (0,1] $, we  are interested in the asymptotic properties  of the  process 
	$(S_t)_{t \in [t_0 ,1]}$ and will   prove a weak convergence result for an appropriately standardized version of  
	this process in  the space  $\ell^\infty ([t_0,1])$ of bounded functions defined on the interval $[t_0,1]$. Note that the random variable $S_1$ has been studied intensively in the literature (see the discussion in Section \ref{sec1}).

For the statement of our main result we require some 
notation.  
	Let  
	\begin{align*}
		F^{\mathbf{A}} = \frac{1}{p} \sum\limits_{j=1}^p \delta_{\lambda_j (\mathbf{A})},
	\end{align*}
	be the empirical spectral distribution of a $p\times p$ Hermitian  matrix $\mathbf{A}$,
where  $\lambda_1  (\mathbf{A}) , \ldots , \lambda_p  (\mathbf{A})$ are the eigenvalues of $\mathbf{A}$ 
(often the dependence on  $ \mathbf{A}$ is omitted in the notation, because it is clear from the context) and $\delta_a $ denotes the Dirac measure at a point $a \in \mathbb{R}$. 
A useful tool in random matrix theory is the Stieltjes transform 
	\begin{align*}
		s_{F}(z) = \int \frac{1}{\lambda - z} dF (\lambda)	
	\end{align*}
of a distribution function $F$ on the real line, which is here  defined for 
 $z\in\mathbb{C}^+ = \{ z \in \mathbb{C} : \im(z) > 0 \}$. If $F= F^{\mathbf{A}}$ is an empirical spectral distribution, then its Stieltjes transform has the form
	\begin{align*}
		s_{F^\mathbf{A}} (z) = \frac{1}{p} \tr \left\{ \lb \mathbf{A} - z \mathbf{I} \rb\inv \right\}, z \in \mathbb{C}^+. 
	\end{align*}
	Standard results on linear spectral statistics \citep[see, for example the monograph of][]{bai2004} show that
	(for fixed $t>0$)
	 under certain conditions, with probability $1$, 
	the empirical spectral distribution $F^{(n/\ntn)\mathbf{B}_{n,t}}$ of the (scaled) matrix $(n/\ntn)\mathbf{B}_{n,t}$
	converges weakly. The    limit, say  
	$F^{y_t,H}$,    is the so-called generalized Mar\v{c}enko-Pastur distribution  defined by its  Stieltjes transform $s_t = s_{F^{y_t,H}}$, which  is the unique solution of the equation
	\begin{align} \label{fund_eq}
		s_t (z) = \int \frac{1}{\lambda( 1 - y_t - y_t z s_t (z) ) - z } dH(\lambda) 
	\end{align}	 
	on the set $\{ s_t \in \mathbb{C}^+ : \frac{1-y_t}{z} + y_t s_t \in \mathbb{C}^+ \}$.
	 Here, $H$ denotes the limiting spectral distribution
	 $H_n = F^{\T_n}$  of the Hermitian matrix $\mathbf{T}_n$ which will be assumed to exist 
	 throughout this paper and $y_t=y/t$.
Hence, we have
\begin{align} \label{def_gen_MP_lim}
\tilde{F}^{y_t,H} (x) :=
\lim_{n \to \infty}
F^{\mathbf{B}_{n,t}} (x) =  
%\tilde{F}^{y_t,H}(x)=
F^{y_t,H}(x/t).
\end{align}
at all points, where $\tilde{F}^{y_t,H}$  is continuous.

For  the following discussion,
	%we denote by  
 % $H_n = F^{\T_n}$ the spectral distribution of the matrix $\T_n$,
 define for $\mathbf{B}_{n,t}$ the $(\ntn \times \ntn)$-dimensional companion matrix
	\begin{align} \label{comp}
		\mathbf{\underline{B}}_{n,t} = \frac{1}{n} \mathbf{X}_{n,t}^\star \mathbf{T}_n \mathbf{X}_{n,t} 
	\end{align}	
	and denote the limit (if it  exists) of its spectral distribution 
	$F^{\mathbf{\underline{B}}_{n,t}}$ 
and its corresponding  Stieltjes transform  by
	\begin{align} \underline{\tilde{F}}^{y_t,H}~~~\text{ and } ~~ ~~ 
	%\underline{\tilde{s}}_t=
	\underline{\tilde{s}}_t(z)=s_{\underline{\tilde{F}}^{y_t,H}} (z),
	\label{def_sut}
	\end{align}
	respectively.  A straightforward  calculation (using \eqref{fund_eq}) shows that this
	 Stieltjes transform satisfies the equation
		\begin{align} \label{repl_a47}
		z & = - \frac{1}{\tilde{\su}_t(z)} + y \int \frac{\lambda}{1 + \lambda t \tilde{\su}_t(z) } dH(\lambda).
	\end{align}

Our main result provides the asymptotic properties of   the process $(	X_{n}(f,t))_{t \in [t_0,1]}$, where $t_0 \in (0,1]$, $f$ is a given function,  
	\begin{align} \label{def_X}
		X_{n}(f,t) = \int f(x) d G_{n,t} (x),
	\end{align}
 the process $G_{n,t} $ is defined by 
$$
G_{n,t}(x) = p \big ( F^{\mathbf{B}_{n,t}}(x) - \tilde{F}^{y_{\ntn}, H_{n} } (x) \big ), ~ t\in [t_0,1]
$$
  and 
	\begin{align} \label{def_gen_MP}
	\tilde{F}^{y_{\ntn},H_n} ( x) = F ^{y_{\ntn}, H_n} \lb \frac{n}{\ntn} x  \rb
	\end{align} 
	is a rescaled version of the generalized Mar\v{c}enko-Pastur distribution defined by \eqref{fund_eq}. 
The proof is challenging and therefore deferred to Section \ref{sec40} and the Appendix.

%	 \subsection{Weak convergence for linear spectral statistics of sequential sample covariance matrices} \label{sec32}
	 
	 \begin{theorem} \label{thm}
Assume that $ 
 p/ \ntn \to y_{t} = y/t \in (0,\infty)$ and that 
the following additional conditions are satisfied: 
	\begin{enumerate}
	\item[(a)]  
	For each $n$, the random variables $x_{ij}=x_{ij}^{(n)}$ are independent with  $\E x_{ij} = 0,$ $\E |x_{ij}|^2=1$, $\max\limits_{i,j,n} \E |x_{ij}|^{12} < \infty$.
% 	Moreover,   the condition
% 	\begin{align} \label{assumption_lindeberg}
% 		\frac{1}{n p} \sum\limits_{i=1}^p \sum\limits_{j=1}^n \E \left[ |x_{ij}|^4 I(|x_{ij}| \geq \sqrt{n} \eta ) \right] \to 0
% 	\end{align}
% 	holds for any $\eta >0$.
	\item[(b)] $(\mathbf{T}_n)_{n\in \mathbb{N}}$ is  a sequence of $p\times p$ Hermitian non-negative definite matrices with bounded spectral norm
	and  the sequence of spectral distributions $(F^{\mathbf{T}_n})_{n\in \mathbb{N}}$ converges to a  proper c.d.f. $H$. 
	\item[(c)]
	Let $t_0 \in (0,1]$ and $f_1, f_2$ be functions, which are analytic on an open region containing the interval
	\begin{align} \label{interval}
		\Big [ \liminf\limits_{n\to \infty} \lambda_{\min}(\T_n) I_{(0,1)} (y_{t_0}) t_0 (1-\sqrt{y_{t_0}})^2 ,  
		\limsup\limits_{n\to\infty} \lambda_{\max}(\T_n) ( 1+ \sqrt{y_{t_0}} )^2 \Big ].
	\end{align}
	\end{enumerate}
	\begin{enumerate}
	\item[(1)] %\label{real} 
	If the random variables $x_{ij}$ are real and $\E x_{ij}^4 = 3$, then the process  
	\begin{align*}
	 (X_{n}(f_1,t), X_{n}(f_2,t))_{t\in[t_0,1]}   
	\end{align*}
	converges weakly to a Gaussian process $\lb X(f_1,t), X(f_2,t) \rb _{t\in[t_0,1]}$ in the space $\lb \ell^\infty([t_0,1]) \rb^2$  with means 
	\begin{align*}
		\E[ X(f_i,t)] =  - \frac{1}{2\pi i} \int\limits_{\mathcal{C}} f_i(z) 
		\frac{ t y  \int \frac{\sut_t^3(z)\lambda^2}{(t \sut_t(z) \lambda + 1)^3 } dH(\lambda) }
			{\lb 1 - t y  \int \frac{\sut_t^2(z)\lambda^2}{( t \sut_t(z) \lambda + 1 )^2}  dH(\lambda) \rb^2} 
			dz~,~~ i=1,2,
	\end{align*}
	and covariance kernel 
	%for $t_1 \geq t_2$
	\begin{align*}
		& \cov (X(f_1,t_1), X(f_2,t_2)) 
		=   \frac{1}{2 \pi^2 } \int_{\mathcal{C}_1} \int_{\mathcal{C}_2} f_1(z_1) \overline{f_2(z_2)} 
	 \sigma_{ t_1, t_2} ^2 (z_1,\overline{z_2})
	 \overline{dz_2} dz_1 ,
	\end{align*}
	where  $\mathcal{C}, \mathcal{C}_1, \mathcal{C}_2$ are  arbitrary closed,  
	positively orientated contours in the complex plane
	enclosing the interval in \eqref{interval}, $\mathcal{C}_1, \mathcal{C}_2$
	are non overlapping and the  function  $\sigma_{t_1,t_2}^2(z_1,z_2)$ is defined  in \eqref{def_sigma}.

	\item[(2)] %\label{complex} 
	If the random variables $x_{ij}$ are complex with $\E x_{ij}^2 = 0$ and $\E |x_{ij}|^4 = 2,$ then (1) also holds with means $\E[ X(f_i,t)] = 0, ~i=1,2,$ and covariance structure 
	\begin{align*}
		& \cov (X(f_1,t_1), X(f_2,t_2)) 
		=  
		\frac{1}{4 \pi^2 } \int_{\mathcal{C}_1} \int_{\mathcal{C}_2} f_1(z_1) \overline{f_2(z_2)} 
	 \sigma_{ t_1, t_2}^2 (z_1,\overline{z_2})
	 \overline{dz_2} dz_1.
	\end{align*}
	% where $t_1 \geq t_2$.
	\end{enumerate}
	\end{theorem}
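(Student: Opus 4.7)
The plan is to follow the Bai--Silverstein strategy for the CLT of linear spectral statistics, lifted from a single random variable to a process indexed by $t \in [t_0,1]$. By assumption (c), each $f_i$ is analytic on a neighborhood of the interval \eqref{interval}, which almost surely contains the spectrum of $\mathbf{B}_{n,t}$ for every $t \in [t_0,1]$. After the customary truncation, centering and renormalization of the entries $x_{ij}$ permitted by the twelfth-moment assumption in (a), Cauchy's integral formula gives
\begin{align*}
X_{n}(f_i, t) = -\frac{1}{2 \pi i} \int_{\mathcal{C}} f_i(z)\, M_n(z,t)\, dz, \quad i = 1, 2,
\end{align*}
where $\mathcal{C}$ encloses the interval \eqref{interval} and
\begin{align*}
M_n(z,t) = p \bigl( s_{F^{\mathbf{B}_{n,t}}}(z) - s_{\tilde{F}^{y_{\ntn}, H_n}}(z) \bigr).
\end{align*}
It therefore suffices to prove weak convergence of the two-parameter process $\{M_n(z,t)\}_{z \in \mathcal{C},\, t \in [t_0,1]}$ to the appropriate Gaussian limit and to transfer the claim by continuous mapping, using non-overlapping contours for the bivariate statement.

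I would decompose $M_n = M_n^{(1)} + M_n^{(2)}$ into the centred random part
\begin{align*}
M_n^{(1)}(z,t) = p \bigl( s_{F^{\mathbf{B}_{n,t}}}(z) - \E[s_{F^{\mathbf{B}_{n,t}}}(z)] \bigr)
\end{align*}
and the deterministic bias $M_n^{(2)}(z,t) = p \bigl( \E[s_{F^{\mathbf{B}_{n,t}}}(z)] - s_{\tilde{F}^{y_{\ntn}, H_n}}(z) \bigr)$, and treat them separately. For $M_n^{(1)}$, the martingale decomposition
\begin{align*}
M_n^{(1)}(z,t) = \sum_{j=1}^{\ntn} (\E_j - \E_{j-1}) \tr (\mathbf{B}_{n,t} - z \mathbf{I})\inv,
\end{align*}
with $\E_j$ the conditional expectation given $\bfx_1, \ldots, \bfx_j$, feeds a martingale CLT for any finite collection of pairs $(z_\ell, t_\ell)$. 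The main computation is the joint predictable quadratic variation, which after Sherman--Morrison-type rank-one identities and passage to the limit through \eqref{repl_a47} produces the kernel $\sigma^2_{t_1,t_2}(z_1,z_2)$ stated in the theorem; the real and complex cases split via the usual fourth-moment cancellation. The deterministic bias $M_n^{(2)}$ is handled by perturbing the fixed-point equation \eqref{repl_a47} for $\E[s_{F^{\mathbf{B}_{n,t}}}(z)]$, which yields the displayed mean in the real case and its vanishing in the complex case.

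The genuinely new difficulty, and the main obstacle, is tightness of $M_n^{(1)}(z,\cdot)$ in the $t$-direction: the process $t \mapsto \mathbf{B}_{n,t}$ undergoes $O(n)$ rank-one updates on $[t_0,1]$, each producing an $O(1)$ jump of the unnormalized trace, so the second-moment bounds that suffice in the classical LSS CLT are too weak to produce tightness via a Kolmogorov--Chentsov criterion. My plan is to establish a fourth-moment inequality
\begin{align*}
\E \bigl| M_n^{(1)}(z,t_2) - M_n^{(1)}(z,t_1) \bigr|^4 \leq C \, (t_2 - t_1)^2
\end{align*}
uniformly in $z \in \mathcal{C}$, by applying Burkholder's inequality to the martingale differences together with a careful analysis of the telescoped resolvent identity
\begin{align*}
(\mathbf{B}_{n,t_2} - z \mathbf{I})\inv - (\mathbf{B}_{n,t_1} - z \mathbf{I})\inv
\end{align*}
as a sum of $\ntt - \nt$ rank-one corrections; this is precisely where the twelfth-moment hypothesis in (a) is consumed. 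Tightness in $z$ on $\mathcal{C}$ is routine via analyticity and Cauchy estimates, and equicontinuity in $t$ of $M_n^{(2)}(z,\cdot)$ follows from smooth dependence of the solution to \eqref{repl_a47} on the parameter $y_t = y/t$. Combining finite-dimensional convergence with this increment bound gives asymptotic equicontinuity and hence weak convergence in $\ell^\infty([t_0,1])$; the joint statement for $(X_n(f_1,t), X_n(f_2,t))$ then follows from the Cram\'er--Wold device applied to the contour integrals.
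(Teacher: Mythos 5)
Your overall architecture coincides with the paper's: the same Cauchy--integral reduction to the two-parameter Stieltjes-transform process, the same decomposition into a centred martingale part $M_n^{(1)}$ and a deterministic bias $M_n^{(2)}$, a martingale CLT for the finite-dimensional distributions, and moment bounds on increments for tightness. The step that would fail as written is your tightness bound.

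The inequality $\E\bigl|M_n^{(1)}(z,t_2)-M_n^{(1)}(z,t_1)\bigr|^4\le C(t_2-t_1)^2$ cannot hold uniformly: the process is piecewise constant in $t$ with jumps at the points $j/n$, so for $t_2-t_1<1/n$ straddling a jump the left-hand side is of order one while the right-hand side is arbitrarily small. The increment must therefore be measured in $(\ntt-\nt)/n$ and fed into a Bickel--Wichura-type criterion based on the minimum of adjacent increments. More seriously, the exponent $2$ is exactly critical for the two-parameter index set $\mathcal{C}^+\times[t_0,1]$: partitioning into $m^2$ blocks of side $1/m$, each block contributes $\lambda^{-4}(1/m)^{2}$ to the union bound, and $m^2\cdot m^{-2}=O(1)$ does not vanish as $m\to\infty$, so asymptotic equicontinuity does not follow. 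This is precisely why the paper splits the increment into $Z_n^1$ (the change of the first $\nt$ martingale differences, which admits a fourth-moment bound with exponent $4$ in $(\ntt-\nt)/n$) and $Z_n^2$ (the new summands $j=\nt+1,\ldots,\ntt$), for which a $(4+\delta)$-th moment bound with exponent $2+\delta/2>2$ is established in Lemma \ref{hat}; the twelfth-moment hypothesis is consumed exactly in these higher-order bounds on quadratic forms. A further omission is that your contour $\mathcal{C}$ meets the real axis near $x_l$ and $x_r$, where all resolvent moment bounds degenerate; the process must first be regularized on the truncated contour $\mathcal{C}_n$ at height $\varepsilon_n/n$ (the paper's $\hat M_n$, with Lemma \ref{a36} controlling the discrepancy) before any of the above estimates are available, so tightness in $z$ is not simply ``routine via analyticity.''
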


	\begin{remark}{\rm
	 While linear spectral statistics have been studied intensively  for sample covariance matrices \citep[see, for example,][]{baisilverstein2004,bai2004}, very little effort has been done in a sequential framework so far.
	 In contrast to these ``classical''  CLTs the sequential version in 
Theorem \ref{thm} reveals the asymptotic behaviour of the whole process of linear spectral statistics 
corresponding to the sequential empirical covariance process \eqref{1.1} and thus provides a substantial generalization of its one-dimensional versions. In particular, the limiting process is not a standard Gaussian process and the proofs require   an extended machinery and some additional assumptions. 
	\begin{itemize}
	    \item[(1)] 
	     While assumptions such as 
	    %\eqref{assumption_lindeberg} and 
	    \eqref{interval} and on the spectrum of the population covariance matrix $\T_n$ are common even for a standard CLT of non-sequential linear spectral statistics, we should have a closer look at the moment assumptions. 
	Among many other technical challenges, the most delicate part of the proof of Theorem \ref{thm} lies in controlling the process $(X_n(f,t))_t$ of linear spectral statistics in terms of (asymptotic) tightness,
	which enforces higher-order moment conditions in order to find sharper bounds for the concentration of random quadratic forms of the type
	\begin{align} \label{quad_form}
	    \mathbf{x}_j^\star \mathbf{A} \mathbf{x}_j - \tr (\mathbf{A} ) ,
	\end{align}
	where $\mathbf{A}$ denotes a random $p\times p$ matrix independent of $\mathbf{x}_j$, $j\in\{1, \ldots, n\}$. 	In particular, the existence of  the $12$th-moment in Theorem \ref{thm} is exclusively needed for the  proof of asymptotic tightness and is not used for the  proof of convergence of the finite-dimensional distributions (for details, see Section \ref{sec_proof_tight}). 
	Strengthening the moment conditions on the underlying random variables appears to be a convienient tool for investigating linear spectral statistics of non-standard random matrices. 
	For example, in the work of \cite{banna2020}, the authors consider linear spectral statistics of random information-plus-noise matrices and assume the existence of the $16$th-moment for deriving a non-sequential CLT for linear spectral statistics corresponding to this type of random matrices. Consequently, the higher-order moment condition implies stronger bounds for the moments of
random quadratic forms of the type  \eqref{quad_form} (see their Lemma A.2 for more details). \\
Moreover, note that our condition on the $12$th-moment implies the Lindeberg-type condition (9.7.2) in the work of \cite{bai2004}. 

	%or the uniform convergence of the non-random part. \\
	    \item[(2)]	In order to allow for non-centralized data $(\E[x_{ij}] \neq 0 )$, \cite{zheng_et_al_2015} prove a substitution principle for linear spectral statistics of recentered sample covariance matrices and thus, weakening the conditions of Bai and Silverstein's CLT. We expect that it is possible to pursue such a generalization of  Theorem \ref{thm}  combining the tools developed in this paper with methodology used in the proof of Theorem \ref{thm}. 
    \item[(3)]	
    Furthermore, it might be of interest to  relax  the Gaussian-type 4$th$ moment condition. When allowing for a general finite $4$th moment, additional terms for the covariance structure and the bias arise whose convergence is not guaranteed under the assumptions of Theorem \ref{thm}. In fact, in this case those terms depend also on the eigenvectors of the population covariance matrix $\T_n$, which are not controlled under the conditions of Theorem \ref{thm}.  For instance, in the non-sequential case, \cite{najimyao2016} show that the Lévy–Prohorov distance between the linear statistics’ distribution and a normal distribution, whose mean and variance may diverge, vanishes asymptotically, while \cite{pan2014}  imposes additional conditions on $\T_n$ in order to ensure convergence of the additional terms for mean and covariance. 
	For the sequential version considered in this paper, it seems to be promising to derive the convergence of such additional terms under similar conditions on $\T_n$  as used by  \cite{pan2014} for a proof of a ``classical'' CLT.  
%	The two latter generalizations are beyond the scope of this paper and could be further explored in detail in following papers.
	\end{itemize}
	}
	\end{remark}

%	\subsection{Log-determinant of the sequential sample covariance matrix } \label{sec33}
	In general, the calculation of the limiting parameters appearing in Theorem \ref{thm} might be involved, since mean and covariance are given by contour integrals and rely on the Stieltjes transform $\sut_t(z)$, which is defined implicitly by a equation involving the limiting spectral distribution $H$ 
	and  has in general no closed form. In the
	case $\T_n = \mathbf{I}$
	these integrals can be interpreted as integrals over the unit circle (see Proposition \ref{prop_formula} in the Appendix), and for
	specific  functions $f_1$ and $f_2$ an explicit calculation of the asymptotic expectation and variance in Theorem \ref{thm} is possible.
	In the following corollary we illustrate this
	for the sequential process corresponding to 
	 the log-determinant of   $\mathbf{B}_{n,t}$. Note that the log-determinant  $ \log | \mathbf{B}_{n,1}|$
	 of the sample covariance matrix is a well-studied object in random matrix theory (see, e.g., \cite{bao2015}, \cite{cai2015}, \cite{nguyen_vu}, \cite{wang2018}) and has many applications in statistics. 
	%For example, the following corollary enables one to construct a test for $H_0$ in \eqref{hypothesis} based on the log-likelihood ratio test statistic. However, for reasons discussed in Section \ref{sec2}, we do not pursue this approach further. 
	% To the best of our knowledge, the following result seems to be the first one to deal with the asymptotics of the log determinant of a process sample covariance matrix. 
%	Let $h(x) = \log (x)$. 
A proof can be found in Section \ref{sec_51}. 
\bigskip

			\begin{corollary} \label{thm:logdet}
		Let $t_0 \in (0,1]$, and assume that  condition (a)  of  Theorem \ref{thm} is satisfied and that $p/n \to y \in (0,t_0)$ as $n\to \infty$. 
	\begin{enumerate}
	\item 
If the variables $x_{ij}$ are real and $\E x_{ij}^4 = 3$, then the process
\begin{align*}
\big ( \mathbb{D}_{n}(t) \big ) _{t \in [t_0,1]}= 
\Big (   \log | \mathbf{B}_{n,t} |    + p  + \ntn \log ( 1 - y_{\ntn} ) - p \log \Big (   \frac{\ntn}{n} - y_n \Big ) \Big )_{t \in [t_0,1]},
	\end{align*}
	converges weakly to a Gaussian process
	$(\mathbb{D}(t))_{t \in [t_0,1]}$  in the space $\ell^\infty([t_0,1])$  with mean
	\begin{align*}
		\E[ \mathbb{D}(t)] = \frac{1}{2} \log ( 1 - y_t)
	\end{align*}
	and covariance kernel  
	\begin{align*}
		& \cov (\mathbb{D}(t_1), \mathbb{D}(t_2)) 
		=  - 2 \log (1 - y_{t_1} \wedge y_{t_2}  \ ).
	\end{align*}
	\item If $x_{ij}$ are complex with $\E x_{ij}^2 = 0$ and $\E |x_{ij}|^4 = 2,$ then (1) also holds with mean	$\E[ \mathbb{D}(t)]  = 0$ and 
$
		 \cov (\mathbb{D}(t_1), \mathbb{D}(t_2)) 
		=  -  \log (1 -y_{t_1} \wedge y_{t_2}  ).
$
	\end{enumerate}
	    % hab ich durch Simulationen geprüft 
	\end{corollary}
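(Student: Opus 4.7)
The plan is to apply Theorem~\ref{thm} with $f_1 = f_2 = f = \log$ in the simplified setting $\mathbf{T}_n = \mathbf{I}$, $H = \delta_1$, and then to evaluate the resulting contour integrals for the mean and covariance explicitly using Proposition~\ref{prop_formula}.

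First I verify the hypotheses of Theorem~\ref{thm}. Assumption (b) is trivial for $\mathbf{T}_n = \mathbf{I}$. For (c), the hypothesis $y \in (0, t_0)$ forces $y_{t_0} = y/t_0 \in (0,1)$, so the interval in \eqref{interval} reduces to $[t_0(1 - \sqrt{y_{t_0}})^2, (1 + \sqrt{y_{t_0}})^2] \subset (0,\infty)$, on an open neighborhood of which $\log$ is analytic.

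Second, I identify $\mathbb{D}_n(t)$ with $X_n(\log, t)$. Using the classical Mar\v{c}enko-Pastur log-moment
\[
\int \log(x) \, dF^{y_k}(x) = \frac{y_k - 1}{y_k} \log(1 - y_k) - 1, \qquad y_k \in (0, 1),
\]
together with the rescaling $\tilde{F}^{y_{\ntn}, H_n}(x) = F^{y_{\ntn}}((n/\ntn) x)$, one obtains
\[
p \int \log(x) \, d\tilde{F}^{y_{\ntn}, H_n}(x) = p \log(\ntn / n) + (p - \ntn) \log(1 - y_{\ntn}) - p.
\]
A direct rearrangement, using $1 - y_{\ntn} = (\ntn - p)/\ntn$ and $\ntn/n - y_n = (\ntn - p)/n$, rewrites this as $-p - \ntn\log(1-y_{\ntn}) + p\log(\ntn/n - y_n)$, so $\mathbb{D}_n(t) = X_n(\log, t)$ and the weak convergence in $\ell^\infty([t_0, 1])$ follows immediately from Theorem~\ref{thm}.

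Third, I compute the asymptotic mean. Substituting $H = \delta_1$ in the formula of Theorem~\ref{thm} collapses the $\lambda$-integrals and gives
\[
\E[X(\log, t)] = -\frac{1}{2\pi i} \oint_{\mathcal{C}} \log(z) \, \frac{t y \, \tilde{\su}_t^3(z) / (1 + t \tilde{\su}_t(z))^3}{\bigl(1 - t y \, \tilde{\su}_t^2(z) / (1 + t \tilde{\su}_t(z))^2\bigr)^2} \, dz.
\]
Applying Proposition~\ref{prop_formula} via the parametrization $z(\xi) = t(1 + \sqrt{y_t} \, \xi)(1 + \sqrt{y_t}/\xi)$ with the explicit inverse $\tilde{\su}_t(z(\xi)) = -1/[t(1 + \sqrt{y_t} \, \xi)]$, the integrand reduces to a rational function of $\xi$ multiplied by $\log z(\xi)$. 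A direct residue calculation on the contour $|\xi| = r > 1$, using the decomposition $\log z(\xi) = \log t + \log(1 + \sqrt{y_t}\, \xi) + \log(1 + \sqrt{y_t}/\xi)$ to resolve the branch, yields $\tfrac{1}{2}\log(1-y_t)$.

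Fourth, and the main obstacle, is the covariance. Proposition~\ref{prop_formula} similarly converts $\sigma_{t_1,t_2}^2(z_1,\overline{z_2})$ into a kernel in $(\xi_1,\xi_2)$ over a product of two circles. The double residue evaluation is delicate for several reasons: (i) the branches of $\log z_j(\xi_j)$ must be selected consistently on each contour; (ii) since $t_1 \neq t_2$ in general involves two distinct parametrizations via $y_{t_1}$ and $y_{t_2}$, the two $\xi$-contours must be nested (one strictly inside the other) in order to apply Cauchy's theorem, and it is precisely this nesting that breaks the symmetry in $(t_1,t_2)$ and produces the $\wedge$ structure in the final formula; (iii) the competing poles in each variable (at $\xi_j = 0$, at $\xi_j = \pm 1$ and at the roots of $1 + \sqrt{y_{t_j}}\,\xi_j = 0$) must all be tracked. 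The outcome of the residue computation is $-2\log(1 - y_{t_1}\wedge y_{t_2})$, as claimed. Part~(2) then follows at once: the kernel $\sigma_{t_1,t_2}^2$ is unchanged, only the prefactor halves from $1/(2\pi^2)$ to $1/(4\pi^2)$ (giving the factor $1/2$ in the variance), while the asymptotic bias vanishes by Theorem~\ref{thm}(2).
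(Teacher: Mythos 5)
Your proposal follows essentially the same route as the paper: apply Theorem \ref{thm} with $f=\log$ and $\mathbf{T}_n=\mathbf{I}$ (noting that $y<t_0$ keeps the origin outside the interval \eqref{interval}), identify the centering constant from the Mar\v{c}enko--Pastur log-moment exactly as the paper does, and evaluate mean and covariance by residues through Proposition \ref{prop_formula}. The one caveat is that the decisive residue evaluations --- in particular the double contour integral for the covariance, which constitutes the bulk of the paper's argument --- are asserted rather than carried out, and your heuristic that the $\wedge$ arises from the nesting $r_2>r_1$ of the contours is not quite the right explanation (that nesting is needed even for $t_1=t_2$; the $\wedge$ comes from the $\min(t_1,t_2)$ built into $\sigma^2_{t_1,t_2}$ by the martingale truncation), but the setup, pole locations, branch handling and final formulas all agree with the paper's computation.
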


\section{Monitoring sphericity in large dimension} \label{sec2}

	\begin{comment}
	In this section, we only consider real random variables $x_{ij}$ and in this case, $\mathbf{B}_{n,t}$ takes the form
	\begin{align*}
		\mathbf{B}_{n,t} = \sum\limits_{j=1}^{\ntn}  \rd_j \rd_j^\top.
	\end{align*}
	\end{comment}

% \subsection{The sphericity test}
		In many statistical problems  an important assumption is sphericity, which means, that the components of the random vectors are independent and have common variance. In the present context the corresponding test problem can be formulated as
		\begin{align} \label{3.1}
			\textnormal{H}_0: \T_n = \sigma^2 \mathbf{I}_p \textnormal{ for some } \sigma^2 >0,
~~~	\textnormal{ vs. } ~~
	\textnormal{H}_1: \T_n \neq \sigma^2 \mathbf{I}_p \textnormal{ for all } \sigma^2 >0.
%	\textnormal{H}_0: \bfSigma \in \mathcal{S}_p
%	\textnormal{ vs. } 
%	\textnormal{H}_1: \bfSigma \notin \mathcal{S}_p,
	\end{align}
	
%	\textcolor{red}{Achtung: $\mathbf{\Sigma}$ ist nicht defniniert. }
%	where 
	%$\bfSigma = \bfSigma_1 = \ldots = \bfSigma_n$ and 
% \begin{align*}
% 	\mathcal{S}_p = \{ 
% 	\sigma^2 \mathbf{I}_p : \sigma^2 > 0
% 	\} \subset \R^{p\times p}. 
% 	\end{align*}
% 	denotes the class of covariance matrices of size $p\times p$ that satisfy the sphericity assumption.  \cite{yao2015}  consider the log-likelihood ratio test and provide a central limit theorem  
%	In the latter work, it is discussed that the limiting distribution of the log-likelihood ratio test crucially depends on $-\log(1-y)$ through its asymptotic variance. If $p$ is close to $n$, the asymptotic variance quickly blows up and dramatic effects on the power of the test are expected. 
In general, it is well-known that the likelihood ratio test statistic for the hypotheses in \eqref{3.1} 
is degenerated if $p>n$ \citep[see][]{anderson2003, muirhead2009}). 	   A test statistic which is also applicable  in the case $p\geq n$ 
has been proposed by  \cite{john1971}
 and is based on the statistic
	\begin{align*}
		% U^{(n)} = 
		\frac{1}{p} \tr \Big \{ \Big (  \frac{\mathbf{B}_{n,1}}{\frac{1}{p} \tr \mathbf{B}_{n,1} } - \mathbf{I} \Big  )^2 \Big \} + 1
		=  \frac{ \frac{1}{p} \tr ( \mathbf{B}_{n,1}^2)}{\big ( \frac{1}{p} \tr \mathbf{B}_{n,1} \big )^2 }.
	\end{align*}
	The asymptotic properties of this statistic 
	in the high-dimensional regime  are investigated by \cite{ledoit_wolf_2002} and  \cite{yao2015}
	in the case $y\in(0,\infty)$ 
	and by  \cite{birke_dette} 
	%and  \cite{chen_pan_2015})  
	in the ultra high dimensional case $y=\infty$.
		In the following discussion we will use the results of Section \ref{sec3} to develop a sequential 
	monitoring procedure for the assumption of sphericity.
	
To be precise,  we consider random variables $\mathbf{y}_1, \ldots, \mathbf{y}_n \in\R^p$, where
    \begin{align*}
        \mathbf{y}_i = \bfSigma\sq_i \mathbf{x}_i, ~ 1 \leq i \leq n, 
    \end{align*}
    for symmetric non-negative definite matrices $\bfSigma_1, \ldots, \bfSigma_n \in \R^{p \times p}$ and random variables $\mathbf{x}_1, \ldots, \mathbf{x}_n \in\R^p$ satisfying the asssumptions stated  in Section \ref{sec3}. 
	%independent and centered random variables  $\mathbf{x}_1, \ldots, \mathbf{x}_n \in\R^p$ with covariance matrices $\bfSigma_1, \ldots, \bfSigma_n \in \R^{p\times p}$. 
	We are interested in monitoring the sphericity assumption 
	\begin{align}
	&\textnormal{H}_0: \bfSigma_1 =  \ldots =  \bfSigma_n = \sigma^2 \mathbf{I}_p \textnormal{ for some } \sigma^2 >0 
	  \nonumber \\
	\textnormal{ vs. }  & H_1:  \bfSigma_1 =  \ldots = \bfSigma_{\lfloor nt_1^\star \rfloor} = \sigma^2 \mathbf{I}_p,~ \bfSigma_{\lfloor nt_1^\star \rfloor + 1} =  \ldots = 
	\bfSigma_{ n  } \neq  \sigma^2 \mathbf{I}_p,  \label{hypothesis}
	\end{align}
for some  $0< t_1^\star < % t_2^\star \leq 
1$.
 For the construction of a test we consider a sequential version of the statistic proposed by 
 \cite{john1971}, that is 
	\begin{align} \label{det1}
		U_{n,t} = \frac{ \frac{1}{p} \tr ( \mathbf{\hat \Sigma}_{n,t}^2)}{\big ( \frac{1}{p} \tr \mathbf{\hat \Sigma}_{n,t} \big ) ^2 }~,
	\end{align}
	and investigate the asymptotic behaviour of the stochastic process 
	$U_n= (U_{n,t})_{t\in[t_0,1]}$  under  the null hypothesis.
	%using the results about sequential processes of linear spectral statistics of the previous section.
Here, $\mathbf{\hat \Sigma}_{n,t}$ denotes the sequential sample covariance matrix corresponding to the sample $\mathbf{y}_1, \ldots, \mathbf{y}_{\lfloor n t \rfloor } $, that is,
	\begin{align} \label{def_tilde_B}
	    \mathbf{\hat \Sigma}_{n,t} = \frac{1}{n} \sum\limits_{i=1}^{\ntn} \mathbf{y}_i \mathbf{y}_i^\top 
	    = \frac{1}{n} \sum\limits_{i=1}^{\ntn} \bfSigma_i\sq \mathbf{x}_i \mathbf{x}_i^\top \bfSigma_i\sq .
	\end{align}
	Note that in contrast to tests  based on the likelihood ratio principle the dimension may exceed the sample size.
	Moreover, under the null hypothesis, 
	we have $\mathbf{\Sigma}_i =\sigma^2 \mathbf{I}_p$
	($i=1,\ldots , n$), and a simple calculation 
	shows that 
	the statistic $U_{n,t}$ is independent of the concrete proportionality constant $\sigma^2$.
	The following theorem deals with the weak convergence of $(U_n)_{n\in\N}$ considered as a sequence in the space of bounded functions $\ell^\infty ([t_0,1])$ 
	and its proof is postponed to Section \ref{sec4}.   
In the following discussion the symbol $\rightsquigarrow$ denotes 
weak convergence of  processes and the symbol $	 \stackrel{\mathcal{D}}{\to}$ weak convergences of a real-valued random variables.

	\begin{theorem} \label{thm:u}
Let $y\in(0,\infty)$, $t_0>0$ and define $y_t = y/t$ for $t\in[t_0,1]$. If  the random variables $\mathbf{x}_1, \ldots, \mathbf{x}_n$ satisfy the assumptions (a) and (1)  of Theorem \ref{thm},
it follows under the null hypothesis  \eqref{hypothesis} that 
	\begin{align*}
		p \lb U_{n,t} - 1 - y_{\ntn} \rb_{t\in[t_0,1]  } 
		 \rightsquigarrow
		 (U_t)_{t\in[t_0,1]} ~~~\textnormal{ in } \ell^\infty([t_0,1]),
	\end{align*}	 
as $n\to\infty$, where $(U_t)_{t\in[t_0,1]}$ denotes a Gaussian process with mean function
$ 	\E [ U_t ]  = y_t $ and covariance kernel 
	\begin{align*}
	\cov (U_{t_1}, U_{t_2} )& = 4 y_{\max(t_1,t_2)}^2  
	% = \HD{ 4 \big (	\min  \big \{ {y \over t_1}, {y \over t_2} 	\big  \} \big ) ^2}
~,~~t_1,t_2 \in [t_0,1].
\end{align*}	 
	\end{theorem}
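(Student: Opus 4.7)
The plan is to recognize $U_{n,t}$ as a smooth functional of two linear spectral statistics of the matrix $\mathbf{B}_{n,t}$ (in the case $\T_n = \mathbf{I}_p$), to apply Theorem \ref{thm} to obtain joint weak convergence of the corresponding two-dimensional process in $\ell^\infty$, and to transfer the conclusion via the functional delta method. Because $U_{n,t}$ is scale invariant, under $H_0$ we may take $\sigma^2 = 1$, in which case $\mathbf{\hat\Sigma}_{n,t}$ coincides with the matrix $\mathbf{B}_{n,t}$ of \eqref{1.1} with $\T_n = \mathbf{I}_p$. Writing $A_{n,t} = \tfrac{1}{p}\tr \mathbf{B}_{n,t}^2$ and $B_{n,t} = \tfrac{1}{p}\tr \mathbf{B}_{n,t}$, we have $U_{n,t} = A_{n,t}/B_{n,t}^2$, and $A_{n,t}, B_{n,t}$ are the linear spectral statistics associated with the entire functions $f_1(x) = x^2$ and $f_2(x) = x$.

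With $\T_n = \mathbf{I}_p$, we have $H_n = H = \delta_1$, and direct computation of the moments of $\tilde F^{y_{\ntn}, \delta_1}$ yields the centering terms $b_n(t) := \int x\, d\tilde F^{y_{\ntn},\delta_1}(x) = \ntn/n$ and $a_n(t) := \int x^2\, d\tilde F^{y_{\ntn},\delta_1}(x) = (\ntn/n)^2 (1 + y_{\ntn})$, with the exact identity $a_n(t)/b_n(t)^2 = 1 + y_{\ntn}$, matching the centering in the statement. All conditions of Theorem \ref{thm}(1) are satisfied, and it gives the joint weak convergence
\begin{align*}
\bigl(p(A_{n,t} - a_n(t)),\; p(B_{n,t} - b_n(t))\bigr)_{t\in[t_0,1]} \rightsquigarrow \bigl(X(f_1,t),\; X(f_2,t)\bigr)_{t\in[t_0,1]}
\end{align*}
in $\ell^\infty([t_0,1])^2$.

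A Taylor expansion of the map $(a,b) \mapsto a/b^2$ about $(a_n(t), b_n(t))$, together with the uniform lower bound $b_n(t) \geq t_0/2$ for $n$ large and the asymptotic tightness of the fluctuation processes in $\ell^\infty$, yields
\begin{align*}
p\bigl(U_{n,t} - 1 - y_{\ntn}\bigr)
= \frac{1}{b_n(t)^2}\, p(A_{n,t} - a_n(t))
- \frac{2 a_n(t)}{b_n(t)^3}\, p(B_{n,t} - b_n(t)) + o_{\PR}(1),
\end{align*}
uniformly in $t \in [t_0,1]$, with remainder of order $O_{\PR}(1/p)$. Since $a_n(t) \to t^2 + t y$ and $b_n(t) \to t$ uniformly on $[t_0,1]$, the continuous mapping theorem in $\ell^\infty([t_0,1])$ identifies the limit as the Gaussian process
\begin{align*}
U_t \;=\; \frac{1}{t^2}\, X(f_1,t) \;-\; \frac{2(t+y)}{t^2}\, X(f_2,t).
\end{align*}

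What remains is to evaluate $\E[U_t]$ and $\cov(U_{t_1}, U_{t_2})$ explicitly. Since $H = \delta_1$, Proposition \ref{prop_formula} in the appendix re-expresses the contour integrals appearing in Theorem \ref{thm} via a rational parametrization of $\sut_t(z)$ along a contour encircling the unit circle; for $f_1(x) = x^2$ and $f_2(x) = x$ the resulting integrals then reduce to residue computations and yield $\E[U_t] = y_t$ and $\cov(U_{t_1}, U_{t_2}) = 4\, y_{\max(t_1,t_2)}^2$. I expect this explicit evaluation of the contour integrals---in particular the cross- and self-covariance of $X(f_1,\cdot)$ at distinct times $t_1 \neq t_2$, which produces the non-trivial maximum structure in the covariance kernel---to be the main technical obstacle; the weak-convergence reduction outlined above is, by contrast, relatively routine once Theorem \ref{thm} is in hand.
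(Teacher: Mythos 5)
Your proposal is correct and follows essentially the same route as the paper: reduction to $\T_n=\mathbf{I}$ by scale invariance, application of Theorem \ref{thm} with $f(x)=x$ and $f(x)=x^2$, the same centering identity $a_n(t)/b_n(t)^2 = 1+y_{\ntn}$, the functional delta method for $\phi(a,b)=a/b^2$ yielding $U_t = t^{-2}\bigl(X(x^2,t) - 2(t+y)X(x,t)\bigr)$, and evaluation of the limiting mean and covariance by residue calculus via Proposition \ref{prop_formula}. The paper carries out exactly the residue computations you defer to (Section \ref{sec_51}), obtaining $\E[X(x,t)]=0$, $\E[X(x^2,t)]=ty$ and the stated cross- and self-covariances, which then combine to $\E[U_t]=y_t$ and $\cov(U_{t_1},U_{t_2})=4y_{\max(t_1,t_2)}^2$.
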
	
	
\begin{remark} \label{remhd1}
{\rm  ~~
\begin{itemize}
\item[(1)] 	
	To obtain a test for the  hypotheses in \eqref{hypothesis} we note that the continuous 
	mapping theorem implies under the null hypothesis
	\begin{align} \label{max_u}
		 \sup\limits_{t\in[t_0,1]}  p \lb U_{n,t} - 1 - y_{\ntn} \rb 
		 \stackrel{\mathcal{D}}{\to} \sup\limits_{ t \in [t_0,1] } U_t  , ~n\to\infty~.
	\end{align}
Therefore we propose to  reject  the null hypothesis in \eqref{hypothesis} whenever 
	\begin{align} \label{test1}
		 \sup\limits_{t\in[t_0,1]}  p \lb U_{n,t} - 1 - y_{\ntn} \rb 
		 > c_{\alpha} , 
	\end{align}
	where $c_{\alpha}$ denotes the $(1-\alpha)$-quantile of the statistic $\sup_{t\in [t_0,1]} U_t $. Thus, we have by \eqref{max_u}
	\begin{align*}
		\lim\limits_{n\to\infty} \PR_{H_0} \Big (  \sup\limits_{t\in[t_0,1]} p \big (  U_{n,t} - 1 - y_{\ntn} \big )  > c_{\alpha} \Big )  =  \PR \Big (   \sup\limits_{t\in[t_0,1]} U_t > c_{\alpha} \Big )    \leq \alpha ,
	\end{align*}	 
	which means, that the test keeps a  nominal level $\alpha$ (asymptotically).   
	\item[(2)]	
In order to investigate the consistency of the test \eqref{test1} assume that the matrices 
${\bf \Sigma}_i $ in \eqref{hypothesis} satisfy 
\begin{eqnarray*}
  \mathbf{\Sigma}_i = \left \{ 
  \begin{array}{lll}
    \sigma^2  \mathbf{I}_p & \mbox{if} & 0 \leq i  \leq  \lfloor n  t_1^\star \rfloor,  \\
    {\bf  \Sigma} & \mbox{if} & \lfloor n  t_1^\star \rfloor  < i \leq n,
  \end{array}
  \right .  ~
  \end{eqnarray*}
  where $\sigma^2>0$ and $ {\bf  \Sigma}$ is a $p \times p $ nonnegative definite matrix.
   We also assume that  $\frac{1}{p}{\rm tr } {\bf \Sigma}$ 
and $\frac{1}{p} {\rm tr }({\bf \Sigma}^2)$ converge to   $g>0$ and 
$h>0$, respectively. Furthermore,  for the matrix   $\mathbf{H} = \bfSigma\sq  = (H_{ij})_{i,j=1, \ldots , p}$ we have
\begin{align*}
    \Big (  \frac{1}{p} \sum\limits_{j,l=1}^p
 H_{jl}^2  \Big )^2 = \Big (  \frac{1}{p} {\rm tr } \bfSigma  \Big ) ^2 \to g^2. 
 \end{align*}
 A straightforward calculation then shows that for $t\in(t_1^\star, 1)$
\begin{eqnarray*}
  \frac{1}{p}  \E \left[ \tr \lb \hat{\mathbf{\Sigma}}_{n,t} \rb \right] & { \stackrel{\mathbb{P}}{\longrightarrow} }  &  t_1^\star \sigma^2 +  ( t - t_1^\star ) g , \\
\frac{1}{p}  \E \left[ \tr \lb \hat{\mathbf{\Sigma}}_{n,t}^2 \rb \right] & { \stackrel{\mathbb{P}}{\longrightarrow} } &  \lb t_1^\star\rb^2 \sigma^4 
+ 2   t_1^\star \sigma^2 ( t-t_1^\star ) g   
  +  ( t-t_1^\star )^2 h
+ y t_1^\star \sigma^4 
+ y ( t-t_1^\star ) g.
\end{eqnarray*}
 Using a martingale decomposition and the estimate (9.9.3) in \cite{bai2004}, one can show that for fixed $t \in (t_1^\star, 1)$
 \begin{align*}
   \E | s_{F^{\hat \bfSigma_{n,t}}} (z) 
     -  \E [ s_{F^{\hat \bfSigma_{n,t}}} (z) ] |^2 \to 0,
 \end{align*}
 if we assume that the spectral norm $|| \bfSigma||$ is  uniformly bounded with respect to  $n\in\N$.  
Using \eqref{cauchy}, this implies
\begin{align*}
   \frac{1}{p}  \tr \lb f( \hat{\mathbf{\Sigma}}_{n,t} ) \rb  -  \frac{1}{p}  \E \left[ \tr \lb f( \hat{\mathbf{\Sigma}}_{n,t} ) \rb \right]
    \conp 0
\end{align*}
for $f(x) = x$ and $f(x) = x^2. $
Consequently,
\begin{align*}
% \nonumber % Remove numbering (before each equation)
 U_{n,t}   {\stackrel{\mathbb{P}}{\longrightarrow} } & 
\frac{ (t_1^\star)^2 \sigma^4  + 2    t_1^\star \sigma^2  ( t-t_1^\star ) g   
  +  ( t-t_1^\star )^2 h
+ y t_1^\star \sigma^4
+ y ( t-t_1^\star ) g^2}
{ (t_1^\star)^2 \sigma^4
+  \lb ( t-t_1^\star ) g \rb^2
+ 2 t_1^\star \sigma^2  ( t-t_1^\star ) g } \\
= & 1 + y_t + \Delta_{1,t} + \Delta_{2,t}
\end{align*}
where
$$
\Delta_{1,t} = \frac{ ( t-t_1^\star )^2 (h-g^2) }
{ (t_1^\star)^2 \sigma^4
+  \lb ( t-t_1^\star ) g \rb^2
+ 2 t_1^\star \sigma^2  ( t-t_1^\star ) g }
\geq 0 
$$
by construction, and 
\begin{align*}
\Delta_{2,t} = &  \frac{  y t_1^\star \sigma^4
+ y ( t-t_1^\star ) g^2}
{ (t_1^\star)^2 \sigma^4
+  \lb ( t-t_1^\star ) g\rb^2
+ 2 t_1^\star \sigma^2  ( t-t_1^\star ) g }  - y_t \\
= & \frac{  y t_1^\star \sigma^4
+ y ( t-t_1^\star ) g^2 
 - y_t \big  \{    (t_1^\star)^2 \sigma^4 
+  \big  (   ( t-t_1^\star ) g \big  )^2
+ 2 t_1^\star \sigma^2  ( t-t_1^\star ) g \big  \}  }
{ (t_1^\star)^2 \sigma^4
+  \big  ( ( t-t_1^\star ) g \big )^2
+ 2 t_1^\star \sigma^2  ( t-t_1^\star ) g }  \\
= & \frac{ y_t t_1^\star 
( t-t_1^\star ) \lb \sigma^2 - g \rb^2 }{(t_1^\star)^2 \sigma^4
+  \big  ( ( t-t_1^\star ) g \big )^2
+ 2 t_1^\star \sigma^2  ( t-t_1^\star ) g}  \geq 0.
\end{align*} 

Note that under the alternative in \eqref{hypothesis} two types  of structural breaks in
the covariance structure corresponding to the terms $\Delta_{1,t}$ and $\Delta_{2,t}$ may occur. 
On the one hand, the diagonal elements in the matrices  $\bfSigma_1, \ldots , \bfSigma_n$ might shift from $\sigma^2$ to a different variance while the matrices still remain spherical.
This structural break is captured by the term $\Delta_{2,t}$. On the other hand, 
the change in the matrices  could violate the sphericity assumption, which corresponds to 
the term $\Delta_{1,t}$. \\ 
Consequently, whenever there exists a parameter $\tilde t \in (t_1^\star, 1) $ such that $\Delta_{1, \tilde t} > 0$ or $\Delta_{2,\tilde t} > 0$,
%(which corresponds to the alternative in \eqref{hypothesis}), 
it follows under the additional assumption $y - y_n = o\lb p\inv \rb$ that
$$
\sup_{t \in [t_0,1]} p (U_{n,t}-1 - y_{\lfloor nt \rfloor} ) \geq p (U_{n,\tilde t}-1 - y_{\lfloor n\tilde t \rfloor} )
{\stackrel{\mathbb{P}}{\longrightarrow} }\ \infty,
$$
and in this case the test \eqref{test1} rejects the null hypothesis with a probability converging to $1$ as $p,n \to \infty $, $p/n \to y \in (0, \infty ) $.
This is in particular the case for the alternative considered in \eqref{hypothesis}.
\end{itemize}
}
\end{remark}

\cite{fisher2010} consider several generalizations of the  classical test  introduced by \cite{john1971}.
Motivated by this work an  alternative test for the hypothesis \eqref{hypothesis}  could be based on  the test statistic 
	\begin{align*}
		U_{n,t}^{(2)} = \frac{ \frac{1}{p} \tr ( \mathbf{\hat \Sigma}_{n,t}^4)}{\lb \frac{1}{p} \tr \mathbf{\hat \Sigma}_{n,t}^2\rb^2 }, 
	\end{align*}	
where the matrix $\mathbf{\hat \Sigma}_{n,t}$ is defined  in \eqref{def_tilde_B}.
For $t=1$, the asymptotic properties  of an appropriately  centered   version of $U_{n,1}^{(2)}$ have been  investigated by  \cite{fisher2010}   assuming that all arithmetic means of the eigenvalues of the sample covariance up to order 16 converge to the corresponding arithmetic means of the eigenvalues of the population covariance. 
%	More generally, one could consider the ratio of arithmetic means of the sample eigenvalues, namely for $r\in\N$,
%	\begin{align*}
%		U_{n,t}^{(r)} =  \frac{ \frac{1}{p} \tr ( \mathbf{B}_{n,t}^{2r})}{\lb \frac{1}{p} \tr \mathbf{B}_{n,t}^r\rb^2 }.
%	\end{align*}
%	Then, we recover our first test statistic as $U_{n,t}=U_{n,t}^{(1)}$. 
The following results provides the   weak convergence of  the corresponding stochastic process  $U_n^{(2)}= (U_{n,t}^{(2)})_{t\in[t_0,1]}$ under the null hypothesis.  A corresponding asymptotic level-$\alpha$ test and a discussion of its power properties can be obtained by similar arguments as given  for the process $ \big ( U_{n,t}^{(1)}
\big )_{t \in [t_0,1]}$  in Remark \ref{remhd1} and the details are omitted for the sake of brevity.

	\begin{theorem} \label{thm:u2}
Under the assumptions of Theorem \ref{thm:u} we have
	\begin{align*}
		 p \Big (  U_{n,t}^{(2)}  -\frac{  1 + 6    y_{\ntn} + 6  y_{\ntn}^2 +  y_{\ntn}^3
		}{ ( 1 + y_{\ntn}  ) ^2}  \Big )_{t\in[t_0,1] } 
		 \rightsquigarrow
		 (U_t^{(2)})_{t\in[t_0,1]}~~~ \textnormal{ in } \ell^\infty([t_0,1]),
	\end{align*}	 
where $(U_t^{(2)})_{t\in[t_0,1]}$ denotes a Gaussian process with mean function 
	\begin{align*}
	\E [ U_t^{(2)} ] & =  \frac{ y (4 t^2 + 7 t y + 4 y^2)}{t (t + y)^2} ~,~~
t	\in [t_0,1],
	\end{align*}
and covariance  kernel 
	\begin{align*}
	\cov (U_{t_1}^{(2)}, U_{t_2}^{(2)} )& = \frac{8 y^2 \Big\{ 4 t_1^2 (2 t_2^2 + 3 t_2 y + 2 y^2) + 
   6 t_1 y (4 t_2^2 + 5 t_2 y + 2 y^2) + 
   y^2 (21 t_2^2 + 24 t_2 y + 8 y^2)\Big\}}{t_1^2 (t_1 + y)^2 (t_2 + y)^2}
\end{align*}
      for $t_0 \leq t_2 \leq t_1 \leq 1$.
	\end{theorem}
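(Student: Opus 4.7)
The plan is to reduce the claim to an application of the main linear spectral statistics CLT (Theorem \ref{thm}) via the functional delta method, and then evaluate the resulting contour integrals explicitly for $H=\delta_{1}$. Under $H_{0}$ we have $\hat\bfSigma_{n,t}=\sigma^{2}\mathbf{B}_{n,t}$ with $\T_{n}=\mathbf{I}_{p}$, and because $U_{n,t}^{(2)}$ is scale-invariant we may reduce to $\T_{n}=\mathbf{I}_{p}$, so $H_{n}=H=\delta_{1}$. In this reduction $U_{n,t}^{(2)}=a_{n,t}/b_{n,t}^{2}$ with $a_{n,t}=\tfrac{1}{p}\tr(\mathbf{B}_{n,t}^{4})$ and $b_{n,t}=\tfrac{1}{p}\tr(\mathbf{B}_{n,t}^{2})$, and a direct computation of the moments of the rescaled Mar\v{c}enko--Pastur law shows that the finite-sample centers $a_{t}^{(n)}=\int x^{4}d\tilde{F}^{y_{\ntn},\delta_{1}}$, $b_{t}^{(n)}=\int x^{2}d\tilde{F}^{y_{\ntn},\delta_{1}}$ satisfy $a_{t}^{(n)}/(b_{t}^{(n)})^{2}=(1+6y_{\ntn}+6y_{\ntn}^{2}+y_{\ntn}^{3})/(1+y_{\ntn})^{2}$, which is exactly the centering in the statement.

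I would then apply Theorem \ref{thm} (part (1)) with the entire functions $f_{1}(x)=x^{2}$, $f_{2}(x)=x^{4}$ to obtain joint weak convergence of the linear spectral statistic processes $(X_{n}(x^{2},t),X_{n}(x^{4},t))_{t\in[t_{0},1]}$ to a Gaussian limit $(X(x^{2},t),X(x^{4},t))_{t\in[t_{0},1]}$ in $(\ell^{\infty}([t_{0},1]))^{2}$. A first-order Taylor expansion of $(a,b)\mapsto a/b^{2}$ around $(a_{t}^{(n)},b_{t}^{(n)})$ gives
\[
p\Bigl(U_{n,t}^{(2)}-\frac{a_{t}^{(n)}}{(b_{t}^{(n)})^{2}}\Bigr) = \frac{X_{n}(x^{4},t)}{(b_{t}^{(n)})^{2}}-\frac{2 a_{t}^{(n)}}{(b_{t}^{(n)})^{3}}\,X_{n}(x^{2},t)+R_{n,t},
\]
where the remainder $R_{n,t}$ is bounded by $C\sup_{s\in[t_{0},1]}(|X_{n}(x^{2},s)|^{2}+|X_{n}(x^{4},s)|^{2})/p$. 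Since $b_{t}:=t(t+y)$ is bounded away from zero on $[t_{0},1]$ and the tightness conclusion of Theorem \ref{thm} gives $\sup_{s}|X_{n}(f,s)|=O_{\mathbb{P}}(1)$, we have $\sup_{t}|R_{n,t}|=O_{\mathbb{P}}(1/p)$, and the continuous mapping theorem then delivers weak convergence of the left-hand side in $\ell^{\infty}([t_{0},1])$ to the Gaussian process
\[
U_{t}^{(2)} = \frac{X(x^{4},t)}{b_{t}^{2}}-\frac{2 a_{t}}{b_{t}^{3}}\,X(x^{2},t), \qquad a_{t}=t^{4}+6t^{3}y+6t^{2}y^{2}+ty^{3}.
\]

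It remains to identify the mean and covariance of $U_{t}^{(2)}$. For $H=\delta_{1}$ equation \eqref{repl_a47} reduces to the quadratic $zt\sut_{t}(z)^{2}+(z+t-y)\sut_{t}(z)+1=0$, and I would use the associated change of variables $z\mapsto\sut$ (as in Proposition \ref{prop_formula}) to transform the contour integrals in Theorem \ref{thm} into residue calculations on a small circle in the $\sut$-plane. For $f(x)\in\{x^{2},x^{4}\}$ the integrands become rational functions of $\sut$, so the four cross-covariances $\cov(X(x^{k},t_{i}),X(x^{l},t_{j}))$ with $k,l\in\{2,4\}$ and the two bias terms $\E[X(x^{k},t)]$ can be computed explicitly by residues. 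Assembling these through the linear combination giving $U_{t}^{(2)}$ then yields the claimed formulas.

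The main obstacle will be the explicit evaluation of the double-contour covariance integrals: the kernel $\sigma_{t_{1},t_{2}}^{2}$ couples $\sut_{t_{1}}(z_{1})$ and $\sut_{t_{2}}(z_{2})$ in a nontrivial way, and the bookkeeping needed to collect the four residue computations into the compact rational form displayed in the statement is the most delicate algebraic step. By contrast, the linearization via the delta method and the reduction to $\T_{n}=\mathbf{I}_{p}$ are routine given the tightness already provided by Theorem \ref{thm}.
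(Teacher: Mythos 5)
Your proposal is correct and follows essentially the route the paper intends: the paper omits a detailed proof of this theorem and refers to the argument given for Theorem \ref{thm:u}, which is exactly what you reproduce with $f_1(x)=x^2$, $f_2(x)=x^4$ in place of $x$, $x^2$ — Mar\v{c}enko--Pastur moments for the centering (your computation of $a_t^{(n)}/(b_t^{(n)})^2$ matches the stated centering term), the (functional) delta method for $\phi(b,a)=a/b^2$, and residue evaluation of the mean and the four cross-covariances via the substitution underlying Proposition \ref{prop_formula}. The only cosmetic difference is that you linearize by an explicit Taylor expansion with a uniformly $O_{\mathbb{P}}(1/p)$ remainder rather than invoking Hadamard differentiability as in Section \ref{sec4}; both are valid given the tightness supplied by Theorem \ref{thm}.
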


\medskip 

	\begin{example} 
	{\rm 
	We conclude this section with a small simulation study illustrating the finite-sample properties of the
	test \eqref{test1}. For this purpose, we generated centered $p$-dimensional normally distributed data with various covariance structures. 
		To be precise, we consider the  the alternatives 
	\begin{align}
	    & \bfSigma_1 = \ldots = \bfSigma_{\lfloor nt^\star \rfloor} = \mathbf{I}_p,  ~
	    \bfSigma_{\lfloor nt^\star \rfloor+ 1} = \ldots
	    = \bfSigma_n = \mathbf{I}_p + 
	    \textnormal{diag} (\underbrace{ 0, \ldots, 0}_{\substack{p/2}}, \underbrace{ \delta, \ldots, \delta}_{\substack{p/2}}) ,
	    \label{alternative} \\
	      & \bfSigma_1 = \ldots = \bfSigma_{\lfloor nt^\star \rfloor} = \mathbf{I}_p,  ~
	    \bfSigma_{\lfloor nt^\star \rfloor+ 1} = \ldots
	    = \bfSigma_n = \mathbf{I}_p + 
	    \textnormal{diag} (\underbrace{ 0, \ldots, 0}_{\substack{p/2}}, \underbrace{ \delta, \ldots, \delta}_{\substack{p/2}}) + \tilde{\mathbf{S}}(\delta) ,
	    \label{alternative11} \\
	    & \bfSigma_1 = \ldots = \bfSigma_{\lfloor nt^\star \rfloor} = \mathbf{I}_p,  ~
	    \bfSigma_{\lfloor nt^\star \rfloor+ 1} = \ldots
	    = \bfSigma_n = ( 1+ \varepsilon) \mathbf{I}_p  ,
	    \label{alternative2} \\
	     & \bfSigma_1 = \ldots = \bfSigma_{\lfloor nt^\star \rfloor} = \mathbf{I}_p,  ~
	    \bfSigma_{\lfloor nt^\star \rfloor+ 1} = \ldots
	    = \bfSigma_n = ( 1+ \varepsilon) \mathbf{I}_p + \mathbf{S}(\varepsilon) ,
	    \label{alternative21}
	\end{align} 
	where $\delta, \varepsilon \geq 0$ determine the "deviation" from  the null hypothesis (note that the choice $\delta = 0$ and $\varepsilon=0$ correspond to the null hypothesis 
	\eqref{hypothesis}).  Here, the entries of the $p \times p$ matrix $\mathbf{S}(\varepsilon) $ in \eqref{alternative21} are given by $S_{j,j-1}(\varepsilon) =S_{j-1,j} (\varepsilon) =\varepsilon, ~ 1\leq j \leq p,$ and all other entries are 0. 
	Similarly, the $p \times p$ matrix $\tilde{\mathbf{S}}(\varepsilon)$ in \eqref{alternative11} has the entries $\tilde{S}_{j,j-1}(\delta) =\tilde{S}_{j-1,j} (\delta) =\delta, ~ p/2 <j \leq p,$ and all other entries are 0.

    	 \begin{figure}[!ht]
    \centering
             \includegraphics[width=0.49\columnwidth, height=0.35\textheight]{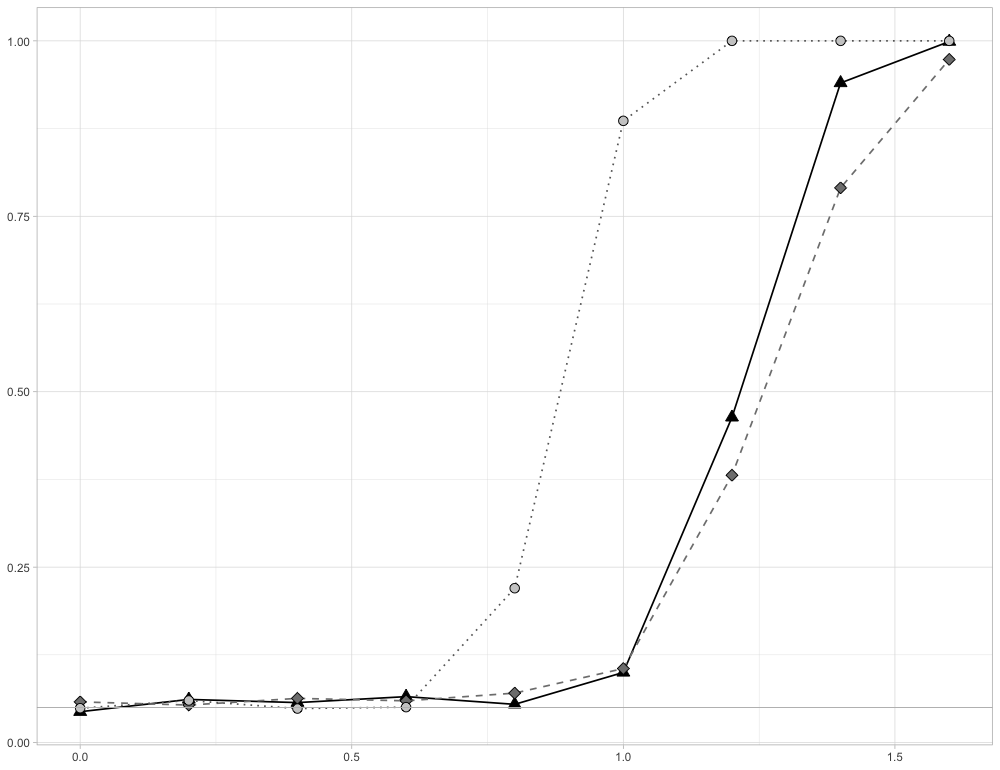}
             \includegraphics[width=0.49\columnwidth, height=0.35\textheight]{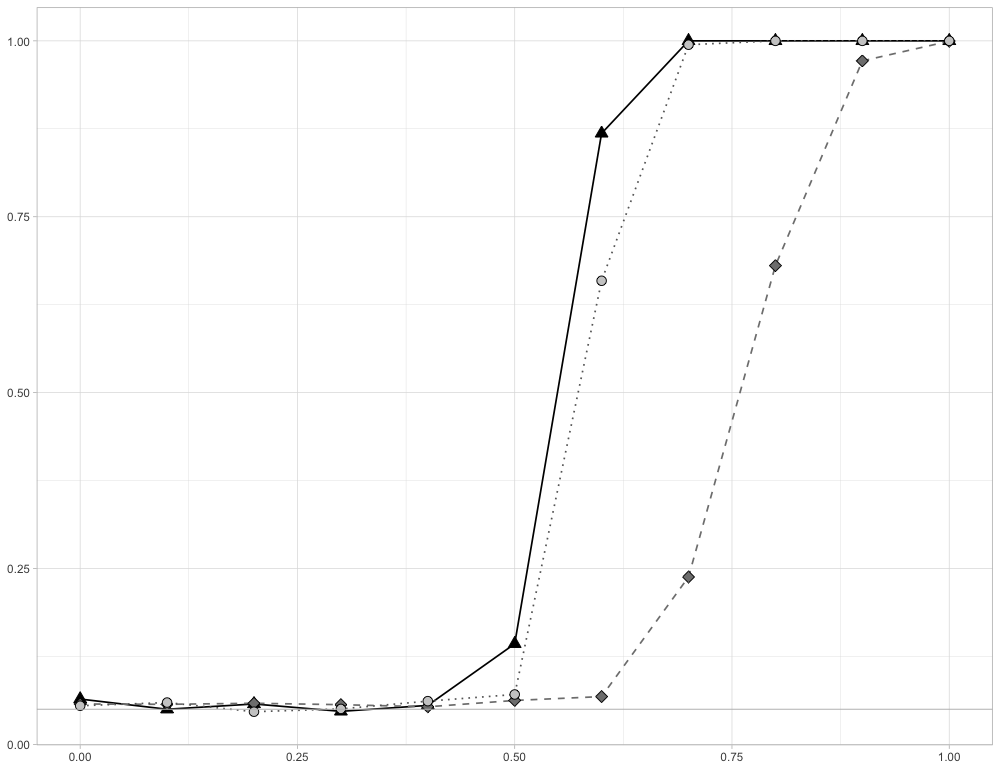}

         \caption{   \it  Simulated rejection probabilities of the test \eqref{test1}  under the null hypothesis ($\delta=0$) and  the different alternatives  in \eqref{alternative} (left)   and 
          \eqref{alternative11} (right)  for $\delta >0$ . The circle indicates $n=200, p=300$, the triangle $n=200, p=120$ and the square $n=150, p=300$.   }
    \label{fig1}
         
    \end{figure}

    	 \begin{figure}[!ht]
    \centering
          \includegraphics[width=0.49\columnwidth, height=0.35\textheight]{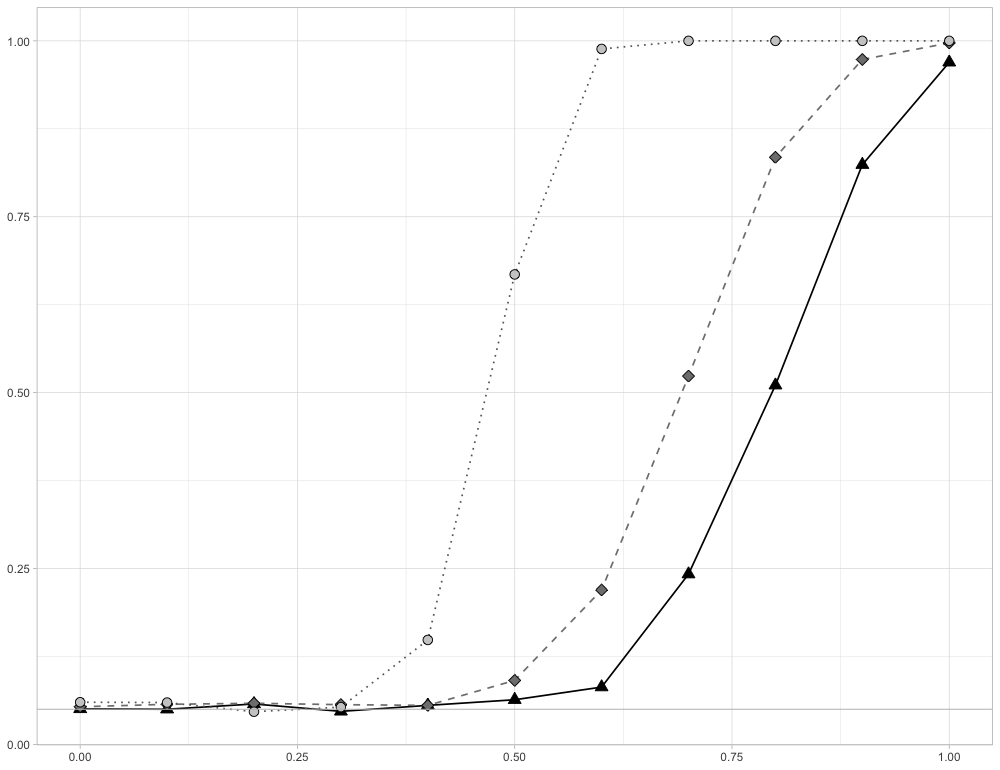}
         \includegraphics[width=0.49\columnwidth, height=0.35\textheight]{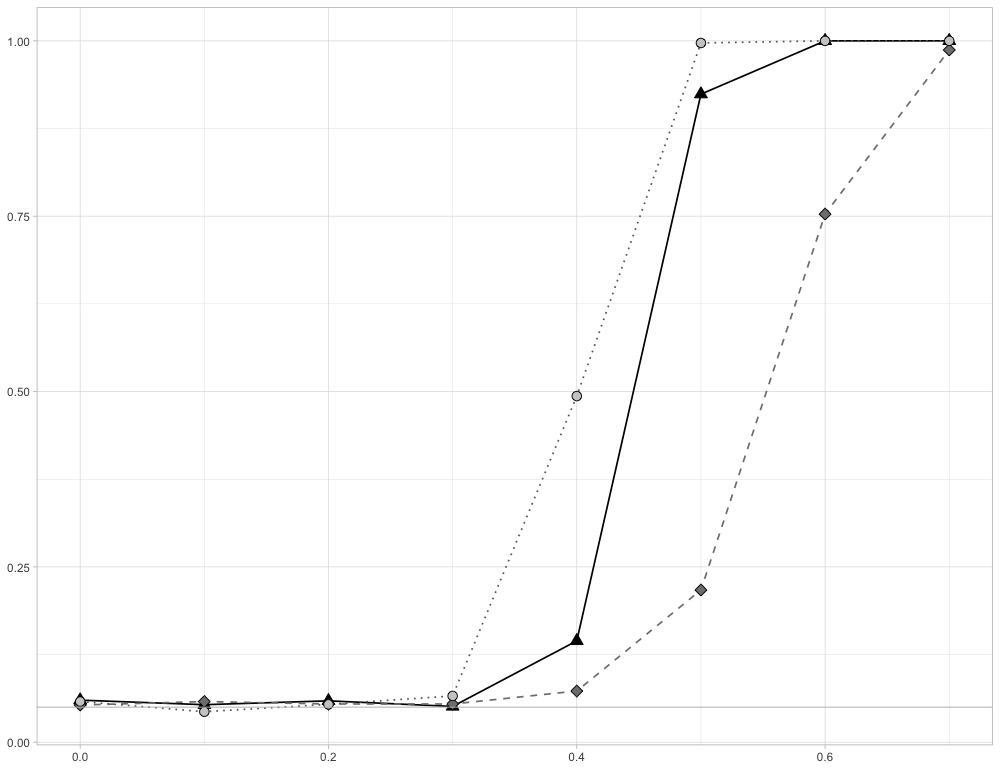}

         \caption{  \it   Simulated rejection 
         probabilities of the test   \eqref{test1} under the null hypothesis ($\varepsilon=0$) and  the different alternatives  in  \eqref{alternative2} (left) and  \eqref{alternative21} (right)  for $\varepsilon >0$. The circle indicates $n=200, p=300$, the triangle $n=200, p=120$ and the square $n=150, p=300$.   }
    \label{fig2}
    \end{figure}
	In Figure \ref{fig1} and Figure \ref{fig2}, we display the the empirical rejection of the test \eqref{test1} for  the  different alternatives and different values of $n$ and $p$, where 
	the change point is given by $t^\star = 0.6$. 
	For the  parameter  $t_0$, we always use  $t_0 = 0.2, $
	and all results are based on $2,000$  simulation runs. The vertical grey line in each figure defines the nominal level $\alpha = 5 \%$. 
	
	Note that the choices $ \delta=0$ and $\varepsilon =0$ correspond to the null hypothesis in 
	\eqref{alternative}, \eqref{alternative11}, \eqref{alternative2} and \eqref{alternative21}, respectively.
	We observe a good approximation of the nominal level in all cases under consideration. Moreover, the test has power under all considered alternatives, even if the dimension $p$ is substantially larger than
	the sample size. Note that the test performs better for alternatives of the form \eqref{alternative11} compared to the alternatives  in \eqref{alternative}. This reflects the intuition that the alternative in  \eqref{alternative} is somehow closer to sphericity than the alternative \eqref{alternative11}. A similar observation can be made for the alternatives \eqref{alternative2} and  \eqref{alternative21}.

%	Note that due to the invariance of our test statistic under $H_0$, we can assume w.l.o.g. that 
%	\begin{align*}
%	    \bfSigma_1 = \ldots = \bfSigma_n = %\mathbf{I}_p.
%	\end{align*}
%	The quantile of the discretized limiting distribution is simulated from 1,000,000 realizations of 
%	$$ \sup\limits_{\substack{t= \frac{j}{n}, \\ 1 \leq j \leq n }} U_t $$
%	for the corresponding value of $n$. 
	}
	\end{example}

\section{Proof of Theorem \ref{thm}} 
\label{sec40}

	\subsection{Outline of the proof of Theorem \ref{thm}} \label{sec34}
 
	A frequently used powerful tool in random matrix theory is the Stieltjes transform. This 
 is partially explained by the formula
	\begin{align} \label{cauchy}
		\int f(x) dG(x)  &= \frac{1}{2\pi i} \int \int_\mathcal{C} \frac{f(z)}{z-x} dz dG(x)  %\\
	%	&= \frac{1}{2 \pi i} \int\limits_{\mathcal{C}} f(z) \int \frac{1}{z-x} d G(x) dz
		= - \frac{1}{2 \pi i} \int_\mathcal{C} f(z) s_G(z) dz,  % \nonumber
	\end{align}
	where $G$ is an arbitrary cumulative distribution function (c.d.f.) with  a compact support, $f$ is an arbitrary analytic function on an open set, say $O$, containing the support of $G$, $\mathcal{C}$ is a positively oriented contour in $O$ enclosing the support of $G$ and 
	$$s_G (z)= \int \frac{1}{x-z} d G(x)  $$ 
	denotes the Stieltjes transform of $G $. Note  that \eqref{cauchy} follows from 
	Cauchy’s integral  and Fubini’s theorem. Thus invoking the continuous mapping theorem, it may suffice to prove weak convergence for the sequence $(M_n)_{n\in\N}$, where
	\begin{align} \label{def_M_n}
		M_{n}(z,t) = p \lb s_{F^{\mathbf{B}_{n,t}}} (z) - s_{\tilde{F}^{y_{\ntn}, H_{n}}} (z) \rb, ~~~  {z \in \mathcal{C}}.
	\end{align} 
 Here, $s_{\tilde{F}^{y_{\ntn}, H_{n}}}$ denotes the Stieltjes transform of $\tilde{F}^{y_{\ntn}, H_{n}}$ given in \eqref{def_gen_MP} characterized through the equation
	\begin{align} \label{a50}
		s_{\tilde{F}^{y_{\ntn}, H_{n} }}(z) = \int \frac{1}{\lambda \frac{\ntn}{n} \lb 1 - y_{\ntn} - y_{\ntn} z s_{\tilde{F}^{y_{\ntn}, H_{n} }}(z) \rb - z } dH_n (\lambda),
	\end{align}
and the contour  $\mathcal{C}$ in \eqref{def_M_n} has to be constructed in such a way that it encloses the support of  $\tilde{F}^{y_{\ntn}, H_n}$ and $F^{\mathbf{B_{n,t}}}$ with probability $1$ for  all $n\in\N, t\in[t_0,1]$. 
	This idea is formalized in the proof of Theorem \ref{thm} in Section \ref{sec_proof_thm}.

	In  order to  prove the weak convergence of \eqref{def_M_n}
% we set $M_n(z,t)= 0$ if $t\in[t_0,1]$ and  
define a contour $\mathcal{C}$ as follows. Let $x_r$ be any number greater than the right endpoint of the interval \eqref{interval} and $v_0 >0 $ be arbitrary. Let $x_l$ be any negative number if the left endpoint of the interval  \eqref{interval} is zero. Otherwise, choose 
	\begin{align*}
		x_l \in \lb 0, \liminf\limits_{n\to \infty} \lambda_{\min}(\T_n) I_{(0,1)} (y_{t_0}) t_0 (1-\sqrt{y_{t_0}})^2 \rb.
	\end{align*}		
	Let
$
		\mathcal{C}_u = \{ x + i v_0 : x\in [x_l , x_r] \} ~, 
$
	\begin{align*}
		\mathcal{C}^+ = \{ x_l + i v : v \in [0,v_0] \} ~ \cup ~ \mathcal{C}_u ~\cup ~\{ x_r + i v : v \in [0,v_0] \}. 
	\end{align*}
and define   $\mathcal{C} = \mathcal{C}^+ ~\cup ~   \overline{\mathcal{C}^+}$, where $\overline{\mathcal{C}^+}$ contains all elements of $\mathcal{C}^+$ complex conjugated. Next,
% we construct subsets $\mathcal{C}_n$ of $\mathcal{C}^+$ 
% on which $M_n$ coincides with $\hat{M}_n$. 
% For this purpose 
consider a sequence $(\varepsilon_n)_{n \in \N}$  converging to zero such that for some $\alpha \in (0,1)$
	\begin{align*}
	\varepsilon_n \geq n ^{-\alpha}, 
	\end{align*}
  define
	\begin{align*}
		\mathcal{C}_l &=
		\{ x_l + iv : v \in [n\inv \varepsilon_n, v_0] \} \\
		\mathcal{C}_r & = \{ x_r + i v : v \in [ n \inv \varepsilon_n , v_0 ] \},
	\end{align*}
and consider the  set $\mathcal{C}_n = \mathcal{C}_l \cup \mathcal{C}_u \cup \mathcal{C}_r $.
We  define an approximation 
	 $\hat{M}_n$ of  the   process ${M}_n$ for $z=x + iv\in\mathcal{C}^+, t\in[t_0,1] $ by 
	\begin{align} \label{def_hat_m}
		\hat{M}_n (z,t) = 		
		\begin{cases}
			M_n(z,t) & \textnormal{ if } z \in \mathcal{C}_{n}, \\
			M_n(x_r + i n \inv \varepsilon_{n} ,t) & \textnormal{ if } x=x_r,~v\in [0,n\inv \varepsilon_{n} ], \\
			M_n(x_l + i n \inv \varepsilon_{n} ,t) & \textnormal{ if } x=x_l,~v\in [0,n\inv \varepsilon_{n} ].
		\end{cases}
	\end{align}
	%Note that for $t\in[0,t_0)$, we have $\hat{M}_n(z,t) = M_n(z,t)=0$. 
	In Lemma \ref{a36} in the Appendix, it is shown that $(\hat{M}_n)_{n\in\N}$ approximates $(M_n)_{n\in\N}$ appropriately in the sense that the corresponding linear spectral statistics
		\begin{align*}
			- \frac{1}{2 \pi i} \int_\mathcal{C} f(z) M_n(z,t) dz ~~~~ 
			\textnormal{ and } 
			- \frac{1}{2 \pi i} \int_\mathcal{C} f(z) \hat{M}_n(z,t) dz ~
		\end{align*}
 in \eqref{cauchy}
 coincide asymptotically. As a consequence the weak convergence of the process \eqref{def_M_n} follows from that of  $\hat M_n$, which is established in the following theorem.  The  proof  is given in Section \ref{sec_proof_lem}.

	\begin{theorem}[Weak convergence for the process of Stieltjes transforms]

	\label{lem}
	Under the assumptions of Theorem \ref{thm}, the sequence $(\hat{M}_{n})_{n \in \N}$ defined in \eqref{def_hat_m}
	converges weakly to a Gaussian process $(M(z,t))_{z\in\mathcal{C}^+, t\in [t_0,1]}$ in the space $\ell^\infty(\mathcal{C}^+ \times [t_0,1])$.
	
The mean  of the limiting process $M$ is given by 
	\begin{align} \label{mean}
	 \E M(z, t) = 
	 \begin{cases} 
	\frac{t y  \int \frac{\sut_t^3(z)\lambda^2}{(t \sut_t(z) \lambda + 1)^3 } dH(\lambda) }
			{\lb 1 - t y  \int \frac{\sut_t^2(z)\lambda^2}{( t \sut_t(z) \lambda + 1 )^2}  dH(\lambda) \rb^2} 
			& 
		 \textnormal{ for the real case,} \\
		0& \textnormal{ for the complex case}
			\end{cases}
	 \end{align} 
	 ($z\in\mathcal{C}^+$, $t\in[t_0,1]$). In the complex case the covariance kernel  of the limiting process $M$ 
is given by 
	\begin{align*}
		\cov (M (z_1, t_1), M (z_2, t_2))  =   \sigma_{t_1,t_2}^2(z_1,\overline{z_2})     , 
		%~t_1 \geq t_2,
	\end{align*}
	where $ \sigma_{t_1,t_2}^2(z_1,z_2) $ is defined in \eqref{def_sigma}. In  the real case, we have
	\begin{align} \label{det60} 
		\cov (M (z_1, t_1), M(z_2, t_2))  =  2 \sigma_{t_1,t_2}^2(z_1,\overline{z_2}). 
		%~t_1 \geq t_2.
	\end{align} 
	\end{theorem}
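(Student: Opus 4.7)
The plan is to establish weak convergence of $(\hat{M}_n)_{n\in\N}$ in $\ell^\infty(\mathcal{C}^+\times[t_0,1])$ by the familiar two-step scheme: convergence of the finite-dimensional distributions of $(\hat{M}_n(z_i,t_i))_{i=1,\ldots,r}$, followed by asymptotic tightness of the sequence. Following the Bai-Silverstein roadmap, I would split the centered Stieltjes process as
\[
M_n(z,t) \;=\; M_n^{(1)}(z,t) \,+\, M_n^{(2)}(z,t),
\]
where $M_n^{(1)}(z,t) = p\bigl(s_{F^{\mathbf{B}_{n,t}}}(z) - \E\, s_{F^{\mathbf{B}_{n,t}}}(z)\bigr)$ is the random fluctuation and $M_n^{(2)}(z,t) = p\bigl(\E\, s_{F^{\mathbf{B}_{n,t}}}(z) - s_{\tilde{F}^{y_{\ntn},H_n}}(z)\bigr)$ is the deterministic bias. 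The Gaussian covariance kernel is extracted from $M_n^{(1)}$ alone, while the mean formula \eqref{mean} emerges from an asymptotic analysis of $M_n^{(2)}$.

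For the finite-dimensional distributions of the fluctuation part, I would use a martingale decomposition with respect to the filtration $\mathcal{F}_k=\sigma(\mathbf{x}_1,\ldots,\mathbf{x}_k)$. Writing $\E_k = \E[\,\cdot\mid\mathcal{F}_k]$ and expanding $(\mathbf{B}_{n,t}-z\mathbf{I})^{-1}$ via the Sherman-Morrison identity yields
\[
M_n^{(1)}(z,t) \;=\; \sum_{k=1}^{\ntn} (\E_k-\E_{k-1})\,\tr\bigl(\mathbf{B}_{n,t}-z\mathbf{I}\bigr)^{-1},
\]
a sum of martingale differences. Applying the martingale CLT to arbitrary linear combinations $\sum_i \alpha_i M_n^{(1)}(z_i,t_i)$ gives joint asymptotic normality, and the limiting covariance arises from summing the conditional second moments only up to index $\lfloor n(t_1\wedge t_2)\rfloor$. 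This truncation by the minimum of the two time parameters is the structural novelty relative to the non-sequential case and is precisely what makes $\sigma_{t_1,t_2}^2$ depend on $t_1\wedge t_2$ through the pair $\tilde{\underline{s}}_{t_1}, \tilde{\underline{s}}_{t_2}$. The Gaussian-type fourth-moment hypothesis in parts (1) and (2) of Theorem \ref{thm} governs the real-versus-complex factor of $2$ in \eqref{det60} in a standard way. The bias $M_n^{(2)}$ is then treated by perturbing the fixed-point equation \eqref{repl_a47}: substituting $\E\underline{s}_n$ for $\tilde{\underline{s}}_t$ and collecting leading-order fluctuations in a Newton-type expansion produces the explicit formula in \eqref{mean} in the real case and its vanishing in the complex case.

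The truly delicate step is tightness of $(\hat{M}_n)_n$ as a process on $\mathcal{C}^+\times[t_0,1]$. Tightness in $z$ for fixed $t$ is comparatively routine: $\hat{M}_n(\cdot,t)$ is analytic on the modified contour $\mathcal{C}_n$, and uniform resolvent bounds together with a Cauchy-integral estimate transfer pointwise second-moment control into equicontinuity in $z$. Tightness in $t$ is substantially harder, because for $t_1<t_2$ the increment $\hat{M}_n(z,t_2)-\hat{M}_n(z,t_1)$ incorporates $\ntt-\nt$ additional rank-one updates of the resolvent, and controlling it in $L^{2q}$ requires a Kolmogorov-Chentsov bound of the form
\[
\E\bigl|\hat{M}_n(z,t_2)-\hat{M}_n(z,t_1)\bigr|^{2q} \;\leq\; C\,|t_2-t_1|^{1+\eta}
\]
uniformly in $z\in\mathcal{C}_n$, for some $q\geq 2$ and $\eta>0$. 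This is the main obstacle. Its proof rests on sharp moment inequalities for quadratic forms $\mathbf{x}_j^\star\mathbf{A}\mathbf{x}_j - \tr\mathbf{A}$ that appear in the Sherman-Morrison expansion; applying a Burkholder-type inequality to the associated martingale decomposition produces high powers of these quadratic forms, and it is exactly to secure sufficient decay of their moments that the $12$th-moment hypothesis in assumption (a) of Theorem \ref{thm} is invoked. Once the increment estimate is established, tightness in $t$ and, combined with the $z$-tightness, joint tightness on $\mathcal{C}^+\times[t_0,1]$ follow from the standard criterion on $\ell^\infty$, completing the proof.
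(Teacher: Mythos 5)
Your outline follows essentially the same route as the paper: the decomposition $M_n=M_n^1+M_n^2$ into fluctuation and bias, a martingale CLT for linear combinations of $M_n^1$ with the covariance produced by summing conditional second moments up to $\lfloor n(t_1\wedge t_2)\rfloor$, a perturbation of the fixed-point equation \eqref{repl_a47} for the bias, and tightness via high-order moment bounds on increments for which the $12$th-moment assumption is needed.

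One caveat on the tightness step: the uniform Kolmogorov--Chentsov bound you propose, $\E|\hat M_n(z,t_2)-\hat M_n(z,t_1)|^{2q}\le C|t_2-t_1|^{1+\eta}$, cannot hold as stated, because $\hat M_n(z,\cdot)$ is a step function in $t$ jumping at multiples of $1/n$; for $t_1<k/n\le t_2$ with $t_2-t_1$ arbitrarily small the increment is a full martingale-difference summand whose moments do not decay with $|t_2-t_1|$. The paper circumvents this in the standard way for c\`adl\`ag step processes: it bounds the \emph{minimum} of the two adjacent increments $m^2(z,t,t',t'')$ (the Bickel--Wichura/Billingsley-type criterion, here Corollary A.4 of Dette and Tomecki), splits into the cases $t''-t'\ge 1/n$ and $t''-t'<1/n$ (in the latter one of the two increments vanishes identically), and uses the two separate bounds $\E|\hat Z_n^1|^4\lesssim((\ntt-\nt)/n)^4$ and $\E|\hat Z_n^2|^{4+\delta}\lesssim((\ntt-\nt)/n)^{2+\delta/2}$ for the two pieces of the increment. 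With that standard replacement your argument goes through and coincides with the paper's proof.
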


\subsection{Proof of Theorem \ref{thm} using Theorem \ref{lem}} \label{sec_proof_thm}
	
%	\begin{proof}[Proof of Theorem \ref{thm}]
From \eqref{cauchy} we obtain
%	We use Cauchy's integral formula to obtain
%	\begin{align} \label{int1}
%		\int f(x) dG(x) = \frac{1}{2\pi i} \int \int_\mathcal{C} \frac{f(z)}{z-x} dz dG(x) = \frac{1}{2 \pi i} \int\limits_{\mathcal{C}} f(z) \int \frac{1}{z-x} d G(x) dz
%		= - \frac{1}{2 \pi i} \int_\mathcal{C} f(z) s_G(z) dz
%	\end{align}
%	and consequently,
	\begin{align} \label{int2} - \frac{1}{2 \pi i} \int\limits_{\mathcal{C}} f(z) \E s_G (z) dz = - \frac{1}{2 \pi i} \E \int\limits_{\mathcal{C}} f(z) s_G(z) dz =  \E \int f(x) d G(x).
	\end{align}
%	where G is an arbitrary c.d.f. obtaining a compact support, $f$ is an arbitrary analytic function on an open set, say $O$, containing the support of $G$ and $s_G$ denotes the Stieltjes transform of $G$. The complex integral on the right hand side is over any positively oriented contour $\mathcal{C}$ in $O$ enclosing the support of $G$. \\
	We choose $v_0, x_r, x_l$ so that $f_1$ and $f_2$ given in Theorem \ref{thm} are analytic on and inside the resulting contour $\mathcal{C}$
	and define 
		$$\mathbf{S}_{n,t} = \frac{1}{n} \mathbf{X}_{n,t} \mathbf{X}_{n,t}^\star. $$
	The almost sure convergence 
		\begin{align*}
		\lim\limits_{n\to\infty} \lambda_{\min} (\mathbf{S}_{n,t}) 
		& 
		%= \lim\limits_{n\to\infty} \left\{ \frac{\ntn}{n}\lambda_{\min} \lb \frac{1}{\ntn}\mathbf{X}_{n,t} \mathbf{X}_{n,t}^\star\rb \right\}
		= t ( 1 - \sqrt{y_{t}} )^2 I_{(0,1)} (y_t)
		= ( \sqrt{t} - \sqrt{y} )^2 I_{(0,1)} (y_t), \\
		~\lim\limits_{n\to\infty} \lambda_{\max} (\mathbf{S}_{n,t})
		 &
		 %=  \lim\limits_{n\to\infty} \left\{ \frac{\ntn}{n}\lambda_{\max} \lb \frac{1}{\ntn}\mathbf{X}_{n,t} \mathbf{X}_{n,t}^\star \rb \right\}
		= t ( 1 + \sqrt{y_{t}} )^2 = ( \sqrt{t} + \sqrt{y} )^2
	\end{align*}
	of the extreme eigenvalues 	 \cite[see, e.g., Theorem 1.1 in][] {baizhou2008} 
	and the inequalities
	\begin{align*}
		\lambda_{\max}(\mathbf{AB}) \leq \lambda_{\max}(\mathbf{A}) \lambda_{\max}(\mathbf{B}), ~ \lambda_{\min}(\mathbf{AB}) \geq \lambda_{\min}(\mathbf{A}) \lambda_{\min}(\mathbf{B})
	\end{align*}
	(valid for quadratic Hermitian nonnegative definite matrices $\mathbf{A}$ and $\mathbf{B}$) imply 
	\begin{align*}
		\limsup\limits_{n\to \infty} \lambda_{\max}(\mathbf{B}_{n,t})
		& \leq \limsup\limits_{n\to\infty} \lambda_{\max}(\mathbf{T}_n) \cdot  \limsup\limits_{n\to\infty} \lambda_{\max}(\mathbf{S}_{n,t}) \
		= \limsup\limits_{n\to\infty} \lambda_{\max}(\T_n) t \lb 1 + \sqrt{y_t} \rb^2 \\
		& \leq \limsup\limits_{n\to\infty} \lambda_{\max}(\T_n) \lb 1 + \sqrt{y_{t_0}} \rb^2 
		< x_r
		\end{align*}
		for each $t\in [t_0,1]$	 with probability 
		$1$.
	Similar calculations for $x_l$ show that it holds for all $t\in [t_0, 1] $  with probability 1	
		\begin{align} \label{a33}
		\liminf\limits_{n\to\infty} \min \big (  x_r - \lambda_{\max}(\mathbf{B}_{n,t}) , \lambda_{\min}(\mathbf{B}_{n,t}) - x_l \big ) > 0,
	\end{align} 
	which implies that for sufficiently large $n$  the contour  $\mathcal{C}$ encloses the support of $F^{\mathbf{B}_{n,t}}$, $t\in [t_0, 1] $, with probability $1$ for (note that the null set depends on $n$ and $t$).   For every $n$, there exist only finitely many $t_1, t_2 \in [t_0,1]$ such that $\nt \neq \ntt$. Since the countable union of null sets is again a null set, we may choose the above nullset in such a way that $\mathcal{C}$ encloses the support of $F^{\mathbf{B}_{n,t}}$ for sufficiently large $n$ with probability 1 (this  null set independent of $n$ and $t\in [t_0,1]$).  
	%zu dieser Überlegung habe ich Notizen
	From Lemma \ref{a37} in the Appendix, it follows that the support of $\tilde{F}^{y_{\ntn},H_{n}}$, $t \in [t_0, 1]$, is contained in the interval 
	\begin{align*}
		\Big [  \frac{\lfloor nt_0 \rfloor}{n} \lambda_{\min}(\T_n) I_{(0,1)} (y_{\lfloor n t_0 \rfloor})  ( 1 - \sqrt{y_{\lfloor n t_0 \rfloor }} )^2,  \lambda_{\max}(\T_n) (1+\sqrt{y_{\lfloor n t_0 \rfloor}})^2 \Big ],
	\end{align*} 
 which is enclosed by the contour  $\mathcal{C}$ for sufficiently large $n$.
	Therefore, using \eqref{cauchy} and \eqref{int2}, we have almost surely
	$$\Big (  \Big ( - \frac{1}{2 \pi i} \int_\mathcal{C} f_i (z) M_{n} (z,t) dz  \Big )_{i=1,2}  \Big)_{t \in [t_0, 1]} =  \big ( ( X_n(f_i,t)
	)_{i=1,2} \big )_{t \in [t_0, 1]}$$
	for sufficiently large $n$. 
	%for equality in distribution : since the finite-dimensional distributions of both processes follow the same law. \\
	%and both processes are D[t_0,1]-valued.
	Moreover, we have with probability 1 (see Lemma \ref{a36} in the Appendix)
	\begin{align*}
		\Big| \int\limits_{\mathcal{C}} f_i(z) (M_n(z,t) - \hat{M}_n(z,t) ) dz \Big| = o(1), ~i=1,2,
	\end{align*}
	uniformly with respect to $t\in[t_0,1]$. 
	Let $C(\mathcal{C} \times [t_0,1] )$ and $C([t_0,1])$ denote the spaces of continuous functions defined on $\mathcal{C} \times [t_0,1]$ and $[t_0,1]$, respectively, then the mapping
	\begin{align*}
		C ( \mathcal{C} \times [t_0, 1])  \to \lb C([t_0,1])\rb^2, ~  
		h \mapsto  
		\lb  I_{f_1}(h) , I_{f_2}(h) \rb 
	\end{align*}
	is  continuous, where
	\begin{align*}
	I_{f_i}(h)(\cdot) = - \frac{1}{2 \pi i} \int_\mathcal{C} f(z) h(z, \cdot) dz \in C ([t_0,1]), i=1,2.
	\end{align*}
	By Corollary \ref{m1_con} stated in Section \ref{sec_proof_tight} below and \eqref{mean}, the limiting process  $M$ in Theorem \ref{lem} satisfies  $M \in C ( \mathcal{C}^+ \times [t_0,1])$.   
	Invoking the continuous mapping theorem  \citep[see Theorem 1.3.6 in][]{vandervaart1996} and noting that $ \overline{M_n(z,t)} = M_n(\overline{z},t)$, we have 
%	that the weak limit of 
	\begin{align*}
	\big(  I_{f_1}(\hat{M}_n), I_{f_2}(\hat{M}_n)\big ) 
	&\rightsquigarrow
	%	\lb \lb - \frac{1}{2 \pi i} \int_\mathcal{C} f(z) \hat{M}_{n} (z, t) dz \rb_{t \in [t_0, 1]},
%		\lb - \frac{1}{2 \pi i} \int_\mathcal{C} g(z) \hat{M}_{n} (z, t) dz \rb_{t \in [t_0, 1]}
%		\rb 
%	\end{align*}
%	is given by 
%	\begin{align*}
	\lb  I_{f_1}(M), I_{f_2}(M)\rb 
		=
	\Big (\Big (   - \frac{1}{2 \pi i} \int_\mathcal{C} f_i(z) M(z, t) dz \Big )_{i=1,2} ~
		 \Big )_{t \in [t_0, 1]}  .
	\end{align*}
	The fact that this random variable is a Gaussian process follows from the observation  that the  Riemann sums corresponding to these integrals are multivariate Gaussian and therefore   integral must  be Gaussian as well. The limiting expression for the mean and the covariance follow immediately from Theorem \ref{lem}. For example, we have for the real case  observing \eqref{det60}
	\begin{align*}
	 & \cov \Big(
	  - \frac{1}{2 \pi i} \int_\mathcal{C} f_1(z) M(z, t_1) dz,
	    - \frac{1}{2 \pi i} \int_\mathcal{C} f_2(z) M(z, t_2) dz
	\Big ) \\
	= &  \frac{1}{4 \pi^2 } \int_{\mathcal{C}_1} \int_{\mathcal{C}_2} f_1(z_1) \overline{f_2(z_2)} \cov \lb 
	      M(z_1, t_1) ,
	      M(z_2, t_2) 
	\rb \overline{dz_2} dz_1 \\
	= &  \frac{1}{2 \pi^2 } \int_{\mathcal{C}_1} \int_{\mathcal{C}_2} f_1(z_1) \overline{f_2(z_2)} 
	 \sigma_{ t_1, t_2}^2 (z_1,\overline{z_2})	
	\overline{dz_2} dz_1. \\
	\end{align*}
	
%	\end{proof}	
	
\subsection{Proof of Theorem \ref{lem}} \label{sec_proof_lem}
   We begin with the usual ``truncation'' and 
    replace the entries of the matrix  $\mathbf{X}_n =(x_{ij})_{i=1,...,p, j=1,...,n } $  by truncated variables [see Section 9.7.1, \cite{bai2004}]. More precisely, without loss of generality  we assume that 
	\begin{align*}
	|x_{ij}| < \eta_n \sqrt{n}, ~ \E [ x_{ij} ] = 0, ~ \E |x_{ij}|^2 = 1, ~ \E |x_{ij} |^4 < \infty.
	\end{align*}
	Additionally, for the real case (part (1) of Theorem \ref{thm}) we may assume that
	\begin{align*}
		\E |x_{ij} |^4 = 3 + o(1)
	\end{align*}
	uniformly in $i\in\{1, \ldots, p\}, j\in\{1, \ldots, n\}$, and for the complex case (part (2) of Theorem \ref{thm})
	\begin{align*}
		\E x_{ij}^2 = o \Big ( \frac{1}{n} \Big ), ~ \E |x_{ij} |^4 = 2 + o(1)
	\end{align*}
	uniformly in $i\in\{1, \ldots, p\}, j\in\{1, \ldots, n\}$.
	Here, $(\eta_n)_{n\in\N}$ denotes a sequence converging to zero with the property
	\begin{align*}
	\eta_n n^{{1}/{5}} \to \infty.
	\end{align*}

   We  now give a brief outline for the proof of Theorem \ref{lem}
   describing the important steps, which are carried out in the following sections and the online appendix. 
   	We consider the stochastic processes  $(M_n)_{n\in\N}$ and $(\hat{M}_n)_{n\in\N}$ (which is defined in \eqref{def_hat_m}) as sequences in the space $\ell^\infty(\mathcal{C}^+ \times [t_0,1])$
and     use the decomposition 
 	\begin{equation} \label{det61}
 	    M_n = M_n^1 + M_n^2 ~,
 	\end{equation}  
 	where the   random part $M_n^1$ and the  deterministic part $M_n^2$ are given by 
\begin{align} 
\label{def_m_n1} 
		M_{n}^1(z,t) =&  p \lb s_{F^{\mathbf{B}_{n,t}}} (z) - \E \left[ s_{F^{\mathbf{B}_{n,t}}} (z) \right] \rb, \\
		M_n^2(z,t) = &  p \lb\E \left[  s_{ F^{\mathbf{B}_{n,t}}  } (z)\right]-  s_{\tilde{F}^{y_{\ntn}, H_n}} (z) \rb, ~
		\label{def_m_n2}
 	\end{align}
 the Stieltjes transform $s_{\tilde{F}^{y_{\ntn}, H_n}}$ is defined in \eqref{a50} and $s_{F^{\mathbf{B}_{n,t}}}$ denotes the Stieltjes transform of the empirical spectral distribution $F^{\mathbf{B}_{n,t}}.$ 
	\\ 
Our first result  provides the convergence of the finite-dimensional distributions of $({M}_n^1)_{n\in\N}$. Its proof relies on a central limit theorem for martingale difference schemes and  is carried out in Section \ref{sec_proof_fidis}.
 		\begin{theorem} \label{thm_fidis}
	Under the assumption (1) for the real case or assumption (2) for the complex case from Theorem \ref{thm}, it holds
	for all $k\in\N,t_1, t_2 \in [0,1]$, $ z_1,...,z_k \in\mathbb{C}$, $\im (z_i) \neq 0$
	\begin{align}
		& ( M_n^1(z_1, t_1), M_{n}^1(z_1, t_2),..., M_{n}^1(z_k, t_1), M_{n}^1(z_k, t_2) )^\top \nonumber \\
		& \cond ( M^1(z_1, t_1), M^1(z_1, t_2),..., M^1(z_k, t_1), M^1(z_k, t_2) )^\top~, 
		\label{conv_fidis}
 	\end{align}
 	where $M^1(z,t)= M(z,t) - \mathbb{E[}M(z,t)] $ is the centered version of the Gaussian process defined in Theorem  \ref{lem}.
	\end{theorem}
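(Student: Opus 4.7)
The proof proceeds via a martingale difference decomposition combined with a CLT for martingale arrays, following the general strategy of \cite{baisilverstein2004} but adapted to the sequential nature of the process. Let $\mathcal{F}_j = \sigma(\mathbf{x}_1, \ldots, \mathbf{x}_j)$ be the natural filtration, write $\mathbb{E}_j[\cdot] = \mathbb{E}[\cdot \mid \mathcal{F}_j]$, and set $\mathbb{E}_0 = \mathbb{E}$. Since $\mathbf{B}_{n,t}$ depends only on $\mathbf{x}_1, \ldots, \mathbf{x}_{\lfloor nt \rfloor}$, the summands $(\mathbb{E}_j - \mathbb{E}_{j-1})[\tr(\mathbf{B}_{n,t} - z\mathbf{I})^{-1}]$ vanish for $j > \lfloor nt \rfloor$, yielding
\begin{align*}
M_n^1(z,t) = \sum_{j=1}^{\lfloor nt \rfloor}(\mathbb{E}_j - \mathbb{E}_{j-1})\bigl[\tr(\mathbf{B}_{n,t} - z\mathbf{I})^{-1}\bigr] =: \sum_{j=1}^{\lfloor nt \rfloor} Y_j(z,t),
\end{align*}
so for each fixed $(z,t)$ the sequence $\{Y_j(z,t)\}_j$ is a martingale difference array. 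Applying the Sherman--Morrison identity to extract the rank-one contribution of $\mathbf{x}_j$ expresses each $Y_j(z,t)$ as a functional of the leave-one-out resolvent $(\mathbf{B}_{n,t,(j)} - z\mathbf{I})^{-1}$, where $\mathbf{B}_{n,t,(j)} = \mathbf{B}_{n,t} - \tfrac{1}{n}\mathbf{T}_n^{1/2} \mathbf{x}_j \mathbf{x}_j^\star \mathbf{T}_n^{1/2}$, together with the quadratic form $\mathbf{x}_j^\star \mathbf{T}_n^{1/2}(\mathbf{B}_{n,t,(j)} - z\mathbf{I})^{-1}\mathbf{T}_n^{1/2}\mathbf{x}_j$.

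By the Cram\'er--Wold device, \eqref{conv_fidis} reduces to showing that for any complex coefficients $\{\alpha_{i,\ell}\}_{i \leq k,\, \ell\in\{1,2\}}$ the linear combination
\begin{align*}
\sum_{i=1}^k \sum_{\ell=1}^2 \alpha_{i,\ell} M_n^1(z_i,t_\ell) = \sum_{j=1}^n Z_{n,j}, \qquad Z_{n,j} = \sum_{i,\ell} \alpha_{i,\ell} Y_j(z_i,t_\ell) \mathbf{1}\{j \leq \lfloor nt_\ell \rfloor\},
\end{align*}
is asymptotically centered Gaussian. This is established via a standard martingale CLT, which requires (i) a Lindeberg-type condition $\sum_j \mathbb{E}[|Z_{n,j}|^2 \mathbf{1}\{|Z_{n,j}| > \varepsilon\}] \to 0$, and (ii) convergence of the conditional variance $\sum_j \mathbb{E}_{j-1}[Z_{n,j}^2]$ to a deterministic limit matching the claimed covariance in Theorem \ref{lem}. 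Condition (i) follows from Burkholder's inequality for martingales combined with the Bai--Silverstein concentration bounds for $\mathbf{x}_j^\star \mathbf{A}\mathbf{x}_j - \tr \mathbf{A}$ and uses only the 4th moment; the 12th-moment assumption plays no role at this stage and is reserved for tightness.

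The technical heart of the argument is (ii). It suffices to treat pairs and, by symmetry, to compute for $t_1 \leq t_2$ the limit of
\begin{align*}
\sum_{j=1}^{\lfloor nt_1\rfloor} \mathbb{E}_{j-1}\bigl[Y_j(z_1,t_1)\, Y_j(z_2,t_2)\bigr],
\end{align*}
the truncation at $\lfloor n(t_1 \wedge t_2)\rfloor$ arising automatically and being the source of the minimum structure that will appear in the kernel $\sigma^2_{t_1,t_2}$. After the Sherman--Morrison substitution, each summand is, up to negligible error, a polynomial in two conditional quadratic forms in $\mathbf{x}_j$; the expectation of such a product reduces via the independence structure and the fourth-moment assumption ($\mathbb{E} x_{ij}^4 = 3$ in the real case, $\mathbb{E} x_{ij}^2 = 0, \mathbb{E}|x_{ij}|^4 = 2$ in the complex case) to traces of products of the two leave-one-out resolvents. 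Using the nesting $\mathbf{B}_{n,t_2,(j)} = \mathbf{B}_{n,t_1,(j)} + \tfrac{1}{n}\sum_{i=\lfloor nt_1\rfloor+1}^{\lfloor nt_2\rfloor} \mathbf{T}_n^{1/2}\mathbf{x}_i\mathbf{x}_i^\star\mathbf{T}_n^{1/2}$, the almost-sure convergence of the empirical spectral distribution of $\mathbf{B}_{n,t}$, and the fundamental equation \eqref{repl_a47} for $\tilde{\underline{s}}_t$, these traces converge to explicit expressions in $\tilde{\underline{s}}_{t_1}(z_1)$ and $\tilde{\underline{s}}_{t_2}(z_2)$. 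A Riemann-sum approximation over $j/n \in (0, t_1]$ then produces the integral representation of $\sigma^2_{t_1,t_2}$.

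\textbf{Main obstacle.}
The principal difficulty, absent in the classical non-sequential CLT, is the simultaneous handling of two distinct time indices $t_1 \neq t_2$ at the level of the conditional covariance: one must keep track of the increment $\mathbf{B}_{n,t_2,(j)} - \mathbf{B}_{n,t_1,(j)}$, which contains $\lfloor nt_2\rfloor - \lfloor nt_1\rfloor$ independent rank-one terms indexed by the variables $\mathbf{x}_i$, $i > \lfloor nt_1\rfloor$, that are independent of $\mathcal{F}_{j-1}$ for $j \leq \lfloor nt_1\rfloor$. Showing that this increment contributes only through its deterministic large-$n$ limit, with fluctuation effects on the conditional variance dying out, is the delicate step and requires uniform control (in $z_1, z_2$ on the contour and in $t_1, t_2 \in [t_0,1]$) of traces of products of two resolvents at different spectral parameters. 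This is where the analysis departs substantively from the one-parameter treatment in \cite{baisilverstein2004} and produces the non-standard (as opposed to Brownian-motion-like) covariance structure of the limiting process.
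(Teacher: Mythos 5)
Your proposal follows essentially the same route as the paper: martingale decomposition along the filtration $\sigma(\mathbf{r}_1,\ldots,\mathbf{r}_j)$ with the summands vanishing for $j>\lfloor nt\rfloor$, Cramér--Wold, a martingale CLT (the paper invokes Lemma 5.6 of Najim and Yao) with the Lindeberg condition handled by fourth-moment bounds on quadratic forms, and the conditional covariance $\sum_{j\le \lfloor n(t_1\wedge t_2)\rfloor}\E_{j-1}[Y_{j,t_1}(z_1)Y_{j,t_2}(z_2)]$ computed via a Riemann-sum limit in $j/n$ yielding $\sigma^2_{t_1,t_2}$. The only technical device you do not spell out is that the paper first computes the undifferentiated quantity $V_n^{(2)}$ and obtains $\sigma^2_{t_1,t_2}$ by applying $\partial^2/\partial z_1\partial z_2$ afterwards, but this is a detail within the same strategy.
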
	
\smallskip

Next, we define the process 
$\hat{M}_n^1$ in the same way  as $\hat{M}_n$  in \eqref{def_hat_m}
replacing $M_n$ by $M_n^1$ 
and show the following  tightness 
result. 	The main argument in in its  proof  consists in establishing delicate 
 	moment inequalities for the increments of the process $(\hat{M}_n^1)_{n\in\N}$, see Lemma \ref{hat}
 	and its proof in Appendix \ref{sec_proof_lemma_hat}.
%of $(\hat{M}_n^1)_{n\in\N}$, we refer the reader to the following theorem. 
 		\begin{theorem} \label{asympt_tight}
		Under the assumptions of Theorem \ref{thm}, the sequence $(\hat{M}_n^1)_{n\in\N}$ is asymptotically tight in the space $ \ell^\infty (\mathcal{C}^+\times [t_0,1])$.
		\end{theorem}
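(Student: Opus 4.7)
The plan is to derive asymptotic tightness of $(\hat M_n^1)_{n\in\N}$ in $\ell^\infty(\mathcal{C}^+\times[t_0,1])$ via the standard characterization of tightness in $\ell^\infty$: marginal (finite-dimensional) tightness at each $(z,t)$ is already implied by Theorem \ref{thm_fidis}, so the remaining task is asymptotic equicontinuity with respect to a totally-bounded pseudo-metric on $\mathcal{C}^+\times[t_0,1]$. I would use the Euclidean pseudo-metric $d((z_1,t_1),(z_2,t_2))=|z_1-z_2|+|t_1-t_2|$ and, as is standard in random matrix theory, deduce asymptotic equicontinuity from a moment inequality on increments of the form
\begin{align} \label{pp:moment}
\E \bigl| \hat M_n^1(z_2,t_2) - \hat M_n^1(z_1,t_1) \bigr|^{2k} \leq K \bigl( |z_2 - z_1|^{2k} + |t_2 - t_1|^{k(1+\gamma)} \bigr)
\end{align}
for some $\gamma > 0$ and $k$ large enough that a Kolmogorov--Chentsov chaining covers the two-dimensional index set. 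Precisely such an inequality is the content of Lemma \ref{hat} cited in the excerpt.

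The $z$-smoothness part of \eqref{pp:moment} is the easier half: it follows from the resolvent identity $(\mathbf{B}_{n,t}-z_1)\inv-(\mathbf{B}_{n,t}-z_2)\inv = (z_1-z_2)(\mathbf{B}_{n,t}-z_1)\inv(\mathbf{B}_{n,t}-z_2)\inv$ combined with the lower bound $n\inv\varepsilon_n$ on $|\im z|$ throughout $\mathcal{C}_n$, while the two short vertical extensions in the definition \eqref{def_hat_m} of $\hat M_n^1$ are, by construction, constant in $z$ and so contribute nothing to the $z$-modulus. The $t$-smoothness part is the heart of the matter. For this I would use the martingale decomposition
\begin{align*}
M_n^1(z,t) = \sum_{j=1}^{\ntn} \gamma_j(z,t), \qquad \gamma_j(z,t) = (\E_j - \E_{j-1}) \tr \bigl( \mathbf{B}_{n,t} - z\mathbf{I} \bigr)\inv,
\end{align*}
along the natural filtration generated by $\bfx_1,\ldots,\bfx_n$; for $j > \ntn$ the summand vanishes because $\mathbf{B}_{n,t}$ does not depend on $\bfx_j$ there. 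For $t_1 < t_2$ the increment $M_n^1(z,t_2)-M_n^1(z,t_1)$ then splits naturally into two pieces: the ``new'' martingale differences indexed by $\nt < j \leq \ntt$ evaluated on $\mathbf{B}_{n,t_2}$, and the ``old'' differences $\gamma_j(z,t_2)-\gamma_j(z,t_1)$ for $1 \leq j \leq \nt$, which reflect the rank-$(\ntt - \nt)$ perturbation between $\mathbf{B}_{n,t_1}$ and $\mathbf{B}_{n,t_2}$.

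Applying Burkholder's inequality to each piece reduces the problem to moment bounds on the individual summands, which are in turn driven by quadratic forms $\bfx_j^\star \mathbf{A} \bfx_j - \tr \mathbf{A}$ with $\mathbf{A}$ a random matrix independent of $\bfx_j$; sharp control of such forms is exactly where the $12$th-moment assumption from Theorem \ref{thm} enters, as emphasized in the remark following that theorem. The principal obstacle is obtaining the correct exponent $1+\gamma$ in $|t_2-t_1|$ rather than the naive exponent $1$: the ``new'' sum has on the order of $n(t_2-t_1)$ martingale differences each of size $n\inv$, and orthogonality alone gives only $O(n\inv(t_2-t_1))$ for the second moment of that piece; the extra gain $|t_2-t_1|^\gamma$, together with the analogous control of the ``old'' perturbation term via an interlacing/rank-perturbation estimate for the resolvent, necessitates the high-order quadratic-form concentration bounds afforded by twelve finite moments. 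Once \eqref{pp:moment} is established with a large enough $k$, a standard chaining argument delivers asymptotic equicontinuity and therefore the asymptotic tightness of $(\hat M_n^1)_{n\in\N}$ in $\ell^\infty(\mathcal{C}^+\times[t_0,1])$.
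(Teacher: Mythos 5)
Your overall architecture — splitting the $t$-increment into the ``new'' martingale differences $\nt < j \leq \ntt$ and the ``old'' differences perturbed by the rank-$(\ntt-\nt)$ update, controlling both via Burkholder and high-order quadratic-form concentration (which is where the $12$th moment enters), and handling the $z$-increments by the resolvent identity — is exactly the paper's route to its Lemma \ref{hat}. The genuine gap is in the step that converts these moment bounds into equicontinuity. The inequality \eqref{pp:moment} you posit, with a pure power $|t_2-t_1|^{k(1+\gamma)}$, is \emph{not} what Lemma \ref{hat} provides and cannot hold: the map $t\mapsto \hat M_n^1(z,t)$ is a step function with jumps at the lattice points $t=j/n$, so if $t_1 < j/n \leq t_2$ with $t_2-t_1$ arbitrarily small, the increment equals one fixed nonzero martingale-difference term whose moments are of order $n^{-(2+\delta/2)}$, uniformly in how small $t_2-t_1$ is. The correct bounds are in terms of $(\ntt-\nt)/n \leq t_2-t_1 + n^{-1}$, and the additive $n^{-1}$ destroys a Kolmogorov--Chentsov chaining run down to scale zero.

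The paper circumvents this with a Bickel--Wichura-type tightness criterion (Corollary A.4 in Dette and Tomecki, 2019) based on the \emph{minimum} of two adjacent increments,
$m^2(z,t,t',t'')=\min\{|\hat M_n^1(z,t)-\hat M_n^1(z,t')|,\,|\hat M_n^1(z,t)-\hat M_n^1(z,t'')|\}$ for $t'\leq t\leq t''$: when $t''-t'\geq 1/n$ the moment bounds of Lemma \ref{hat} give a tail estimate of order $(t''-t')^{2+\delta/2}$, while when $t''-t'<1/n$ one of $\lfloor nt'\rfloor=\ntn$ or $\lfloor nt''\rfloor=\ntn$ holds, so one of the two adjacent increments vanishes identically and the minimum is zero. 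To repair your argument you must either adopt this minimum-of-adjacent-increments criterion, or stop your chaining at mesh $1/n$ and invoke the piecewise constancy of $t\mapsto\hat M_n^1(z,t)$ below that scale; as written, the ``standard chaining argument'' in your last sentence does not go through.
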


 	The third step is an investigation of 
  the deterministic part. In particular we  show that the bias   $(M_n^2)_{n\in\N}$ converges in the space $\ell^\infty(\mathcal{C}^+ \times [t_0,1])$ to the limit 
 given in \eqref{mean}. Note that the space of bounded function is equipped with the sup-norm, which demands an uniform convergence of the Stieltjes transform $\E [ s_{F^{\mathbf{B}_{n,t}}}(z) ]$ with respect to the arguments $t\in[t_0,1], z\in\mathcal{C}^+.$ The latter result is provided in Theorem \ref{thm_stieltjes} in Section \ref{sec_proof_bias}. 
 		\begin{theorem} \label{thm_bias}
	Under the assumptions of Theorem \ref{thm}, it holds 
	\begin{align*}
	   \lim\limits_{n\to\infty }
	   \sup\limits_{\substack{z\in\mathcal{C}_n, \\ t\in[t_0,1] }} \left| M_n^2(z,t) 
	   - \E [ M(z,t) ] \right| 
	   = 0.
	\end{align*}
	\end{theorem}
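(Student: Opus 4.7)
The plan is to carry out a Bai--Silverstein style second-order expansion of the bias $M_n^2(z,t)$, identify its leading-order term as $\E[M(z,t)]$, and then upgrade pointwise convergence to the required uniform convergence on $\mathcal{C}_n \times [t_0,1]$. The uniform-in-$(z,t)$ convergence of the Stieltjes transform $\E[s_{F^{\mathbf{B}_{n,t}}}(z)]$ to $s_{\tilde{F}^{y_{\ntn},H_n}}(z)$ from Theorem \ref{thm_stieltjes} serves as the essential input to control error terms.

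First I would pass to the companion matrix. Using the deterministic identity $p s_{F^{\mathbf{B}_{n,t}}}(z) - \ntn s_{F^{\underline{\mathbf{B}}_{n,t}}}(z) = (\ntn - p)/z$ together with its analogue for the limits, $M_n^2(z,t)$ reduces (up to a deterministic term) to $\ntn \bigl(\E[s_{F^{\underline{\mathbf{B}}_{n,t}}}(z)] - \underline{\tilde{s}}_{\ntn,H_n}(z)\bigr)$, where the companion Stieltjes transform satisfies the clean fixed-point equation \eqref{repl_a47}. Writing $\mathbf{r}_j = n^{-1/2}\mathbf{T}_n^{1/2}\mathbf{x}_j$ and $\mathbf{D}_j(z) = \mathbf{B}_{n,t} - z\mathbf{I}_p - \mathbf{r}_j \mathbf{r}_j^\star$, the Sherman--Morrison formula yields
\begin{align*}
-z \E[s_{F^{\underline{\mathbf{B}}_{n,t}}}(z)] \;=\; 1 - \frac{1}{n}\sum_{j=1}^{\ntn} \E\Bigl[\frac{1}{1 + \mathbf{r}_j^\star \mathbf{D}_j^{-1}(z) \mathbf{r}_j}\Bigr].
\end{align*}
Subtracting \eqref{repl_a47} from this identity and carrying out the second-order Taylor expansion in the fluctuation $\mathbf{r}_j^\star \mathbf{D}_j^{-1}(z) \mathbf{r}_j - n^{-1}\tr(\mathbf{T}_n \E\mathbf{D}_j^{-1}(z))$ isolates the leading deterministic correction. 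After relabeling $n \mapsto \ntn$ and $y \mapsto y_{\ntn}$, this computation is algebraically the same as in the non-sequential case, and the leading term matches \eqref{mean} evaluated at $(y_{\ntn}, H_n)$; convergence of $y_{\ntn} \to y_t$ uniformly in $t \in [t_0,1]$ and of $H_n$ to $H$ then identifies the limit as $\E[M(z,t)]$.

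The principal obstacle is uniform control of the remainder on $\mathcal{C}_n \times [t_0,1]$. For uniformity in $z$, the lower bound $\im z \geq n^{-1}\varepsilon_n$ on $\mathcal{C}_n$ gives resolvent norm bounds $\|\mathbf{D}_j^{-1}(z)\| \leq n/\varepsilon_n$, and the twelfth-moment condition (a) delivers moment bounds for quadratic forms $\mathbf{r}_j^\star \mathbf{A} \mathbf{r}_j - n^{-1}\tr(\mathbf{T}_n \mathbf{A})$ sharp enough that, after the $p$-scaling in the definition of $M_n^2$, all error terms are $o(1)$ with constants that depend on $z$ only through $(\im z)^{-1}$; the polynomial loss in $n$ from $(\im z)^{-1} \leq n/\varepsilon_n$ is absorbed by the higher-order moment bounds. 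For uniformity in $t$, note that $t$ enters $M_n^2(z,t)$ only through $\ntn$, so the map $t \mapsto M_n^2(z,t)$ is piecewise constant with at most $n - \lfloor nt_0 \rfloor + 1$ jumps on $[t_0,1]$; the sup over $t$ therefore reduces to a maximum over this finite grid, and the uniform-in-$(z,t)$ Stieltjes convergence from Theorem \ref{thm_stieltjes} (rerun through the expansion above with uniform bookkeeping) together with equicontinuity of $t \mapsto \E[M(z,t)]$ on $[t_0,1]$ yields the claimed uniform convergence.
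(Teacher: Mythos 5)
Your overall route --- reduce to the companion matrix via $M_n^2(z,t)=\ntn\lb\E[\sut_{n,t}(z)]-\sut_{n,t}^0(z)\rb$, apply Sherman--Morrison, expand around the fixed-point equation \eqref{repl_a47}, and feed in the uniform Stieltjes convergence of Theorem \ref{thm_stieltjes} --- is the same skeleton the paper uses. However, the two steps you compress into single sentences are exactly where the work lies, and as written they contain genuine gaps. The phrase ``the second-order Taylor expansion \ldots isolates the leading deterministic correction'' and the claim that ``this computation is algebraically the same as in the non-sequential case'' do not account for the leading term. The paper's exact identity (Lemma \ref{a73}) writes $\E[\sut_{n,t}(z)]-\sut_{n,t}^0(z)$ as a ratio whose numerator involves $\ntn R_{n,t}(z)\E[\sut_{n,t}(z)]$, a sum of terms $\E[\beta_{j,t}(z)d_{j,t}(z)]$. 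Identifying the limit of this quantity (statement \eqref{conv2}) is not a direct expansion: it requires the resolvent decomposition \eqref{d_inv}, the proof that the cross terms $A_{i,l,j}$ with $i\neq l$ are $o(n^{-1})$ (Lemma \ref{lem_cross_terms} --- which the paper notes corrects a wrong statement on p.~260 of Bai and Silverstein, so the non-sequential computation you lean on is itself flawed at this point, as is formula (9.11.1) there, which Lemma \ref{a73} corrects), and the solution of a self-consistent equation \eqref{a70} for $\ntn R_{n,t}(z)\E[\sut_{n,t}(z)]$. You also never address the denominator: the identity has the form of a remainder divided by $1-D_n(z,t)$, and one must show $|D_n(z,t)|$ stays uniformly bounded away from $1$ on $\mathcal{C}_n\times[t_0,1]$; this is the content of Theorem \ref{sn0_conv} together with Lemmas \ref{a72} and \ref{aux_im_z}, and the argument does not close without it.

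Your uniformity-in-$t$ argument is also circular as stated. Reducing the supremum over $t$ to a maximum over the jump set of $t\mapsto\ntn$ gains nothing, because that grid has on the order of $n(1-t_0)$ points: a pointwise $o(1)$ statement for each fixed $t$ does not yield a uniform statement over an $n$-dependent grid. What is actually needed is an error bound that is uniform in $(z,t)$ from the outset; the paper obtains this by bounding all resolvents through Lemma \ref{mom_Dinv} (a constant bound off an event of probability $o(n^{-m})$ for every $m$, rather than by ``absorbing'' the factor $n/\varepsilon_n$ with high moments) and by proving Theorem \ref{thm_stieltjes} through a compactness argument in the Skorokhod space $D[0,1]^2$ combined with a uniqueness lemma. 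A minor but telling point: the twelfth-moment condition is not what powers the bias analysis --- the paper uses it exclusively for tightness, and a fourth moment suffices here --- so invoking it as the engine of the uniform-in-$z$ control misidentifies the mechanism.
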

	The proofs of Theorem \ref{thm_fidis}, \ref{asympt_tight} and \ref{thm_bias} are postponed to Section \ref{sec_proof_fidis}, \ref{sec_proof_tight} and \ref{sec_proof_bias}, respectively. 
    Using these results, we are now in the position to prove Theorem \ref{lem}.
    \subsubsection{Proof of Theorem \ref{lem}}
        %and see that the weak convergence of a sequence in the space of bounded functions can be characterized by the convergence of its finite-dimensional distribution towards the finite-dimensional distribution of the limiting process $(M^1(z,t))_{z\in\mathcal{C}^+, t\in[t_0,1]}$ and the asymptotic tightness of the sequence $(\hat{M}_n^1)_{n\in\N}$. \\
		Theorem \ref{thm_fidis} yields the convergence of the finite-dimensional distributions of $M_n^1(z,t)$ for $t\in[t_0,1]$ and $z\in \mathcal{C}$ with $\im(z) \neq 0$ towards the corresponding finite-dimensional distributions of the centered process $M^1(z,t)= M(z,t) - \mathbb{E}[M(z,t) ] $. By the  definition  in equation \eqref{def_hat_m}, this implies the convergence of the finite-dimensional distributions of $\hat{M}_n^1(z,t)$ for $t\in[t_0,1]$ and $z\in \mathcal{C}$ with $\im(z) \neq 0$ towards the corresponding finite-dimensional distributions of $M^1$.
		Since the limiting process $(M^1(z,t))_{z\in\mathcal{C}^+, t\in[t_0,1]}$ is continuous as proven later in this section (see Corollary \ref{m1_con} in Section \ref{sec_proof_tight})
	and $\lb \mathcal{C}^+ \setminus \{x_l,x_r\} \rb \times [t_0,1]  $ is a dense subset of $\mathcal{C}^+ \times [t_0,1]$, this is sufficient in order to ensure uniqueness of the limiting process. 
	As  Theorem \ref{asympt_tight} 
establishes tightness, Theorem \ref{lem} follows from the decomposition \eqref{det61},
 Theorem 1.5.6 in \cite{vandervaart1996}
	and Theorem \ref{thm_bias}.

	\subsubsection{Proof of Theorem \ref{thm_fidis}}	
%	\subsection{Convergence of the finite-dimensional distribution of $(\hat{M}_n^1)_{n\in\N}$}
	\label{sec_proof_fidis}
%	\begin{proof}[Proof of Theorem \ref{thm_fidis}]
	 We start by performing some preparations and by introducing notations which will remain crucial for the rest of this work. 
	Using the Cramér–Wold device, the convergence in \eqref{conv_fidis} is equivalent to the weak convergence
	\begin{align} \label{f1}
		\sum\limits_{i=1}^k \lb \alpha_{i,1} M_{n}^1(z_i, t_1) + \alpha_{i,2} (M_{n}^1(z_i, t_2) \rb \cond 
		\sum\limits_{i=1}^k \lb \alpha_{i,1} M^1(z_i, t_1) + \alpha_{i,2} (M^1(z_i, t_2) \rb
	\end{align}
	for all $\alpha_{1,1} , \ldots  , \alpha_{k,1}, \alpha_{1,2} , \ldots  , \alpha_{k,2} \in \mathbb{C}$. 
 We want to show that the limiting random variable on the right hand side of the display above 
	follows a Gaussian distribution under the assumption (1) or (2) of Theorem \ref{thm}. \\
		Recalling assumption (b) in Theorem \ref{thm}, we may assume $||\T_n|| \leq 1$, $n\in\N$. 
Define for $k,j=1,..,\ntn$, $k\neq j$, $t\in (0,1]$, $z\in\mathbb{C}$ with Im$(z) \neq 0$
	\begin{align*}
		\mathbf{r}_{j} &= \frac{1}{\sqrt{n}} \mathbf{T}_n^{\frac{1}{2}} \mathbf{x}_j \\
		\mathbf{B}_{n,t} & = \sum\limits_{j=1}^{\ntn} \rd_j \rd_j^\star ,  \\ 
		\mathbf{D}_t (z) &= \mathbf{B}_{n,t} - z\mathbf{I} , \\
		\mathbf{D}_{j,t}(z) &= \mathbf{D}_t(z) - \mathbf{r}_{j} \mathbf{r}_{j}^\star, \\
		\mathbf{D}_{k,j,t}  (z) & = \mathbf{D}_{j,t} (z) -  \mathbf{r}_{k} \mathbf{r}_{k}^\star = \mathbf{D}_{t} (z) -  \mathbf{r}_{k} \mathbf{r}_{k}^\star - \rd_j \rd_j^\star,\\
		\alpha_{j,t} (z) &= \mathbf{r}_{j}^\star \mathbf{D}_{j,t}^{-2} (z) \mathbf{r}_{j} 
		- n^{-1} \operatorname{tr} ( \mathbf{D}_{j,t}^{-2} (z) \mathbf{T}_n ),  \\
		\gamma_{j,t} (z) &= \mathbf{r}_{j}^\star \mathbf{D}_{j,t}^{-1} (z) \mathbf{r}_{j} - n^{-1} \E \operatorname{tr} (\mathbf{D}_{j,t}^{-1} (z) \T_n ), \\
		\gamma_{k,j,t} (z) & = \rd_k^\star \D_{k,j,t}\inv(z) \rd_k 
		- n\inv \E \left[ \tr \lb \T_n \D_{k,j,t}\inv(z) \rb \right]  \\
		\hat{\gamma}_{j,t} (z) &= \mathbf{r}_{j}^\star \mathbf{D}_{j,t}^{-1} (z) \mathbf{r}_{j} - n^{-1} \operatorname{tr} (\mathbf{D}_{j,t}^{-1} (z) \T_n ), \\
		\beta_{j,t} (z) &= \frac{1}{1+\mathbf{r}_{j}^\star \mathbf{D}_{j,t}^{-1} (z) \mathbf{r}_{j} }, \\
		\beta_{k,j,t} (z) &= \frac{1}{1+\mathbf{r}_{k}^\star \mathbf{D}_{k,j,t}^{-1} (z) \mathbf{r}_{k} }, \\
		\overline{\beta}_{j,t} (z) &= \frac{1}{1+ n^{-1} \operatorname{tr}(\mathbf{T}_n \mathbf{D}_{j,t} \inv (z) ) } ,\\
		b_{j,t} (z) & = \frac{1}{1+ n^{-1} \E \operatorname{tr} (\mathbf{T}_n \mathbf{D}_{j,t} \inv (z))} ,\\
		b_t (z) &= \frac{1}{1+ n^{-1} \E \operatorname{tr} (\mathbf{T}_n \mathbf{D}_{t}^{-1} (z) )}.
	\end{align*}
Note that the terms  $\beta_{j,t} (z) , \beta_{k,j,t} (z), \overline{\beta}_{j,t} (z) , b_{j,t}(z) $ and $b_t(z) $ are  bounded in absolute value by $|z|/v$, where $v= {\rm  Im } (z)$ is assumed to be positive \citep[see (6.2.5) in][]{bai2004}. 
By the Sherman–Morrison formula we obtain the representation
	\begin{align} \label{sher_mor}
		\mathbf{D}_t^{-1} (z) - \mathbf{D}_{j,t}^{-1} (z) = - \mathbf{D}_{j,t}^{-1}(z) \mathbf{r}_{j} \mathbf{r}_{j}^\star \mathbf{D}_{j,t}^{-1}(z)
		 \beta_{j,t}(z).
	\end{align} 
	% \bigskip
 	In order to prove asymptotic normality of the random variable appearing in \eqref{f1}, we show that it can be represented  as a suitable martingale difference scheme plus some negligible remainder, which allows us to apply a central limit theorem. 
 	\\
 	For $j=1 \ldots , n$
	let  $\E_{j}$ denote the conditional expectation 
	with respect to the  filtration $\mathcal{F}_{nj}=\sigma( \{ \mathbf{r}_{1},...,\mathbf{r}_{j} \} )$
	(by $\E_{0}$ we denote the common expectation). 
		Recalling the definition \eqref{def_m_n1} and
	using the martingale decomposition, we have
	\begin{align}
		M_n^1  (z, t) & %= p ( s_{F^{  \mathbf{B}_{n,t}}} (z) - \E s_{F^{  \mathbf{B}_{n,t}}} (z) ) 
		= \tr ( \mathbf{D}_t^{-1} (z) - \E \mathbf{D}_t^{-1} (z) ) \nonumber \\
		&= \sum\limits_{j=1}^{\ntn} \lb \tr \E_j \mathbf{D}_t^{-1} (z) - \tr \E_{j-1} \mathbf{D}_t^{-1}(z) \rb \nonumber \\
		&= \sum\limits_{j=1}^{\ntn} \lb \tr \E_j \left[ \mathbf{D}_t^{-1}(z) - \mathbf{D}_{t,j}^{-1}(z) \right] 
		- \tr \E_{j-1} \left[ \mathbf{D}_t^{-1}(z) - \mathbf{D}_{j,t}^{-1}(z) \right]\rb \nonumber \\
		&= - \sum\limits_{j=1}^{\ntn} (\E_{j} - \E_{j-1} ) \beta_{j,t}(z) \mathbf{r}_{j}^\star \mathbf{D}_{j,t}^{-2}(z) \mathbf{r}_{j}. \label{mn_t}
	\end{align}
	Since, from now on, the proof is similar in spirit to Section 9.9 in \cite{bai2004}, we restrict ourselves to an overview explaining  the main steps and important differences. 
	By similar arguments as given in this monograph  it can be shown, that it is sufficient to prove asymptotic normality for
	the quantity 
	\begin{equation*} % \label{yj}
%		\sum\limits_{i=1}^k \Big (  \alpha_{i,1} \sum\limits_{j=1}^{\nt} Y_{j,t_1} (z_i) + \alpha_{i,2} \sum\limits_{j=1}^{\ntt} 
		% Y_{j,t_2} (z_i) \Big ) 	= 
		\sum\limits_{j=1}^{\max(\nt,\ntt) }
		Z_{nj}^{t_1,t_2}~,
		%\sum\limits_{i=1}^k \lb \alpha_{i,1} Y_{j,t_1} (z_i) + \alpha_{i,2} Y_{j,t_2} (z_i) \rb,
	\end{equation*}
	where
		\begin{align*} %\label{hol2}
		Z_{nj}^{t_1,t_2} &= \sum\limits_{i=1}^k \lb \alpha_{i,1} Y_{j,t_1}(z_i) + \alpha_{i,2} Y_{j,t_2}(z_i) \rb ~, \\ 
		Y_{j,t} (z) & = - \E_{j} \Big  [ \overline{\beta}_{j,t}(z) \alpha_{j,t}(z) - \overline{\beta}_{j,t}^2(z)\hat{\gamma}_{j,t}(z) \frac{1}{n} 
		\tr (\mathbf{T}_n \mathbf{D}_{j,t}^{-2}(z) ) \Big ] %\nonumber \\& 
		= - \E_{j} \frac{d}{dz} \overline{\beta}_{j,t}(z) \hat{\gamma}_{j,t}(z) ~
		\nonumber 
		%\label{deriv}
	\end{align*}
if  $j \leq \ntn$ and $Y_{j,t}(z) = 0 $ if  $j > \ntn$.

	For this purpose we verify conditions (5.29) - (5.31)  of  the central limit theorem for complex-valued martingale difference schemes given in Lemma 5.6 of  \cite{najimyao2016}. 
	   It is straightforward to show that 
%	\begin{align*}
$		Z_{nj}^{t_1,t_2}$
%= \sum\limits_{i=1}^k \lb \alpha_{i,1} Y_{j,t_1}(z_i) + \alpha_{i,2} Y_{j,t_2}(z_i) \rb
%	\end{align*}
	forms
	a martingale difference scheme with respect to to
	the filtration $\mathcal{F}_{nj}=\sigma( \{ \mathbf{r}_{1},...,\mathbf{r}_{j} \} )$ and  we can prove that (5.31) in this reference holds true
	by deriving bounds for  the $4$th moment of $Y_{j,t}(z)$. 
For a proof of condition (5.30), we note that
	\begin{align*}
		\sum\limits_{j=1}^{\max (\nt, \ntt)} \E_{j-1} \left[\lb Z_{nj}^{t_1,t_2} \rb ^2\right]
		= & \sum\limits_{i,l=1}^{k} \Big( \sum\limits_{j=1}^{\nt}  \alpha_{i,1} \alpha_{l,1} \E_{j-1} [ Y_{j,t_1}(z_i) Y_{j,t_1} (z_l) ] \\
		 & + \sum\limits_{j=1}^{\min(\nt,\ntt)} \alpha_{i,1} \alpha_{l,2} \E_{j-1} [ Y_{j,t_1}(z_i) Y_{j,t_2} (z_l) ] \\
		& + \sum\limits_{j=1}^{\min(\nt,\ntt)}  \alpha_{i,2} \alpha_{l,1} \E_{j-1} [ Y_{j,t_2}(z_i) Y_{j,t_1} (z_l) ] \\
		& + \sum\limits_{j=1}^{\ntt} \alpha_{i,2} \alpha_{l,2} \E_{j-1} [ Y_{j,t_2}(z_i) Y_{j,t_2} (z_l) ] \Big) ~.
%		\conp &  \sigma^2
	\end{align*}
	As all terms have the same form,  it is sufficient to show that   for all $z_1, z_2\in\mathbb{C}$ with $\textnormal{Im}(z_1), \textnormal{Im}(z_2) \neq 0$ and $t_1,t_2\in (0,1]$
	\begin{align} \label{a1}
		V_n(z_1,z_2,t_1,t_2 ) = \sum\limits_{j=1}^{\min(\nt, \ntt)} \E_{j-1} \left[ Y_{j,t_1} (z_1) Y_{j,t_2} (z_2) \right] 
	~~	\conp ~\sigma_{t_1,t_2}^2(z_1,{z_2})
	\end{align}
for an appropriate function 
$\sigma_{t_1,t_2}^2(z_1,z_2)$ (see equation \eqref{def_sigma} below for a precise definition).  Note that this convergence implies condition (5.29), since 	
	\begin{align*}
		\sum\limits_{j=1}^{\min(\nt, \ntt)} \E_{j-1} \big[ Y_{j,t_1} (z_1) \overline{Y_{j,t_2} (z_2)} \big] 
		= \sum\limits_{j=1}^{\min(\nt, \ntt)} \E_{j-1} \left[ Y_{j,t_1} (z_1) Y_{j,t_2} (\overline{z_2})\right] \conp ~\sigma_{t_1,t_2}^2(z_1,\overline{z_2}),
	\end{align*}
where the equality  follows from the fact that 
the matrices $\T_n, \mathbf{B}_{n,t}, \rd_j\rd_j^\star$ are Hermitian and $(\overline{\D_{j,t}\inv (z)})^T  = \D_{j,t}\inv (\overline{z}). $ 
 Consequently, 	Lemma 5.6 in \cite{najimyao2016} combined with the Cramér–Wold device 
yields the weak convergence of the finite-dimensional distributions to a multivariate normal distribution 
with covariance $\sigma_{t_1,t_2}^2(z_1,\overline{z_2}) = \cov (M^1(z_1,t_1),M^1(z_2,t_2)) $.
	
	\medskip	\medskip
	
	 Hence, it is remains  to show   
	 \eqref{a1} in order to establish the convergence of the finite dimensional distributions. 
For this purpose, we introduce 	the quantity 
	 \begin{align*}
	     V_n^{(2)} (z_1, z_2, t_1, t_2) & =\frac{1}{n^2} \sum\limits_{j=1}^{\min(\nt,\ntt)} b_{j,t_1}(z_1) b_{j,t_2}(z_2) \tr \lb  \E_j\left[\D_{j,t_1}^{-1}(z_1) \right] \T_n \E_j \left[ \D_{j,t_2}^{-1} (z_2)\right] \T_n \rb, 
	 \end{align*}
	 and note that it can be shown by similar arguments as on p. $273$ in  \cite{bai2004}
	 that 
	 \begin{align} \label{beh1}
	     \frac{\partial^2}{\partial z_1 \partial z_2} V_n^{(2)} (z_1, z_2, t_1, t_2) = V_n(z_1, z_2, t_1, t_2) + o_{\PR} (1)~.
	 \end{align}
Moreover,  we can show
	 \begin{align} \label{beh2}
	     V_n^{(2)} (z_1, z_2, t_1, t_2) 
	     \conp  a(z_1,z_2, t_1, t_2) \int\limits_0^{\min(t_1, t_2)} \frac{1}{1-\lambda a(z_1,z_2, t_1, t_2)} d \lambda,~ n\to\infty,
	 \end{align}
	  where 
	 \begin{align*}
	     a(z_1, z_2, t_1, t_2) & =  \frac{  \sut_{t_2} (z_2) - \sut_{t_1}(z_1) + (z_1 - z_2) \tilde{\su}_{t_1}(z_1) \tilde{\su}_{t_2}(z_2)}{t_2 \tilde{\su}_{t_2}(z_2) - t_1 \tilde{\su}_{t_1}(z_1) }
	 \end{align*}
	 and the Stieltjes transform $\tilde{\su}_{t}(z)$ is  defined  in \eqref{def_sut}.
From \eqref{beh1} and \eqref{beh2}  it follows that  
	\begin{align} 
	  \sigma_{t_1,t_2}^2 (z_1,z_2) & = \frac{\partial^2}{\partial z_1 \partial z_2} \int\limits_0^{ \min(t_1, t_2) a(z_1,z_2, t_1, t_2)} \frac{1}{1 - \lambda} d\lambda 
	= \frac{\partial}{\partial z_2} \Big ( \frac{ \min(t_1, t_2) \frac{\partial}{\partial z_1} a(z_1,z_2, t_1, t_2)}{1 - \min(t_1, t_2) a(z_1,z_2, t_1, t_2) }\Big ) \nonumber \\
	& =  \frac{\textnormal{numerator}}
	{\textnormal{denominator}}
	\label{def_sigma},
	\end{align}
	where
	\begin{align*}
	\textnormal{numerator} = &  \min(t_1,t_2)\Big\{ - t_2 (t_2 - \min(t_1,t_2) ) \sut_{t_2}^2(z_2)\sut_{t_1}'(z_1)
	\Big[ t_2 \sut_{t_2}^2(z_2) + (t_1 - t_2) \sut_{t_2}'(z_2) \Big]
	\Big\} \\
	& - t_1^2 \sut_{t_1}^4(z_1) \Big\{ \min(t_1,t_2) \sut_{t_2}^2(z_2) + (t_1 - \min(t_1,t_2) \sut_{t_2}'(z_2))
	\Big\}
	\\
	& + 2 t_1 t_2 \sut_{t_1}^3(z_1) \sut_{t_2}(z_2) \Big\{ \min(t_1,t_2)\sut_{t_2}^2(z_2) + (t_1 - \min(t_1,t_2)) \sut_{t_2}'(z_2) \Big\}
	\\
	& + 2 t_1 t_2 (t_2 - \min(t_1,t_2)) \sut_{t_1}(z_1) \sut_{t_2}^2(z_2) \sut_{t_1}'(z_1) \Big\{ \sut_{t_2}(z_2) + (-z_1 + z_2) \sut_{t_2}'(z_2) \Big\}
	\\
	& + \sut_{t_1}^2(z_1) \Big\{
		- t_2^2 \min(t_1,t_2) \sut_{t_2}^4(z_2) 
		+ t_1 (t_1 - t_2) (t_1 - \min(t_1,t_2)) \sut_{t_1}'(z_1) \sut_{t_2}'(z_2) 
		\\ & + 2 t_1 t_2 (t_1 - \min(t_1,t_2) ) (z_1 - z_2) \sut_{t_2}(z_2) \sut_{t_1}'(z_1) \sut_{t_2}'(z_2) 
		\\ & + \sut_{t_2}^2(z_2) \Big[ 
			t_2^2 (-t_1 + \min(t_1,t_2)) \sut_{t_2}'(z_2) 
			\\ & + t_1 \sut_{t_1}'(z_1)   \Big( 
				t_1(-t_2 + \min(t_1,t_2) ) 
				+ t_2 \min(t_1,t_2) (z_1 - z_2)^2 \sut_{t_2}'(z_2) 
				\Big)
			\Big]
	\Big\}
	\\
	\textnormal{denominator} = & \lb t_1 \sut_{t_1}(z_1) - t_2 \su_{t_2}(z_2) \rb^2 \Big\{ (-t_2 + \min(t_1,t_2) ) \sut_{t_2}(z_2) 
	\\ & + \sut_{t_1}(z_1) (t_1 - \min(t_1,t_2) + \min(t_1,t_2) (z_1 -  z_2) \sut_{t_2}(z_2) ) \Big\}^2.
	\end{align*}
	The  proofs of \eqref{beh1} and \eqref{beh2} are very similar to \cite{bai2004} and omitted for the sake of brevity. 
	Note also, that for the special case $t_1 = t_2=1$, this covariance structure coincides with formula (9.8.4) in   this monograph.

%	\end{proof}

%	\subsection{Asymptotic tightness of $(\hat{M}_n^1)_{n\in\N}$ in $\ell^\infty$} 
	
		\subsubsection{Proof of Theorem \ref{asympt_tight} and continuity of the limiting process} \label{sec_proof_tight}
% 		\begin{proof}[Proof of Theorem \ref{asympt_tight}]
		%rechne (A1) von tomecki nach
		We will show that the assumptions of Corollary A.4 in \cite{tomecki} are satisfied, where we identify the curve $\mathcal{C}^+$ with the compact interval $[0,1]$. For this purpose, we define the increments for the first and second coordinate of $\hat{M}_n^1$ by
		\begin{align}
		m^1 (z, t, z', z'') = & 
		\min \{ 
		| \hat{M}_n^1(z, t) - \hat{M}_n^1(z', t) |, | \hat{M}_n^1(z, t) - \hat{M}_n^1(z'', t) |
		\}, \label{m1} \\
		m^2 (z, t, t', t'') = & 
		\min \{ 
		| \hat{M}_n^1(z, t) - \hat{M}_n^1(z, t') |, | \hat{M}_n^1(z, t) - \hat{M}_n^1(z, t'') |
		\}, \label{m2}
		\end{align}
		where $t,t',t'' \in [t_0,1]$ and $z,z',z''\in\mathcal{C}^+$. 
		In order to find estimates for the tails of \eqref{m1} and \eqref{m2}, we establish 
		in the following lemma  
		estimates on the moments of the increments of $\hat{M}_n^1(z,t)$, which are  proved in Appendix \ref{sec_proof_lemma_hat}. 
		For this purpose note that it follows from \eqref{mn_t} that 
			\begin{align*}
			\hat{M}_n^1(z,t_1) - \hat{M}_n^1(z,t_2) 
			= \hat{Z}_n^1(z,t_1,t_2) + \hat{Z}_n^2(z,t_1,t_2). 
		\end{align*}
	where $\hat{Z}_n^1$ and  $\hat{Z}_n^1$ are the processes obtained 
from
\begin{align}
\label{det7}
	Z_n^1(z,t_1,t_2) = & \sum\limits_{j=1}^{\nt} (\E_j - \E_{j - 1} ) \lb  \beta_{j,t_2}(z) \rd_j^\star \D_{j,t_2}^{-2}(z) \rd_j 
	- \beta_{j,t_1}(z) \rd_j^\star \D_{j,t_1}^{-2}(z) \rd_j \rb, \\
	Z_n^2 (z,t_1,t_2) = &  \sum\limits_{j=\nt + 1}^{\ntt} (\E_j - \E_{j - 1}) \beta_{j,t_2}(z) \rd_j^\star \D_{j,t_2}^{-2}(z) \rd_j 
\label{det8}	
	\end{align}
	using the definition \eqref{def_hat_m}.

		\begin{lemma} \label{hat}
		For $t\in[t_0,1], z_1,z_2\in\mathcal{C}^+$, it holds for sufficiently large $n\in\N$ under the assumptions of Theorem \ref{asympt_tight}
		\begin{align} \label{ineq_hat1}
			\E | \hat{M}_n^1(z_1,t) - \hat{M}_n^1(z_2,t) |^{2+\delta} \leq K |z_1 - z_2|^{2+\delta},
		\end{align}
		where $K>0$ is some universal constant independent of $n,t,z_1,z_2$. 
		We also have for $t_1,t_2\in[t_0,1], z \in\mathcal{C}^+$
		\begin{align}
			\E |\hat{Z}_n^1(z,t_1,t_2) |^4 & \leq K  \Big ( \frac{\ntt - \nt}{n} \Big )^4,  \label{ineq_hat2}\\
	 		\E |\hat{Z}_n^2(z,t_1,t_2) |^{4 + \delta} & \leq K \Big ( \frac{\ntt - \nt}{n} \Big )^{2 + \delta /2}~. \label{ineq_hat3}
		\end{align}
	
		% where $\hat{Z}_n^1$ and $\hat{Z}_n^2$ are defined through $Z_n^1$ and $Z_n^2$, respectively, in the same way as $\hat{M}_n^1$ is defined through $M_n^1$. 
		\end{lemma}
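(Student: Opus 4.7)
All three estimates rest on the same three ingredients: (i) the martingale decomposition of $\hat M_n^1$ developed in the proof of Theorem \ref{thm_fidis}, together with the Sherman-Morrison identity \eqref{sher_mor} and the resolvent identity; (ii) Burkholder's inequality for martingale differences; and (iii) sharp moment bounds on the centered quadratic forms $\hat\gamma_{j,t}(z) = \rd_j^\star \D_{j,t}^{-1}(z)\rd_j - n^{-1}\tr(\D_{j,t}^{-1}(z)\T_n)$ and their $\D^{-2}$-analogues. Under the twelfth-moment assumption, Lemma~9.1 in Bai-Silverstein's monograph gives $\E|\rd_j^\star \mathbf{A} \rd_j - n^{-1}\tr(\mathbf{A}\T_n)|^p \leq C_p n^{-p/2}\|\mathbf{A}\|^p$ for exponents $p$ up to~$6$, which is precisely the quantitative input needed. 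Throughout we work on the high-probability event identified in Section \ref{sec_proof_thm} on which the spectra of $\mathbf{B}_{n,t}$ stay at a positive distance from $\mathcal{C}$; on this event the norms $\|\D_{j,t}^{-1}(z)\|, \|\D_{k,j,t}^{-1}(z)\|$ and the scalars $\beta_{j,t}(z), \overline\beta_{j,t}(z), b_{j,t}(z)$ are uniformly bounded for $z \in \mathcal{C}$.

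\textbf{Proof of \eqref{ineq_hat1}.} The resolvent identity $\D_t^{-1}(z_1) - \D_t^{-1}(z_2) = (z_1-z_2)\D_t^{-1}(z_1)\D_t^{-1}(z_2)$ factorises the increment as
\[
\hat M_n^1(z_1,t) - \hat M_n^1(z_2,t) = (z_1-z_2)\, p\,\bigl[\tr\bigl(\D_t^{-1}(z_1)\D_t^{-1}(z_2)\bigr) - \E\tr\bigl(\D_t^{-1}(z_1)\D_t^{-1}(z_2)\bigr)\bigr],
\]
extracting the factor $|z_1-z_2|^{2+\delta}$. The remaining bracket is a centered functional that admits the same martingale decomposition \eqref{mn_t}, now involving products of two resolvents. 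Applying Burkholder at exponent $2+\delta$ and estimating each martingale difference via the quadratic-form moment bound yields a constant $K$ independent of $z_1,z_2$.

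\textbf{Proof of \eqref{ineq_hat2}.} The decisive observation is that for $j \leq \nt$, $\D_{j,t_2}(z) - \D_{j,t_1}(z) = \sum_{k=\nt+1}^{\ntt}\rd_k\rd_k^\star$ is a rank-$(\ntt-\nt)$ perturbation. Iterating Sherman-Morrison, and adding the indices $\nt+1,\ldots,\ntt$ one by one, produces a telescoping expansion of $\D_{j,t_2}^{-1}-\D_{j,t_1}^{-1}$, $\D_{j,t_2}^{-2}-\D_{j,t_1}^{-2}$ and $\beta_{j,t_2}-\beta_{j,t_1}$ as sums of $(\ntt-\nt)$ rank-one contributions. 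Substituting these expansions into $\Delta f_j := \beta_{j,t_2}\rd_j^\star\D_{j,t_2}^{-2}\rd_j - \beta_{j,t_1}\rd_j^\star\D_{j,t_1}^{-2}\rd_j$ turns the martingale summand $(\E_j - \E_{j-1})[\Delta f_j]$ in \eqref{det7} into a double sum over the outer index $j \leq \nt$ and the inner index $k \in (\nt,\ntt]$; every summand carries two off-diagonal factors of the form $\rd_j^\star(\cdots)\rd_k$, each of order $n^{-1/2}$ in an $L^p$-sense (with $p \leq 6$) by the concentration bound. Applying Burkholder at exponent~$4$ to the outer martingale, the power-mean inequality to the inner sum over $k$, and summing via the quadratic-form moment bounds gives the claimed bound $K\bigl((\ntt-\nt)/n\bigr)^4$.

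\textbf{Proof of \eqref{ineq_hat3} and main obstacle.} $\hat Z_n^2$ is a martingale sum of the $\ntt-\nt$ differences $M_j = (\E_j - \E_{j-1})[\beta_{j,t_2}(z)\rd_j^\star\D_{j,t_2}^{-2}(z)\rd_j]$, $j = \nt+1,\ldots,\ntt$. Writing $\beta_{j,t_2} = \overline\beta_{j,t_2} - \overline\beta_{j,t_2}\beta_{j,t_2}\hat\gamma_{j,t_2}$ and applying the concentration bound to $\hat\gamma_{j,t_2}$ and to $\rd_j^\star\D_{j,t_2}^{-2}\rd_j - n^{-1}\tr(\T_n\D_{j,t_2}^{-2})$, one verifies $\E|M_j|^{4+\delta} = O\bigl(n^{-(2+\delta/2)}\bigr)$ uniformly in $j$. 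Burkholder combined with the power-mean inequality then yields
\[
\E|\hat Z_n^2|^{4+\delta} \leq C\,(\ntt-\nt)^{(4+\delta)/2-1}\sum_{j=\nt+1}^{\ntt}\E|M_j|^{4+\delta} \leq C\,\Big(\frac{\ntt-\nt}{n}\Big)^{2+\delta/2}.
\]
The hardest step is \eqref{ineq_hat2}: a naive Burkholder estimate applied to the outer sum of $\nt \sim n$ martingale differences would lose a factor of $n^2$, and only the full exploitation of cancellation inside each $(\E_j-\E_{j-1})$ together with the rank-$(\ntt-\nt)$ structure of the perturbation salvages the target order $\bigl((\ntt-\nt)/n\bigr)^4$. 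This is precisely the point at which the twelfth-moment hypothesis becomes indispensable, since bounds on the $(4+\delta)$-th moments of the relevant centered quadratic forms require roughly $2(4+\delta)$ moments of $x_{ij}$.
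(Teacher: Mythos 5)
Your overall architecture (martingale decomposition, Burkholder via Lemma 2.1 of \cite{li2003}, concentration of quadratic forms, and the rank-$(\ntt-\nt)$ perturbation expansion for $Z_n^1$) is the same as the paper's, and your treatments of \eqref{ineq_hat1} and \eqref{ineq_hat3} match Steps 2 and 1 of the proof of the underlying Lemma \ref{no_hat}. However, your argument for \eqref{ineq_hat2} has a genuine quantitative gap. After Burkholder over the outer index $j$ (contributing $\nt\cdot\nt\lesssim n^2$) and the power-mean inequality over the inner index $k$ (contributing $(\ntt-\nt)^4$), reaching the target $((\ntt-\nt)/n)^4$ forces the per-term bound $\E|(\E_j-\E_{j-1})T_{j,k}|^4\lesssim n^{-6}$. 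Your accounting --- two off-diagonal factors $\rd_j^\star(\cdots)\rd_k$, each of order $n^{-1/2}$ --- only delivers $n^{-4}$ at fourth power, hence a final bound of order $(\ntt-\nt)^4 n^{-2}$, which is off by $n^2$ and useless for tightness. The missing $n^{-2}$ is a third independent source of smallness that must be extracted term by term: after expanding $\beta_{j,t}=b_{j,t}-b_{j,t}\beta_{j,t}\gamma_{j,t}$ and $\beta_{k,j,t_2}$ analogously, each resulting $T_{j,k}$ either carries a $\gamma$-factor contributing $\E|\gamma|^4\lesssim n^{-2}$, or has its $\rd_k$-dependence annihilated by $(\E_j-\E_{j-1})$ up to a centered quadratic form in $\rd_k$ contributing another $n^{-2}$; combined (by conditioning, not Cauchy--Schwarz, so as not to lose a power) with the $n^{-4}$ from centering the quadratic form in $\rd_j$, this gives $n^{-6}$. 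That case-by-case extraction is exactly the content of the estimates for $R_1,\dots,R_5$ in the paper and is the heart of the lemma; your appeal to ``full exploitation of cancellation inside $(\E_j-\E_{j-1})$'' names the right mechanism but does not carry it out.

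A secondary point: you propose to work ``on the high-probability event'' on which the spectra stay away from $\mathcal{C}$, but for $z\in\mathcal{C}_l\cup\mathcal{C}_r$ with $\im(z)$ as small as $\varepsilon_n n\inv$ the resolvent norm is only bounded by $n\varepsilon_n\inv$ off that event, and conditioning on an event destroys the martingale structure. The paper instead keeps the indicator of the bad event inside the moment bounds (Lemma \ref{mom_Dinv}, using that this event has probability $o(n^{-m})$ for every $m$), and passes from $M_n^1$ to $\hat M_n^1$ for $\im(z)<\varepsilon_n n\inv$ by the elementary observation that projecting $z$ onto $\mathcal{C}_n$ only decreases the relevant distances. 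Both steps need to be made explicit for the constant $K$ to be uniform over $z\in\mathcal{C}^+$.
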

		In order to simplify notation, we write $ a\lesssim b$ for $a\leq K b$, where $a,b\geq 0$ and $K>0$ denote some universal constant independent of $n, t, t_1, t_2, z, z_1, z_2$.  
		We continue with the proof of Theorem \ref{asympt_tight} by using results from Lemma \ref{hat}.  \\
		
		We observe that for $t' \leq t \leq t''$ and $\lambda > 0$
		\begin{align*}
			 \PR \lb m^2 (z, t, t', t'') > \lambda\rb 
			\leq & \PR \lb  |\hat{M}_n^1(z,t) - \hat{M}_n^1(z,t') | 
			|\hat{M}_n^1(z,t) - \hat{M}_n^1(z,t'') | 
			> \lambda^2 \rb \\
			= & \PR \lb  |\hat{Z}_n^1(z,t',t) + \hat{Z}_n^2(z,t',t) | |\hat{Z}_n^1(z,t,t'') + \hat{Z}_n^2(z,t,t'') | 
			> \lambda^2 \rb \\
			\leq &  \PR \lb  |\hat{Z}_n^1(z,t',t) + \hat{Z}_n^2(z,t',t) | 
			> \lambda \rb			
			+ \PR \lb   |\hat{Z}_n^1(z,t,t'') + \hat{Z}_n^2(z,t,t'') | 
			> \lambda \rb \\
%			\leq &   \PR \lb  |\hat{Z}_n^1(z,t',t) | > \lambda/2 \rb 
%			+\PR \lb  |\hat{Z}_n^2(z,t',t) | 
%			> \lambda/2 \rb			
%			+ \PR \lb   |\hat{Z}_n^1(z,t,t'') | > \lambda / 2 \rb 
%			+ \PR \lb  | \hat{Z}_n^2(z,t,t'') | 
%			> \lambda/2 \rb \\
			\leq &  
			\sum_{k=1}^2 \Big\{
			\PR \lb  |\hat{Z}_n^k(z,t',t) | > \lambda/2 \rb 
			+ \PR \lb   |\hat{Z}_n^k(z,t,t'') | > \lambda / 2 \rb  \Big\}
		 \\
			\leq & \Big ( \frac{2}{\lambda}\Big )^4 \E |\hat{Z}_n^1(z,t',t) |^4 
			+ \Big ( \frac{2}{\lambda} \Big )^{4+\delta} \E |\hat{Z}_n^2(z,t',t) |^{4+\delta}
			+ \Big ( \frac{2}{\lambda}\Big )^4 \E |\hat{Z}_n^1(z,t,t'') |^4 \\
			& + \Big ( \frac{2}{\lambda} \Big )^{4+\delta} \E | \hat{Z}_n^2(z,t,t'') | ^{4+\delta}. 
		\end{align*}
		In the case $t'' - t' \geq 1/n$, we use Lemma \ref{hat} and obtain 
		\begin{align*}
			\E |\hat{Z}_n^1(z,t',t)|^4 
			\lesssim &  \Big ( \frac{\ntn - \lfloor nt' \rfloor}{n} \Big )^4
			\lesssim  \Big ( t - t' + \frac{1}{n} \Big )^4
			\leq   \Big ( t'' - t' + \frac{1}{n} \Big )^4 
			\leq    2^4 (t'' - t')^4 \\
			\lesssim &   (t'' - t')^4, \\
				\E |\hat{Z}_n^2(z,t,t'') |^{4+\delta}
			\lesssim &   \Big ( \frac{\lfloor nt'' \rfloor - 
			\lfloor nt \rfloor}{n} \Big )^{2+\delta /2}
			\lesssim  \Big ( t'' - t + \frac{1}{n} \Big )^{2 + \delta /2} 
			\leq  \Big ( t'' - t' + \frac{1}{n} \Big )^{2 + \delta / 2} \\
			\leq & K 2^{2 + \delta /2}  (t'' - t')^{2 + \delta /2} \lesssim (t'' - t')^{2 + \delta /2}.
		\end{align*}
		The remaining terms can be treated  similarly in this case,
		which gives
		\begin{align*}
			\PR \lb m^2 (z, t, t', t'') > \lambda\rb 
			\lesssim  \max ( \lambda^{-4}, \lambda^{-( 4 + \delta)})  (t'' - t')^{ 2 + \delta /2}
		\end{align*}
		for $ t'' - t' \geq 1 /n$. 
		In the other case $t'' - t' < 1/n$, we have $\ntn = \lfloor nt'' \rfloor$ or $\ntn = \lfloor nt' \rfloor$ and consequently,
		\begin{align*}
			\hat{M}_n^1(z,t) - \hat{M}_n^1(z,t') = 0 \textnormal{ or } \hat{M}_n^1(z,t'') - \hat{M}_n^1(z,t) = 0.
		\end{align*}
Therefore we obtain for $t' \leq t \leq t'' \leq 1$
		\begin{align*}
			\PR \lb m^2 (z, t, t', t'') > \lambda\rb 
			\lesssim \max ( \lambda^{-4}, \lambda^{-( 4 + \delta)}) (t'' - t')^{ 2 + \delta /2}.
		\end{align*}
	 In order to derive a similar estimate for  the term $m^1$, we note that it follows  for $z,z',z''\in\mathcal{C}_n$
		\begin{align*}
			\PR \lb m^1 (z, t, z', z'') > \lambda\rb 
			\leq & \PR \lb  |\hat{M}_n^1(z,t) - \hat{M}_n^1(z',t) | | \hat{M}_n^1(z,t) - \hat{M}_n^1(z'',t) | > \lambda^2 \rb \\
			\leq & \lambda^{-( 2+ \delta  ) } \E [ |\hat{M}_n^1(z,t) - \hat{M}_n^1(z',t) | |\hat{M}_n^1(z,t) - \hat{M}_n^1(z'',t) |  ]^{1+\delta/2} \\
			\leq & \lambda^{-( 2 + \delta  ) } \lb \E |\hat{M}_n^1(z,t) - \hat{M}_n^1(z',t) |^{ 2 + \delta}
			\E |\hat{M}_n^1(z,t) - \hat{M}_n^1(z'',t) |^{ 2 + \delta} \rb^{1/2} \\
			\lesssim &  \lambda^{-( 2 + \delta  ) } \lb |z - z'|^{2+ \delta} |z - z''|^{2 + \delta} \rb^{1/2} 
			\leq  \lambda^{-( 2 + \delta  ) } |z' - z''|^{2+ \delta},
		\end{align*}
		where we used Lemma \ref{hat} in the last line.
	Moreover, we have 
	\begin{align*}
		& \PR \lb |\hat{M}_n^1(z_1,t_1) - \hat{M}_n^1 (z_2, t_2) | > \lambda \rb  \\
		\leq & \PR \lb |\hat{M}_n^1(z_1,t_1) - \hat{M}_n^1 (z_2, t_1) | > \frac{\lambda}{2} \rb 
		+ \PR \lb |\hat{M}_n^1(z_2,t_1) - \hat{M}_n^1 (z_2, t_2) | > \frac{\lambda}{2}  \rb \\
		\leq & \PR \Big ( |\hat{M}_n^1(z_1,t_1) - \hat{M}_n^1 (z_2, t_1) | > \frac{\lambda}{2} \Big ) 
		+ \sum\limits_{k=1}^2 \PR \Big ( | \hat{Z}_n^k(z_2, t_1, t_2) |  > \frac{\lambda}{4}  \Big )
		%+  \PR \lb |\hat{Z}_n^2(z_2, t_1, t_2) |  > \frac{\lambda}{4}  \rb 
		\\
		\leq & \Big ( \frac{2}{\lambda} \Big )^{2 + \delta} \E |\hat{M}_n^1(z_1,t_1) - \hat{M}_n^1 (z_2, t_1) |^{2 + \delta}
		+ \Big ( \frac{2}{\lambda} \Big )^4 
		 \E | \hat{Z}_n^1(z_2,t_1,t_2) |^4
		 +  \Big ( \frac{2}{\lambda} \Big )^{4 + \delta} \E | \hat{Z}_n^2(z_2,t_1,t_2) |^{4 + \delta}
		 \\ 
		 \lesssim &  \Big ( \frac{2}{\lambda} \Big )^{2 + \delta}  |z_1 - z_2|^{2 + \delta}
		+  \Big ( \frac{2}{\lambda} \Big )^4 \Big ( \frac{\ntt - \nt}{n} \Big ) ^4
		+   \Big ( \frac{2}{\lambda} \Big )^{4 + \delta} \Big ( \frac{\ntt - \nt}{n} \Big )^{2 + \delta /2} 
		 \\ 
		\lesssim & C_{1, \lambda}  \Big [   \Big  | \frac{\ntt - \nt}{n}  \Big  | ^{2 + \delta /2} + | z_1 - z_2|^{2 + \delta}  \Big ] \\
		\leq &  C_{1,\lambda}  \Big  [  \Big ( \left| t_2 - t_1 \right| + \frac{1}{n}  \Big )^{ 2 +\delta /2} + | z_1 - z_2|^{2 + \delta}  \Big ] \\
		\leq &  C_{1, \lambda}
		 \Big  (  \Big \| \lb z_1, t_1 \rb^\top - \lb z_2, t_2 \rb^\top   \Big \|_{\infty} + \frac{1}{n}  \Big  )^{2+\delta /2},
	\end{align*}
	where
	\begin{align*}
		C_{1,\lambda} = \max ( \lambda^{-4}, \lambda^{-( 2 + \delta) }, \lambda^{-(4+ \delta)})
		.
	\end{align*}
	Let $m\in\N$ and define for $j=(j_1,j_2) \in \{1, \ldots, m\}^2$ the set
	\begin{align*}
		K_j =  \Big [ \frac{j_1 - 1}{m}, \frac{j_1}{m}  \Big ] \times
		 \Big [ \frac{j_2 - 1}{m} \wedge t_0 , \frac{j_2}{m} \wedge t_0  \Big ].
	\end{align*}
	Combining the three inequalities above, we are able to apply Corollary A.4 in \cite{tomecki} with the parameters
	$\varepsilon=1/m, \delta' = 2 + \delta /2$ and get
	\begin{align*}
		& \PR  \Big  (  \sup\limits_{(z_1,t_1),(z_2,t_2)\in K_j} 
		|\hat{M}_n^1(z_1, t_1) - \hat{M}_n^2(z_2,t_2) | > \lambda
	 \Big ) 
		\lesssim   C_{2, \lambda}  \Big  (\frac{1}{m}  \Big ) ^{2 +\delta /2}
		+ C_{1, \lambda} \Big (\frac{1}{m} + \frac{1}{n}   \Big  )^{2 + \delta /2} ,
	\end{align*}		
	where 
$
		C_{2,\lambda} = \max(\lambda^{-4}, \lambda^{-(4+\delta)}, \lambda^{-(2+\delta)} ). 
$
	This implies 
	\begin{align*}
		& \limsup\limits_{n\to \infty} \PR  \Big  ( \sup\limits_{j\in \{1, \ldots, m\}^2} \sup\limits_{(z_1,t_1),(z_2,t_2)\in K_j} 
		|\hat{M}_n^1(z_1, t_1) - \hat{M}_n^2(z_2,t_2) | > \lambda
		 \Big  ) \\
		\leq & \limsup\limits_{n\to \infty}  \sum\limits_{j\in \{1, \ldots, m\}^2}
		\PR  \Big  (  \sup\limits_{(z_1,t_1),(z_2,t_2)\in K_j} 
		|\hat{M}_n^1(z_1, t_1) - \hat{M}_n^2(z_2,t_2) | > \lambda
		 \Big  )\\
		\lesssim &  \limsup\limits_{n\to \infty} m^2  \Big [  C_{2,\lambda}  \Big ( \frac{1}{m}  \Big ) ^{2 +\delta /2}
		+ C_{1,\lambda} \Big  (  \frac{1}{m} + \frac{1}{n}   \Big )^{2 + \delta /2}  \Big  ] \\
		\lesssim &     m^2 \frac{1}{m^{2 + \delta /2}}
		\to 0, \textnormal{ as } m\to \infty.
	\end{align*}
 Theorem 1.5.7 in \cite{vandervaart1996} finally implies the asymptotic tightness of the sequence $(\hat{M}_n^1)_{n\in\N}$,
 which completes the proof of Theorem  
 \ref{asympt_tight}. 
 
 \bigskip

% 	\end{proof} 
	\begin{corollary} \label{m1_con}
	There exists a version of the process $(M^1(z,t))_{z\in\mathcal{C}^+,t\in[t_0,1]}$ with continuous sample paths.
	\end{corollary}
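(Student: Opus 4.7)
The plan is to deduce continuity of the limiting process directly from the moment bounds already proved in Lemma \ref{hat}, transferred to $M^1$ via the convergence of finite-dimensional distributions established in Theorem \ref{thm_fidis}. More precisely, I would first observe that for any fixed pair of points $(z_1,t_1), (z_2,t_2) \in \mathcal{C}^+ \times [t_0,1]$, the increment $\hat M_n^1(z_1,t_1) - \hat M_n^1(z_2,t_2)$ converges in distribution to $M^1(z_1,t_1) - M^1(z_2,t_2)$ by Theorem \ref{thm_fidis}. Combined with the uniform-in-$n$ moment bounds \eqref{ineq_hat1}--\eqref{ineq_hat3} of Lemma \ref{hat}, one step up in the exponent provides uniform integrability of the $(2+\delta)$-th powers, so Fatou's lemma in the form
\begin{align*}
\E \bigl| M^1(z_1,t_1) - M^1(z_2,t_2) \bigr|^{2+\delta}
\; \leq \; \liminf_{n\to\infty}
\E \bigl| \hat M_n^1(z_1,t_1) - \hat M_n^1(z_2,t_2) \bigr|^{2+\delta}
\end{align*}
is legitimate.

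Next, I would split the right-hand side into a ``$z$-part'' and a ``$t$-part'' via the triangle inequality by inserting the auxiliary point $(z_2,t_1)$. Applied to each part separately, Lemma \ref{hat} yields, after absorbing constants,
\begin{align*}
\E \bigl| M^1(z_1,t_1) - M^1(z_2,t_2) \bigr|^{p}
\; \leq \; K \Bigl( |z_1 - z_2|^{2+\delta} + |t_1 - t_2|^{1 + \delta/4}\Bigr)
\end{align*}
for a suitable $p$ obtained from Minkowski's inequality applied to the exponents $2+\delta$ and $4+\delta$ from \eqref{ineq_hat1} and \eqref{ineq_hat3}. Since $\mathcal{C}^+$ is a piecewise smooth curve (so its parameter space is effectively one-dimensional) and $[t_0,1]$ is one-dimensional, the index set has Hausdorff dimension two, and the displayed bound exceeds the required exponent of $2$ on the right-hand side by a strictly positive amount provided $\delta$ is chosen small enough. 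Kolmogorov's continuity criterion then yields a version of $M^1$ with (Hölder-)continuous sample paths on $\mathcal{C}^+ \times [t_0,1]$.

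An alternative, essentially equivalent route is to invoke Theorem 1.5.7 of \cite{vandervaart1996}: the proof of Theorem \ref{asympt_tight} already verified, via Corollary A.4 of \cite{tomecki}, a uniform modulus-of-continuity estimate in probability for $\hat M_n^1$ with respect to the Euclidean semimetric on $\mathcal{C}^+ \times [t_0,1]$. Together with the convergence of the finite-dimensional distributions from Theorem \ref{thm_fidis}, this forces the weak limit measure on $\ell^\infty(\mathcal{C}^+ \times [t_0,1])$ to be concentrated on $UC(\mathcal{C}^+ \times [t_0,1])$, which is precisely the desired continuity.

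The main technical obstacle is the justification of uniform integrability needed to push the moment bounds of Lemma \ref{hat} through the weak limit: one has to verify either that the $(2+\delta)$-th moments of $\hat M_n^1(z_1,t_1) - \hat M_n^1(z_2,t_2)$ remain uniformly bounded when slightly raised in exponent (which follows from revisiting Lemma \ref{hat} with a marginally larger $\delta$, allowed since the entries have finite $12$th moment), or, alternatively, that the Gaussian structure of the limit permits to replace Fatou by an explicit covariance calculation using the kernel $\sigma_{t_1,t_2}^2(z_1,\overline{z_2})$ defined in \eqref{def_sigma}. Once this is settled, the continuity of sample paths follows routinely.
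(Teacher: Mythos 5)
Your second (``alternative'') route is precisely the paper's proof: Theorem \ref{asympt_tight} establishes asymptotic tightness of $(\hat M_n^1)_{n\in\N}$, and Addendum 1.5.8 (following Theorem 1.5.7) in \cite{vandervaart1996} then guarantees that the limit law concentrates on sample paths that are uniformly continuous on the index set; the paper only adds the small remark that this is first obtained on the dense subset $(\mathcal{C}^+\setminus\{x_l,x_r\})\times[t_0,1]$ and then extended by density. If you run that argument you are done, in essentially the same way as the authors.

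Your primary route, however, has a genuine gap at the application of the Kolmogorov--Chentsov criterion. On a two-dimensional parameter set the criterion requires $\E|M^1(u)-M^1(v)|^p\le K\|u-v\|^{2+\epsilon}$ with $\epsilon>0$, but the bound you display carries the factor $|t_1-t_2|^{1+\delta/4}$, whose exponent is strictly \emph{less} than $2$; your claim that it ``exceeds the required exponent of $2$'' is false, and shrinking $\delta$ only makes it worse. The obstruction is structural: Lemma \ref{hat} controls the two pieces of the $t$-increment in \emph{different} $L^p$-spaces, namely $\E|\hat Z_n^1|^4\lesssim ((\ntt-\nt)/n)^4$ and $\E|\hat Z_n^2|^{4+\delta}\lesssim((\ntt-\nt)/n)^{2+\delta/2}$, and merging them into a single moment bound degrades the exponent --- e.g.\ in $L^4$ one gets $(\E|\hat Z_n^2|^{4+\delta})^{4/(4+\delta)}\lesssim((\ntt-\nt)/n)^{2}$, i.e.\ exactly the critical exponent $2$ and not more. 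This is exactly why the paper avoids a plain Kolmogorov criterion and instead uses the minimum-of-two-adjacent-increments (Bickel--Wichura type) bound of Corollary A.4 in \cite{tomecki}, which multiplies the tail probabilities of the two pieces and therefore tolerates different exponents. Your Kolmogorov route can be repaired, but only by exploiting Gaussianity of the limit, which you mention only in passing: it suffices to transfer the second-moment bound $\E|M^1(z,t_1)-M^1(z,t_2)|^2\lesssim|t_1-t_2|$ (obtainable by Fatou from the $n$-level bounds, or from the covariance kernel), and then use that for a Gaussian increment $\E|\Delta|^{2m}=c_m\lb\E|\Delta|^2\rb^m$, choosing $m\ge 3$ to exceed exponent $2$. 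As written, the step is not justified. A minor additional point: Fatou via weak convergence only delivers moment bounds at pairs of points where the finite-dimensional convergence of Theorem \ref{thm_fidis} holds, i.e.\ for $\im(z)\neq 0$, so the endpoints $x_l,x_r$ must again be covered by a density argument.
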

	\begin{proof}
	By Addendum 1.5.8 in \cite{vandervaart1996}, almost all paths $(z,t,\omega) \in (\mathcal{C}^+\setminus \{x_l, x_r\} ) \times [t_0,1] \times \Omega \mapsto \hat{M}^1(z,t) ( \omega)$ are continuous. Since $(\mathcal{C}^+ \setminus \{ x_l, x_r\} ) \times [t_0,1] \subset \mathcal{C}^+ \times [t_0,1]$ is a dense set, we conclude that almost all paths $(z,t,\omega) \in \mathcal{C}^+\times [t_0,1] \times \Omega \mapsto \hat{M}^1(z,t) ( \omega)$ are continuous.
	\end{proof}

	\subsubsection{Proof of Theorem \ref{thm_bias} %Convergence of the bias $(\hat{M}_n^2)_{n\in\N}$
	} \label{sec_proof_bias}

%	In this section, we will show that $(\hat{M}_n^2)_{n\in \N}$ converges  uniformly to the mean function in \eqref{mean}. 
%For  a useful representation of the random variable ${M}_n^2(z,t)$, 
Let 
		\begin{align*}
		\tilde{\underline{s}}_{n,t}(z)  &= s_{F^{ \underline{\mathbf{B}}_{n,t}}} (z) = - \frac{1 - y_{\ntn}}{z} + y_{\ntn} \tilde{s}_{n,t}(z)
	\end{align*}
	be the Stieltjes transform of  the empirical spectral distribution  $F^{ \underline{ \mathbf{B}}_{n,t}}$ of 
	the matrix  $\underline{\mathbf{B}}_{n,t}$ defined in \eqref{comp}, and let 
	\begin{align*}
		\tilde{\su}_{n,t}^0 (z)  &= s_{\underline{\tilde{F}}^{y_{\ntn},H_n}(z)}
		= - \frac{1 - y_{\ntn}}{z} + y_{\ntn} \tilde{s}_{n,t}^0(z)
	\end{align*}
	be the Stieltjes transform of the distribution  
	%$\underline{\tilde{F}}^{y_{\ntn},H_n}$.}
	%, that is
%	\begin{align*}
%		\tilde{\underline{s}}_{n,t} &= s_{F^{ \underline{\mathbf{B}}_{n,t}}}, \\
%		\tilde{\su}_{n,t}^0 &= %s_{\underline{\tilde{F}}^{y_{\ntn},H_n}}.
%	\end{align*}
%	
	$$
 	\tilde{\underline{F}}^{y_{\ntn},H_n} ( \cdot) = \underline{F} ^{y_{\ntn}, H_n} \Big ( \frac{n}{\ntn} \cdot \Big )
 	$$
%	 Here, as an empirical version of $\underline{F}^{y_t,H}$, the distribution $\underline{F}^{y_{\ntn},H_n}$ is defined through
with
	$ \underline{F}^{y_{\ntn}, H_n} - y_{\ntn} F^{y_{\ntn}, H_n} = ( 1 - y_{\ntn} ) I_{[0,\infty)} $.
	Recalling the definition \eqref{def_m_n2}  we have 
	\begin{align}
		M_n^2(z,t) 
		%& = p \lb \E \left[s_{  F^{\mathbf{B}_{n,t}}  } (z)\right]-  s_{\tilde{F}^{y_{\ntn}, H_n}} (z) \rb \\
		& = p \lb \E [ \tilde{s}_{n,t}(z) ] - \tilde{s}_{n,t}^0(z) \rb % \\	& 
		= \ntn \lb \E \left[\tilde{\su}_{n,t} (z)\right]-  \tilde{\su}_{n,t}^0(z) \rb .
		\label{hol5}
	\end{align}

We begin with  a lemma  which can be used to derive   an alternative representation of $M_n^2(z,t)$. 
Note that this Lemma  corrects  an error in formula (9.11.1) in \cite{bai2004} and is proved  in Section  \ref{pra73} of the online supplement.

\begin{lemma} \label{a73}
	\begin{align*}
		& \lb \E [ \tilde{\su}_{n,t} (z) ] - \tilde{\su}_{n,t}^0 (z) \rb 
		\lb 1 - \frac{y_{n} \frac{\ntn}{n} \int \frac{\lambda^2 \tilde{\su}_{n,t}^0(z) dH_n(\lambda)}{( 1 + \lambda \frac{\ntn}{n}\E [\tilde{\su}_{n,t} (z) ] ) ( 1 + \lambda \frac{\ntn}{n} \tilde{\su}_{n,t}^0(z) ) }   }
	{  - z + y_{n} \int \frac{\lambda dH_n(\lambda) }{1 + \lambda \frac{\ntn}{n}\E [ \tilde{\su}_{n,t} (z) ] } - \frac{\ntn}{n} R_{n,t}(z)  } \rb 		
		\\
		= &  \frac{\ntn}{n}  R_{n,t}(z)  \E [ \tilde{\su}_{n,t}(z) ] \tilde{\su}_{n,t}^0(z) ,
	\end{align*}
	where  
	\begin{align*}
	R_{n,t}(z)   % &  = R_{n,t}(z)(z) \\
&	=     y_{\ntn} \ntn\inv \sum\limits_{j=1}^{\ntn} \E [ \beta_{j,t}( z) d_{j,t} (z) ] \lb \E [ \tilde{\su}_{n,t}(z) ] \rb\inv \\
&	=  y_{\ntn} n\inv \sum\limits_{j=1}^{\ntn} \E [ \beta_{j,t}( z) d_{j,t} (z) ] \Big ( \frac{\ntn}{n}\E [ \tilde{\su}_{n,t}(z) ]\Big ) \inv, \\
	d_{j,t} (z)  & = - \mathbf{q}_{j}^\star \T_n\sq \D_{j,t}\inv(z) ( \frac{\ntn}{n} \E [ \tilde{\su}_{n,t}(z) ] \T_n + \mathbf{I} )\inv \T_n\sq \mathbf{q}_j \\
& 	 +  \frac{1}{p} \E \Big[ \tr ( \frac{\ntn}{n}\E [ \tilde{\su}_{n,t} (z) ] \T_n + \mathbf{I} )\inv \T_n \D_{t}\inv(z) \Big], \\
	 \mathbf{q}_j & = \frac{1}{\sqrt{p}} \mathbf{x}_j.
	\end{align*}
		
	\end{lemma}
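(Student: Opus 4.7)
The goal is to mirror the Marchenko--Pastur fixed-point equation \eqref{a50} on the stochastic side: derive a self-consistent equation for $\E[\sut_{n,t}(z)]$ that differs from the one satisfied by $\sut_{n,t}^0(z)$ only by the error term $\tfrac{\ntn}{n}R_{n,t}(z)$, and then subtract the two reciprocal relations to isolate $\E[\sut_{n,t}(z)]-\sut_{n,t}^0(z)$.

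First I would record the companion-matrix Schur-complement identity $\bigl((\underline{\mathbf{B}}_{n,t}-z\mathbf{I})\inv\bigr)_{jj}=-\beta_{j,t}(z)/z$, which yields
$$
\sut_{n,t}(z) \;=\; -\frac{1}{\ntn z}\sum_{j=1}^{\ntn}\beta_{j,t}(z),
$$
so that understanding $\E[\sut_{n,t}(z)]$ reduces to a careful expansion of $\E[\beta_{j,t}(z)]$. The core step is to use the resolvent decomposition $\mathbf{I}+z\D_t\inv(z)=\sum_{j}\beta_{j,t}(z)\rd_j\rd_j^\star\D_{j,t}\inv(z)$ together with Sherman--Morrison, and then take the trace of this identity against the test matrix $\tfrac{1}{p}\bigl(\tfrac{\ntn}{n}\E[\sut_{n,t}(z)]\T_n+\mathbf{I}\bigr)\inv\T_n$. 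This choice of test matrix is what produces the precise form of $d_{j,t}(z)$: the quadratic forms $\mathbf{q}_j^\star \T_n\sq \D_{j,t}\inv(z)\bigl(\tfrac{\ntn}{n}\E[\sut_{n,t}]\T_n+\mathbf{I}\bigr)\inv \T_n\sq \mathbf{q}_j$ split into their expected value---a deterministic integral against $H_n$---plus the centred fluctuation $d_{j,t}(z)$ of the statement. After taking expectations and collecting the surviving terms, one arrives at
$$
\E[\sut_{n,t}(z)]\Bigl(-z+y_n\int\frac{\lambda\,dH_n(\lambda)}{1+\lambda\tfrac{\ntn}{n}\E[\sut_{n,t}(z)]}-\tfrac{\ntn}{n}R_{n,t}(z)\Bigr)=1,
$$
which is \eqref{a50} perturbed by $\tfrac{\ntn}{n}R_{n,t}(z)$, with $R_{n,t}(z)$ characterized precisely as in the lemma.

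Having this, denote by $A$ and $B$ the two bracketed denominators (for $\E[\sut_{n,t}]$ and $\sut_{n,t}^0$ respectively), so that $A\,\E[\sut_{n,t}]=1=B\,\sut_{n,t}^0$ and $\E[\sut_{n,t}]-\sut_{n,t}^0=(B-A)/(AB)$. An application of $\tfrac{1}{1+a}-\tfrac{1}{1+b}=\tfrac{b-a}{(1+a)(1+b)}$ gives
$$
B-A \;=\; y_n\tfrac{\ntn}{n}\bigl(\E[\sut_{n,t}]-\sut_{n,t}^0\bigr)\int\frac{\lambda^2\,dH_n(\lambda)}{(1+\lambda\tfrac{\ntn}{n}\sut_{n,t}^0)(1+\lambda\tfrac{\ntn}{n}\E[\sut_{n,t}])}+\tfrac{\ntn}{n}R_{n,t}(z).
$$
Substituting this into $(\E[\sut_{n,t}]-\sut_{n,t}^0)AB=B-A$, collecting the factor $\E[\sut_{n,t}]-\sut_{n,t}^0$ on the left and multiplying through by $\E[\sut_{n,t}]\sut_{n,t}^0$ to replace $1/(AB)$, one checks that the resulting bracketed coefficient is exactly the ratio displayed in the lemma (the numerator carries the extra factor $\sut_{n,t}^0$ from the $B\cdot\sut_{n,t}^0=1$ normalization, and the denominator is $A$), recovering the stated identity.

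\textbf{Main obstacle.} The delicate part is the second step: choosing the test matrix and accounting cleanly for the interplay of the normalizations $\tfrac{\ntn}{n}$ and $\tfrac{1}{n}$, together with the distinctions between $b_t(z)$, $b_{j,t}(z)$, $\overline{\beta}_{j,t}(z)$ and the true denominator $A$, so that the residual collects exactly into $\tfrac{\ntn}{n}R_{n,t}(z)$ with $R_{n,t}$ of the form in the statement rather than picking up an extra term. This is where formula~(9.11.1) of \cite{bai2004} contains the error corrected here, and its remediation requires careful bookkeeping of conditional versus unconditional expectations through each Sherman--Morrison application.
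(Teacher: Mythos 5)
Your proposal is correct and follows essentially the same route as the paper: your first step (testing the resolvent decomposition against $\tfrac{1}{p}(\tfrac{\ntn}{n}\E[\sut_{n,t}(z)]\T_n+\mathbf{I})\inv\T_n$ to obtain the perturbed fixed-point equation $\E[\sut_{n,t}(z)]\,A=1$ with $A$ containing the residual $\tfrac{\ntn}{n}R_{n,t}(z)$) is exactly the content of the paper's Lemma \ref{a52} combined with the relation between $\tilde{s}_{n,t}$ and $\sut_{n,t}$, and your subtraction of the two reciprocal relations with the identity $\tfrac{1}{1+a}-\tfrac{1}{1+b}=\tfrac{b-a}{(1+a)(1+b)}$ reproduces the paper's chain of equalities ending in \eqref{a90}. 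No gaps.
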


The next main step is the following 
	%provide the uniform convergence of the Stieltjes transform $(\E [ \sut_{n,t}(z) ])$ given in the following theorem 
	result, which  is  proved in Section \ref{sec_proof_stieltjes}. 
	
	\begin{theorem} \label{thm_stieltjes}
	Under the assumptions of Theorem \ref{thm}, we have 
	\begin{align*} %\label{a59}
	%	\lim\limits_{n\to\infty} \sup_{\substack{z\in\mathcal{C}_n, \\ t \in [t_0,1]}} | \E [ \tilde{s}_{n,t}(z) ] 
	%	- \tilde{s}_t(z) |
	%	= 0  , \\ 
		\lim\limits_{n\to\infty} \sup_{\substack{z\in\mathcal{C}_n, \\ t \in [t_0,1]}} | \E [ \sut_{n,t}(z) ] 
		- \sut_t(z) |~
		= 0,
	\end{align*}
	where $\sut_t$ is defined in \eqref{def_sut}. %and $\tilde{s}_t$ denotes the Stieltjes transform of $\tilde{F}^{y_t,H}$ given in \eqref{def_gen_MP_lim}. 
	% where
	%$$\tilde{s}_{n,\cdot}(\cdot): \mathcal{C}^+ \times [0,1] \to \mathbb{C}, ~
	%	(z,t) \mapsto \tilde{s}_{n,t}(z) = \frac{1}{p} \tr \lb \D_t\inv(z) \rb 
	% $$ 
	% and
	% $$\tilde{s}_\cdot(\cdot): \mathcal{C}^+ \times [0,1] \to \mathbb{C}, ~
	%	(z,t) \mapsto \tilde{s}_t(z)
	% $$ 
	% is the unique solution of the equation
	% \begin{align} \label{a60}
	%	\tilde{s}_t(z) = \int \frac{1}{\lambda t ( 1 - y_t - y_t z \tilde{s}_t(z) ) -z } dH(\lambda).
	%\end{align}
	\end{theorem}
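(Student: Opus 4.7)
The plan is to split the bias $M_n^2(z,t) = \lfloor nt\rfloor\bigl(\mathbb{E}[\sut_{n,t}(z)] - \sut_{n,t}^0(z)\bigr)$ analysis (which reduces Theorem \ref{thm_bias} once Theorem \ref{thm_stieltjes} is proved) into two independent uniform convergences: a random-to-deterministic step, $\mathbb{E}[\sut_{n,t}(z)] - \sut_t(z) \to 0$, followed by a deterministic finite-$n$ to limit step. The backbone is Lemma \ref{a73}, which gives the exact identity
\begin{align*}
\bigl(\mathbb{E}[\sut_{n,t}(z)] - \sut_{n,t}^0(z)\bigr)\,L_{n,t}(z)
= \frac{\lfloor nt\rfloor}{n}\,R_{n,t}(z)\,\mathbb{E}[\sut_{n,t}(z)]\,\sut_{n,t}^0(z),
\end{align*}
where $L_{n,t}(z)$ denotes the bracketed factor in Lemma \ref{a73}. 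If I can show that $R_{n,t}(z) \to 0$ uniformly on $\mathcal{C}_n\times[t_0,1]$ and that $|L_{n,t}(z)|$ is bounded below by a positive constant uniformly on the same set, then the first reduction is complete.

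To control $R_{n,t}(z)$, I would estimate $\mathbb{E}|\beta_{j,t}(z) d_{j,t}(z)|$ individually. The factor $\beta_{j,t}(z)$ is deterministically bounded by $|z|/v$ on the horizontal arc $\mathcal{C}_u$, so the task reduces to a moment bound for the centred quadratic form $d_{j,t}(z)$. Using a Burkholder/concentration inequality for quadratic forms (as in Lemma 9.1 and the variance identity on p.\,281 of \cite{bai2004}), combined with the boundedness of the relevant resolvent products in Hilbert–Schmidt norm, yields $\mathbb{E}|d_{j,t}(z)|^2 = O(n^{-1})$, hence $|R_{n,t}(z)| = O(n^{-1/2})$ pointwise. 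For uniformity in $t$, I observe that $t\mapsto\lfloor nt\rfloor$ takes only finitely many (at most $n$) distinct values on $[t_0,1]$, so a union bound combined with the sharper $\mathbb{E}|d_{j,t}(z)|^{2q}$ estimate available from assumption (a) of Theorem \ref{thm} (the $12$th moment) upgrades pointwise to uniform convergence. Uniformity in $z\in\mathcal{C}_n$ follows on $\mathcal{C}_u$ because $v=v_0$ is fixed.

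The main obstacle will be the two vertical segments $\mathcal{C}_l,\mathcal{C}_r$, where $v = \im(z)$ is as small as $n^{-1}\varepsilon_n$ and the naive bound $|z|/v$ blows up. There one has to introduce a good event on which $\lambda_{\max}(\mathbf{B}_{n,t})<x_r$ and $\lambda_{\min}(\mathbf{B}_{n,t})>x_l$ for all $t\in[t_0,1]$, which holds with probability $1-o(n^{-\ell})$ for any $\ell$ by the edge-eigenvalue bounds used in \eqref{a33}; on this event the resolvents $\mathbf{D}_{j,t}^{-1}(z)$ satisfy the $v$-free operator-norm bound $\|\mathbf{D}_{j,t}^{-1}(z)\|\le \min(|x_r-\lambda_{\max}|,|\lambda_{\min}-x_l|)^{-1}$, uniformly in $t$. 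On the complement, a crude deterministic bound together with polynomial control on the failure probability gives a negligible contribution. This localisation argument is the only place where the moment assumption on $x_{ij}$ is really used for the bias.

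For the second reduction, $\sut_{n,t}^0(z) \to \sut_t(z)$, both functions satisfy equation \eqref{repl_a47} with parameters $(y\lfloor nt\rfloor/n, H_n, \lfloor nt\rfloor/n)$ and $(y,H,t)$ respectively. Subtracting the two fixed-point equations and using that the derivative of the right-hand side with respect to the Stieltjes transform is a strict contraction on $\mathcal{C}_n$ (a consequence of the spectral separation from the support of $H$ together with $\im(\sut_t(z))>0$), yields a Lipschitz inequality of the form $|\sut_{n,t}^0(z)-\sut_t(z)| \le C\,\bigl(|y_{\lfloor nt\rfloor}-y_t| + \delta_n(H_n,H)\bigr) + \tfrac{1}{2}|\sut_{n,t}^0(z)-\sut_t(z)|$, which closes. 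Since $|y_{\lfloor nt\rfloor}-y_t|\le y/(n t_0^2)$ uniformly in $t\in[t_0,1]$ and $H_n\Rightarrow H$ with uniformly bounded support (assumption (b)), both error terms are $o(1)$ uniformly. Combining the two stages gives Theorem \ref{thm_stieltjes}.
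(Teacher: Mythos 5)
Your route is genuinely different from the paper's, and it contains a gap at its central step. The paper does \emph{not} prove Theorem \ref{thm_stieltjes} through Lemma \ref{a73}; it uses a compactness-and-uniqueness argument: Lemma \ref{lem_cp_closure} shows that $\{\E[\hat m_{n,\cdot}(\cdot)]\}$ has compact closure in the Skorokhod space $D[0,1]^2$ (via increment bounds in $t$ and $z$ and the Bickel--Wichura/Neuhaus modulus), Lemma \ref{uni} identifies every subsequential limit as the unique solution of the fixed-point equation, and continuity of the limit upgrades Skorokhod convergence to uniform convergence. The identity of Lemma \ref{a73}, the convergence $R_{n,t}(z)\to 0$ in \eqref{R_to_0}, and the lower bound on the bracketed factor $L_{n,t}(z)$ are all established \emph{afterwards}, in the proofs of \eqref{conv2} and Theorem \ref{sn0_conv}, and each of them uses Theorem \ref{thm_stieltjes} as an input. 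Your plan inverts this order, and the inversion fails precisely where you write ``if I can show that $|L_{n,t}(z)|$ is bounded below by a positive constant uniformly'': you give no argument for this bound. In the paper the analogous estimate is the long chain of inequalities in Section \ref{prsn0_conv} showing that the ratio
\begin{align*}
\Big| \tfrac{y_{n} \frac{\ntn}{n}\int \lambda^2\,\sut_{n,t}^0(z)\,dH_n(\lambda)\,( 1 + \lambda \frac{\ntn}{n}\E [\sut_{n,t} (z) ] )\inv ( 1 + \lambda \frac{\ntn}{n} \sut_{n,t}^0(z) )\inv}{ - z + y_{n} \int \lambda\, dH_n(\lambda)\, (1 + \lambda \frac{\ntn}{n}\E [ \sut_{n,t} (z) ])\inv - \frac{\ntn}{n}R_{n,t}(z)} \Big| \leq 1 - \tfrac{1}{1+K} < 1 ,
\end{align*}
and that chain needs $\im(R_{n,t}(z)) + \im(z)\geq 0$ (Lemma \ref{aux_im_z}) as well as $|\E[\sut_{n,t}(z)]|$ and $|\re\,\E[\sut_{n,t}(z)]|$ bounded away from zero uniformly on $\mathcal{C}_n\times[t_0,1]$ --- all of which the paper obtains \emph{from} Theorem \ref{thm_stieltjes} together with Lemma \ref{a80} and \eqref{R_to_0}. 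As written, your first reduction is therefore circular, or at best rests on an unproved and genuinely hard estimate. The same unestablished lower bound on $|\E[\sut_{n,t}(z)]|$ also enters your bound on $R_{n,t}(z)$ itself, through the factor $(\frac{\ntn}{n}\E[\sut_{n,t}(z)])\inv$ in its definition; on $\mathcal{C}_l\cup\mathcal{C}_r$, where $\im(z)$ may be as small as $n\inv\varepsilon_n$, the trivial bound $\im(\su(z))\geq \im(z)/\sup_\lambda|\lambda-z|^2$ degenerates, so you would need a separate real-part argument there.

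The salvageable parts: your estimate $\E|d_{j,t}(z)|=O(n^{-1/2})$ via concentration of quadratic forms, the good-event localisation on the vertical segments (this is exactly Lemma \ref{mom_Dinv} and \eqref{c24}), and the observation that uniformity in $t$ costs nothing because $\lfloor nt\rfloor$ takes finitely many values, are all sound and would indeed give $R_{n,t}(z)\to 0$ uniformly \emph{once} a uniform lower bound on $|\E[\sut_{n,t}(z)]|$ is in hand --- note that no union bound is needed, since $R_{n,t}$ is deterministic. Your second reduction ($\sut^0_{n,t}\to\sut_t$ by subtracting the two fixed-point equations \eqref{repl_a47} and closing a contraction) is a legitimate alternative to the paper's derivation of this fact in Theorem \ref{sn0_conv}, but the contraction constant being uniformly below one is again the same type of estimate as the lower bound on $L_{n,t}$, so it does not come for free. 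If you want to avoid the paper's Skorokhod-compactness machinery altogether, the honest way to execute your plan is to first prove the uniform lower bounds on $|\E[\sut_{n,t}(z)]|$, $|\re\,\E[\sut_{n,t}(z)]|$ and on $|L_{n,t}(z)|$ directly (in the spirit of Section 9.11 of Bai and Silverstein, using the imaginary-part identities), and only then invoke Lemma \ref{a73}; that is a substantial piece of work that your proposal currently treats as a footnote.
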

	
%	\begin{proof}[Proof of Theorem \ref{thm_bias}]

The third  step in the proof of Theorem \ref{thm_bias} is the following result, which is  proved in Section  \ref{prsn0_conv} of the online supplement. 

		\begin{theorem}  \label{sn0_conv}
	Under the assumptions of Theorem \ref{thm}, we have 
		\begin{align*}
	&	\sup_{\substack{n\in\N, \\ z\in\mathcal{C}_n, \\ t\in[t_0,1]}} 
		| M_n^2(z,t) | \leq K~,~~~~
%		\end{align*}
%		and
%		\begin{align*}
			\lim\limits_{n\to\infty} \sup_{\substack{z\in\mathcal{C}_n, \\ t \in [t_0,1]}} |  \sut_{n,t}^0(z)  
		- \sut_t(z) |
		= 0.
		\end{align*}
	\end{theorem}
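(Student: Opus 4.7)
The statement has two parts: uniform boundedness of $M_n^2$ and uniform convergence of $\tilde{\underline{s}}_{n,t}^0$ to $\tilde{\underline{s}}_t$. I would first establish the (purely deterministic) convergence and then derive boundedness of $M_n^2$ by combining this convergence with Theorem \ref{thm_stieltjes} through the decomposition provided by Lemma \ref{a73}.

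For the convergence, both $\tilde{\underline{s}}_t$ and $\tilde{\underline{s}}_{n,t}^0$ satisfy Marchenko--Pastur-type fixed-point equations. Specifically, $\tilde{\underline{s}}_t$ satisfies \eqref{repl_a47}, while the rescaling $\tilde{\underline{F}}^{y_{\ntn},H_n}(\cdot) = \underline{F}^{y_{\ntn},H_n}((n/\ntn)\,\cdot)$ together with the standard Marchenko--Pastur equation for the companion $\underline{F}^{y_{\ntn},H_n}$ yields
\[
z = -\frac{1}{\tilde{\underline{s}}_{n,t}^0(z)} + y_n \int \frac{\lambda\,dH_n(\lambda)}{1+\lambda(\ntn/n)\,\tilde{\underline{s}}_{n,t}^0(z)}.
\]
Subtracting the two equations and regrouping the integrals gives an identity of the form
\[
(\tilde{\underline{s}}_{n,t}^0(z) - \tilde{\underline{s}}_t(z))\,\mathcal{K}_n(z,t) = \mathcal{E}_n(z,t),
\]
where
\[
\mathcal{K}_n(z,t) = \frac{1}{\tilde{\underline{s}}_{n,t}^0(z)\,\tilde{\underline{s}}_t(z)} + y_n\,\frac{\ntn}{n}\int \frac{\lambda^2\,dH_n(\lambda)}{\bigl(1+\lambda(\ntn/n)\tilde{\underline{s}}_{n,t}^0(z)\bigr)\bigl(1+\lambda t\,\tilde{\underline{s}}_t(z)\bigr)},
\]
and $\mathcal{E}_n(z,t)$ collects vanishing contributions from $y_n \to y$, from $|\ntn/n - t|=O(1/n)$, and from the weak convergence $H_n\Rightarrow H$ applied to a uniformly bounded, equicontinuous family of integrands. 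The coefficient $\mathcal{K}_n$ is bounded below in modulus uniformly on $\mathcal{C}_n\times[t_0,1]$: on $\mathcal{C}_u$ this follows from $\im(z)=v_0$ being macroscopic, while on $\mathcal{C}_l \cup \mathcal{C}_r$ it follows from $\re(z)$ lying outside the support of $\tilde{F}^{y_t,H}$ by construction of the contour, so that the denominators $1+\lambda t\,\tilde{\underline{s}}(z)$ remain bounded away from zero. This delivers the claimed uniform convergence.

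For the boundedness of $M_n^2$, Lemma \ref{a73} gives
\[
M_n^2(z,t) \;=\; \ntn\bigl(\E[\tilde{\underline{s}}_{n,t}(z)] - \tilde{\underline{s}}_{n,t}^0(z)\bigr) \;=\; \frac{(\ntn^2/n)\,R_{n,t}(z)\,\E[\tilde{\underline{s}}_{n,t}(z)]\,\tilde{\underline{s}}_{n,t}^0(z)}{1-A_{n,t}(z)}.
\]
The three factors $\E[\tilde{\underline{s}}_{n,t}]$, $\tilde{\underline{s}}_{n,t}^0$ and $1/(1-A_{n,t})$ are uniformly bounded on $\mathcal{C}_n \times [t_0,1]$ by Theorem \ref{thm_stieltjes}, by the convergence just established, and by an argument for $1-A_{n,t}$ analogous to the one for $\mathcal{K}_n$ above. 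The whole assertion thus reduces to the estimate $|R_{n,t}(z)|\leq K/n$ uniformly. I would obtain this per summand in the definition of $R_{n,t}$ by writing $\beta_{j,t}=b_{j,t}+(\beta_{j,t}-b_{j,t})$: the piece $\E[b_{j,t} d_{j,t}]=b_{j,t}\,\E[d_{j,t}]$ is $O(n^{-1})$ because $\D_t^{-1}$ and $\D_{j,t}^{-1}$ differ by a rank-one Sherman--Morrison perturbation \eqref{sher_mor}, and the cross term $\E[(\beta_{j,t}-b_{j,t}) d_{j,t}]$ is also $O(n^{-1})$ by Cauchy--Schwarz applied to the concentration of $\beta_{j,t}$ around $b_{j,t}$ and the twelfth-moment bound on centered quadratic forms (cf.\ Lemma B.26 in \cite{bai2004}).

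The main obstacle I anticipate is obtaining $|R_{n,t}(z)|\leq K/n$ uniformly on $\mathcal{C}_n$, where $\im(z)$ may shrink like $n^{-1-\alpha}$ so that the naive bound $|\beta_{j,t}(z)|\leq |z|/\im(z)$ is far too crude. The resolution is never to use pointwise boundedness of $\beta_{j,t}$; instead one exploits its concentration around the deterministic quantity $b_{j,t}$ at rate $n^{-1/2}$ in $L^2$ together with the $n^{-1/2}$ concentration of $d_{j,t}$, yielding the net $n^{-1}$ rate via Cauchy--Schwarz and the twelfth-moment assumption. Uniformity in $t \in [t_0,1]$ is comparatively mild, since all bounds depend on $t$ only through $\ntn/n \in [t_0/2,1]$ for $n$ large, so pointwise estimates transfer to uniform ones.
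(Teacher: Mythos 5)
Your strategy is sound but runs in the opposite direction from the paper's, and it replaces the paper's main quantitative input by a cheaper one. The paper proves the uniform bound on $M_n^2$ \emph{first} and then obtains the convergence $\sut_{n,t}^0\to\sut_t$ as an immediate corollary of that bound together with Theorem \ref{thm_stieltjes} (since $\E[\sut_{n,t}]-\sut^0_{n,t}=M_n^2(z,t)/\ntn=O(1/n)$ uniformly); you instead prove the convergence of $\sut^0_{n,t}$ directly by a stability argument for the two Mar\v{c}enko--Pastur fixed-point equations and then feed it into the boundedness proof, which is legitimate and arguably more natural since that assertion is purely deterministic. The second difference is in how $R_{n,t}=O(1/n)$ is obtained: the paper derives it from the computation of the actual limit of $\ntn R_{n,t}(z)\E[\sut_{n,t}(z)]$ (statement \eqref{conv2}, proved in Section \ref{prconv2} via the decomposition \eqref{d_inv} and Lemma \ref{lem_cross_terms}), whereas you only need the order of magnitude, which your split $\beta_{j,t}=b_{j,t}+(\beta_{j,t}-b_{j,t})$, the rank-one identity \eqref{sher_mor} giving $\E[d_{j,t}(z)]=O(1/n)$, and Cauchy--Schwarz applied to the two $n^{-1/2}$-concentrating factors do deliver, provided you control $\beta_{j,t}$ by the truncation-type bound $|\beta_{j,t}|\lesssim 1+|\rd_j|^2+n^3 I\{\cdot\}$ rather than by $|z|/\im(z)$, exactly as you indicate. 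Your version is shorter because the full limit computation is only needed later, for the bias in Theorem \ref{thm_bias}, not for boundedness; you also need $|\E[\sut_{n,t}(z)]|$ bounded away from zero (Lemma \ref{a80} with Theorem \ref{thm_stieltjes}) to pass from the sum of $\E[\beta_{j,t}d_{j,t}]$ to $R_{n,t}$ itself.

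The one step you cannot wave through is the uniform lower bound on $|1-A_{n,t}(z)|$ (and on your $\mathcal{K}_n(z,t)$) down to $\im(z)=n^{-1}\varepsilon_n$. On $\mathcal{C}_l\cup\mathcal{C}_r$ the observation that the denominators $1+\lambda t\sut(z)$ stay away from zero is necessary but not sufficient: the two terms making up $\mathcal{K}_n$ could still nearly cancel. The paper spends most of its proof of this theorem precisely here, combining a Cauchy--Schwarz/imaginary-part identity with Lemma \ref{a72} (which gives $\im(\sut^0_{n,t}(z))\leq K\im(z)$ off the support, so that the resulting bound $1-\im(z)/(\im(z)+a)$ stays below $1-1/(1+K)$ uniformly even as $\im(z)\to0$) and with Lemma \ref{aux_im_z} to guarantee $\im(R_{n,t}(z))+\im(z)\geq0$; note that $|R_{n,t}|=O(1/n)$ alone does not yield the latter, since $\im(z)$ may be as small as $n^{-1-\alpha}$. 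In your ordering one can alternatively pass to the limit (both Stieltjes transforms converge uniformly to $\sut_t$) and invoke the strict inequality $ty\int\lambda^2|\sut_t(z)|^2|1+\lambda t\sut_t(z)|^{-2}dH(\lambda)<1$ characterizing points off the support, made uniform by compactness and continuity on the contour; either way, this argument has to be supplied explicitly.
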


With these preparations  we   show in Section \ref{prconv2} that 
	\begin{align} \label{conv2}	
		\ntn R_{n,t}(z)  \E [ \tilde{\su}_{n,t}(z) ]   \to
		\begin{cases}
		 \frac{y  \int \frac{\sut_t^2(z)\lambda^2}{(t \sut_t(z) \lambda + 1)^3 } dH(\lambda) }
			{1 - t y  \int \frac{\sut_t^2(z)\lambda^2}{( t \sut_t(z) \lambda + 1 )^2}  dH(\lambda)} & 
		 \textnormal{ for the real case,} \\
		0& \textnormal{ for the complex case,}
		\end{cases}
	\end{align}
 uniformly with respect to  $z\in\mathcal{C}_n, t\in[t_0,1]$.
 Combining this result  with Theorem \ref{thm_stieltjes} and Lemma \ref{a80} yields 
		\begin{align} \label{R_to_0}
		\lim\limits_{n\to\infty} 
			\sup\limits_{\substack{z\in\mathcal{C}_n \\ t\in [t_0,1] }} |R_{n,t}(z)| 
			= 0 
		\end{align}
 This result and Lemma \ref{a80}, Theorem \ref{thm_stieltjes}, 
		Theorem \ref{sn0_conv}, Proposition \ref{a61} and the equation \eqref{repl_a47}
		show that 
	\begin{align*} %\label{conv1}
		\frac{y_{n} \frac{\ntn}{n} \int \frac{\lambda^2 \tilde{\su}_{n,t}^0(z) dH_n(\lambda)}{( 1 + \lambda \frac{\ntn}{n}\E [\tilde{\su}_{n,t} (z) ] ) ( 1 + \lambda \frac{\ntn}{n} \tilde{\su}_{n,t}^0(z) ) }   }
	{  - z + y_{n} \int \frac{\lambda dH_n(\lambda) }{1 + \lambda \frac{\ntn}{n}\E [ \tilde{\su}_{n,t} (z) ] } - R_{n,t}(z)  }
	\to t y \int \frac{\lambda^2 \tilde{\su}_t^2(z) dH(\lambda) }{(1 + \lambda t \sut_t(z) )^2}~.
	\end{align*}		
 Observing the representation in \eqref{hol5},  Lemma \ref{a73} and Theorem \ref{sn0_conv}, this implies 
	\begin{align*}
	M_n^2(z,t) \to 
	\begin{cases} 
	\frac{t y  \int \frac{\sut_t^3(z)\lambda^2}{(t \sut_t(z) \lambda + 1)^3 } dH(\lambda) }
			{\lb 1 - t y  \int \frac{\sut_t^2(z)\lambda^2}{( t \sut_t(z) \lambda + 1 )^2}  dH(\lambda) \rb^2} 
			& 
		 \textnormal{ for the real case,} \\
		0& \textnormal{ for the complex case.}
			\end{cases} 
			%\label{aim_cov}
	\end{align*}
	 uniformly with respect  $z\in\mathcal{C}_n, t\in[t_0,1]$,  which completes the proof of Theorem \ref{thm_bias}.

\section{Proof of Theorem \ref{thm:u} } \label{sec4}
   
     Due to the invariance of $U_{n,t}$ under $H_0$, we may assume w.l.o.g. that
      that $\bfSigma_1 = \ldots \bfSigma_n = \mathbf{I}$, which implies	$\mathbf{\hat \Sigma}_{n,t} = \mathbf{B}_{n,t}.$
	We apply Theorem \ref{thm} for the special case $f_1(x) = x$, $f_2(x)=x^2, \T_n = \mathbf{I}$, that is
		\begin{align*}
		X_n(f_1,t) & =  \tr \lb \mathbf{B}_{n,t} \rb  - \ntn y_n , ~ \\
		X_n(f_2,t) & =  \tr \lb \mathbf{B}_{n,t}^2 \rb  -  \ntn y_n \Big (  \frac{\ntn}{n} + y_n \Big ) , 
		~t \in [t_0,1].
	\end{align*}
	Note that all 
	conditions from Theorem \ref{thm} are satisfied, and therefore 
	\begin{align*} %\label{det2}
	\lb (X_{n}(f_1,t))_{t\in[t_0,1]} , (X_{n}(f_2,t))_{t\in[t_0,1]}  \rb_{n \in \N}  \rightsquigarrow 
	 \lb (X(f_1,t))_{t\in[t_0,1]} , (X(f_2,t))_{t\in[t_0,1]}  \rb 
	\end{align*}
	in the space $\lb \ell^\infty([t_0,1]) \rb^2$, where $ \lb (X(f_1,t))_{t\in[t_0,1]} , (X(f_2,t))_{t\in[t_0,1]}  \rb$ is a Gaussian process. 
	Thus, it is left to calculate mean, covariance and the centering term appearing in Theorem \ref{thm}. A tedious calculation in Section \ref{sec_51}  shows 
     \begin{align}\label{det3}
		\E[ X(f_1,t)] = 0, ~
		\E [X(f_2,t)] = t y,
	\end{align}
	and  
	\begin{align} \nonumber 
		& \cov (X(f_1,t_1), X(f_1,t_2)) 
		=  2 y \min(t_1,t_2), \\
		& \cov (X(f_2,t_1), X(f_2,t_2)) 
		=  4 \min(t_1, t_2) y \left\{ 2 t_1 t_2 + \left[ \min(t_1, t_2) + 2 (t_1 + t_2)\right] y + 2 y^2 \right\},
		\label{det4}\\
	& 	\cov (X(f_1,t_1) , X(f_2,t_2) )  = 4 \min(t_1,t_2) y (t_2 + y). 
	 \nonumber 
	\end{align}	
	In Section \ref{sec_51}, we also calculate the centering terms for $X_n(f_1,t)$ and $X_n(f_2,t).$
With the definition 
$
		\phi(x,y) = \frac{y}{x^2} , 
$
we obtain the representation 
	\begin{align*}
		U_{n,t} = \phi\Big (  \frac{1}{p} \tr ( \mathbf{B}_{n,t}), \frac{1}{p} \tr ( \mathbf{B}_{n,t}^2) \Big ) . 
	\end{align*}
	for the process $	U_{n,t}$ in  \eqref{det1}.  
	Consequently, the assertion can be proved by the functional delta method.

	To be precise, note that it follows from $y_n =p/n$
		\begin{align} \label{det6} 
		& p \begin{pmatrix}
		  \frac{1}{p} \tr ( \mathbf{B}_{n,t}) - \frac{\ntn}{n}  \\
		   \frac{1}{p}  \tr ( \mathbf{B}_{n,t}^2) - \frac{\ntn}{n} \lb \frac{\ntn}{n} + y_n\rb  
		\end{pmatrix}_{t \in [t_0,1] }
%		= 
%		\begin{pmatrix}
%		&  \lb  \tr ( \mathbf{B}_{n,t}) - \ntn y_n \rb_{t \in [t_0,1] } \\
%		 & \lb  \tr ( \mathbf{B}_{n,t}^2) - \ntn y_n \lb \frac{\ntn}{n} + y_n\rb   \rb_{t \in [t_0,1] }
%		\end{pmatrix}_{n\in\N} \\https://www.overleaf.com/project/6040a54ea7fcff10f338571f
		 \rightsquigarrow
		\begin{pmatrix}
		X(f_1,t)  \\
		   X(f_2,t) 
		\end{pmatrix}_{t \in [t_0,1] }
	\end{align}
in $ \lb \ell^\infty([t_0,1])\rb^2.$
	 Let $a_{n}(t)= \frac{\ntn}{n}$ and $b_{n}(t)= \frac{\ntn}{n} \big (  \frac{\ntn}{n} + y_n \big )$, such that 
	\begin{align*}
		\lim\limits_{n\to\infty} a_{n}(t) = t = a(t), ~
		\lim\limits_{n\to\infty} b_{n}(t) = t(t+y) = b(t)
	\end{align*}
	uniformly in $t\in [t_0,1]$.
 For a sequence $(h_{n,1},h_{n,2} )_{n\in\N}$ in $ (\ell^\infty([t_0,1] ))^2$ converging to $0$, a straightforward calculation shows that 
	\begin{align}
		&   p \left\{ \phi \lb a_{n} + p\inv h_{n,1} , b_{n} + p\inv h_{n,2} \rb - 
		\phi \lb a_{n}  , b_{n}  \rb
		\right\} 
		%= & p \left\{ 
		%\frac{b_{n}(t) + p\inv h_{n,2}(t)}{\lb a_{n}(t) + p \inv h_{n,1}(t) \rb^2 }
		%- \frac{b_{n}(t) }{a_{n}(t)^2 }
		%\right\}
		%= \frac{h_{n,2}(t)}{\lb a_{n}(t) + p \inv h_{n,1}(t) \rb^2} 
% 		+ \frac{b_{n}(t) \lb p a_{n}^2(t) - p \lb a_{n}(t) + p\inv h_{n,1}(t) \rb^2 \rb }{a_{n}^2(t) \lb a_{n}(t) + p \inv h_{n,1}(t) \rb^2} \nonumber \\
% 		= & \frac{h_{n,2}(t)}{\lb a_{n}(t) + p \inv h_{n,1}(t) \rb^2} 
% 		+ \frac{ b_{n}(t) \lb  - 2 a_{n}(t) h_{n,1}(t) - p\inv h_{n,1}^2 (t)  \rb }
% 		{a_{n}^2(t) \lb a_{n}(t) + p \inv h_{n,1}(t) \rb^2} \nonumber %\label{phi_deriv}
% 		\\
		\to 
% 		\frac{h_2(t)}{t^2} - 2  \frac{ t(t+y) t h_1 (t) }{t^4} 
% 		= \frac{h_2(t)}{t^2} - 2h_1(t) \frac{t+y}{t^2}
% 		 \frac{h_2 - 2h_1  (t+y)}{t^2} 
% 		 = 
		 \frac{h_2}{a^2} -  \frac{2 b h_1}{a^3}
		 = \phi'_{(a,b)}(h_1 , h_2 )
		\nonumber 
	\end{align}
	in $l^\infty([t_0,1])$, as $ n \to \infty$. 
	Moreover, we have
	\begin{align*}
		\phi \lb a_{n}(t)  , b_{n}(t)  \rb
%		\phi \lb \frac{\ntn}{n} , \frac{\ntn}{n} \lb \frac{\ntn}{n} + y_n \rb \rb 
		= \frac{\frac{\ntn}{n} \big ( \frac{\ntn}{n} + y_n \big )}
		{\big ( \frac{\ntn}{n} \big )^2}
		= \frac{n}{\ntn} \Big ( \frac{\ntn}{n} + y_n \Big ) 
		= 1 + y_{\ntn} .
	\end{align*}
	Thus, it follows from \eqref{det6} and  Theorem 3.9.5 in \cite{vandervaart1996} that
	\begin{align*}
	& p \left\{ U_{n,t} - 1 - y_{\ntn} \right\}_{t\in[t_0,1]  }
	= p \Big\{  \phi\Big ( \frac{1}{p} \tr ( \mathbf{B}_{n,t}), \frac{1}{p} \tr ( \mathbf{B}_{n,t}^2) \Big )
	 - 	\phi \lb a_{n}(t)  , b_{n}(t) \rb 
	 %\phi\lb \frac{\ntn}{n} , \frac{\ntn}{n} \lb \frac{\ntn}{n} + y_n\rb \rb 
	 \Big\}_{t\in[t_0,1]  }
	 %\\ 	& 
	 \rightsquigarrow	 
%	\lb \phi'_{(t,t(t+y))}(X(f,t), X(g,t)) \rb_{t\in[t_0,1]}
%	= 
	(U_t)_{t\in[t_0,1]}
	\end{align*}
	in $\ell^\infty([t_0,1])$,
	where
	\begin{align*}
		U_t =  \frac{X(f_2,t) - 2X(f_1,t) (t+y)}{t^2}, ~ t\in[t_0,1] 
	\end{align*}
is a Gaussian process.
	Recalling \eqref{det3} and \eqref{det4} 
	we obtain  for $t,t_1,t_2\in[t_0,1]$  with $t_2 \leq t_1$  by straightforward calculations 
	\begin{align*}
		\E [ U_t ]  = & \frac{1}{t^2} \lb \E [ X(f_2,t) ]  - 2 (t+y) \E [ X(f_1,t) ]  \rb 
		=  \frac{t y}{t^2}  = y_t , \\
		\cov (U_{t_1} , U_{t_2} ) = & \frac{1}{t_1^2 t_2^2}\cov \lb X(f_2,t_1) - 2(t_1+y)X(f_1,t_1) , X(f_2,t_2) - 2 (t_2+y)X(f_1,t_2) \rb \\
%		= &  \frac{1}{t_1^2 t_2^2} \Big\{ 
%		\cov ( X(g,t_1), X(g,t_2) )  
%		- 2 (t_2 +y) \cov ( X(g,t_1), X(f,t_2) ) \\
%		& - 2 (t_1 +y) \cov(X(f,t_1) , X(g,t_2) ) 
%		+ 4 (t_1 + y) (t_2 + y) \cov ( X(f,t_1) , X(f,t_2) ) 
%		\Big\} \\
		= & \frac{1}{t_1^2 t_2^2} \Big\{ 
		4 t_2 y \left\{ 2 t_1 t_2 + \left[ t_2 + 2 (t_1 + t_2)\right] y + 2 y^2 \right\}
		- 2 (t_2 +y) 4 \min(t_1,t_2) y (t_1 + y) \\
		& - 2 (t_1 +y) 4 \min(t_1,t_2) y (t_2 + y)
		+ 4 (t_1 + y) (t_2 + y) 2 y \min(t_1,t_2) 
		\Big\} \\
		= & \frac{1}{t_1^2 t_2^2} \Big\{ 
		4 t_2 y \left\{ 2 t_1 t_2 + \left[ t_2 + 2 (t_1 + t_2)\right] y + 2 y^2 \right\}
		- 8 (t_2 +y)  \min(t_1,t_2) y (t_1 + y) 
		\Big\} \\
		=& 4 \frac{y^2}{t_1^2}  = 4 y_{\max(t_1,t_2)}^2.
	\end{align*}
	which proves the assertion of Theorem \ref{thm:u}.

\bigskip
\bigskip

{\bf Acknowledgements.} 
The work of 
N.  D\"ornemann was 
supported by the Deutsche
Forschungsgemeinschaft
(RTG 2131). We are grateful to Zhidong Bai and Jack Silverstein for the discussion of the proof of  formula (9.11.1) in \cite{bai2004}.

\setlength{\bibsep}{1pt}
\begin{small}

\end{small}

\newpage 

\begin{appendices} % Name dieser section kann man in Zeile 42 ändern 

\section{Details for the arguments in Section \ref{sec_proof_thm}} 

	\begin{lemma} \label{a37}
	Let $\Gamma_{F}$ denote the support of a cdf $F$. Then it holds
	\begin{align*}
		\Gamma_{\tilde{F}^{y_{\ntn}, H_n}} \subset 
		\Big[ \frac{\lfloor n t_0 \rfloor}{n} \lambda_{\min}(\T_n) I_{(0,1)} (y_{\lfloor n t_0 \rfloor}) ( 1 - \sqrt{y_{\lfloor n t_0 \rfloor }} )^2,  \lambda_{\max}(\T_n) (1+\sqrt{y_{\lfloor n t_0 \rfloor}})^2  \Big].
	\end{align*} 
	\end{lemma}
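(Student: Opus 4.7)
The plan is to invoke the known characterization of the support of the generalized Mar\v{c}enko–Pastur distribution (as in Silverstein–Choi 1995 / \cite{baisilverstein1998}), translate it through the rescaling $x \mapsto (n/\ntn)x$ relating $F^{y_{\ntn},H_n}$ and $\tilde{F}^{y_{\ntn},H_n}$, and then show by an elementary monotonicity argument in the parameter $t$ that both endpoints are worst at $t=t_0$.

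First I would recall the standard bound for the support of the generalized Mar\v{c}enko–Pastur law $F^{y_s,H_n}$ with ratio $y_s=p/s$ and population spectrum $H_n$, namely
\begin{align*}
\Gamma_{F^{y_s,H_n}} \subset \Big[\lambda_{\min}(\T_n)\,I_{(0,1)}(y_s)(1-\sqrt{y_s})^2,\ \lambda_{\max}(\T_n)(1+\sqrt{y_s})^2\Big].
\end{align*}
Since by \eqref{def_gen_MP} we have $\tilde{F}^{y_{\ntn},H_n}(x)=F^{y_{\ntn},H_n}((n/\ntn)x)$, the support of $\tilde{F}^{y_{\ntn},H_n}$ is the support of $F^{y_{\ntn},H_n}$ scaled by the factor $\ntn/n$. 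Applying the display above with $s=\ntn$ therefore yields
\begin{align*}
\Gamma_{\tilde F^{y_{\ntn},H_n}} \subset \Big[\tfrac{\ntn}{n}\lambda_{\min}(\T_n)I_{(0,1)}(y_{\ntn})(1-\sqrt{y_{\ntn}})^2,\ \tfrac{\ntn}{n}\lambda_{\max}(\T_n)(1+\sqrt{y_{\ntn}})^2\Big].
\end{align*}

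The remaining step is to compare these $t$-dependent endpoints with the endpoints stated in the lemma (which correspond to $t=t_0$). For the upper endpoint, since $\ntn\leq n$ and $y_{\ntn}=p/\ntn\leq p/\lfloor nt_0\rfloor=y_{\lfloor nt_0\rfloor}$ for $t\in[t_0,1]$, I get the trivial chain
\begin{align*}
\tfrac{\ntn}{n}(1+\sqrt{y_{\ntn}})^2 \leq (1+\sqrt{y_{\ntn}})^2 \leq (1+\sqrt{y_{\lfloor nt_0\rfloor}})^2,
\end{align*}
so that multiplication by $\lambda_{\max}(\T_n)$ gives the required upper bound. For the lower endpoint, the case $y_{\lfloor nt_0\rfloor}\geq 1$ is trivial because the indicator then kills the bound. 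In the remaining case $y_{\lfloor nt_0\rfloor}<1$ we also have $y_{\ntn}<1$, so both indicators equal one; expanding the square gives the clean identity
\begin{align*}
\tfrac{\ntn}{n}(1-\sqrt{y_{\ntn}})^2 = \tfrac{(\sqrt{\ntn}-\sqrt{p})^2}{n},\qquad \tfrac{\lfloor nt_0\rfloor}{n}(1-\sqrt{y_{\lfloor nt_0\rfloor}})^2 = \tfrac{(\sqrt{\lfloor nt_0\rfloor}-\sqrt{p})^2}{n},
\end{align*}
and since $\ntn\geq \lfloor nt_0\rfloor>p$ we conclude $\sqrt{\ntn}-\sqrt{p}\geq\sqrt{\lfloor nt_0\rfloor}-\sqrt{p}\geq 0$, which after multiplying by $\lambda_{\min}(\T_n)$ yields the desired inequality.

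There is no genuine obstacle here: the only thing to be slightly careful about is the indicator function at the lower endpoint, where one must treat the degenerate case $y_{\lfloor nt_0\rfloor}\geq 1$ separately, and the identity $\frac{s}{n}(1\pm\sqrt{p/s})^2=(\sqrt{s}\pm\sqrt{p})^2/n$, which converts the bounds into a form where monotonicity in $s=\ntn$ (respectively in $s\wedge n$) is transparent. The quoted support result for $F^{y_s,H_n}$ is used as a black box.
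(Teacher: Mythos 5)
Your proposal is correct and follows exactly the route the paper intends: the paper simply cites the standard support bound for the generalized Mar\v{c}enko--Pastur law (Lemma 6.1 in Bai and Silverstein, or Proposition 2.17 in Yao et al.) and omits the remaining details, which are precisely the rescaling by $\ntn/n$ and the monotonicity in $t$ that you carry out. Your handling of the lower endpoint via the identity $\tfrac{s}{n}(1-\sqrt{p/s})^2=(\sqrt{s}-\sqrt{p})^2/n$ and the separate treatment of the case $y_{\lfloor nt_0\rfloor}\geq 1$ is a clean and complete way to fill in what the paper leaves implicit.
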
	
	The proof of Lemma \ref{a37} follows from  Lemma 6.1, \cite{bai2004} or Proposition 2.17, \cite{yao2015} and is therefore omitted.

% 	\begin{proposition}[Lemma 6.1, \cite{bai2004}; Proposition 2.17, \cite{yao2015}]
% 	If $\lambda \notin \Gamma_{\underline{F}^{y_{\ntn}, H_n}}$, then $\underline{s}_{n,t}(\lambda) \neq 0$ and $\alpha= - 1/ \su_{n,t}(\lambda)$ satisfies 
% 	\begin{enumerate}
% 	\item $\alpha \notin \Gamma_{H_n}$ and $\alpha \neq 0$,
% 	\item $\psi '(\alpha) > 0 $.
% 	\end{enumerate}
% 	Conversely, if $\alpha$ satisfies 1-2, then $\lambda = \psi(\alpha) \notin \Gamma_{\underline{F}^{y_{\ntn}, H_n}}$.
% 	\end{proposition} 
	
	The following lemma ensures that the process $(\hat{M}_n(z,t))_{z\in\mathcal{C}^+, t\in[t_0,1]}$ defined in \eqref{def_hat_m} provides an appropriate approximation  for  the  process $(M_n(z,t))_{z\in\mathcal{C}^+, t\in[t_0,1]}$.
	
	\begin{lemma} \label{a36}
	Let $i\in\{1,2\}.$
	It holds for all large $n$ and for all $z \in\mathcal{C}^+, t \in [t_0,1]$ with probability 1
% 	\begin{align*}
% 		\left| \int \limits_{\mathcal{C}} f_i(z) \lb M_n(z,t) - \hat{M}_n(z,t) \rb dz \right| 
% 		& \leq 4 K  \varepsilon_n \Bigg\{ \left| x_r - \max \lb \lambda_{\max}^{\mathbf{B}_{n,t}} , 
% 		\lambda_{\max}^{\mathbf{T}_n} ( 1 + \sqrt{y_{\lfloor nt \rfloor }} )^2 \rb \right|\inv \\
% 		& + 	\left| x_l - \min \lb  \lambda_{\min}^{\mathbf{B}_{n,t}} , 
% 		\lambda_{\min}^{\mathbf{T}_n} I_{(0,1)}  (y_{ \lfloor nt \rfloor }) t_0 (1 - \sqrt{ y_{ \lfloor nt \rfloor } } )^2 \rb \right|\inv
% 		\Bigg\}.
% 	\end{align*}
% 	This implies with probability 1 
	(uniformly in $t \in [t_0, 1]$)
	\begin{align*}
		\Big| \int \limits_{\mathcal{C}} f_i(z) \lb M_n(z,t) - \hat{M}_n(z,t) \rb dz \Big|  = o(1), \textnormal{ as } n\to\infty. 
	\end{align*}
	\end{lemma}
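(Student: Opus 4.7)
The plan is to exploit the definition \eqref{def_hat_m}, which ensures that $M_n(z,t) - \hat M_n(z,t) = 0$ for all $z \in \mathcal{C}_n$. Consequently, the only contribution to the integral comes from the four short segments near the real axis, namely
\[
\mathcal{C}_r^0 = \{x_r + iv : v \in [0, n^{-1}\varepsilon_n]\}, \qquad \mathcal{C}_l^0 = \{x_l + iv : v \in [0, n^{-1}\varepsilon_n]\},
\]
together with their complex conjugates, whose total arc length is $4 n^{-1} \varepsilon_n$. The strategy is then to bound the integrand uniformly by $O(p)$ on each such segment, so that the total contribution is of order $p \cdot n^{-1} \varepsilon_n = O(\varepsilon_n) = o(1)$.

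For the pointwise bound, I would use the elementary observation that if $\mathbf{A}$ is Hermitian nonnegative definite with spectrum contained in an interval $I$, then $|s_{F^{\mathbf{A}}}(z)| \leq 1/\operatorname{dist}(z, I)$ for any $z \notin I$. Applied to $\mathbf{A} = \mathbf{B}_{n,t}$ together with \eqref{a33} from the proof of Theorem \ref{thm}, this shows that $|s_{F^{\mathbf{B}_{n,t}}}(z)|$ is bounded by a deterministic constant on $\mathcal{C}_r^0 \cup \mathcal{C}_l^0 \cup \overline{\mathcal{C}_r^0} \cup \overline{\mathcal{C}_l^0}$ with probability $1$ for all sufficiently large $n$. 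The corresponding bound for $|s_{\tilde F^{y_{\ntn}, H_n}}(z)|$ follows deterministically from Lemma \ref{a37} together with the choice of $x_l, x_r$. Since $\hat M_n(\cdot, t)$ is defined on the short segments simply as the value of $M_n(\cdot, t)$ at the top endpoint $x_r + i n^{-1}\varepsilon_n$ (respectively $x_l + i n^{-1}\varepsilon_n$), the same bound applies to $\hat M_n$, giving
\[
|M_n(z,t) - \hat M_n(z,t)| \leq C p
\]
uniformly in $z$ on the short segments and in $t \in [t_0, 1]$, with probability one for all large $n$.

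Combining this with the boundedness of $f_i$ on the compact contour $\mathcal{C}$ and the total length $4 n^{-1}\varepsilon_n$ of the short segments yields
\[
\Big| \int_\mathcal{C} f_i(z) \lb M_n(z,t) - \hat M_n(z,t) \rb dz \Big| \leq C' \, p \, n^{-1} \varepsilon_n = O(\varepsilon_n),
\]
which tends to $0$ uniformly in $t \in [t_0,1]$ because $\varepsilon_n \to 0$ and $p/n \to y < \infty$. The main obstacle is ensuring the uniformity in $t$ of the almost-sure spectral separation: a priori \eqref{a33} holds only on a null-exceptional set that may depend on $t$. This is resolved precisely as in the proof of Theorem \ref{thm} by noting that $\{\lfloor nt \rfloor : t \in [t_0, 1]\}$ is finite for each fixed $n$, so only a countable union of null sets is involved and can be absorbed into a single exceptional null set.
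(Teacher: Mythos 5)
Your proposal is correct and follows essentially the same route as the paper's proof: the difference $M_n-\hat M_n$ vanishes off the four short vertical segments of total length $4n^{-1}\varepsilon_n$, the integrand is bounded there by $Cp$ via the estimate $|s_F(z)|\leq 1/\operatorname{dist}(z,\Gamma_F)$ combined with \eqref{a33} and Lemma \ref{a37}, and the $t$-uniformity of the exceptional null set is obtained from the countable-union argument. No gaps.
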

	\begin{proof}[Proof of Lemma \ref{a36}]
	For convenience, we write $f_i = f.$
	Since $\mathcal{C} = \mathcal{C}^+ \cup \overline{\mathcal{C}^+}$ and $M_n(\overline{z},t)=\overline{M_n(z,t)}$ for all $z = x + iv \in\mathcal{C}^+$, we have
(using also the definition of $\hat{M}_n$)
		% für x_l <0 stimmen M_n und \hat{M}_n noch auf einer größeren Menge überein. 
		\begin{align*}
			\Big| \int \limits_{\mathcal{C}} f(z) \lb M_n(z,t) - \hat{M}_n(z,t) \rb dz \Big|
			& \leq K \int\limits_{[0, n\inv \varepsilon_n]} \Big\{ | M_n( x_r + iv, t ) - M_n (x_r + i n\inv \varepsilon_n, t ) | 
			\\ & ~~~~~~ ~~~~~~ + | M_n ( x_l + iv, t) - M_n ( x_l+i n\inv \varepsilon_n, t ) | \Big\} dv.
		\end{align*}
	Let $\Gamma_F$ denote the support of a c.d.f. $F$, then it follows by Proposition 2.4 in \cite{yao2015} that
		\begin{align}\label{det70}
			\left| s_F(z) \right| \leq \frac{1}{\textnormal{dist} (z, \Gamma_F)},
		\end{align}
		where $z \in \mathbb{C} \setminus \Gamma_F$ and $s_F$ is the Stieltjes transform of $F$.
		Using \eqref{a33} and Lemma \ref{a37}, we have for $v\in [0,n\inv \varepsilon_n]$ and sufficiently large $n$   
		\begin{align*}
			\textnormal{dist} \lb x_r + iv, \Gamma_{F^{\mathbf{B}_{n,t}}} \rb 
			& \geq \big| x_r -  \lambda_{\max}(\mathbf{B}_{n,t}) \big| 
			\geq \big| x_r - \max \big( \lambda_{\max}(\mathbf{B}_{n,t}),
			 \lambda_{\max}(\mathbf{T}_n) ( 1 + \sqrt{ y_{\lfloor nt \rfloor}} )^2  \big) \big|, \\
			\textnormal{dist} \lb x_l + iv, \Gamma_{\tilde{F}^{y_{\ntn}}, H_n} \rb 
			& \geq \big| x_l - \frac{\lfloor nt_0 \rfloor}{n}\lambda_{\min}(\mathbf{T}_n) I_{(0,1)} (y_{ \lfloor nt \rfloor })  (1 - \sqrt{ y_{ \lfloor nt \rfloor } } )^2  \big| \\
			& \geq \big| x_l - \min\big(  \lambda_{\min}(\mathbf{B}_{n,t}) , 
			\frac{\lfloor nt_0 \rfloor}{n} \lambda_{\min}(\mathbf{T}_n) I_{(0,1)} (y_{ \lfloor nt \rfloor })  (1 - \sqrt{ y_{ \lfloor nt \rfloor } } )^2  \big) \big|.
		\end{align*}
		Similarly, one can show that for sufficiently large n
		\begin{align*}
		\textnormal{dist} \lb x_r + iv, \Gamma_{\tilde{F}^{y_{\ntn}}, H_n} \rb 
		&\geq \left| x_r - \max \lb  \lambda_{\max}(\mathbf{B}_{n,t}),
		 \lambda_{\max}(\mathbf{T}_n) ( 1 + \sqrt{ y_{\lfloor nt \rfloor} } )^2 \rb \right|, \\
		 \textnormal{dist} \lb x_l + iv, \Gamma_{F^{\mathbf{B}_{n,t}}} \rb 
		 & \geq \big| x_l - \min\big(  \lambda_{\min}(\mathbf{B}_{n,t}) , 
		\frac{\lfloor nt_0 \rfloor}{n} \lambda_{\min}(\mathbf{T}_n) I_{(0,1)} (y_{ \lfloor nt \rfloor })  (1 - \sqrt{ y_{ \lfloor nt \rfloor } } )^2  \big) \big|.
		\end{align*}
	Recall   the definition of $M_n$, 
		then \eqref{det70}    implies
	\begin{align*}
		\Big| \int \limits_{\mathcal{C}} f(z) \lb M_n(z,t) - \hat{M}_n(z,t) \rb dz \Big| 
		& \leq 4 K  \varepsilon_n \Big\{ \left| x_r - \max \lb \lambda_{\max}(\mathbf{B}_{n,t}) , 
		\lambda_{\max}(\mathbf{T}_n) ( 1 + \sqrt{y_{\lfloor nt \rfloor }} )^2 \rb \right|\inv \\
		& + 	\big| x_l - \min \big( \lambda_{\min}(\mathbf{B}_{n,t}) , 
		\lambda_{\min}(\mathbf{T}_n) I_{(0,1)} (y_{ \lfloor nt \rfloor }) \frac{\lfloor nt_0 \rfloor}{n} (1 - \sqrt{ y_{ \lfloor nt \rfloor } } )^2 \big) \big|\inv
		\Big\}.
	\end{align*}
	Due to \eqref{a33}, for every $t \in [t_0,1]$, the denominators are bounded away from 0 for sufficiently large $n$ with probability 1 (nullset may depend on $t$). Note that for every $n \in \N$, there are only finitely many $t_1,t_2 \in [t_0,1]$ such that $\nt \neq \ntt$. That is, since the countable union of nullsets is again a nullset, we find that with probability 1 (uniformly in $t$)
	\begin{align*}
		& \limsup\limits_{n \to \infty} \Big| \int \limits_{\mathcal{C}} f(z) \lb M_n(z,t) - \hat{M}_n(z,t) \rb dz \Big| \\
		& \leq 4 K \lim\limits_{n \to \infty} \varepsilon_n   
		 \Big\{ \Big( x_r - \limsup\limits_{n\to\infty} \max \lb \lambda_{\max}(\mathbf{B}_{n,t}) , 
		\lambda_{\max}(\mathbf{T}_n) ( 1 + \sqrt{y_{\lfloor nt \rfloor }} )^2 \rb \Big) \inv \\
		 & + 	\Big( \liminf\limits_{n\to\infty} \min \Big( \lambda_{\min}(\mathbf{B}_{n,t}) , 
		\lambda_{\min}(\mathbf{T}_n) I_{(0,1)} (y_{ \lfloor nt \rfloor }) \frac{\lfloor nt_0 \rfloor}{n} (1 - \sqrt{ y_{ \lfloor nt \rfloor } } )^2 \Big) - x_l \Big) \inv
		\Big\} \\
		& \leq 4 K \lim\limits_{n \to \infty}  \varepsilon_n  \Big\{ \Big( x_r - \limsup\limits_{n\to\infty} 
		\lambda_{\max}(\mathbf{T}_n) ( 1 + \sqrt{y_{\lfloor nt_0 \rfloor }} )^2 \Big) \inv \\
		 & + 	\Big( \liminf\limits_{n\to\infty} 
		\lambda_{\min}(\mathbf{T}_n) I_{(0,1)} (y_{ \lfloor nt_0 \rfloor }) \frac{\lfloor nt_0 \rfloor}{n} (1 - \sqrt{ y_{ \lfloor nt_0 \rfloor } } )^2 - x_l \Big) \inv
		\Big\} = 0.
	\end{align*}
	% mit Wahrscheinlichkeit 1, da x_r - limsup... >0 bzw liminf.. - x_l >0 mit Wahrscheinlichkeit 1 (folgt aus der fast sicheren Konvergenz der Eigenwerte)
	\end{proof}

\section{More details for the proof of  Theorem \ref{lem} }

In this section we provide the remaining arguments in the proof of  Theorem \ref{lem} in Section  \ref{sec_proof_lem}.
Several further very technical results are given in Section \ref{aux}.

\subsection{Proof of Lemma \ref{hat}} \label{sec_proof_lemma_hat}

To be precise, recall the definition of  $Z_n^1$ and $Z_n^2$
in \eqref{det7} and \eqref{det8} and define $\hat{Z}_n^1$ and $\hat{Z}_n^2$ by  $Z_n^1$ and $Z_n^2$, respectively, in the same way as $\hat{M}_n^1$ is defined by  $M_n^1$ in equation  \eqref{def_hat_m}.  
		The bounds \eqref{ineq_hat2} and \eqref{ineq_hat3} for the moments of $\hat{Z}_n^1$ and $\hat{Z}_n^2$ follow directly from corresponding bounds \eqref{tight_t1} and \eqref{tight_t2} in Lemma \ref{no_hat}. 
		
		We continue by proving the first assertion \eqref{ineq_hat1}.
		If $z_1$ and $z_2$ are both contained in $\mathcal{C}_n$, the assertion directly follows from \eqref{tight_z}. Otherwise we assume that $N\in\N$ is sufficiently large so that for all $n\geq N$
		\begin{align*}
			v_0 > \varepsilon_n n\inv.
		\end{align*}
		Let $z_1\in \mathcal{C}_n$ and $z_2 \notin \mathcal{C}_n$, that is,
		$0 \leq \textnormal{Im} (z_2) \leq \varepsilon_n n\inv 
		\leq \textnormal{Im}(z_1).$
		With the notation Re$(z_2)= x\in\{x_l,x_r\}$ we have from \eqref{tight_z}
		\begin{align*}
			\E | \hat{M}_n^1(z_1,t) - \hat{M}_n^1(z_2,t) |^{2+\delta}
			= & \E | M_n^1(z_1,t) - M_n^1(x + i\varepsilon_n n\inv,t) |^{2+\delta}
			\lesssim  |z_1 - (x + i\varepsilon_n n\inv ) |^{2 + \delta} \\
			\leq &   \left[ ( \textnormal{Re} (z_1) - x )^2 
			+  (\textnormal{Im}(z_1) - \varepsilon_n n\inv)^2
			\right]^{(2+\delta)/2} \\
			\leq &  \left[ ( \textnormal{Re} (z_1) - x )^2 
			+  (\textnormal{Im}(z_1) - \textnormal{Im}(z_2))^2 \right]^{(2 + \delta)/2} \\
			= &  |z_1 - z_2|^{2 + \delta}.
		\end{align*}
		Finally, if both  $z_1,z_2 \in\mathcal{C}^+ \setminus \mathcal{C}_n$, it follows from \eqref{tight_z} that
		\begin{align*}
			\E | \hat{M}_n^1(z_1,t) - \hat{M}_n^1(z_2,t) |^{2+\delta}
			= & \E | M_n^1( \textnormal{Re} (z_1) + i \varepsilon_n n\inv ) 
			- M_n^1( \textnormal{Re} (z_2) + i \varepsilon_n n\inv ) |^{2 + \delta} \\
			\lesssim &  |\textnormal{Re}(z_1) - \textnormal{Re}( z_2 )|^{ 2 + \delta} 
			\leq   |z_1 - z_2| ^{2 + \delta},
		\end{align*}
which completes the proof of Lemma \ref{hat}.		

\subsection{Proof of Lemma \ref{a73}} \label{pra73}

	We begin by deriving an alternative form for $R_{n,t}(z)$. By 
	\begin{align*}
		\E [ \tilde{s}_{n,t} (z) ] = \frac{1}{y_{\ntn}} \E [ \tilde{\su}_{n,t} (z)] + \frac{1}{z y_{\ntn} } - \frac{1}{z}
	\end{align*}		
	and Lemma \ref{a52}, we have 
	\begin{align*} 
		 -\frac{\ntn}{n} \E \tilde{\su}_{n,t}(z) \Big( - z- & \frac{1}{\E [ \tilde{\su}_{n,t}(z) ] } + y_{n} \int \frac{\lambda d H_n(\lambda)}{1+ \lambda \frac{\ntn}{n}\E [ \tilde{\su}_{n,t} (z) ]}\Big) \\
		= & y_{n} \int \frac{dH_n(\lambda)}{1 + \lambda \frac{\ntn}{n} \E [ \tilde{\su}_{n,t}(z)] } + z y_{n} \E [ \tilde{s}_{n,t}(z ) ] \\
		 = & - n\inv \sum\limits_{j=1}^{\ntn} \E \Big[ \beta_{j,t}( z ) 
		 \Big( \rd_{j}^\star \D_{j,t}\inv(z ) ( \frac{\ntn}{n} \E \tilde{\su}_{n,t}(z) \T_n + \mathbf{I} )\inv \rd_{j}  \\
		 & ~~~~~
		 - \frac{1}{n}  \E [ \tr ( \frac{\ntn}{n} \E \tilde{\su}_{n,t}(z) \T_n + \mathbf{I} )\inv \T_n \D_t\inv(z ) ] \Big) \Big] \\
		  = & - y_{n} n\inv \sum\limits_{j=1}^{\ntn} \E \Big[ \beta_{j,t}( z ) 
		 \Big( \mathbf{q}_j^\star \T_n\sq \D_{j,t}\inv(z) ( \frac{\ntn}{n}\E \tilde{\su}_{n,t}(z) \T_n + \mathbf{I} )\inv \T_n\sq \mathbf{q}_j
		 \\
		 & ~~~~~
		 - \frac{1}{p}  \E [ \tr ( \frac{\ntn}{n} \E \tilde{\su}_{n,t}(z) \T_n + \mathbf{I} )\inv \T_n \D_t\inv(z) ] \Big) \Big] \\
		 = & - y_{n} n\inv \sum\limits_{j=1}^{\ntn} \E \left[ \beta_{j,t}( z ) 
		 d_{j,t}(z ) \right] 
		 = - \frac{\ntn}{n} y_{\ntn} n\inv \sum\limits_{j=1}^{\ntn} \E \left[ \beta_{j,t}( z ) 
		 d_{j,t}(z ) \right]. \\
	\end{align*}
	This implies
	\begin{align*}
		\frac{\ntn}{n} R_{n,t}(z) = -  z - \frac{1}{ \E [ \tilde{\su}_{n,t}(z) ] } + y_{n} \int \frac{\lambda d H_n(\lambda)}{1 + \lambda \frac{\ntn}{n} \E [ \tilde{\su}_{n,t}(z) ] },
	\end{align*}
	and we can conclude
	\begin{align}
	   \E [ \tilde{\su}_{n,t} (z) ] 	& - \tilde{\su}_{n,t}^0 (z) \nonumber \\
	= & \frac{1}{- z + y_{n} \int \frac{\lambda dH_n(\lambda) }{1 + \lambda \frac{\ntn}{n} \E [ \tilde{\su}_{n,t} (z) ] } - \frac{\ntn}{n} R_{n,t}(z) }
	- \frac{1}{- z + y_{n} \int \frac{\lambda dH_n(\lambda) }{1 + \lambda \frac{\ntn}{n} \tilde{\su}_{n,t}^0 (z) }  } \nonumber \\
	= & \frac{y_{n} \Big(  \int \frac{\lambda dH_n(\lambda) }{1 + \lambda \frac{\ntn}{n} \tilde{\su}_{n,t}^0 (z) } -  \int \frac{\lambda dH_n(\lambda) }{1 + \lambda \frac{\ntn}{n}\E [ \tilde{\su}_{n,t} (z) ] }  \Big) + \frac{\ntn}{n} R_{n,t}(z)  }
	{ \Big( - z + y_{n} \int \frac{\lambda dH_n(\lambda) }{1 + \lambda \frac{\ntn}{n} \E [ \tilde{\su}_{n,t} (z) ]  } -\frac{\ntn}{n}  R_{n,t}(z) \Big) \Big( - z + y_{n} \int \frac{\lambda dH_n(\lambda) }{1 + \lambda \frac{\ntn}{n} \tilde{\su}_{n,t}^0 (z) } \Big)  } \nonumber \\
	= & \frac{y_{n} \frac{\ntn}{n}\lb \E [ \tilde{\su}_{n,t}(z) ] - \tilde{\su}_{n,t}^0(z) \rb \int \frac{\lambda^2 dH_n(\lambda)}{( 1 + \lambda \frac{\ntn}{n}\E [ \tilde{\su}_{n,t} (z) ] ) ( 1 + \lambda \frac{\ntn}{n}\tilde{\su}_{n,t}^0(z) ) }  + \frac{\ntn}{n} R_{n,t}(z)  }
	{ \Big( - z + y_{n} \int \frac{\lambda dH_n(\lambda) }{1 + \lambda \frac{\ntn}{n} \E [ \tilde{\su}_{n,t} (z) ] } - \frac{\ntn}{n} R_{n,t}(z) \Big) \Big( - z + y_{n} \int \frac{\lambda dH_n(\lambda) }{1 + \lambda \frac{\ntn}{n}\tilde{\su}_{n,t}^0 (z) } \Big)  } \label{a90}\\
	= & \frac{y_{n} \frac{\ntn}{n}\lb \E [ \tilde{\su}_{n,t}(z) ] - \tilde{\su}_{n,t}^0(z) \rb \int \frac{\lambda^2 \tilde{\su}_{n,t}^0(z) dH_n(\lambda)}{( 1 + \lambda \frac{\ntn}{n}\E [\tilde{\su}_{n,t} (z) ] ) ( 1 + \lambda \frac{\ntn}{n} \tilde{\su}_{n,t}^0(z) ) }   }
	{  - z + y_{n} \int \frac{\lambda dH_n(\lambda) }{1 + \lambda \frac{\ntn}{n}\E [ \tilde{\su}_{n,t} (z) ] } - \frac{\ntn}{n}  R_{n,t}(z)  } \nonumber \\
	& + \frac{\ntn}{n} R_{n,t}(z)  \E [ \tilde{\su}_{n,t}(z) ] \tilde{\su}_{n,t}^0(z) . \nonumber
	\end{align}

	\subsection{Proof of Theorem \ref{thm_stieltjes}} \label{sec_proof_stieltjes}
%	\subsection{Uniform convergence of the Stieltjes transform} \label{sec_proof_stieltjes}

 In this section, $D[0,1]^2$ denotes the Skorokhod space  on $[0,1]^2$ \citep[see][for a formal definition]{bickel1971,neuhaus1971}. 
We  will identify the set $\mathcal{C}^+ \times [0,1]$ with the square  $[0,1]^2$ and  proceed in several steps. First, we will show a uniqueness condition,  second we prove the existence of a Skorokhod-limit of $(\E [\tilde{s}_{n,\cdot}(\cdot) ])_{n\in\N}$.  We conclude by proving that the Skorokhod-limit is in fact an uniform limit.

		\begin{lemma} \label{uni}
			Let $(\E [ \tilde{s}_{k(n), t}(z) ])_{n\in\N}$ and $(\E [ \tilde{s}_{l(n), t}(z) ])_{n\in\N}$ be two subsequences of $(\E [ \tilde{s}_{n, t}(z) ])_{n\in\N}$
			and $m_1$ and $m_2$ be functions on $\mathcal{C}^+ \times [t_0,1]$. 
			If for $z\in\mathcal{C}^+, t\in[t_0,1]$, 
			\begin{align*}
				\lim\limits_{n\to\infty} \E [ \tilde{s}_{k(n), t}(z) ] = m_1(z,t)
		\text{~~and ~~~}
				\lim\limits_{n\to\infty} \E [ \tilde{s}_{j(n), t}(z) ] = m_2(z,t),
			\end{align*}
			then we have for $z\in\mathcal{C}^+, t\in[t_0,1]$
			\begin{align*}
			    m_1(z,t) = m_2(z,t) = \tilde{s}_t(z),
			\end{align*}
			where $\tilde{s}_t$ denotes the Stieltjes transform of $\tilde{F}^{y_t,H}$ given in \eqref{def_gen_MP_lim}
		\end{lemma}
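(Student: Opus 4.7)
The plan is to establish that the full sequence $\E[\tilde{s}_{n,t}(z)]$ converges pointwise to $\tilde{s}_t(z)$ for each $z\in\mathbb{C}^+$ and $t\in[t_0,1]$; the claim about $m_1$ and $m_2$ will then follow immediately, since any subsequence of a convergent numerical sequence inherits the same limit.

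First I would appeal to the classical Marchenko--Pastur--Silverstein theorem. Rewriting $\mathbf{B}_{n,t}=(\ntn/n)\cdot \tfrac{1}{\ntn}\sum_{i=1}^{\ntn}\mathbf{T}_n^{1/2}\mathbf{x}_i\mathbf{x}_i^\star \mathbf{T}_n^{1/2}$ exposes a standard sample covariance matrix with sample size $\ntn$ and dimension $p$. Under assumptions (a)--(b) of Theorem \ref{thm} together with $t\geq t_0>0$, we have $\ntn\to\infty$, $p/\ntn\to y_t=y/t\in(0,\infty)$ and $F^{\mathbf{T}_n}\to H$, so the classical result (e.g.\ Theorem 4.3 of \cite{bai2004}) yields almost sure weak convergence of the empirical spectral distribution of that standard matrix to $F^{y_t,H}$. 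Combined with $\ntn/n\to t$, this gives $F^{\mathbf{B}_{n,t}}\Rightarrow \tilde{F}^{y_t,H}$ almost surely, and hence $\tilde{s}_{n,t}(z)\to \tilde{s}_t(z)$ almost surely for every fixed $z\in\mathbb{C}^+$ and $t\in[t_0,1]$.

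Second, since $\tilde{s}_{n,t}(z)$ is the Stieltjes transform of a probability measure, the deterministic bound $|\tilde{s}_{n,t}(z)|\leq 1/\im(z)$ holds uniformly in $n$ and $t$. Bounded convergence then yields $\E[\tilde{s}_{n,t}(z)]\to \tilde{s}_t(z)$ for each fixed $z\in\mathbb{C}^+$ and $t\in[t_0,1]$, so both $m_1(z,t)$ and $m_2(z,t)$ must coincide with $\tilde{s}_t(z)$.

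The only conceptual point requiring care is verifying the hypotheses of the classical theorem in the present sequential setting; this rests crucially on the restriction $t\geq t_0>0$, which simultaneously guarantees $\ntn\to\infty$ and keeps $y_t=y/t$ finite. An alternative route would derive an approximate version of the fundamental equation \eqref{repl_a47} for $\E[\tilde{\su}_{n,t}(z)]$ directly from Lemma \ref{a73}, pass to the limit along each subsequence, and invoke uniqueness of solutions within the class of Stieltjes transforms; this would make the ``uniqueness'' wording of the lemma more transparent, but would require a proof that $R_{n,t}(z)$ vanishes that is independent of Theorem \ref{thm_stieltjes}, so I would avoid that route to prevent circularity.
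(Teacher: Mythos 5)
Your proposal is correct, but it takes a genuinely different route from the paper. You identify the limit by invoking the classical Mar\v{c}enko--Pastur--Silverstein theorem for the ESD of the rescaled matrix $(n/\ntn)\mathbf{B}_{n,t}$ (legitimate here precisely because $t\ge t_0>0$ forces $\ntn\to\infty$ and $p/\ntn\to y_t\in(0,\infty)$), and then pass to expectations via the deterministic bound $|\tilde{s}_{n,t}(z)|\le 1/\im(z)$ and dominated convergence; this yields convergence of the \emph{full} sequence, from which the subsequence statement is immediate. The paper instead works directly with $\E[\tilde{s}_{n,t}(z)]$: adapting the resolvent expansion of \cite{baizhou2008} around the deterministic matrix $b_t(z)\T_n$, it shows that any subsequential limit satisfies the self-consistent equation $\tilde{s}_t(z)=\int\big(\lambda t(1-y_t(1+z\tilde{s}_t(z)))-z\big)^{-1}dH(\lambda)$ and then invokes uniqueness of solutions of that equation. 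This is essentially the ``alternative route'' you sketch at the end, but your worry about circularity is unfounded: the paper's derivation of the approximate equation is independent of Lemma \ref{a73} and does not require $R_{n,t}(z)\to 0$. Your approach is shorter given the classical theorem and proves slightly more; the paper's is self-contained at the level of expected Stieltjes transforms and makes the ``uniqueness'' phrasing of the lemma literal. One small caveat on your domination step: $\mathcal{C}^+$ as defined contains the two real endpoints $x_l$ and $x_r$, where the bound $1/\im(z)$ degenerates and $\E[\tilde{s}_{n,t}(z)]$ is not obviously controlled; for those points one needs the almost sure spectral separation \eqref{a33}, or one restricts to $\im(z)>0$, which is all that is used downstream (the paper's own argument is likewise only carried out for $\im(z)>0$).
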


	\begin{proof}[Proof of Lemma \ref{uni}]
	We will start   showing that a potential limit of the sequence  $(\E [ \tilde{s}_{n,\cdot}(\cdot)])_{n\in\N}$  satisfies an equation which admits a unique solution. 
	%obeys equation \eqref{a60}. 
	For this purpose we will  adapt ideas from \cite{baizhou2008} and also correct some arguments  in step 2 in the proof of their Theorem 1.1. To be precise, define for  $z\in\mathcal{C}^+$ and $t\in[t_0,1]$   
	\begin{align*}
		\mathbf{K} =b_t(z) \T_n ,
		% \mathbf{K} =\overline{\beta}_{k,t}(z) \T_n , Fehler bei Bai und Zhou!!!
	\end{align*}
	% Fehler bei Bai und Zhou!} \\ 
	and note that  
	\begin{align*}
	\D_t(z) - \Big( \frac{\ntn}{n}\mathbf{K} - z\mathbf{I} \Big) = \sum\limits_{k=1}^{\ntn} \rd_k \rd_k^\star - \frac{\ntn}{n}\mathbf{K}.
	\end{align*}

Multiplying  with $( (\ntn/ n)\mathbf{K} - z \mathbf{I})\inv$  and $\D_t\inv(z)$  from the left and from the  right, respectively, 
and using  identity (6.1.11) from \cite{bai2004} yields
	\begin{align*}
	& \Big( \frac{\ntn}{n}\mathbf{K} - z \mathbf{I} \Big) \inv - \D_t\inv(z) \\
	= &  \sum\limits_{k=1}^{\ntn} \Big( \frac{\ntn}{n}\mathbf{K} - z \mathbf{I}\Big) \inv \rd_k \rd_k^\star \D_t\inv(z) 
	- \frac{\ntn}{n} \Big( \frac{\ntn}{n}\mathbf{K} - z \mathbf{I}\Big) \inv \mathbf{K} \D_t\inv(z) \\
	= & \sum\limits_{k=1}^{\ntn} \beta_{k,t}(z) \Big( \frac{\ntn}{n}\mathbf{K} - z \mathbf{I}\Big) \inv \rd_k \rd_k^\star \D_{k,t}\inv(z)
	- \frac{\ntn}{n} \Big( \frac{\ntn}{n}\mathbf{K} - z \mathbf{I}\Big) \inv \mathbf{K} \D_t\inv(z). \\
	\end{align*}
	This implies for $l\in\{0,1\}$ 
%	\begin{align*}
%		& \T_n^l \lb \frac{\ntn}{n}\mathbf{K} - z \mathbf{I} \rb \inv - \T_n^l \D_t\inv(z) \\
%		=& \sum\limits_{k=1}^{\ntn} \beta_{k,t}(z) \T_n^l \lb \frac{\ntn}{n}  \mathbf{K} - z\mathbf{I} \rb\inv \rd_k \rd_k^\star \D_{k,t}\inv(z) - \frac{\ntn}{n}\T_n^l \lb \frac{\ntn}{n}\mathbf{K} - z\mathbf{I} \rb \inv \mathbf{K} \D_t\inv(z). 
%	\end{align*}
%	Taking traces and dividing by $p$, we conclude
	\begin{align*}
		& \frac{1}{p} \tr  \T_n^l \Big( \frac{\ntn}{n} \mathbf{K} - z \mathbf{I} \Big) \inv 
		- \frac{1}{p} \tr \T_n^l \D_t\inv(z) \\
		= & \frac{1}{p}\sum\limits_{k=1}^{\ntn} \beta_{k,t}(z) \rd_k^\star \D_{k,t}\inv(z) \T_n^l \Big( \frac{\ntn}{n}\mathbf{K} - z\mathbf{I} \Big) \inv \rd_k 
		 - \frac{1}{p} \tr \frac{\ntn}{n}\T_n^l \Big( \frac{\ntn}{n}\mathbf{K} - z\mathbf{I} \Big)\inv \mathbf{K} \D_t\inv(z) \\
		 = & \frac{1}{p} \sum\limits_{k=1}^{\ntn} \beta_{k,t}(z) \varepsilon_k,
	\end{align*}
	where 
	\begin{align*}
		\varepsilon_k 
		= & \rd_k^\star \D_{k,t}\inv(z) \T_n^l \Big( \frac{\ntn}{n}\mathbf{K} - z\mathbf{I} \Big)\inv \rd_k 
		 - n\inv \beta_{k,t}\inv(z) \tr \T_n^l \Big( \frac{\ntn}{n}\mathbf{K} - z\mathbf{I} \Big) \inv \mathbf{K} \D_t\inv(z) 
		\\
		= &   \rd_k^\star \D_{k,t}\inv(z) \T_n^l \Big( \frac{\ntn}{n}\mathbf{K} - z\mathbf{I} \Big)\inv \rd_k 
		 - n\inv \tr \T_n^l \Big( \frac{\ntn}{n}\mathbf{K} - z\mathbf{I} \Big) \inv \mathbf{K} \D_t\inv(z) 
		 \lb 1 + \rd_k^\star \D_{k,t}\inv(z) \rd_k \rb.
	\end{align*}
	We decompose $\varepsilon_k = \varepsilon_{k1} + \varepsilon_{k2} + \varepsilon_{k3}$, where
	\begin{align*}
		\varepsilon_{k1} = &
		n\inv \tr \T_n^{l+1} \Big( \frac{\ntn}{n} \mathbf{K} - z \mathbf{I} \Big) \inv \D_{k,t}\inv (z) 
		- n\inv \tr \T_n^{l+1} \Big( \frac{\ntn}{n} \mathbf{K} - z \mathbf{I} \Big)\inv \D_{t}\inv (z) \\
		\varepsilon_{k2} = &
		\rd_k^\star \D_{k,t}\inv(z) \T_n^l \Big( \frac{\ntn}{n} \mathbf{K} - z \mathbf{I} \Big)\inv \rd_k
		- n\inv \tr \T_n^{l+1} \Big( \frac{\ntn}{n} \mathbf{K} - z \mathbf{I} \Big)\inv \D_{k,t}\inv(z) \\
		\varepsilon_{k3} = & 
		 - n\inv \tr \T_n^l \Big( \frac{\ntn}{n}\mathbf{K} - z\mathbf{I} \Big) \inv \mathbf{K} \D_t\inv(z) 
		 \lb( 1 + \rd_k^\star \D_{k,t}\inv(z) \rd_k \rb)  \\
		 & 
		 +n\inv \tr \T_n^{l+1} \Big( \frac{\ntn}{n} \mathbf{K} - z \mathbf{I} \Big)\inv \D_{t}\inv (z) \\
		 = & - n\inv \tr \T_n^{l+1} \Big( \frac{\ntn}{n} \mathbf{K} - z \mathbf{I} \Big)\inv \D_t\inv(z) 
		 \left\{ b_{t}(z)\lb \rd_k^\star \D_{k,t}\inv(z) \rd_k + 1\rb- 1 \right\},
	\end{align*}
	%hier war ein Fehler bei Bai und Zhou (b_t)
	and we have used the fact that the matrices $\T_n$ and $( (\ntn/n) \mathbf{K} - z\mathbf{I} ) \inv$ commute.
    Similar arguments as given by \cite{baizhou2008} for their estimate (3.4)
yield
	\begin{align*}
		\Big| \Big| \Big( \frac{\ntn}{n} \mathbf{K} - z \mathbf{I} \Big)\inv \Big| \Big| \leq K 
		%\textnormal{ and }
		%\left| \left| \lb \frac{\ntn}{n} \mathbf{\tilde{K}} - z \mathbf{I} \rb\inv \right| \right| \leq K 
		,
	\end{align*}
and this estimate  can  be used to show   
	\begin{align*} 
	%\label{a54}
	\E |\varepsilon_{ki} |^2 \to 0, ~ n\to\infty~~, i \in \{ 1,2,3\} .
	\end{align*}
%     More precisely, from  Lemma 2.6 in \cite{silversteinbai1995} it follows that 
% 	\begin{align*}
% 		| \varepsilon_{k1} | \to 0 ,
% 	\end{align*}
% 	as $n\to\infty$. The second moment of $\varepsilon_{k2}$ can be bounded by using (9.9.6) in \cite{bai2004}. For $\varepsilon_{k3},$ we note that 
% 	\begin{align*}
% 		| \varepsilon_{k3} |
% 		= & \Big| n\inv \tr \T_n^{l+1} \Big( \frac{\ntn}{n} \mathbf{K} - z \mathbf{I} \Big)\inv \D_t\inv(z) 
% 		 \Big\{ b_{t}(z)\lb \rd_k^\star \D_{k,t}\inv(z) \rd_k 
% 		 - \E \left[ n\inv \tr \T_n \D_{t}\inv(z) \right] \rb \Big\} \Big| \\
% 		 \leq & K \lb | \varepsilon_{k31} | + | \varepsilon_{k32} | + | \varepsilon_{k33} | \rb ,
% 	\end{align*}
% 	where 
% 	\begin{align*}
% 		\varepsilon_{k31} = & \rd_k^\star \D_{k,t}\inv(z) \rd_k - n\inv \tr \T_n \D_{k,t}\inv(z), \\
% 		\varepsilon_{k32} = & n\inv \tr \T_n \D_{k,t}\inv(z) - n\inv \tr \T_n \D_{t}\inv(z), \\
% 		\varepsilon_{k33} = & n\inv \tr \T_n \D_{t}\inv(z) - \E \left[ n\inv \tr \T_n \D_{t}\inv(z) \right] .
% 	\end{align*}
% 	Since similar terms are studied intensively in the proof of Lemma \ref{no_hat}, %in Appendix \ref{sec_mom_ineq},
% 	%in Sections \ref{fidis_1} and \ref{sec_tight}, 
% 	we omit a detailed proof of \eqref{a54}. 
	This implies for $l \in \{0,1\}$
	\begin{align} \label{a55}
	\frac{1}{p} \Big( \E \tr  \T_n^l \Big( \frac{\ntn}{n} \mathbf{K} - z \mathbf{I} \Big) \inv 
		- \E \tr \T_n^l \D_t\inv(z) \Big) \to 0, ~n\to\infty.
	\end{align}

	Using \eqref{a55} with $l=0$ for the first line and $l=1$ for the second one, we have
	\begin{align}
	&\frac{1}{p} \E \tr \Big( \frac{ \frac{\ntn}{n}\T_n}{1 + y_{\ntn} a_{n,t}(z)} - z \mathbf{I}\Big)\inv- \E \tilde{s}_{n,t} (z) \to 0	, \label{a56}\\
	& \frac{1}{p} \E \tr \frac{\ntn}{n} \T_n \Big( \frac{ \frac{\ntn}{n}\T_n}{1 + y_{\ntn} a_{n,t}(z)} - z \mathbf{I}\Big)\inv- a_{n,t}(z) \to 0	, \label{a57}
	\end{align}
	where $a_{n,t}(z) = ( \ntn /n ) p\inv \E \tr \T_n \D_t\inv(z)$, so that $1+y_{\ntn} a_{n,t} (z) = b_t(z)$. 
	We use
	\begin{align*}
		\Big| \frac{1}{1 + y_{\ntn} a_{n,t}(z) } \Big| \leq \frac{|z|}{v}
	\end{align*}
	to conclude from \eqref{a57}
	\begin{align*}
		1 + \frac{z}{p} \E \tr \Big( \frac{ \frac{\ntn}{n} \T_n}{1 + y_{\ntn} a_{n,t}(z) } - z \mathbf{I} \Big) \inv 
		- \frac{a_{n,t}(z)}{1 + y_{\ntn} a_{n,t}(z)} \to 0.
	\end{align*}
	Combining this with \eqref{a56} yields
	\begin{align*}
		1 + z \E \tilde{s}_{n,t}(z) - \frac{a_{n,t}(z)}{1 + y_{\ntn} a_{n,t}(z)} \to 0
	\end{align*}
	and, by rearranging terms and multiplying with $y_{\ntn}$,
	\begin{align*}
		\frac{1}{1 + y_{\ntn} a_{n,t}(z) }  
		= 1 - y_{\ntn} ( 1 + z \E \tilde{s}_{n,t}(z) ) + o(1).
	\end{align*}
	Substituting this in \eqref{a56}, we get
	\begin{align} \label{a58}
		\frac{1}{p} \E \tr \Big (  \frac{\ntn}{n}\T_n \lb 1 - y_{\ntn} ( 1 + z \E\tilde{s}_{n,t}(z) ) \rb  - z \mathbf{I}\Big )\inv- \E \tilde{s}_{n,t} (z) \to 0.
	\end{align}
	Due to \eqref{a58}, any potential limit $\tilde{s}_\cdot (\cdot)$ of $(\E \tilde{s}_{n,\cdot}(\cdot) )_{n\in\N}$ satisfies
	\begin{align*}
		\tilde{s}_t(z) = \int \frac{1}{ \lambda t ( 1 - y_t( 1 + z \tilde{s}_t(z))) - z } dH(\lambda).
	\end{align*}	 
	It follows from Theorem 1.1 in \cite{baizhou2008}, that this equation admits a unique solution $\tilde{s}_\cdot (\cdot)$.  

\end{proof}
	In the following lemma, we consider for technical reasons the functions $\hat{s}_{n,\cdot} (\cdot) : \mathcal{C}^+ \times [0,1] \to \mathbb{C}$ with $\hat{s}_{n,t}(z) = 0$ for $t<t_0$ and for $t\in[t_0,1], z=x+iv\in\mathcal{C}^+$
	\begin{align*}
		\hat{s}_{n,t}(z)
		= \begin{cases}
		\tilde{s}_{n,t}(z) & : z \in\mathcal{C}_n \\
		\tilde{s}_{n,t}(x_r + i  n\inv \varepsilon_n ) 
		& : x=x_r, ~v\in[0,n\inv\varepsilon_n] \\ 		
		\tilde{s} _{n,t}(x_l + i  n\inv \varepsilon_n )
		& : x=x_l, ~v\in[0,n\inv\varepsilon_n] 
		\end{cases}
	\end{align*}
	and for $t\in[0,1], z\in\mathcal{C}^+$
	\begin{align*}
	\hat{m}_{n,t}(z) =
		\begin{cases}
			 \lim\limits_{t\to 1} \hat{s}_{n,t}(z) 
			 = \hat{s}_{n,\frac{n - 1}{n} } (z) &: t=1 \\
			 \hat{s}_{n,t}(z) &: t\in [0,1).
		\end{cases}
	\end{align*}
	Note that for $t\in[0,1]$, the functions $\hat{s}_{n,t}(\cdot)$ and $\tilde{s}_{n,t}(\cdot)$ coincide on $\mathcal{C}_n$, $n\in\N$ and that for $z\in\mathcal{C}^+$, the functions $\hat{s}_{n,t}(z)$ and $\hat{m}_{n,t}(z)$ differ only in the point  $t=1$. 
	\begin{lemma} \label{lem_cp_closure}
	The set $\{ \E [ \hat{m}_{n, \cdot} (\cdot) ] : n \in \N \}$ has a compact closure in
	the Skorokhod space $D[0,1]^2$. 
	\end{lemma}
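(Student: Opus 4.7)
The plan is to verify the standard Bickel--Wichura criterion for relative compactness in the Skorokhod space $D([0,1]^2)$: uniform boundedness together with vanishing of the Skorokhod modulus $w'(\cdot,\delta)$ uniformly as $\delta\downarrow 0$ (cf.\ \cite{bickel1971,neuhaus1971}). We identify the curve $\mathcal{C}^+$ with $[0,1]$ via a Lipschitz parametrization, so that $\mathcal{C}^+\times[0,1]\simeq[0,1]^2$.

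For the uniform bound $\sup_{n,t,z}|\E[\hat m_{n,t}(z)]|\leq K$, I would split $\mathcal{C}^+$ into the horizontal part $\mathcal{C}_u$ and the two vertical pieces. On $\mathcal{C}_u$ the estimate $|\tilde s_{n,t}(z)|\leq 1/v_0$ is immediate from $|s_F(z)|\leq 1/\im z$. On the vertical pieces and on the corner extensions I would invoke the sharper Stieltjes inequality $|s_F(z)|\leq\mathrm{dist}(z,\mathrm{supp}\,F)^{-1}$ on the ``good event'' that the spectrum of $\mathbf{B}_{n,t}$ stays inside the deterministic interval of Lemma \ref{a37}; by the no-eigenvalues-outside-the-support theorems of \cite{baisilverstein1998,baisilverstein1999}, the complementary ``bad event'' has probability $O(n^{-L})$ for arbitrary $L$, and on it the crude bound $|\tilde s_{n,t}(z)|\leq n/\varepsilon_n$ contributes negligibly to the expectation. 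The corner extensions and the zero extension for $t<t_0$ inherit these bounds.

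For the modulus in $z$, analyticity of the Stieltjes transform on $\mathbb{C}\setminus\mathrm{supp}\,F^{\mathbf{B}_{n,t}}$ yields $\tilde s_{n,t}'(z)=p^{-1}\tr\mathbf{D}_t^{-2}(z)$ and, by the same good/bad decomposition, $|\E[\tilde s_{n,t}(z_1)]-\E[\tilde s_{n,t}(z_2)]|\leq K|z_1-z_2|$ uniformly in $n,t$; the corner pieces, being constant in $v$ on intervals of length $n^{-1}\varepsilon_n$, contribute no additional oscillation. For the modulus in $t$, Sherman--Morrison telescoping in the form
\[
\tilde s_{n,t_2}(z)-\tilde s_{n,t_1}(z)=-\frac{1}{p}\sum_{j=\lfloor nt_1\rfloor+1}^{\lfloor nt_2\rfloor}\beta_{j,t_2}(z)\,\mathbf{r}_j^\star\mathbf{D}_{j,t_2}^{-2}(z)\mathbf{r}_j
\]
expresses the increment as at most $\lfloor nt_2\rfloor-\lfloor nt_1\rfloor$ rank-one contributions, each of uniformly bounded expectation by the usual $|\beta_{j,t}(z)|\leq|z|/v$ and $\|\mathbf{D}_{j,t}^{-2}(z)\|$ estimates; hence $|\E[\tilde s_{n,t_2}(z)]-\E[\tilde s_{n,t_1}(z)]|\leq K(t_2-t_1+n^{-1})$, i.e.\ Lipschitz in $t$ up to jumps of size $O(n^{-1})$. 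The single jump of $\hat m_{n,\cdot}$ at $t=t_0$ (from $0$ to $\tilde s_{n,t_0}(z)$) is a bounded jump at a fixed location, compatible with the $J_1$-topology.

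Combining the three estimates, the Skorokhod modulus satisfies $w'(\E[\hat m_{n,\cdot}(\cdot)],\delta)\leq K(\delta+n^{-1})$, so $\lim_{\delta\downarrow 0}\limsup_{n\to\infty}w'(\cdot,\delta)=0$, giving the claim by Bickel--Wichura. The principal obstacle is lifting the high-probability spectral bounds to uniform bounds on the deterministic expectations $\E[\hat m_{n,t}(z)]$; the good-event/bad-event device achieves this because the bad-event probability decays superpolynomially in $n$ while the crude Stieltjes bound on the bad event grows only polynomially in $n$.
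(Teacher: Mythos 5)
Your proposal is correct and follows essentially the same route as the paper: uniform boundedness plus Lipschitz-type increment bounds in $z$ and in $t$ (up to $O(n^{-1})$ step discontinuities) obtained from resolvent/Sherman--Morrison identities combined with the good-event/bad-event device for the resolvent norm near the real axis, and then the Bickel--Wichura/Neuhaus modulus criterion on $D[0,1]^2$. One small slip: the telescoping identity for $\tilde s_{n,t_2}-\tilde s_{n,t_1}$ must carry the intermediate resolvents (the $j$-th term involves $\mathbf{D}_{j,j/n}^{-1}$, not $\beta_{j,t_2}\mathbf{r}_j^\star\mathbf{D}_{j,t_2}^{-2}\mathbf{r}_j$ for every $j$; the paper instead uses $\mathbf{D}_{t_1}^{-1}-\mathbf{D}_{t_2}^{-1}=\mathbf{D}_{t_1}^{-1}\big(\sum_j\mathbf{r}_j\mathbf{r}_j^\star\big)\mathbf{D}_{t_2}^{-1}$), but each corrected term satisfies the same uniform bound, so the estimate and the conclusion stand.
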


	\begin{proof}[Proof of Lemma \ref{lem_cp_closure}]
	The sequence $(\E \hat{m}_{n,\cdot}(\cdot))_{n\in \N}$ is  bounded, since by Lemma \ref{mom_Dinv}, we get uniformly with respect to $t \in [t_0,1], z\in\mathcal{C}_n, n\in\N$
	\begin{align*}
		\left| \E \tilde{s}_{n,t}(z) \right| = \frac{1}{p} \left| \E \tr \D_t\inv(z) \right| \leq \E || \D_t\inv(z) || \leq K.
	\end{align*}
	 We observe for $t_2 \geq t_1$ and $z\in\mathcal{C}_n$
	\begin{align*}
		| \E [ \tilde{s}_{n,t_1}(z) ] - \E [ \tilde{s}_{n,t_2} (z) ] | 
		= & \Big| \frac{1}{p} \E \left[ \tr \lb \D_{t_1}\inv(z) - \D_{t_2}\inv(z) \rb \right] \Big|
		= \Big| \frac{1}{p} \sum\limits_{j= \nt + 1 }^{\ntt} \E \left[ \rd_j^\star \D_{t_1}\inv(z) \D_{t_2}\inv(z) \rd_j \right] \Big| 
		\nonumber \\
		= & \Big| \frac{1}{p} \sum\limits_{j= \nt + 1 }^{\ntt} \Big\{ \E \left[ \rd_j^\star \D_{t_1}\inv(z) \D_{j,t_2}\inv(z) \rd_j \right] \nonumber \\
		& - \E \left[ \beta_{j,t_2}(z) \rd_j^\star \D_{t_1}\inv(z)  \D_{j,t_2}\inv(z) \rd_j \rd_j^\star \D_{j,t_2}\inv(z) \rd_j \right] \Big\} \Big| \nonumber \\
		\leq & K y_n \frac{\ntt - \nt}{n} \leq K \frac{\ntt - \nt }{n}, 
	\end{align*}
	where the constant $K$ is independent of $t_1,t_2,z,n$.
	Thus, we have
	\begin{align*} % \label{a64}
		\big| \E [ \hat{s}_{n,t_1}(z) ] - \E [ \hat{s}_{n,t_2} (z) ]\big | 
		\leq K \frac{\ntt - \nt }{n}.
	\end{align*}
	We aim to show
	\begin{align} \label{w2'}
		\lim\limits_{\delta \to 0} \sup\limits_{n\in \N} \sup\limits_{\substack{(t_1,t_2,t) \in \mathcal{A}_\delta, \\ z \in\mathcal{C}^+ } } \min 
		\lb | \E [ \hat{s}_{n,t}(z) ] - \E [ \hat{s}_{n,t_1} (z) ] | , | \E [ \hat{s}_{n,t}(z) ] - \E [ \hat{s}_{n,t_2} (z) ] | \rb
		= 0,
	\end{align}
	where 
	\begin{align*} 
		\mathcal{A}_\delta = \{ (t_1,t_2,t)~ : ~ t_1 \leq t \leq t_2, ~ t_2 - t_1 \leq \delta \}.
	\end{align*}
	Let $\varepsilon >0$ be given. We choose $N\in \N$ sufficiently large such that 
$
		\frac{1}{N} < \varepsilon
$
	and $\delta > 0$ sufficiently small such that $\delta < \varepsilon$ and for all $n\in \{1,\ldots,N\}$
	\begin{align*}
		\ntn - \ntt = 0 \textnormal{ or } \ntn - \nt = 0,
	\end{align*}
	where $(t_1,t_2,t)\in\mathcal{A}_\delta$. Then, it holds
	\begin{align*}
		\sup\limits_{n \leq N} \sup\limits_{\substack{(t_1,t_2,t) \in \mathcal{A}_\delta, \\ z \in\mathcal{C}^+ } } \min 
		\lb | \E [ \hat{s}_{n,t}(z) ] - \E [ \hat{s}_{n,t_1} (z) ] | , | \E [ \hat{s}_{n,t}(z) ] - \E [ \hat{s}_{n,t_2} (z) ] | \rb
		= 0.
	\end{align*}
	For $n \geq N$ we conclude
	\begin{align*}
		\big | \E [ \hat{s}_{n,t}(z) ] - \E [ \hat{s}_{n,t_1} (z) ] \big | 
		\leq & K \frac{\ntn - \nt}{n} \\
		\leq & K \Big ( \Big | \frac{\ntn - nt }{n} \Big| + |t_1 - t| + \Big| \frac{\nt - nt_1}{n} \Big | \Big ) \leq 3  \varepsilon K 
	\end{align*}
	and obtain
	\begin{align*}
		\sup\limits_{n\geq N} \sup\limits_{\substack{(t_1,t_2,t) \in \mathcal{A}_\delta, \\ z \in\mathcal{C}^+ } } \min 
		\big \{ | \E [ \hat{s}_{n,t}(z) ] - \E [ \hat{s}_{n,t_1} (z) ] | , | \E [ \hat{s}_{n,t}(z) ]  - \E [ \hat{s}_{n,t_2} (z) ] | \big \}
		\leq 3 \varepsilon K.
	\end{align*}
	Thus, \eqref{w2'} holds true. Similarly, one can show 
	\begin{align*}
		\lim\limits_{\delta \to 0} \sup\limits_{n\in \N} \sup\limits_{\substack{t_1,t_2 \in [1 - \delta, 1), \\ z\in\mathcal{C}^+ }}
		| \E[ \hat{s}_{n,t_1}(z) ] - \E [ \hat{s}_{n,t_2}(z) ] | = 0. 
	\end{align*}
	By definition, this implies
	\begin{align*} %\label{m_hat}
		\lim\limits_{\delta \to 0} \sup\limits_{n\in \N} \sup\limits_{\substack{t_1,t_2 \in [1 - \delta, 1], \\ z\in\mathcal{C}^+ }}
		| \E[ \hat{m}_{n,t_1}(z) ] - \E [ \hat{m}_{n,t_2}(z) ] | = 0.
	\end{align*}
	Since $\hat{s}_{n,t} (z) = 0 $ for $t < t_0$, we also have for $\delta < t_0$
	\begin{align*}
		 \sup\limits_{n\in \N} \sup\limits_{\substack{t_1,t_2 \in [0, \delta), \\ z \in\mathcal{C}^+ }} 
		| \E [ \hat{s}_{n,t_1}(z) ] - \E [ \hat{s}_{n,t_2}(z) ] | = 0. 
	\end{align*}
	Therefore, it follows from the proof of Theorem 14.4 in \cite{billingsley1968} that 
	\begin{align}
	0 = & \lim\limits_{\delta\to 0} ~ \sup\limits_{n\in\N} ~ \sup\limits_{z \in \mathcal{C}^+} ~ 
	\inf\limits_{\substack{(t_0, \ldots, t_r) \in \mathcal{B}_{\delta,r}, \\ r \in \N}} ~ \max\limits_{0 \leq i \leq r } ~ 
	\sup\limits_{t, t' \in [t_{i-1}, t_i) } 
	| \E [ \hat{s}_{n,t}(z) ] - \E [ \hat{s}_{n,t'}(z) ] | \nonumber \\
	= & \lim\limits_{\delta\to 0} ~ \sup\limits_{n\in\N} ~ \sup\limits_{z \in \mathcal{C}^+} ~ 
	\inf\limits_{\substack{(t_0, \ldots, t_r) \in \mathcal{B}_{\delta,r}, \\ r \in \N}} ~ \max\limits_{0 \leq i \leq r } ~ 
	\sup\limits_{t, t' \in [t_{i-1}, t_i\rangle } 
	| \E [ \hat{m}_{n,t}(z) ] - \E [ \hat{m}_{n,t'}(z) ] | , \label{g1}
	\end{align}
	where $[t_{i-1}, t_i\rangle$ is defined as $[t_{i-1}, t_i]$ if $t_i=1$ and as $[t_{i-1}, t_i)$ otherwise and we set
	\begin{align*}
	\mathcal{B}_{\delta,r} = \{ (t_0, \ldots, t_r): 0 = t_0 < t_1 < \ldots < t_r = 1, ~ 
	t_i - t_{i - 1} > \delta \textnormal{ for } i\in \{1,\ldots, r\} \}.
	\end{align*}
	For the next step, we have for $z_1,z_2 \in\mathcal{C}_n$
	\begin{align*}
		| \E [ \tilde{s}_{n,t} (z_1) ] - \E [ \tilde{s}_{n,t}(z_2) ] | = & \frac{1}{p} | z_1 - z_2 | | \E \tr \D_{t}\inv(z_1) \D_{t}\inv(z_2) | 
		\leq K  | z_1 - z_2 | \E || \D_{t}\inv(z_1) \D_{t}\inv(z_2) || \\
		\leq & K |z_1 - z_2|
	\end{align*}
	uniformly in $t\in[t_0,1]$,
	which implies for $z_1, z_2 \in\mathcal{C}_n$ or $z_1,z_2 \notin \mathcal{C}_n$ that
	\begin{align*}
		| \E [ \hat{s}_{n,t} (z_1) ] - \E [ \hat{s}_{n,t}(z_2) ] | \leq K |z_1 - z_2|.
	\end{align*}
	In the case $z_1 = x_1 + iv_1 \in\mathcal{C}_n$ and $z_2 = x_2 + iv_2 \notin\mathcal{C}_n$, we conclude
	\begin{align*}
		| \E [ \hat{s}_{n,t} (z_1) ] - \E [ \hat{s}_{n,t}(z_2) ] |
		& = | \E [ \tilde{s}_{n,t} (z_1) ] 
		- \E [ \tilde{s}_{n,t}(x_2 + i n\inv \varepsilon_n) ] |
		\leq K | z_1 - (x_2 + i n\inv \varepsilon_n) | \\
		= & \left\{ (x_1 - x_2)^2 + (v_1 - n\inv \varepsilon_n)^2 \right\} \sq 
		\leq \left\{ (x_1 - x_2)^2 + (v_1 - v_2)^2 \right\} \sq \\
		\leq & K |z_1 - z_2|,
	\end{align*}
	since
$
		v_2 \leq n\inv \varepsilon_n \leq  v_1. 
$
	Thus, we have
	\begin{align}
		\lim\limits_{\delta \to 0} ~ \sup\limits_{n\in\N} ~  \sup\limits_{\substack{ t\in [0,1], \\ z_1,z_2 \in\mathcal{C}^+, \\ |z_1 - z_2| < \delta }} | \E [ \hat{s}_{n,t} (z_1) ] - \E [ \hat{s}_{n,t} (z_2) ] | =0. \label{f2}
	\end{align}
	Since for $A\times B \subset (\mathcal{C}^+)^2, C \times D \subset [0,1]^2$
	\begin{align*}
		& \sup\limits_{\substack{(z,z')\in A\times B, \\ (t,t')\in C \times D  }}
		| \E [ \hat{s}_{n,t}(z) ] - \E [ \hat{s}_{n,t'}(z') ] | \\
		\leq  &
		\sup\limits_{\substack{(z,z')\in A\times B, \\ t\in C  }}
		| \E [ \hat{s}_{n,t}(z) ] - \E [ \hat{s}_{n,t}(z') ] |
		+ \sup\limits_{\substack{z'\in B, \\ (t,t')\in C \times D  }}
		 | \E [ \hat{s}_{n,t}(z') ] - \E [ \hat{s}_{n,t'}(z') ] |,
	\end{align*}
 	we conclude from \eqref{g1} and \eqref{f2} 
	\begin{align} 
		& \lim\limits_{\delta\to 0} ~ \sup\limits_{n\in\N} ~
	\inf\limits_{\substack{((t_0, z_0), \ldots, (t_r,z_r)) \in \mathcal{B}_{\delta,r}^{(2)}, \\ r \in \N}} ~ 
	\max\limits_{0 \leq i \leq r } ~ 
	\sup\limits_{\substack{t, t' \in [t_{i-1}, t_i ) \\ z, z' \in [z_{i-1}, z_i\rangle}} 
	| \E [ \hat{s}_{n,t}(z) ] - \E [ \hat{s}_{n,t'}(z') ] | \nonumber \\
	= & \lim\limits_{\delta\to 0} ~ \sup\limits_{n\in\N} ~
	\inf\limits_{\substack{((t_0, z_0), \ldots, (t_r,z_r)) \in \mathcal{B}_{\delta,r}^{(2)}, \\ r \in \N}} ~ 
	\max\limits_{0 \leq i \leq r } ~ 
	\sup\limits_{\substack{t, t' \in [t_{i-1}, t_i \rangle \\ z, z' \in [z_{i-1}, z_i\rangle}} 
	| \E [ \hat{m}_{n,t}(z) ] - \E [ \hat{m}_{n,t'}(z') ] | \nonumber \\
	= &  0, \label{a67}
	\end{align}
	where
	\begin{align*}
	B_{\delta,r}^{(2)} = \{ ((t_0,z_0), \ldots, (t_r,z_r)) ~ : ~& 
	0 = t_0 < t_1 < \ldots < t_r = 1, ~ 
	0 = z_0 < z_1 < \ldots < z_r = 1, ~ \\ 
	& t_i - t_{i - 1} > \delta, ~ z_i - z_{i - 1} > \delta\textnormal{ for } i\in \{1,\ldots, r\}  \}.
	\end{align*}
	Note that in this definition, an element $z\in\mathcal{C}^+$ is identified with its representative in $[0,1]$.  \\
	One can observe that \eqref{a67} is equivalent to
	\begin{align*}
		\lim\limits_{\delta\to 0} \sup\limits_{n\in\N} \omega_{\E [ \hat{m}_{n,\cdot}(\cdot)]} ' (\delta)
		=0,
	\end{align*}
	where the modulus $\omega'$ is defined in \cite{neuhaus1971}.
	Applying Theorem 2.1 in this reference, we conclude that $\{ \E [ \hat{m}_{n, \cdot} (\cdot) ] : n \in \N \}$ has a compact closure in $D[0,1]^2.$ 
	% zu diesem Beweis habe ich noch Notizen als Fotos
	% erster Teil + Beweis von Billingsley zeigt: w'->0 bzgl. t-Koordinate und gleichmäßig in z
	% zweiter Teil : w - > 0 bzgl. z-Koordinate und gleichmäßig in t => w' ->0 bzgl. z-Koordinate und gleichmäßig in t
	% => (mit einer Dreiecksungleichung) w' -> 0 bzgl. z und t-Koordinate, also können wir Theorem 2.1 in Neuhaus anwenden.

\end{proof}

\begin{proof}[Proof of  Theorem \ref{thm_stieltjes}]
%	With this setup, we are able to show Theorem \ref{thm_stieltjes}.
%	\begin{proof}[Proof of Theorem \ref{thm_stieltjes}]
	From Lemma \ref{uni} and Lemma \ref{lem_cp_closure}, we conclude that 
	\begin{align*} 
		\lim\limits_{n\to\infty} d_2 |_{\mathcal{C}_n \times [t_0,1)} 
		( \E [\tilde{s}_{n,\cdot}(\cdot) ] , \tilde{s}_{\cdot}(\cdot) ) 
		= \lim\limits_{n\to\infty} d_2 |_{\mathcal{C}_n \times [t_0,1)} 
		( \E [\hat{m}_{n,\cdot}(\cdot) ] , \tilde{s}_{\cdot}(\cdot) )
		= 0,
	\end{align*}
	where $d_2 |_{A} $ for some set $A \subset \mathcal{C}^+ \times [0,1]$ denotes the Skorokhod metric restricted to functions on $A$. Observe that for $t=1$
	\begin{align*}
		\lim\limits_{n\to\infty} \sup\limits_{z \in\mathcal{C}_n} | \E [ \tilde{s}_{n,1}(z) ] - \tilde{s}_1(z)| =0.
	\end{align*}
	Then it is straightforward to show that 
	\begin{align} \label{skor}
		\lim\limits_{n\to\infty} d_2 |_{\mathcal{C}_n \times [t_0,1]} 
		( \E [\tilde{s}_{n,\cdot}(\cdot) ] , \tilde{s}_{\cdot}(\cdot) ) 
		= 0. 
	\end{align}
    The considerations in the proof of Lemma \ref{lem_cp_closure} reveal that $\E [ \tilde{s}_{\cdot}(\cdot) ] \in C (  \mathcal{C}^+ \times [t_0,1] ) $. 
		In this case, the convergence in the Skorokhod  space  in \eqref{skor} implies the uniform convergence
	\begin{align*}
		\lim\limits_{n\to\infty} \sup_{\substack{z\in\mathcal{C}_n, \\ t \in [t_0,1]}} | \E [ \tilde{s}_{n,t}(z) ] 
		- \tilde{s}_t(z) |
		= 0. 
	\end{align*}
	A similar convergence result with respect to the sup-norm can be shown for the Stieltjes transform $\sut_{n,t}(z)$. More precisely,  since
	\begin{align*}
		\sut_t (z) & = - \frac{1 - y_t}{z} + y_t \tilde{s}_t (z), \\
%	\end{align*}
%	$\sut$ inherits the continuity from $\tilde{s}_\cdot(\cdot)$, that is, $\sut_\cdot (\cdot) \in C (  \mathcal{C}^+ \times [t_0,1] ) $.
%	Similarly, we have
%and similarly,
%		\begin{align*}
		\sut_{n,t} (z) &= - \frac{1 - y_{\ntn}}{z} + y_{\ntn} \tilde{s}_{n,t} (z),
	\end{align*}
	we also have
	\begin{align*}
		\lim\limits_{n\to\infty} \sup_{\substack{z\in\mathcal{C}_n, \\ t \in [t_0,1]}} | \E [ \sut_{n,t}(z) ] 
		- \sut_t(z) |
		= 0. 
	\end{align*}
	\end{proof}

	\subsection{Proof of Theorem \ref{sn0_conv}} \label{prsn0_conv}
		The second assertion directly follows from the first one combined with Theorem \ref{thm_stieltjes}. Therefore, it is sufficient to show that $(M_n^2)_{n\in\N}$ is uniformly bounded. For this purpose, we use the following lemma.
		\begin{lemma} \label{a72}
			We have
			\begin{align*}
				\sup\limits_{\substack{n\in\N, \\ z\in\mathcal{C}_n, \\ t\in[t_0,1]}} 
				\Big | \frac{\textnormal{Im} ( \sut_{n,t}^0(z) )}{\textnormal{Im}(z) } \Big | 
				\leq K. 
			\end{align*}
		\end{lemma}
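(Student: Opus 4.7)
The plan is to exploit the fact that, by definition, $\sut_{n,t}^0(z)$ is the Stieltjes transform of the real nonnegative measure $\underline{\tilde{F}}^{y_{\ntn},H_n}$, so that for every $z = x + iv\in\mathbb{C}^+$
\begin{align*}
\frac{\im(\sut_{n,t}^0(z))}{\im(z)} = \int \frac{1}{|\lambda - z|^2}\, d\underline{\tilde{F}}^{y_{\ntn},H_n}(\lambda) \leq \frac{1}{\operatorname{dist}\lb z,\Gamma_{\underline{\tilde{F}}^{y_{\ntn},H_n}}\rb^2}.
\end{align*}
Thus the lemma reduces to showing that the supports $\Gamma_{\underline{\tilde{F}}^{y_{\ntn},H_n}}$ lie in a fixed compact set and that every point of $\mathcal{C}_n$ stays at a distance at least some universal constant $c>0$ from that set.

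First I would verify that there exist constants $0 \leq a \leq b < \infty$ such that $\Gamma_{\underline{\tilde{F}}^{y_{\ntn},H_n}} \subset \{0\} \cup [a,b]$ for every $n\in\N$ and every $t\in[t_0,1]$. The identity $\underline{F}^{y_{\ntn},H_n} - y_{\ntn} F^{y_{\ntn},H_n} = (1-y_{\ntn})I_{[0,\infty)}$ combined with the rescaling $\underline{\tilde{F}}^{y_{\ntn},H_n}(\cdot) = \underline{F}^{y_{\ntn},H_n}(n\cdot/\ntn)$ shows that $\Gamma_{\underline{\tilde{F}}^{y_{\ntn},H_n}}$ differs from $\Gamma_{\tilde{F}^{y_{\ntn},H_n}}$ only by possibly including the point $0$. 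The desired bound then follows directly from Lemma \ref{a37} together with the uniform estimate $\sup_n\|\T_n\| < \infty$ provided by assumption (b) of Theorem \ref{thm}.

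Next I would use the construction of $\mathcal{C}_n$ to establish that $\operatorname{dist}\lb z, \{0\}\cup[a,b]\rb \geq c > 0$ uniformly for $z\in\mathcal{C}_n$ and $n\in\N$. On $\mathcal{C}_u$ one has $\im(z) = v_0 > 0$, which supplies the distance automatically. On $\mathcal{C}_r$, $\re(z) = x_r$ is by construction strictly larger than the upper endpoint of the interval \eqref{interval}, hence strictly larger than $b$. On $\mathcal{C}_l$, $\re(z) = x_l$ is either strictly negative (when $y_{t_0}\geq 1$) or lies in the open interval $(0,a)$; in either case $x_l$ is separated both from $a$ and from $0$ by a fixed positive amount. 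Combining the two preceding steps yields the lemma with $K = c^{-2}$.

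The only minor subtlety is the case $y_{t_0}\geq 1$: here the support can reach down to $0$, but $x_l$ is chosen strictly negative, so the distance on $\mathcal{C}_l$ is bounded below by $|x_l|$ rather than by $x_l - a$. Apart from this piece of bookkeeping, the argument is essentially routine once the Stieltjes-transform representation and the support estimate of Lemma \ref{a37} are in hand, so no delicate analytic estimate seems to be required.
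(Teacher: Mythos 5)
Your proposal is correct and follows essentially the same route as the paper: both write $\im(\sut_{n,t}^0(z))/\im(z)=\int |\lambda-z|^{-2}\,d\tilde{\underline{F}}^{y_{\ntn},H_n}(\lambda)$ and bound this by showing that points of $\mathcal{C}_n$ stay uniformly away from the support, using Lemma \ref{a37} on $\mathcal{C}_l\cup\mathcal{C}_r$ and the fixed imaginary part $v_0$ on $\mathcal{C}_u$. Your explicit bookkeeping for the possible atom of the companion measure at $0$ is a welcome (if minor) refinement of the paper's argument.
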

		\begin{proof}[Proof of Lemma \ref{a72}]
		We have for sufficiently large $n$ 
		\begin{align*}
			\textnormal{Im} ( \sut_{n,t}^0(z) ) 
			=& \int \textnormal{Im} \Big ( \frac{1}{\lambda - z} \Big ) d \tilde{\underline{F}}^{y_{\ntn},H_n} (\lambda)
			= \int \frac{- \textnormal{Im} ( \lambda - z)}{|\lambda - z|^2} 
			d \tilde{\underline{F}}^{y_{\ntn},H_n} (\lambda) \\
			= &  \int \frac{ \textnormal{Im} ( z)}{ (\lambda - \textnormal{Re} (z))^2 + \textnormal{Im}^2(z)  } 
			d \tilde{\underline{F}}^{y_{\ntn},H_n} (\lambda)
			\leq  K \textnormal{Im} (z),
		\end{align*}
		since for $z\in\mathcal{C}_l \cup \mathcal{C}_r$, $\textnormal{Re}(z) \in \{x_l, x_r\}$ is uniformly bounded away from the support of $\tilde{\underline{F}}^{y_{\ntn},H_n}$ for sufficiently large $n$ (Lemma \ref{a37}). If $z\in\mathcal{C}_u$, then $\textnormal{Im}(z) =v_0$ is constant and hence, the denominator is also uniformly bounded away from 0. 
		
		\end{proof}
	To  continue with the proof of Theorem \ref{sn0_conv}, we note that it follows 
 from \eqref{a90} in the proof of Lemma \ref{a73} in Section \ref{pra73} that 
		\begin{align*}
			& \ntn ( \E [ \sut_{n,t} (z) ] - \sut_{n,t}^0(z) )  -  \ntn \frac{\ntn }{n}R_{n,t}(z)  \E [ \tilde{\su}_{n,t}(z) ] \tilde{\su}_{n,t}^0(z)  \\
		&~~~~~~~~~~~~~~~~~~	= 
			\frac{y_{n} \frac{\ntn}{n} \ntn \lb \E [ \tilde{\su}_{n,t}(z) ] - \tilde{\su}_{n,t}^0(z) \rb \int \frac{\lambda^2 dH_n(\lambda)}{( 1 + \lambda \frac{\ntn}{n}\E [\tilde{\su}_{n,t} (z) ] ) ( 1 + \lambda \frac{\ntn}{n} \tilde{\su}_{n,t}^0(z) ) }  }
			{{\big ( - z + y_{n} \int \frac{\lambda dH_n(\lambda) }{1 + \lambda \frac{\ntn}{n} \E [ \tilde{\su}_{n,t} (z) ] } - R_{n,t}(z) \big ) \big ( - z + y_{n} \int \frac{\lambda dH_n(\lambda) }{1 + \lambda \frac{\ntn}{n}\tilde{\su}_{n,t}^0 (z) } \big )   }}, 
% + \ntn \frac{\ntn }{n}R_{n,t}(z)  \E [ \tilde{\su}_{n,t}(z) ] \tilde{\su}_{n,t}^0(z),
		\end{align*}
	which is  equivalent to 
		\begin{align*}
			& \ntn \big ( \E [ \sut_{n,t} (z) ] - \sut_{n,t}^0(z)\big )
			= 
			\frac{\ntn \frac{\ntn }{n} R_{n,t}(z) \E [ \tilde{\su}_{n,t}(z) ] \tilde{\su}_{n,t}^0(z) }
			{ 1 - \frac{y_{n} \frac{\ntn}{n} 
			\int \frac{\lambda^2 dH_n(\lambda)}{( 1 + \lambda \frac{\ntn}{n}\E [\tilde{\su}_{n,t} (z) ] ) ( 1 + \lambda \frac{\ntn}{n} \tilde{\su}_{n,t}^0(z) ) } } 
			{\big ( - z + y_{n} \int \frac{\lambda dH_n(\lambda) }{1 + \lambda \frac{\ntn}{n} \E [ \tilde{\su}_{n,t} (z) ] } - R_{n,t}(z) \big ) \big ( - z + y_{n} \int \frac{\lambda dH_n(\lambda) }{1 + \lambda \frac{\ntn}{n}\tilde{\su}_{n,t}^0 (z) } \big )   }   }.
		\end{align*}
		Note that $\sut_{n,t}^0(z)$ is uniformly bounded which follows by a similar argument as given in the proof of  Lemma \ref{a72}.
	 In order to show that the the sequence $(M_n^2)_{n\in\N}$ is uniformly bounded, by using \eqref{conv2},   it is sufficient 
		to show that the denominator is uniformly bounded away from 0 for sufficiently large $n$. For this aim, it is sufficient to prove that
		\begin{align*}
			\Big| \frac{y_{n} \frac{\ntn}{n} 
			\int \frac{\lambda^2 \tilde{\su}_{n,t}^0(z) \E [ \sut_{n,t}(z) ] dH_n(\lambda)}{( 1 + \lambda \frac{\ntn}{n}\E [\tilde{\su}_{n,t} (z) ] ) ( 1 + \lambda \frac{\ntn}{n} \tilde{\su}_{n,t}^0(z) ) } } 
			{\Big ( - z + y_{n} \int \frac{\lambda dH_n(\lambda) }{1 + \lambda \frac{\ntn}{n} \E [ \tilde{\su}_{n,t} (z) ] } - R_{n,t}(z) \Big ) \Big ( - z + y_{n} \int \frac{\lambda dH_n(\lambda) }{1 + \lambda \frac{\ntn}{n}\tilde{\su}_{n,t}^0 (z) } \Big )   } \Big| <1
		\end{align*}
		holds uniformly.
			Similarly to the proof of Lemma \ref{a80}, we conclude that for any bounded subset $S \subset \mathbb{C}^+$
		\begin{align*}
			\inf\limits_{\substack{n\in\N, \\ z \in S, \\ t\in [t_0,1]}} 
			| \sut_{n,t}^0(z) | > 0.
		\end{align*}
		Using this, Hölder's inequality, Lemma \ref{a72} and the identities
		%Lemma \ref{a20}, 
		\begin{align*}
	\frac{y_{n} \frac{\ntn}{n} \int \frac{ \lambda^2 dH_n(\lambda)}{|1+\lambda\frac{\ntn}{n}\tilde{\su}_{n,t}^0(z)|^2}}
		{  \Big | - z + y_{n} \int \frac{\lambda dH_n(\lambda)}{1+\lambda \frac{\ntn}{n}\tilde{\su}_{n,t}^0(z)}\Big |^2}
		= &  \frac{ \frac{\ntn}{n}\im ( \tilde{\su}_{n,t}^0(z)) y_{n} \int \frac{\lambda^2 dH_n(\lambda)}{|1+\lambda \frac{\ntn}{n}\tilde{\su}_{n,t}^0(z)|^2}}
		{\im (z) + \frac{\ntn}{n} \im ( \tilde{\su}_{n,t}^0(z)) y_{n} \int \frac{\lambda^2 dH_n(\lambda)}{|1+\lambda \frac{\ntn}{n} \tilde{\su}_{n,t}^0(z)|^2}}, \\
	\frac{y_{n} \frac{\ntn}{n} \int \frac{ \lambda^2 dH_n(\lambda)}{|1+\lambda\frac{\ntn}{n} \E[ \tilde{\su}_{n,t}(z)]  |^2}}
		{  \left| - z + y_{n} \int \frac{\lambda dH_n(\lambda)}{1+\lambda \frac{\ntn}{n}\E[ \tilde{\su}_{n,t}(z)]} + R_{n,t} \right|^2}
		= & \frac{ \frac{\ntn}{n}\im ( \E[ \tilde{\su}_{n,t}(z)] ) y_{n} \int \frac{\lambda^2 dH_n(\lambda)}{|1+\lambda \frac{\ntn}{n}\E[ \tilde{\su}_{n,t}(z)]|^2}}
		{\im (z) + \frac{\ntn}{n} \im ( \E[ \tilde{\su}_{n,t}(z)]) y_{n} \int \frac{\lambda^2 dH_n(\lambda)}{|1+\lambda \frac{\ntn}{n} \E[ \tilde{\su}_{n,t}(z)]|^2} + \im(R_{n,t} ) },
	\end{align*}
		we obtain for sufficiently large $n$
		\begin{align*}
			& \Bigg| y_n \frac{\int \frac{\lambda^2  \frac{\ntn}{n}  dH_n(\lambda)}{( 1 + \lambda \frac{\ntn}{n}\E [\tilde{\su}_{n,t} (z) ] ) ( 1 + \lambda \frac{\ntn}{n} \tilde{\su}_{n,t}^0(z) ) } }
			{\Big ( - z + y_{n} \int \frac{\lambda dH_n(\lambda) }{1 + \lambda \frac{\ntn}{n} \E [ \tilde{\su}_{n,t} (z) ] } - R_{n,t}(z) \Big ) \Big ( - z + y_{n} \int \frac{\lambda dH_n(\lambda) }{1 + \lambda \frac{\ntn}{n}\tilde{\su}_{n,t}^0 (z) } \Big )   }			
			\Bigg|^2 \\
			\leq & 
			 \frac{ y_n \frac{\ntn}{n}\int \frac{\lambda^2  dH_n(\lambda) }{\left|1 + \lambda \frac{\ntn}{n} \tilde{\su}_{n,t}^0(z) \right |^2} }
			{\Big | - z + y_{n} \int \frac{\lambda dH_n(\lambda) }{1 + \lambda \frac{\ntn}{n}\tilde{\su}_{n,t}^0 (z) } \Big |^2 }  
			   \frac{y_n\frac{\ntn}{n}\int \frac{  \lambda^2 dH_n(\lambda) }{\left|1 + \lambda \frac{\ntn}{n} \E [\tilde{\su}_{n,t}(z) ] )\right |^2}}
			{\Big | - z + y_{n} \int \frac{\lambda dH_n(\lambda) }{1 + \lambda \frac{\ntn}{n} \E [ \tilde{\su}_{n,t} (z) ] } - R_{n,t}(z) \Big |^2}  \\
		=&   \frac{ \frac{\ntn}{n}\im ( \tilde{\su}_{n,t}^0(z)) y_{n} \int \frac{\lambda^2 dH_n(\lambda)}{|1+\lambda \frac{\ntn}{n}\tilde{\su}_{n,t}^0(z)|^2}}
		{\im (z) + \frac{\ntn}{n} \im ( \tilde{\su}_{n,t}^0(z)) y_{n} \int \frac{\lambda^2 dH_n(\lambda)}{|1+\lambda \frac{\ntn}{n} \tilde{\su}_{n,t}^0(z)|^2}}  \\
		& \times   \frac{ \frac{\ntn}{n}\im ( \E [ \tilde{\su}_{n,t}(z)]) y_{n} \int \frac{\lambda^2 dH_n(\lambda)}{|1+\lambda \frac{\ntn}{n}\E [ \tilde{\su}_{n,t}(z)]|^2}}
		{\im (z) + \frac{\ntn}{n} \im ( \E [ \tilde{\su}_{n,t}(z)] ) y_{n} \int \frac{\lambda^2 dH_n(\lambda)}{|1+\lambda \frac{\ntn}{n} \E [ \tilde{\su}_{n,t}(z)]|^2} + \im (R_{n,t}(z))} \\
		\leq &  1 - \frac{\im(z)} {\im (z) + \frac{\ntn}{n} \im ( \tilde{\su}_{n,t}^0(z)) y_{n} \int \frac{\lambda^2 dH_n(\lambda)}{|1+\lambda \frac{\ntn}{n} \tilde{\su}_{n,t}^0(z)|^2}}  \\
		\leq &  1 - \frac{\im(z)}{\im(z) + \frac{\ntn}{n} K \im(z) y_{n} \int \frac{\lambda^2 dH_n(\lambda)}{|1+\lambda \frac{\ntn}{n} \tilde{\su}_{n,t}^0(z)|^2}} 
		\leq   1 - \frac{1}{1 + K }  <1 , 
		\end{align*}
 	where we used the fact that $\im(R_{n,t}(z)) + \im (z) \geq  0$ for sufficiently large $n$, which follows from Lemma \ref{aux_im_z}. 
		This finishes the proof of Theorem \ref{sn0_conv}.

\subsection{Proof of the statement  \eqref{conv2}}   \label{prconv2}

	As a preparation, we need the following proposition. The proof is omitted for the sake of brevity. 
	\begin{proposition}\label{a61}
	\begin{align*} 
		\sup\limits_{n\in\N, z\in\mathcal{C}_n, t\in [t_0,1]} \Big \| ( \frac{\ntn}{n} \E [ \tilde{\su}_{n,t} (z) ] \T_n + \mathbf{I} )\inv \Big \| \leq K.
	\end{align*}
	\end{proposition}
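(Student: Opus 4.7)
The plan is to reduce the spectral norm of $(a_{n,t}(z)\T_n + \mathbf{I})^{-1}$, where $a_{n,t}(z) := \frac{\ntn}{n}\E[\sut_{n,t}(z)]$, to a pointwise estimate. Since $\T_n$ is Hermitian nonnegative definite, this norm equals $\max_{\lambda \in \mathrm{spec}(\T_n)} |1 + a_{n,t}(z)\lambda|^{-1}$, so it suffices to show $|1 + a_{n,t}(z)\lambda|$ is bounded below uniformly in $n$, $z \in \mathcal{C}_n$, $t \in [t_0,1]$, and $\lambda \in \mathrm{spec}(\T_n)$.

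First I would pass to the limit: by Theorem \ref{thm_stieltjes}, $\E[\sut_{n,t}(z)] \to \sut_t(z)$ uniformly, and $\ntn/n \to t$ uniformly on $[t_0,1]$; invoking Lemma \ref{a80} to bound $|\sut_t(z)|$ uniformly, it is enough to prove $\inf |1 + t\sut_t(z)\lambda| > 0$ on $\mathcal{C} \times [t_0,1] \times [0,K_0]$, with $K_0 := \sup_n \|\T_n\|$.

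Second, I would split the analysis into the three contour pieces. On $\mathcal{C}_u$ (where $\im z = v_0 > 0$), the integral representation of the Stieltjes transform combined with Lemma \ref{a37} yields the uniform lower bound $\im \sut_t(z) \geq c_1 v_0 > 0$. A two-regime analysis in $\lambda$---the bound $|1 + t\sut_t(z)\lambda| \geq 1 - t\lambda|\sut_t(z)|$ for $\lambda \leq 1/(2t|\sut_t(z)|)$ and the bound $|1 + t\sut_t(z)\lambda| \geq t\lambda\, \im \sut_t(z)$ otherwise---then produces a uniform positive lower bound. On $\mathcal{C}_l$, since $x_l$ lies strictly to the left of $\mathrm{supp}(\underline{\tilde{F}}^{y_t,H})$ uniformly in $t \in [t_0,1]$, $\sut_t(z)$ extends continuously up to $z = x_l$ with $\sut_t(x_l) > 0$ real positive; by continuity and compactness, $\re \sut_t(z) \geq c_2 > 0$ on $\mathcal{C}_l$, whence $|1 + t\sut_t(z)\lambda| \geq 1 + t\lambda c_2 \geq 1$ for $\lambda \geq 0$.

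The delicate case, and main obstacle, is $\mathcal{C}_r$, where $\sut_t(x_r) < 0$ is real negative and one must prevent $-1/(t\sut_t(x_r))$ from coinciding with an eigenvalue of $\T_n$. I would substitute $m = t\sut_t(z)$ in equation \eqref{repl_a47} to recast it in standard Marchenko--Pastur form with ratio $y_t$, then invoke the Silverstein--Choi/Bai--Silverstein inversion of the support: since $x_r$ lies strictly to the right of $\mathrm{supp}(\underline{\tilde{F}}^{y_t,H})$, the value $-1/(t\sut_t(x_r))$ lies strictly to the right of $\mathrm{supp}(H)$, and continuity plus compactness of $[t_0,1]$ provides a uniform positive margin. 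Exploiting the freedom to choose $x_r$ above $\limsup_n \lambda_{\max}(\T_n)(1+\sqrt{y_{t_0}})^2$, the separation is made to exceed $K_0 = \sup_n \|\T_n\|$, which separates $-1/(t\sut_t(x_r))$ from $\mathrm{spec}(\T_n)$ for all large $n$; the proposition then follows. The main difficulty lies here, in ensuring via the Marchenko--Pastur inversion that this right-edge separation is uniform in $t$ and exceeds the possible outliers of $\T_n$.
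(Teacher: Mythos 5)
The paper omits its own proof of this proposition ("the proof is omitted for the sake of brevity"), so I can only judge your argument on its internal correctness. The reduction to bounding the scalar quantities $|1+\tfrac{\ntn}{n}\E[\sut_{n,t}(z)]\lambda|$ from below over $\lambda\in\mathrm{spec}(\T_n)$, the passage to the limit via Theorem \ref{thm_stieltjes}, and the two-regime estimate on $\mathcal{C}_u$ are all sound. The treatment of the two vertical segments, however, contains genuine gaps.

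On $\mathcal{C}_l$, your premise that $x_l$ lies strictly to the left of $\mathrm{supp}(\underline{\tilde F}^{y_t,H})$ and hence $\sut_t(x_l)>0$ fails precisely in the case where the construction forces $x_l>0$: that case requires $y_{t_0}<1$, hence $y_t<1$ for all $t\in[t_0,1]$, and then the companion law carries an atom at the origin of mass $1-y_t$ sitting to the \emph{left} of $x_l$. Its contribution $-(1-y_t)/x_l$ to $\sut_t(x_l)$ can dominate, making $\sut_t(x_l)<0$ (take $H=\delta_1$, $y_t=1/2$, $x_l$ close to the lower bulk edge). The desired bound still holds there, but because $-1/(t\sut_t(x_l))$ lands in the gap $(0,\min\mathrm{supp}(H))$, not because $\re\,\sut_t>0$; so $\mathcal{C}_l$ is of the same delicate type as $\mathcal{C}_r$ and needs the same support-inversion argument. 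On $\mathcal{C}_r$, the repair you propose --- pushing $x_r$ beyond the right endpoint of \eqref{interval} by more than $\sup_n\|\T_n\|$ --- is not available: the proposition must hold for the contour already fixed in Section \ref{sec34}, and in the proof of Theorem \ref{thm} that contour is further constrained to lie inside the domain of analyticity of $f_1,f_2$, which is only guaranteed to be a neighbourhood of \eqref{interval}; $x_r$ may therefore be arbitrarily close to the right edge. Without the enlargement, the Silverstein--Choi inversion applied to the limiting equation \eqref{repl_a47} only separates $-1/(t\sut_t(x_r))$ from $\mathrm{supp}(H)$, whereas you need separation from $\mathrm{spec}(\T_n)$, which under assumption (b) may contain outlier eigenvalues outside $\mathrm{supp}(H)$. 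Both gaps point to the same fix: run the inversion at finite $n$ with population law $H_n=F^{\T_n}$, i.e.\ work with $\sut_{n,t}^{0}$ defined through \eqref{a50}, for which the excluded set is $\mathrm{supp}(F^{\T_n})=\mathrm{spec}(\T_n)$ itself; one must then still make this separation quantitative and uniform in $(n,t)$, and transfer from $\sut_{n,t}^{0}$ to $\E[\sut_{n,t}]$ by a route that does not pass through \eqref{conv2}, whose proof already invokes the proposition.
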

	
	Using \eqref{sher_mor} and the representation  		
		\eqref{beta_quer} 
%	\begin{align} \label{beta_quer}
%		\beta_{j,t}(z) = \overline{\beta}_{j,t}(z) - \overline{\beta}_{j,t}^2(z) \hat{\gamma}_{j,t}(z) 
%	+ \overline{\beta}_{j,t}^2(z) \beta_{j,t}(z) \hat{\gamma}_{j,t}^2(z),
% 	\end{align} 
	we obtain
	\begin{align*}
		& \ntn R_{n,t}(z)  \E [ \tilde{\su}_{n,t}(z) ]  = 
		y_{\ntn} \sum\limits_{j=1}^{\ntn} \E [ \beta_{j,t}(z) d_{j,t}(z) ] \nonumber \\
		= & - y_{\ntn} \sum\limits_{j=1}^{\ntn} \E \Big[  \beta_{j,t}(z) \Big\{ \mathbf{q}_{j}^\star \T_n\sq \D_{j,t}\inv(z) ( \frac{\ntn}{n}\E [ \tilde{\su}_{n,t}(z) ] \T_n + \mathbf{I} )\inv \T_n\sq \mathbf{q}_j \nonumber \\
	&  - \frac{1}{p} \E \Big[ \tr ( \frac{\ntn}{n} \E [  \tilde{\su}_{n,t} (z) ] \T_n + \mathbf{I} )\inv \T_n \D_{t}\inv(z) \Big]  \Big\} \Big] \nonumber \\
	 = & - y_{\ntn} \sum\limits_{j=1}^{\ntn} \E \Big[  \beta_{j,t}(z) \Big\{ \mathbf{q}_{j}^\star \T_n\sq \D_{j,t}\inv(z) 
	 ( \frac{\ntn}{n}\E [ \tilde{\su}_{n,t}(z) ] \T_n + \mathbf{I} )\inv \T_n\sq \mathbf{q}_j \nonumber \\
	 & - \frac{1}{p} \E \Big [ \tr (\frac{\ntn}{n} \E [ \tilde{\su}_{n,t} (z) ] \T_n + \mathbf{I} )\inv \T_n \D_{j,t}\inv(z) \Big]  \Big\} \Big] \nonumber \\
	 & +  \frac{1}{\ntn} \sum\limits_{j=1}^{\ntn}  \E \Big[ \beta_{j,t}(z) \tr (\frac{\ntn}{n} \E [ \tilde{\su}_{n,t} (z) ] \T_n + \mathbf{I} )\inv \T_n \E \left[ \D_{t}\inv(z) - \D_{j,t}\inv(z)  \right] \Big] \nonumber \\
	 = & T_{n,1}(z,t) 
	 +  T_{n,2}(z,t) + o(1) 
	\end{align*}
		%Summand mit overline beta_j verschwindet, da r_j unabhängig von overline beta_j
	% Summand mit hat gamma_j,t^2 verschwindet nach Anwendung von Hölder-Ungleichung
  uniformly with respect to $z\in\mathcal{C}_n, t\in [t_0,1]$,	where the terms $T_{n,1}$ and $T_{n,2} $ are defined by  
	\begin{align}
		T_{n,1} (z,t) & =  y_{\ntn} \sum\limits_{j=1}^{\ntn} \E \Big[  \overline{\beta}_{j,t}^2(z) \Big\{ \mathbf{q}_{j}^\star \T_n\sq \D_{j,t}\inv(z) ( \frac{\ntn}{n} \E [ \tilde{\su}_{n,t}(z) ] \T_n + \mathbf{I} )\inv \T_n\sq \mathbf{q}_j \nonumber
	\\  & - \frac{1}{p} \E \Big[ \tr (\frac{\ntn}{n} \E [ \tilde{\su}_{n,t} (z) ] \T_n + \mathbf{I} )\inv \T_n \D_{j,t}\inv(z) \Big]  \Big\}  \hat{\gamma}_{j,t}(z)\Big] , \label{I}\\
		T_{n,2} (z,t) &= - \frac{1}{\ntn} \sum\limits_{j=1}^{\ntn} \E \left[ \beta_{j,t}(z) \right] 
	 \E\Big [ \beta_{j,t}(z) \rd_j^\star \D_{j,t}\inv(z) 
	 \Big (  \frac{\ntn}{n} \E \sut_{n,t}(z) \T_n + \mathbf{I} \Big) \inv \T_n \D_{j,t}\inv(z) \rd_j \Big ] \label{II},
	\end{align}
    For this argument we used the fact
	\begin{eqnarray*}
		&& \E \Big[  \overline{\beta}_{j,t}(z) \Big\{ \mathbf{q}_{j}^\star \T_n\sq \D_{j,t}\inv(z) ( \frac{\ntn}{n} \E [ \tilde{\su}_{n,t}(z) ] \T_n + \mathbf{I} )\inv \T_n\sq \mathbf{q}_j  \\
	&& ~~~~~~~~~~~~~~~~~~~~~~~ - \frac{1}{p} \E \Big[ \tr (\frac{\ntn}{n} \E [ \tilde{\su}_{n,t} (z) ] \T_n + \mathbf{I} )\inv \T_n \D_{j,t}\inv(z) \Big]  \Big\}  \Big]  
  = 0
	\end{eqnarray*}
	and  that by the estimate (9.10.2) in \cite{bai2004}
	\begin{align*}
		& \E \Big|  \overline{\beta}_{j,t}^2(z) \beta_{j,t}(z) \Big\{ \mathbf{q}_{j}^\star \T_n\sq \D_{j,t}\inv(z) ( \frac{\ntn}{n} \E [ \tilde{\su}_{n,t}(z) ] \T_n + \mathbf{I} )\inv \T_n\sq \mathbf{q}_j \\
	& - \frac{1}{p} \E \Big[ \tr (\frac{\ntn}{n} \E [ \tilde{\su}_{n,t} (z) ] \T_n + \mathbf{I} )\inv \T_n \D_{j,t}\inv(z) \Big]  \Big\} \hat{\gamma}_{j,t}(z)^2 \Big| \\
	 \leq & 
	 \E \sq \Big|  \overline{\beta}_{j,t}^2(z) \beta_{j,t}(z) \Big\{ \mathbf{q}_{j}^\star \T_n\sq \D_{j,t}\inv(z) ( \frac{\ntn}{n} \E [ \tilde{\su}_{n,t}(z) ] \T_n + \mathbf{I} )\inv \T_n\sq \mathbf{q}_j \\
	& - \frac{1}{p} \E \Big[ \tr (\frac{\ntn}{n} \E [ \tilde{\su}_{n,t} (z) ] \T_n + \mathbf{I} )\inv \T_n \D_{j,t}\inv(z) \Big]  \Big|^2 \E\sq |\hat{\gamma}_{j,t}(z) |	^4 \Big| \\
	\leq & K n\inv \eta_n^2 = o\lb n\inv \rb. 
	\end{align*}
	
	For the term in \eqref{I} we obtain the representation 
	\begin{align*}
	T_{n,1}(z, t)
	=& y_{\ntn} \sum\limits_{j=1}^{\ntn} \E \Big[  \overline{\beta}_{j,t}^2(z) \Big\{ \mathbf{q}_{j}^\star \T_n\sq \D_{j,t}\inv(z) ( \frac{\ntn}{n} \E [ \tilde{\su}_{n,t}(z) ] \T_n + \mathbf{I} )\inv \T_n\sq \mathbf{q}_j \nonumber
	\\  & - \frac{1}{p} \tr (\frac{\ntn}{n} \E [ \tilde{\su}_{n,t} (z) ] \T_n + \mathbf{I} )\inv \T_n \D_{j,t}\inv(z)   \Big\}  \hat{\gamma}_{j,t}(z)\Big] \\
	& -  y_{\ntn} \sum\limits_{j=1}^{\ntn} \E \Big[  \overline{\beta}_{j,t}^2(z)  \frac{1}{p} \tr (\frac{\ntn}{n} \E [ \tilde{\su}_{n,t} (z) ] \T_n + \mathbf{I} )\inv \T_n \E [ \D_{j,t}\inv(z) ]   \hat{\gamma}_{j,t}(z)\Big] \\
	&  +  y_{\ntn} \sum\limits_{j=1}^{\ntn} \E \Big[  \overline{\beta}_{j,t}^2(z)  \frac{1}{p} \tr (\frac{\ntn}{n} \E [ \tilde{\su}_{n,t} (z) ] \T_n + \mathbf{I} )\inv \T_n  \D_{j,t}\inv(z)  \hat{\gamma}_{j,t}(z)\Big] \\
	= &  y_{\ntn} \sum\limits_{j=1}^{\ntn} \E \Big[  \overline{\beta}_{j,t}^2(z) \Big\{ \mathbf{q}_{j}^\star \T_n\sq \D_{j,t}\inv(z) ( \frac{\ntn}{n} \E [ \tilde{\su}_{n,t}(z) ] \T_n + \mathbf{I} )\inv \T_n\sq \mathbf{q}_j \nonumber
	\\  & - \frac{1}{p} \tr (\frac{\ntn}{n} \E [ \tilde{\su}_{n,t} (z) ] \T_n + \mathbf{I} )\inv \T_n \D_{j,t}\inv(z)   \Big\}  \hat{\gamma}_{j,t}(z) \Big] \\
		= &  y_{\ntn} \sum\limits_{j=1}^{\ntn} z^2 \sut_{t}^2(z) \E \Big[   \Big\{ \mathbf{q}_{j}^\star \T_n\sq \D_{j,t}\inv(z) ( \frac{\ntn}{n} \E [ \tilde{\su}_{n,t}(z) ] \T_n + \mathbf{I} )\inv \T_n\sq \mathbf{q}_j \nonumber
	\\  
	& - \frac{1}{p} \tr (\frac{\ntn}{n} \E [ \tilde{\su}_{n,t} (z) ] \T_n + \mathbf{I} )\inv \T_n \D_{j,t}\inv(z) \Big\}  \hat{\gamma}_{j,t}(z)\Big] + o(1),
	\end{align*}
    where in the last step we used the inequality (9.10.2) in \cite{bai2004}, 
	%and Lemma \ref{su} (similarly to \eqref{b_s}), 
to replace  all of  the terms $\beta_{j,t}(z) , \overline{\beta}_{j,t}(z) , b_{j,t}(z)$ and similarly defined quantities  by $-z \sut_{t} (z)$. 
	 This argument also implies for  the term $	T_{n,2}$ defined in \eqref{II} 
	\begin{align*} %\label{u2}
		T_{n,2}(z,t) = - \frac{z^2 \sut_t^2(z)}{\ntn n} \sum\limits_{j=1}^{\ntn} \E \Big[ \tr  \D_{j,t}\inv(z) 
	 \Big ( \frac{\ntn}{n} \E \sut_{n,t}(z) \T_n + \mathbf{I} \Big)\inv \T_n \D_{j,t}\inv(z) \T_n \Big ] + o(1). 
	\end{align*}
	We now consider the the complex case, where we have from equation  (9.8.6) in \cite{bai2004}  
	%(similarly to the observations following \eqref{a5})
	\begin{align*} 
		T_{n,1}(z,t)
	%	= & y_{\ntn} \sum\limits_{j=1}^{\ntn} z^2 \sut_{t}^2(z) \E \Bigg[   \Bigg\{ \mathbf{q}_{j}^\star \T_n\sq \D_{j,t}\inv(z) ( \frac{\ntn}{n} \E [ \tilde{\su}_{n,t}(z) ] \T_n + \mathbf{I} )\inv \T_n\sq \mathbf{q}_j \nonumber
%	\\  
%	& - \frac{1}{p} \tr (\frac{\ntn}{n} \E [ \tilde{\su}_{n,t} (z) ] \T_n + \mathbf{I} )\inv \T_n \D_{j,t}\inv(z) \Bigg\}  \hat{\gamma}_{j,t}(z)\Bigg] + o(1) % \label{u3}
%	\\
	& =  \frac{z^2 \sut_t^2(z)}{\ntn n} \sum\limits_{j=1}^{\ntn} \E \Big[ \tr  \D_{j,t}\inv(z) \Big ( \frac{\ntn}{n} \E [ \sut_{n,t}(z) ] \T_n + \mathbf{I} \Big)\inv \T_n \D_{j,t}\inv(z) \T_n  \Big] + o(1).\nonumber 
	\end{align*}
	which yields 
$
	T_{n,1}(z,t) + T_{n,2}(z,t) = o(1) ,
$
and as a consequence  \eqref{conv2} in  this case.  \\

Next, we consider the real case  using  again equation (9.8.6) in \cite{bai2004}, which gives 
	%(again similarly to the observations following \eqref{a5}),
	\begin{align}
	\nonumber 
	 \ntn R_{n,t}(z)  \E [ \tilde{\su}_{n,t}(z) ]  & = 
%		y_{\ntn} \sum\limits_{j=1}^{\ntn} \E [ \beta_{j,t}(z) d_{j,t}(z) ] \nonumber \\ 
% &		=  
		T_{n,1}(z,t) + T_{n,2}(z,t) + o(1)  \\
	& 	= \frac{z^2 \sut_t^2(z)}{\ntn n} \sum\limits_{j=1}^{\ntn} 
		\E \Big[ \tr  \D_{j,t}\inv(z) 
	 \Big( \frac{\ntn}{n} \E [ \sut_{n,t}(z)] \T_n + \mathbf{I} \Big)\inv \T_n \D_{j,t}\inv(z) \T_n \Big] + o(1) \nonumber \\
&	 =  \frac{z^2 \sut_t^2(z)}{\ntn n} \sum\limits_{j=1}^{\ntn} 
	  \E \big[ \tr  \D_{j,t}\inv(z) 
	 \big( t  \sut_{t}(z) \T_n + \mathbf{I} \big)\inv \T_n \D_{j,t}\inv(z) \T_n \big] + o(1).
	 \label{a71}
	\end{align}

For a detailed analysis of the  random variable in \eqref{a71}  we use the  decomposition  
	\begin{align} 
		\D_{j,t}^{-1}(z) 
		= & - \Big (  z\mathbf{I} - \frac{\ntn-1}{n} b_{j,t}(z) \T_n \Big) \inv 
		+ b_{j,t}(z) \mathbf{A}_t(z) + \mathbf{B}_t(z) + \mathbf{C}_t(z),\label{d_inv}
	\end{align}
	where  
	\begin{align}
	 	\mathbf{A}_t(z) &=  \sum\limits_{i \neq j, 1 \leq i \leq \ntn } \Big (  z\mathbf{I} - \frac{\ntn-1}{n} b_{j,t}(z) \T_n \Big) \inv \lb \rd_{i} \rd_{i}^\star - n\inv \T_n \rb \D_{i,j,t}\inv(z) , \label{def_A}\\
	 	\mathbf{B}_t(z) &= \sum\limits_{i \neq j, 1 \leq i \leq \ntn }  ( \beta_{i,j,t} (z) - b_{j,t}(z) ) \Big (   z\mathbf{I} - \frac{\ntn-1}{n} b_{j,t}(z) \T_n \Big ) \inv 
	 	\rd_{i} \rd_{i}^\star \D_{i,j,t}\inv (z), %\label{def_B} 
	 	\nonumber \\
	 	\mathbf{C}_t(z) &= b_{j,t}(z) \Big ( z\mathbf{I} - \frac{\ntn-1}{n} b_{j,t}(z) \T_n  \Big) \inv \T_n n \inv \sum\limits_{i \neq j} 
	 	\lb \D\inv_{i,j,t} (z) - \D\inv_{j,t} (z) \rb. \nonumber %\label{def_C} 
	\end{align}
	(here, we do not reflect the dependence on  index $j$ in our notation). We now investigate these terms in more detail.
	\\
	\smallskip
% 	\HD{{\bf Brauchen wir das hier ?} nein
% For the term $\mathbf{A}_t(z)$  note  that $\rd_{i}$ is independent of $\rd_{1},...,\rd_{j}$ and of $\D_{i,j,t}(z)$ for $i>j$, which implies  
% 	\begin{align*}
% 	 \E_j \Big [  \Big (  z\mathbf{I} - \frac{\ntn-1}{n} b_{j,t}(z) \T_n \Big ) \inv  \lb \rd_{i} \rd_{i}^\star - n\inv \T_n \rb \D_{i,j,t}\inv(z) \Big ] 
% 	 % \\ 	& = \Big (  z\mathbf{I} - \frac{\ntn-1}{n} b_{j,t}(z) \T_n \Big ) \inv  \lb \E \left[ \rd_{i} \rd_{i}^\star \right] - n\inv \T_n \rb \E_j \left[ \D_{i,j,t}\inv(z) \right] 
% 	 = 0~, 
% 	\end{align*}
% and consequently we only have to consider therm with index  $i<j$ 	 in the definition of $\mathbf{A}_t$. }\\
	Let $\mathbf{M}$ be a $p\times p$ (random) matrix and let $||\mathbf{M}||$ denote a nonrandom bound on the spectral norm of $\mathbf{M}$ for all parameters governing $\mathbf{M}$ and all realizations of $\mathbf{M}$. 
	Then, one can show the following bounds (similarly to the inequalities  (9.9.14) and (9.9.15) in \cite{bai2004})
	\begin{align} 
		\E | \tr ( \mathbf{B}_t(z) \mathbf{M} ) | 
		& \leq K ||\mathbf{M}|| n^{\frac{1}{2}}, \label{a7} \\
		\left| \tr ( \mathbf{C}_t(z) \mathbf{M} ) \right| & \leq K || \mathbf{M} ||. \label{a8}
	\end{align}
	Moreover, we have for any  nonrandom $\mathbf{M}$ 
	%, \eqref{mom}, and \eqref{a6}
	\begin{align*}
		\E \left| \tr \mathbf{A}_t (z) \mathbf{M} \right| 
		 & \leq K ||\mathbf{M}||  , %\label{A}
	\end{align*}
	which follows using formula  (9.9.6) in \cite{bai2004}.

	Using the decomposition given in \eqref{d_inv},  the estimates \eqref{a7} and \eqref{a8}  (which shows that all  terms involving $\mathbf{B}_t(z) $ and $\mathbf{C}_t(z) $ are negligible)   and the fact 
	\begin{align*}
			\E \big[ \tr  \big( z \mathbf{I} - \frac{\ntn - 1}{n} b_{j,t}(z) \T_n \big)\inv 
	 \lb t  \sut_{t}(z) \T_n + \mathbf{I} \rb\inv \T_n \mathbf{A}_t(z)  \T_n \big]
	 =0,
	\end{align*}		
	we obtain
		\begin{align}
		& y_{\ntn} \sum\limits_{j=1}^{\ntn} \E [ \beta_{j,t}(z) d_{j,t}(z) ] \nonumber \\
		= & \frac{z^2 \sut_t^2(z)}{\ntn n} \sum\limits_{j=1}^{\ntn} 
	  \E \Big [ \tr  \Big  (  z \mathbf{I} - \frac{\ntn - 1}{n} b_{j,t}(z) \T_n \Big  )\inv 
	 \lb t  \sut_{t}(z) \T_n + \mathbf{I} \rb\inv \T_n \nonumber \\ 
	 & \times \Big  (  z \mathbf{I} - \frac{\ntn - 1}{n} b_{j,t}(z) \T_n \Big ) \inv  \T_n \Big  ] \nonumber \\
	 & + \frac{z^2 \sut_t^2(z)}{\ntn n} \sum\limits_{j=1}^{\ntn} 
	  b_{j,t}^2(z)\E \left[ \tr  \mathbf{A}_t(z)
	 \lb t  \sut_{t}(z) \T_n + \mathbf{I} \rb\inv \T_n \mathbf{A}_t(z)  \T_n \right]
	 + o(1) \nonumber \\
	 = & \frac{ \sut_t^2(z)}{ n} 
	  \E \left[ \tr  
	 \lb t  \sut_{t}(z) \T_n + \mathbf{I} \rb^{-3} \T_n^2 \right] \nonumber \\
	  & + \frac{z^4 \sut_t^4(z)}{\ntn n} \sum\limits_{j=1}^{\ntn} 
	  \E \left[ \tr  \mathbf{A}_t(z)
	 \lb t  \sut_{t}(z) \T_n + \mathbf{I} \rb\inv \T_n \mathbf{A}_t(z)  \T_n \right]
	 + o(1). \label{a68}
	\end{align}
For the term    $\mathbf{A}_t(z)$  in \eqref{def_A} (which actually depends on $j$)   we have 
	\begin{align*}
		 \mathbf{A}_t(z) 
		 &=  \sum\limits_{i \neq j, 1 \leq i \leq \ntn } \Big( z\mathbf{I} - \frac{\ntn-1}{n} b_{j,t}(z) \T_n \Big)\inv \lb \rd_{i} \rd_{i}^\star - n\inv \T_n \rb \D_{i,j,t}\inv(z) \\
		 = & \sum\limits_{i \neq j, 1 \leq i \leq \ntn }\D_{i,j,t}\inv(z)
		 \lb \rd_{i} \rd_{i}^\star - n\inv \T_n \rb 
		  \Big( z\mathbf{I} - \frac{\ntn-1}{n} b_{j,t}(z) \T_n \Big)\inv,
	\end{align*}
	which follows from $\mathbf{A}_t(z) = \lb \mathbf{A}_t(\overline{z}) \rb^\star$.
	Substituting the first and second  expression for the term $\mathbf{A}_t(z)$ on the left  and  on the right in   \eqref{a68}, respectively,  yields
	\begin{align}
		& \frac{z^4 \sut_t^4(z)}{\ntn n} \sum\limits_{j=1}^{\ntn} 
	  \E \left[ \tr  \mathbf{A}_t(z)
	 \lb t \sut_{t}(z) \T_n + \mathbf{I} \rb\inv \T_n \mathbf{A}_t(z)  \T_n \right] \nonumber \\
	 = & \frac{z^4 \sut_t^4(z)}{\ntn n} \sum\limits_{j=1}^{\ntn} \sum\limits_{i,l \neq j}
	  \E \Big[ \tr  \Big( z\mathbf{I} - \frac{\ntn-1}{n} b_{j,t}(z) \T_n \Big)\inv \lb \rd_{i} \rd_{i}^\star - n\inv \T_n \rb \D_{i,j,t}\inv(z)
	 \lb t \sut_{t}(z) \T_n + \mathbf{I} \rb\inv \T_n \nonumber \\
	 & \times \D_{l,j,t}\inv(z)
		 \lb \rd_{l} \rd_{l}^\star - n\inv \T_n \rb 
		  \Big( z\mathbf{I} - \frac{\ntn-1}{n} b_{l,t}(z) \T_n \Big)\inv   \T_n \Big] \nonumber \\
		  = & \frac{z^2 \sut_t^4(z)}{\ntn n}
		  \sum\limits_{j=1}^{\ntn}
		  \sum\limits_{i,l \neq j} 
		  A_{i,l,j} (z,t) + o(1), \label{a69}
	\end{align}
	where 
	\begin{align*}
	    A_{i,l,j}(z,t) & =	  \E \Big[ \tr \lb t \sut_t(z) \T_n + \mathbf{I} \rb^{-2} \T_n 
		  \lb \rd_i \rd_i^\star - n\inv \T_n\rb 
		  \D_{i,j,t}\inv(z)
	 \lb t \sut_{t}(z) \T_n + \mathbf{I} \rb\inv \T_n \nonumber \\
	 & \times \D_{l,j,t}\inv(z)
		 \lb \rd_{l} \rd_{l}^\star - n\inv \T_n \rb		
		  \Big].
	\end{align*}
	In the following, we will show that the sum of the cross terms $A_{i,l,j}(z,t) $ (i.e. $l\neq i$) in \eqref{a69} vanishes asymptotically. For this purpose we  use the formulas for $l\neq i$
	\begin{align*}
		\D_{i,j,t}\inv(z)  = & \D_{l,i,j,t}\inv(z) - \beta_{l,i,j,t}(z) \D_{l,i,j,t}\inv(z) \rd_l \rd_l^\star \D_{l,i,j,t}\inv(z), % \\
%		\D_{l,j,t}\inv(z)  = & \D_{i,l,j,t}\inv(z) - \beta_{i,l,j,t}(z) \D_{i,l,j,t}\inv(z) \rd_i \rd_i^\star \D_{i,l,j,t}\inv(z),
	\end{align*}
 where
	\begin{align*}
		\beta_{l,i,j,t}(z) = \frac{1}{1 + \rd_l^\star \D_{l,i,j,t}\inv(z) \rd_l }.
	\end{align*}
	Note that that the expectation appearing in the cross term $A_{i,l,j}(z,t)$ will be $0$ if $\D_{i,j,t}\inv(z)$ or $\D_{l,j,t}\inv(z)$ are replaced by $\D_{l,i,j,t}\inv(z).$ 
	Hence, it remains to bound for $i\neq l$  (use also \eqref{beta_quer} ) 
	\begin{align*}
	& | A_{i,l,j}(z,t) | \\ = & \Big| \E \Big[ \tr \lb t \sut_t(z) \T_n + \mathbf{I} \rb^{-2} \T_n 
		  \lb \rd_i \rd_i^\star - n\inv \T_n\rb 
		  \lb \D_{l,i,j,t}\inv(z) - \D_{i,j,t}\inv(z) \rb
	 \lb t \sut_{t}(z) \T_n + \mathbf{I} \rb\inv \T_n \nonumber \\
	 & \times \lb \D_{i,l,j,t}\inv(z) - \D_{l,j,t}\inv(z) \rb 
		 \lb \rd_{l} \rd_{l}^\star - n\inv \T_n \rb		
		  \Big] \Big| \\
		= & \Big| \E \Big[ \tr \lb t \sut_t(z) \T_n + \mathbf{I} \rb^{-2} \T_n 
		  \lb \rd_i \rd_i^\star - n\inv \T_n\rb 
		  \beta_{l,i,j,t}(z) \D_{l,i,j,t}\inv(z) \rd_l \rd_l^\star \D_{l,i,j,t}\inv(z) 
	 \lb t \sut_{t}(z) \T_n + \mathbf{I} \rb\inv \T_n \nonumber \\
	 & \times \beta_{i,l,j,t}(z) \D_{i,l,j,t}\inv(z) \rd_i \rd_i^\star \D_{i,l,j,t}\inv(z)
		 \lb \rd_{l} \rd_{l}^\star - n\inv \T_n \rb		
		  \Big] \Big| \\
		  		  = & o \lb n\inv \rb,
	\end{align*}
	which is shown in Lemma \ref{lem_cross_terms} and corrects a wrong statement on p. 260 in
	the monograph of  \cite{bai2004}. 
	
Summarizing, we have shown that 
	\begin{align}
	&  y_{\ntn} \sum\limits_{j=1}^{\ntn} \E [ \beta_{j,t}(z) d_{j,t}(z) ] \nonumber \\
	 = & 
	 \frac{ \sut_t^2(z)}{ n} 
	  \E \left[ \tr  
	 \lb t  \sut_{t}(z) \T_n + \mathbf{I} \rb^{-3} \T_n^2 \right] \nonumber \\
	 & + \frac{z^2 \sut_t^4(z)}{\ntn n}
		  \sum\limits_{j=1}^{\ntn}
		  \sum\limits_{i \neq j} 
		  \E \Big[ \tr \lb t \sut_t(z) \T_n + \mathbf{I} \rb^{-2} \T_n 
		  \lb \rd_i \rd_i^\star - n\inv \T_n\rb 
		  \D_{i,j,t}\inv(z)
	 \lb t \sut_{t}(z) \T_n + \mathbf{I} \rb\inv \T_n \nonumber \\
	 & \times \D_{i,j,t}\inv(z)
		 \lb \rd_{i} \rd_{i}^\star - n\inv \T_n \rb		
		  \Big] + o(1) \nonumber \\
		  = & 
	 \frac{ \sut_t^2(z)}{ n} 
	  \E \left[ \tr  
	 \lb t  \sut_{t}(z) \T_n + \mathbf{I} \rb^{-3} \T_n^2 \right] \nonumber \\
	 & + \frac{z^2 \sut_t^4(z)}{\ntn n}
		  \sum\limits_{j=1}^{\ntn}
		  \sum\limits_{i \neq j} 
		  \E \Big[ \tr \lb t \sut_t(z) \T_n + \mathbf{I} \rb^{-2} \T_n 
		   \rd_i \rd_i^\star 
		  \D_{i,j,t}\inv(z)
	 \lb t \sut_{t}(z) \T_n + \mathbf{I} \rb\inv \T_n  \D_{i,j,t}\inv(z)
		  \rd_{i} \rd_{i}^\star 
		  \Big] \nonumber
		  \\ & + o(1) \nonumber\\
		  = & 
		   \frac{ \sut_t^2(z)}{ n} 
	  \E \left[ \tr  
	 \lb t  \sut_{t}(z) \T_n + \mathbf{I} \rb^{-3} \T_n^2 \right] \nonumber \\
	 & + \frac{z^2 \sut_t^4(z)}{\ntn n^3}
		  \sum\limits_{j=1}^{\ntn}
		  \sum\limits_{i \neq j} 
		  \E \Big[ \tr \left\{ \lb t \sut_t(z) \T_n + \mathbf{I} \rb^{-2} \T_n^2 \right\}   
		  \tr \left\{ 
		  \D_{i,j,t}\inv(z)
	 \lb t \sut_{t}(z) \T_n + \mathbf{I} \rb\inv \T_n  \D_{i,j,t}\inv(z)
		  \T_n \right\}
		  \Big] \nonumber
		  \\ & + o(1) \nonumber \\
		  = & 
		   \frac{ \sut_t^2(z)}{ n} 
	  \E \left[ \tr  
	 \lb t  \sut_{t}(z) \T_n + \mathbf{I} \rb^{-3} \T_n^2 \right] \nonumber \\
	 & + \frac{z^2 \sut_t^4(z)}{ n^3}
		  \sum\limits_{j=1}^{\ntn}
		  \E \Big[ \tr \left\{ \lb t \sut_t(z) \T_n + \mathbf{I} \rb^{-2} \T_n^2 \right\}   
		  \tr \left\{ 
		  \D_{j,t}\inv(z)
	 \lb t \sut_{t}(z) \T_n + \mathbf{I} \rb\inv \T_n  \D_{j,t}\inv(z)
		  \T_n \right\}
		  \Big]
		  + o(1). \label{a70}
	\end{align}
Here  we used for the last equality  the fact 
	\begin{align*}
		& \Big|
		 \E \Big[ 
		  \tr \left\{ 
		  \D_{j,t}\inv(z)
	 \lb t \sut_{t}(z) \T_n + \mathbf{I} \rb\inv \T_n  \D_{j,t}\inv(z)
		  \T_n \right\}
		  -  
		  \tr \left\{ 
		  \D_{i,j,t}\inv(z)
	 \lb t \sut_{t}(z) \T_n + \mathbf{I} \rb\inv \T_n  \D_{i,j,t}\inv(z)
		  \T_n \right\}
		  \Big]
		  \Big| \\
		  \leq & 
		  \E \Big| \tr \lb \D_{i,j,t}\inv(z) - \D_{j,t}\inv(z) \rb 
		  \lb t \sut_{t}(z) \T_n + \mathbf{I} \rb\inv \T_n \D_{j,t}\inv(z) \T_n \Big| \\
		  & +  \E \Big| \tr \D_{i,j,t}\inv(z) 
		  \lb t \sut_{t}(z) \T_n + \mathbf{I} \rb\inv \T_n \lb \D_{i,j,t}\inv(z) - \D_{j,t}\inv(z) \rb  \T_n \Big| \\
		  = & 
		  \E \Big| \beta_{i,j,t}(z)  \rd_i^\star \D_{i,j,t}\inv(z) 
		  \lb t \sut_{t}(z) \T_n + \mathbf{I} \rb\inv \T_n \D_{j,t}\inv(z) \T_n \D_{i,j,t}\inv(z) \rd_i \Big| \\
		  & +  \E \Big| \beta_{i,j,t} (z)  \rd_i^\star \D_{i,j,t}\inv(z)   \T_n 
		  \D_{i,j,t}\inv(z) 
		  \lb t \sut_{t}(z) \T_n + \mathbf{I} \rb\inv \T_n  \D_{i,j,t}\inv(z) \rd_i
		  \Big| \\
		  \leq & K 
		  +   \E \Big| \beta_{i,j,t}(z)  \rd_i^\star \D_{i,j,t}\inv(z) 
		  \lb t \sut_{t}(z) \T_n + \mathbf{I} \rb\inv \T_n 
		   \lb \D_{i,j,t}\inv(z) - 
		  \beta_{i,j,t}(z)\D_{i,j,t}\inv(z) \rd_i \rd_i^\star 
		  \D_{i,j,t}\inv(z)
		  \rb 
		   \T_n \D_{i,j,t}\inv(z) \rd_i \Big| \\
		   \leq & K. 
	\end{align*}
		Hence, 
		\begin{align*}
			 & \Big| 
			 	\frac{z^2 \sut_t^4(z)}{\ntn n^3}
		  \sum\limits_{j=1}^{\ntn}
		  \sum\limits_{i \neq j} 
		\tr \left\{ \lb t \sut_t(z) \T_n + \mathbf{I} \rb^{-2} \T_n^2 \right\}   
			 \E \Big[ 
		  \tr \left\{ 
		  \D_{j,t}\inv(z)
	 \lb t \sut_{t}(z) \T_n + \mathbf{I} \rb\inv \T_n  \D_{j,t}\inv(z)
		  \T_n \right\} \\
		  & -  
		  \tr \left\{ 
		  \D_{i,j,t}\inv(z)
	 \lb t \sut_{t}(z) \T_n + \mathbf{I} \rb\inv \T_n  \D_{i,j,t}\inv(z)
		  \T_n \right\}
		  \Big]
		  \Big| 
		   =  o(1). 
		\end{align*}
		We now apply \eqref{a71} for \eqref{a70} and obtain 
		\begin{align*}
		 y_{\ntn} \sum\limits_{j=1}^{\ntn} \E [ \beta_{j,t}(z) d_{j,t}(z) ] 
		= & 
		\frac{ \sut_t^2(z)}{ n} 
	  \E \left[ \tr  
	 \lb t  \sut_{t}(z) \T_n + \mathbf{I} \rb^{-3} \T_n^2 \right] \nonumber \\
	 & + \frac{ \sut_t^2(z) \ntn }{ n^2}
		  \tr \left\{ \lb t \sut_t(z) \T_n + \mathbf{I} \rb^{-2} \T_n^2 \right\}   
		  y_{\ntn} \sum\limits_{j=1}^{\ntn} \E [ \beta_{j,t}(z) d_{j,t}(z) ] 
		  + o(1).
		\end{align*}
		This implies \eqref{conv2} for the real case, namely, 
		\begin{align*}
			y_{\ntn} \sum\limits_{j=1}^{\ntn} \E [ \beta_{j,t}(z) d_{j,t}(z) ] 
			= & \frac{\frac{ \sut_t^2(z)}{ n} 
	  \tr \left\{  
	 \lb t  \sut_{t}(z) \T_n + \mathbf{I} \rb^{-3} \T_n^2 \right\} }
			{1 - \frac{ \sut_t^2(z) \ntn }{ n^2}
		  \tr \left\{ \lb t \sut_t(z) \T_n + \mathbf{I} \rb^{-2} \T_n^2 \right\}   }
			+ o(1) \\ 
			= & \frac{y  \int \frac{\sut_t^2(z)\lambda^2}{(t \sut_t(z) \lambda + 1)^3 } dH(\lambda) }
			{1 - t y  \int \frac{\sut_t^2(z)\lambda^2}{( t \sut_t(z) \lambda + 1 )^2}  dH(\lambda)} + o(1).
		\end{align*}

\subsection{Further auxiliary results} \label{aux} 
    \begin{lemma} \label{mom_Dinv}
	We have uniformly in $n\in\N, t\in[t_0,1], z\in\mathcal{C}_n$
	\begin{align} \label{c23}
		\E ||\D_t\inv(z) ||^q \leq K,
	\end{align}
	where $K>0$ is a constant depending on $q\in\N$.
	Similarly, for pairwise different integers $i,j,k \in \{ 1, \ldots, \ntn\}$
	\begin{align*} 
		\max \lb  \E ||\D_{t}\inv(z) ||^q, \E ||\D_{j,t}\inv(z) ||^q , \E ||\D_{j,k,t}\inv(z) ||^q  \rb 
		\leq K.
	\end{align*}
	It also holds that 
	\begin{align} \label{c22}
		|| \D_t\inv (z) || \leq & K
		+ n \varepsilon_n\inv I \{ ||\mathbf{B}_{n,t} || \geq \eta_{r,t} \textnormal{ or }
		\lambda_{\min}(\mathbf{B}_{n,t}) \leq \eta_{l,t} \}.
	\end{align}
	\end{lemma}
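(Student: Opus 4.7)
The plan is to establish the deterministic pointwise bound \eqref{c22} first, and then obtain the $q$-th moment bound \eqref{c23} by integrating it against sharp tail estimates for the extreme eigenvalues of $\mathbf{B}_{n,t}$. Recall that for a Hermitian matrix $\mathbf{A}$, one has $\|(\mathbf{A}-z\mathbf{I})\inv\|=1/\mathrm{dist}(z,\sigma(\mathbf{A}))$. If $z\in\mathcal{C}_u$, then $\im(z)=v_0$ is a fixed positive constant and the deterministic bound $\|\D_t\inv(z)\|\leq v_0\inv$ holds. If instead $z\in\mathcal{C}_l\cup\mathcal{C}_r$, then $\re(z)\in\{x_l,x_r\}$ lies by construction strictly outside the interval in \eqref{interval}, so we may pick thresholds $\eta_{l,t},\eta_{r,t}$ satisfying
\[
x_l<\eta_{l,t}<\liminf_n\lambda_{\min}(\T_n)I_{(0,1)}(y_{t_0})t_0(1-\sqrt{y_{t_0}})^2\leq\limsup_n\lambda_{\max}(\T_n)(1+\sqrt{y_{t_0}})^2<\eta_{r,t}<x_r.
\]
On the good event $\mathcal{G}_{n,t}=\{\lambda_{\min}(\mathbf{B}_{n,t})>\eta_{l,t},\;\lambda_{\max}(\mathbf{B}_{n,t})<\eta_{r,t}\}$, the distance from $z$ to $\sigma(\mathbf{B}_{n,t})$ is bounded below by $\min(x_r-\eta_{r,t},\,\eta_{l,t}-x_l)>0$, yielding $\|\D_t\inv(z)\|\leq K$. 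On the complement, the rough bound $\|\D_t\inv(z)\|\leq 1/\im(z)\leq n\varepsilon_n\inv$ follows from $z\in\mathcal{C}_n$, which forces $\im(z)\geq n\inv\varepsilon_n$. Combining the two estimates yields \eqref{c22}.

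Taking the $q$-th power in \eqref{c22} and applying the elementary inequality $(a+b)^q\leq 2^{q-1}(a^q+b^q)$ produces
\[
\E\|\D_t\inv(z)\|^q\;\leq\;2^{q-1}K^q+2^{q-1}(n\varepsilon_n\inv)^q\,\PR(\mathcal{G}_{n,t}^c).
\]
The crucial ingredient is that under the truncation $|x_{ij}|\leq\eta_n\sqrt{n}$ with $\eta_n n^{1/5}\to\infty$ enforced at the outset of the proof of Theorem \ref{lem} and the $12$th-moment assumption (a), the classical extreme-eigenvalue estimates of \cite{baisilverstein1998,baisilverstein1999} (see also Theorem~5.11 in \cite{bai2004}) deliver $\PR(\lambda_{\max}(\mathbf{B}_{n,t})>\eta_{r,t})=o(n^{-\ell})$ and, when $y_{t_0}<1$, $\PR(\lambda_{\min}(\mathbf{B}_{n,t})<\eta_{l,t})=o(n^{-\ell})$ for any fixed $\ell\in\N$. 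Since $\varepsilon_n\geq n^{-\alpha}$ for some $\alpha\in(0,1)$, we have $(n\varepsilon_n\inv)^q\leq n^{q(1+\alpha)}$, which is dominated by $n^{-\ell}$ as soon as $\ell$ is chosen large enough in terms of $q$ and $\alpha$. This gives \eqref{c23} uniformly in $n$, $t\in[t_0,1]$, and $z\in\mathcal{C}_n$.

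For the perturbed matrices $\D_{j,t}\inv(z)$ and $\D_{j,k,t}\inv(z)$ the same two-step scheme applies. They are shifted versions of the reduced sample covariance matrices $\mathbf{B}_{n,t}-\mathbf{r}_j\mathbf{r}_j^\star$ and $\mathbf{B}_{n,t}-\mathbf{r}_j\mathbf{r}_j^\star-\mathbf{r}_k\mathbf{r}_k^\star$, which are themselves sums of $\ntn-1$ or $\ntn-2$ i.i.d.\ rank-one terms of exactly the form defining $\mathbf{B}_{n,t}$. Consequently, the Bai--Silverstein extreme-eigenvalue bounds apply verbatim (with $\ntn$ replaced by $\ntn-1$ or $\ntn-2$), and the same good-event argument, with $\eta_{l,t},\eta_{r,t}$ possibly shifted by an arbitrarily small margin to absorb the change in aspect ratio, produces identical moment bounds; Cauchy interlacing provides the complementary deterministic comparison $\lambda_{\max}(\mathbf{B}_{n,t}-\mathbf{r}_j\mathbf{r}_j^\star)\leq\lambda_{\max}(\mathbf{B}_{n,t})$ at the upper end. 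The main obstacle throughout is the quantitative tail control: one has to upgrade the qualitative almost-sure non-escape of eigenvalues outside the limiting support into a polynomial tail decay of order sufficient to overwhelm the factor $(n/\varepsilon_n)^q$, which is precisely where the strong $12$th-moment hypothesis and the truncation regime $\eta_n n^{1/5}\to\infty$ enter the argument essentially.
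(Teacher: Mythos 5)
Your proposal is correct and follows essentially the same route as the paper's proof: the deterministic bound $\|\D_t\inv(z)\|\leq v_0\inv$ on $\mathcal{C}_u$, the good/bad event split on $\mathcal{C}_l\cup\mathcal{C}_r$ with the crude bound $n\varepsilon_n\inv$ on the bad event, and the superpolynomial tail estimate for the extreme eigenvalues (formulas (9.7.8)--(9.7.9) in Bai and Silverstein's monograph, valid under the truncation) to absorb the factor $(n\varepsilon_n\inv)^q$. The treatment of $\D_{j,t}\inv$ and $\D_{j,k,t}\inv$ by the same scheme is also what the paper intends when it restricts attention to the first assertion.
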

	\begin{proof}[Proof of Lemma \ref{mom_Dinv}]
	We restrict ourselves to the first assertion. 
	Let first $z\in \mathcal{C}_u$, that is, $z= x + i v_0$ for some $x \in [x_l , x_r ]$. Then, 
	\begin{align*}
		|| \D_t\inv(z) || 
		= \frac{1}{ \min (  | \lambda_{\min}(\mathbf{B}_{n,t}) - z | , | \lambda_{\max}(\mathbf{B}_{n,t}) - z | ) } 
		\leq \frac{1}{v_0} = K.
	\end{align*}
	 This implies
$
	 	\E || \D_t \inv (z) ||^q \leq K.
$
	Next, assume $z\in \mathcal{C}_l \cup \mathcal{C}_r$, that is, $z = x_r + iv$ or $z = x_l + i v$ for some $v\in [n\inv \varepsilon_n, v_0]$. 
	By formula  (9.7.8) and (9.7.9) in \cite{bai2004}	 we have for $t\in [t_0,1]$ and any $m>0$
	\begin{align}
		\PR \lb || \mathbf{B}_{n,t} || > \eta_{r,t} 
		\textnormal{ or }
		\lambda_{\min}(\mathbf{B}_{n,t}) < \eta_{l,t} 
		\rb
		= o \lb \ntn^{-m} \rb 
		= o \lb n^{-m} \rb,
		\label{c24}
	\end{align}
	where $\eta_{r,t}$ denotes a fixed number between 
	\begin{align*}
		\limsup\limits_{n \to \infty} || \T_n|| ( 1 + \sqrt{y_t} )^2 t
	\end{align*}
	and $x_r$ and $\eta_{l,t} $ between
	\begin{align*}
		\liminf\limits_{n\to \infty}  \lambda_{\min}(\T_n)  ( 1 - \sqrt{y_t} )^2 I_{(0,1)} (y_t) t
	\end{align*}
	and $x_l$.
	We estimate 
	\begin{align*}
		 \E ||\D_t\inv(z) ||^q  
		\leq & K \E \left[ ||\D_t\inv(z) || 
		I \{ ||\mathbf{B}_{n,t} || \leq \eta_{r,t} \textnormal{ and } \lambda_{\min}(\mathbf{B}_{n,t}) \geq \eta_{l,t} \} \right] ^q \\
		& +K  \E \left[ ||\D_t\inv(z) ||
		 I \{ ||\mathbf{B}_{n,t} || > \eta_{r,t} \textnormal{ or } \lambda_{\min}(\mathbf{B}_{n,t}) < \eta_{l,t} \} \right] ^q \\
		 \leq & K + K n^q \varepsilon_n^{-q} n^{-m}  \leq K.
	\end{align*}
	To derive a bound for the first term, we distinguish the cases $z\in \mathcal{C}_r$ and $z \in \mathcal{C}_l$. For the sake of brevity, we only consider the first one. It holds
	\begin{align}
	&  ||\D_t\inv(z) || 
		I \{ ||\mathbf{B}_{n,t} || \leq \eta_{r,t} \textnormal{ and } \lambda_{\min}(\mathbf{B}_{n,t}) \geq \eta_{l,t} \}  \nonumber \\
		= &   \frac{1}{ \min (  | \lambda_{\min}(\mathbf{B}_{n,t}) - (x_r + iv) | , | \lambda_{\max}(\mathbf{B}_{n,t}) - (x_r+iv) | ) } 
		I \{ ||\mathbf{B}_{n,t} || \leq \eta_{r,t} \textnormal{ and } \lambda_{\min}(\mathbf{B}_{n,t}) \geq \eta_{l,t} \} \nonumber \\
		\leq &  \frac{1}{ x_r -  \lambda_{\max}(\mathbf{B}_{n,t})    } 
		I \{ ||\mathbf{B}_{n,t} || \leq \eta_{r,t} \textnormal{ and } \lambda_{\min}(\mathbf{B}_{n,t}) \geq \eta_{l,t} \}  \nonumber \\
		\leq &  \frac{1}{ x_r -  \eta_{r,t}  } 
		\leq \frac{1}{ x_r - \limsup\limits_{n\to\infty} ||\T_n|| (1 + \sqrt{y_{t_0}} )^2  } = K .
		\label{c20}
	\end{align}
	For the second summand, we conclude
	\begin{align}
		&  ||\D_t\inv(z) ||
		 I \{ ||\mathbf{B}_{n,t} || > \eta_{r,t} \textnormal{ or } \lambda_{\min}(\mathbf{B}_{n,t}) < \eta_{l,t} \} \nonumber \\
		 \leq &   \frac{1}{ \min (  | \lambda_{\min}(\mathbf{B}_{n,t}) - z | , | \lambda_{\max}(\mathbf{B}_{n,t}) - z | ) } 
		 I \{ ||\mathbf{B}_{n,t} || > \eta_{r,t} \textnormal{ or } \lambda_{\min}(\mathbf{B}_{n,t}) < \eta_{l,t} \}  \nonumber \\
		 \leq & n \varepsilon_n^{-1} I \left\{ ||\mathbf{B}_{n,t} || > \eta_{r,t} \textnormal{ or } \lambda_{\min}(\mathbf{B}_{n,t}) < \eta_{l,t} \right\}.
		 \label{c21}
	\end{align}
	The bounds in \eqref{c20} and \eqref{c21} show that \eqref{c22} holds true. 
	The assertion in \eqref{c23} follows by applying \eqref{c24}.  
	\end{proof}

The bounds for the increments of $M_n(z,t)$, $z\in\mathcal{C}_n, t\in[t_0,1]$ are given in the following lemma, which will be proven later. 
	
	\begin{lemma} \label{no_hat}
	 	For $t\in[t_0,1], z_1,z_2\in\mathcal{C}_n$, it holds for sufficiently large $n\in\N$ under the assumptions of Theorem \ref{asympt_tight}
		\begin{align}
			\E | M_n^1(z_1,t) - M_n^1(z_2,t) |^{2+\delta} \lesssim |z_1 - z_2|^{2+\delta}. \label{tight_z}
		\end{align} 
		We also have for $t_1,t_2\in[t_0,1], z \in\mathcal{C}_n$
		\begin{align}
			\E |Z_n^1(z,t_1,t_2) |^4 & \lesssim  \Big  ( \frac{\ntt - \nt}{n} \Big  )^4, \label{tight_t1} \\
	 		\E |Z_n^2(z,t_1,t_2) |^{4 + \delta} & \lesssim  \Big  ( \frac{\ntt - \nt}{n} \Big  )^{2 + \delta /2}, \label{tight_t2}
		\end{align} 
		where 
\begin{equation} \label{aa1}
			M_n^1(z,t_1) - M_n^1(z,t_2) 
			= Z_n^1(z,t_1,t_2) + Z_n^2(z,t_1,t_2)
			,
			\end{equation}
	and $Z_n^1$ and $Z_n^2$ are defined in \eqref{det7} and \eqref{det8}, respectively. 
	\end{lemma}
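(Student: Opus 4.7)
All three estimates rest on the same martingale framework: the summands in the decomposition \eqref{mn_t} of $M_n^1(z,t)$, and likewise those of $Z_n^1$ and $Z_n^2$ in \eqref{det7}--\eqref{det8}, form martingale differences with respect to the filtration $(\mathcal{F}_{nj})_j$. The plan is to apply Burkholder's inequality for complex-valued martingale differences,
\begin{align*}
\E\Big|\sum_j X_j\Big|^p \leq K_p\Big\{\E\Big(\sum_j \E_{j-1}|X_j|^2\Big)^{p/2} + \E\sum_j |X_j|^p\Big\},
\end{align*}
with $p=2+\delta$ for \eqref{tight_z}, $p=4$ for \eqref{tight_t1} and $p=4+\delta$ for \eqref{tight_t2}. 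The per-summand estimates would rely on three standing tools: (i) the resolvent identity $\D_{j,t}\inv(z_1)-\D_{j,t}\inv(z_2) = (z_1-z_2)\D_{j,t}\inv(z_1)\D_{j,t}\inv(z_2)$ and its analogues for $\D_{j,t}^{-2}$ and for the scalars $\beta_{j,t}(\cdot)$; (ii) the concentration bound for centered quadratic forms (Lemma B.26 in \cite{bai2004}),
\begin{align*}
\E|\mathbf{x}_j^\star \mathbf{A}\mathbf{x}_j - \tr\mathbf{A}|^q \leq K_q\big\{(\tr \mathbf{A}\mathbf{A}^\star)^{q/2} + \E|x_{11}|^{2q}\tr(\mathbf{A}\mathbf{A}^\star)^{q/2}\big\},
\end{align*}
valid whenever $\mathbf{A}$ is independent of $\mathbf{x}_j$; and (iii) Lemma \ref{mom_Dinv}, which yields $\E\|\D_t\inv(z)\|^q \leq K$ uniformly on $\mathcal{C}_n$. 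The moment assumption $\E|x_{ij}|^{12}<\infty$ is precisely what keeps (ii) usable up to the exponent $q=4+\delta$ for small $\delta>0$.

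For the $z$-increment \eqref{tight_z}, the identities in (i) extract a single factor $(z_1-z_2)$ from $\beta_{j,t}(z_1)\rd_j^\star \D_{j,t}^{-2}(z_1)\rd_j - \beta_{j,t}(z_2)\rd_j^\star \D_{j,t}^{-2}(z_2)\rd_j$. Combined with (ii)--(iii), this yields $\E_{j-1}|X_j|^2 \lesssim |z_1-z_2|^2/n$ and $\E|X_j|^{2+\delta} \lesssim |z_1-z_2|^{2+\delta}/n^{1+\delta/2}$. Summing over $j\leq\ntn\leq n$ and applying Burkholder with $p=2+\delta$ produces the asserted bound $|z_1-z_2|^{2+\delta}$.

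For the $t$-increments I treat $Z_n^1$ and $Z_n^2$ separately. For $Z_n^1$ the decisive point is that for $j\leq\nt$ both $\D_{j,t_1}$ and $\D_{j,t_2}$ omit the index $j$, so that $\D_{j,t_2}=\D_{j,t_1}+\sum_{i=\nt+1}^{\ntt}\rd_i\rd_i^\star$ is a rank-$(\ntt-\nt)$ perturbation. Iterating the Sherman--Morrison identity \eqref{sher_mor} through the extra indices, the difference $\beta_{j,t_2}\rd_j^\star\D_{j,t_2}^{-2}\rd_j-\beta_{j,t_1}\rd_j^\star\D_{j,t_1}^{-2}\rd_j$ acquires an extra factor of size $(\ntt-\nt)/n$ over an individual summand at a single $t$, giving $\E_{j-1}|X_j|^2 \lesssim (\ntt-\nt)^2/n^3$; summing and raising to $p/2=2$ yields $\E|Z_n^1|^4 \lesssim ((\ntt-\nt)/n)^4$. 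For $Z_n^2$, the martingale has only $\ntt-\nt$ summands, each with $\E_{j-1}|X_j|^2 \lesssim 1/n$ by the standard Bai--Silverstein quadratic form analysis. Burkholder with $p=4+\delta$ then produces $\E|Z_n^2|^{4+\delta} \lesssim ((\ntt-\nt)/n)^{2+\delta/2}$, the residual contribution $\sum_j \E|X_j|^{4+\delta} \lesssim (\ntt-\nt)n^{-(2+\delta/2)}$ being of the same order when $\ntt-\nt\geq 1$.

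The hard part will be the delicate uniform control of $\|\D_{j,t}\inv(z)\|$ on the narrow boundary pieces $\mathcal{C}_l\cup\mathcal{C}_r$, where $\im(z)$ can be as small as $\varepsilon_n/n$ and the naive deterministic bound $n/\varepsilon_n$ would destroy every moment estimate by unbounded powers of $n$. Lemma \ref{mom_Dinv}, coupled with the no-eigenvalues-outside-the-support estimate $\PR(\|\mathbf{B}_{n,t}\|>\eta_{r,t}\text{ or }\lambda_{\min}(\mathbf{B}_{n,t})<\eta_{l,t}) = o(n^{-m})$ for every $m$, absorbs this loss and furnishes $\E\|\D_{j,t}\inv(z)\|^q \leq K$ on $\mathcal{C}_n$. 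It is precisely the interaction between this polynomial-probability large-deviation bound and the $(4+\delta)$-th moments arising from (ii) that forces the $12$th-moment assumption.
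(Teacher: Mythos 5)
Your plan follows essentially the same route as the paper: martingale decomposition of $M_n^1$, a Burkholder-type moment inequality for complex martingale differences (the paper uses Lemma 2.1 of Li (2003)), the resolvent identity to extract the factor $(z_1-z_2)$ for \eqref{tight_z}, the quadratic-form concentration bound of Lemma B.26 in Bai--Silverstein, and the uniform resolvent moment bounds on $\mathcal{C}_n$ obtained by absorbing the deterministic $n\varepsilon_n^{-1}$ bound with the $o(n^{-m})$ spectrum-separation estimate. All the intermediate orders you assert are the correct ones: per-summand $L^2$-size $n^{-1}$ for the single-$t$ and $Z_n^2$ differences, and $(\ntt-\nt)^2 n^{-3}$ for the $Z_n^1$ differences, which indeed reproduce \eqref{tight_z}--\eqref{tight_t2} after summation.

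The genuine gap is in the treatment of $Z_n^1$, which is where almost all of the paper's effort is concentrated. Writing $\D_{j,t_2}=\D_{j,t_1}+\sum_{k=\nt+1}^{\ntt}\rd_k\rd_k^\star$ and expanding via Sherman--Morrison produces, for each extra index $k$, a finite family of cross terms $T_{j,k}$ built from products such as $\rd_j^\star\mathbf{A}_1\rd_k\rd_k^\star\mathbf{A}_2\rd_j$ multiplied by $\beta$- and $\gamma$-factors, and what must be proved is the per-index bound $\E|(\E_j-\E_{j-1})T_{j,k}|^4\lesssim n^{-6}$ (equivalently your ``extra factor of $1/n$ per index''). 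Your sketch asserts this gain but gives no mechanism for it: a naive bound on $\rd_j^\star\mathbf{A}_1\rd_k\rd_k^\star\mathbf{A}_2\rd_j$ is $O(1)$, and the $n^{-6}$ rate only emerges after repeatedly centering the quadratic forms in $\rd_j$ \emph{and} in $\rd_k$, which requires an iterated concentration lemma for products of quadratic forms (the paper's Lemma \ref{h1a}) together with eighth-moment bounds on $\gamma_{j,t}$, $\gamma_{k,j,t}$ of order $n^{-4}$. This is also where the twelfth-moment assumption actually enters --- it keeps the Bai--Silverstein quadratic-form bound at the rate $n^{-(q\wedge 6)}$ up to $q=8$, as needed in the Cauchy--Schwarz steps for the cross terms --- rather than through the interaction with the boundary large-deviation bound, which holds already under fourth moments after truncation. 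Without carrying out (or at least correctly locating) this per-term analysis, the estimate \eqref{tight_t1} is not established.
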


	The proof of  Lemma \ref{no_hat} requires  some preparations. 
	Note that while a fourth moment condition is sufficient for proving the convergence of the finite-dimensional distribution of $(\hat{M}_n^1)_{n\in\N}$ (Theorem \ref{thm_fidis}) and the convergence of the non-random part $(\hat{M}_n^2)_{n\in\N}$ (Theorem \ref{thm_bias}), we need the stronger moment assumption from Theorem \ref{thm}, namely
	\begin{align} \label{mom_cond}
		\sup\limits_{i,j,n} \E | x_{ij}|^{12} < \infty,
	\end{align}
	exclusively for a proof of the asymptotic tightness of $(\hat{M}_n^1)_{n\in\N}$. \\
	Under this assumption, by Lemma B.26 in \cite{bai2004},
	%Lemma 2.7 in \cite{baisilverstein1998} for iid case
	the following estimates for moments of quadratic forms hold true for $q\geq 2$
	\begin{align*}
		\E | \mathbf{x}_j^\star \mathbf{A} \mathbf{x}_j - \tr \mathbf{A} |^q
		\lesssim &  \lb \tr \mathbf{A} \mathbf{A}^\star \rb ^{q/2}  + \eta_n^{(2q-12) \vee 0} n^{(q-6) \vee 0} \tr (\mathbf{A} \mathbf{A}^\star )^{q/2}  \\
		\lesssim &
		\begin{cases}
		&  \lb \tr \mathbf{A} \mathbf{A}^\star \rb^{q/2} \lb 1 + n^{(q-6) \vee 0} \rb, \\
		&  n^{q/2} ||\A||^q +  n n^{(q-6) \vee 0} ||\A||^q.
		\end{cases} 
		\end{align*}
		Thus, we have for $q \geq 2$
		\begin{align}
		\E | \mathbf{r}_j^\star \mathbf{A} \mathbf{r}_j - n\inv \tr \T_n \mathbf{A} |^q
		\lesssim &
		\begin{cases}
		&   \lb \tr \mathbf{A} \mathbf{A}^\star \rb^{q/2}  n^{-(q \wedge 6)}, \\
		&   ||\A||^q n^{-((q/2) \wedge 5)}.
		\end{cases}
		\label{h1}
		\end{align}

	Furthermore, combining \eqref{h1} with arguments given in the proof of (9.9.6) in \cite{bai2004}, we obtain  the following lemma. 
	
	\begin{lemma} \label{h1a}
		Let $j,m\in\N_0$, $ q \geq 2$ and $\mathbf{A}_l$, $l\in\{1,\ldots,m+1\}$ be $p \times p$ (random) matrices independent of $\rd_j$ which obey for any $\tilde{q} \geq 2$
		\begin{align*}
			\E || \mathbf{A}_l ||^{\tilde{q}} < \infty, ~ l\in \{1, \ldots, m+1\}. 
		\end{align*}
		Then, it holds
		\begin{align*}
			\E \Big| \Big( \prod\limits_{k=1}^m \rd_j^\star \mathbf{A}_k \rd_j \Big)
		\lb \rd_j^\star \mathbf{A}_{m+1} \rd_j - n\inv \tr \T_n \mathbf{A}_{m+1} \rb \Big|^q
		\lesssim n^{-((q/2) \wedge 5)} .
		\end{align*}
		If even for any $l\in \{1, \ldots, m+1\}$, $\tilde{q} \geq 2$
		\begin{align*}
			\E \left[  \tr \mathbf{A} \mathbf{A}_l^\star  \right]^{\tilde{q}} < \infty ,
		\end{align*}
		holds true, then we have
		\begin{align*}
				\E \Big| \Big( \prod\limits_{k=1}^m \rd_j^\star \mathbf{A}_k \rd_j \Big)
		\lb \rd_j^\star \mathbf{A}_{m+1} \rd_j - n\inv \tr \T_n \mathbf{A}_{m+1} \rb \Big|^q
		\lesssim 
		 n^{-(q \wedge 6)}.
		\end{align*}
	\end{lemma}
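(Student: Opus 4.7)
The plan is to follow the scheme of the proof of (9.9.6) in Bai--Silverstein, namely reduce to deterministic $\mathbf{A}_l$ by conditioning, decompose each quadratic form into its conditional mean plus a centered remainder, expand the resulting product, and apply H\"older's inequality to isolate the decay contributed by the truly centered factor $Y_{m+1} := \rd_j^\star \mathbf{A}_{m+1} \rd_j - n^{-1}\tr(\T_n \mathbf{A}_{m+1})$. Since $\rd_j$ is independent of the $\mathbf{A}_l$, the tower property lets us derive a bound that is polynomial in $\|\mathbf{A}_l\|$ (respectively in $(\tr \mathbf{A}_l \mathbf{A}_l^\star)^{1/2}$), after which the outer expectation is finite by the moment hypothesis combined with one further application of H\"older over the product $\prod_k \|\mathbf{A}_k\|^q$.

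Concretely, set $Y_k := \rd_j^\star \mathbf{A}_k \rd_j - n^{-1}\tr(\T_n \mathbf{A}_k)$ for $k = 1,\ldots,m+1$ and note that $|n^{-1}\tr(\T_n \mathbf{A}_k)| \leq (p/n)\|\T_n\|\,\|\mathbf{A}_k\| \leq C \|\mathbf{A}_k\|$ uniformly in $n$. Expanding the product yields
\begin{align*}
\Big(\prod_{k=1}^m \rd_j^\star \mathbf{A}_k \rd_j\Big) Y_{m+1} = \sum_{S \subseteq \{1,\ldots,m\}} \Big(\prod_{k \notin S} n^{-1}\tr(\T_n \mathbf{A}_k)\Big) \prod_{k \in S \cup \{m+1\}} Y_k,
\end{align*}
so it suffices to estimate the $q$-th moment of each of the $2^m$ summands. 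Writing $s := |S| + 1$ and applying H\"older's inequality with $s$ equal exponents gives
\begin{align*}
\E \Big| \prod_{k \in S \cup \{m+1\}} Y_k \Big|^q \leq \prod_{k \in S \cup \{m+1\}} \big(\E |Y_k|^{qs}\big)^{1/s}.
\end{align*}

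Substituting the second inequality of \eqref{h1}, $\E |Y_k|^{qs} \lesssim \|\mathbf{A}_k\|^{qs} n^{-((qs/2) \wedge 5)}$, one factor is bounded by $\|\mathbf{A}_k\|^q n^{-\min(q/2,\, 5/s)}$, and the product of the $s$ such factors contributes $n^{-s\min(q/2,\,5/s)} = n^{-\min(sq/2,\,5)}$. The key elementary identity is that for every integer $s \geq 1$,
\begin{align*}
\min(sq/2,\, 5) \geq (q/2) \wedge 5,
\end{align*}
because $sq/2 \geq q/2$ already when $s \geq 1$. Hence each summand in the expansion satisfies the bound $\lesssim \prod_k \|\mathbf{A}_k\|^q \cdot n^{-((q/2) \wedge 5)}$, and a finite sum over $S$ preserves the rate. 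The second assertion is obtained by identical reasoning but replacing the second inequality of \eqref{h1} by the first one, $\E |Y_k|^{qs} \lesssim (\tr \mathbf{A}_k \mathbf{A}_k^\star)^{qs/2} n^{-(qs \wedge 6)}$, and using the analogous identity $\min(sq,\,6) \geq q \wedge 6$.

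The main obstacle is not any novel estimate but the balancing act in H\"older's inequality: one must ensure that distributing the centered factors across $s$ equal conjugate exponents does not degrade the per-factor decay, which would be fatal since $s$ may be as large as $m+1$. The monotonicity $\min(sq/2, 5) \geq (q/2)\wedge 5$ (and its Frobenius analogue) is precisely what saves us, making the Hölder decomposition tight enough so that the bound is uniform in $S$ and thus depends on $m$ only through the implicit constant. The rest of the argument is a routine expansion combined with the known quadratic-form bounds in \eqref{h1}.
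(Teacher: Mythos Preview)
Your proof is correct and follows essentially the same strategy as the paper: decompose each quadratic form as centered term plus trace, and control the resulting products by H\"older together with the bound \eqref{h1}. The only difference is organizational---the paper runs an induction on $m$ (splitting off one factor at a time and applying Cauchy--Schwarz), whereas you fully expand into $2^m$ terms and apply H\"older with $s=|S|+1$ equal exponents; your monotonicity observation $\min(sq/2,5)\geq (q/2)\wedge 5$ plays exactly the role that the induction hypothesis at exponent $2q$ plays in the paper.
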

	
		\begin{remark}
{\rm 
		In fact, as the proof of Lemma \ref{h1a} reveals, we could impose a less restrictive condition on the spectral moments of $\mathbf{A}_l$, $l\in\{1, \ldots, m+1\}$. For our purpose, it is sufficient to state the previous lemma in this form, since, when applying Lemma \ref{h1a}, the involved matrices will have bounded spectral moments of any order.
	\\ In particular, the second assertion will be useful if $\mathbf{B}_l$ involves a term like $\rd_k \rd_k^\star$ for some $k\neq j$ among other matrices like $\D_{j,t}\inv(z)$,  while we will make use of the first assertion in case that $\mathbf{B}_l$ only involves matrices like $\D_{j,t}\inv(z)$. In the latter case, contrary to the first one, we are not able to control moments of $\tr \mathbf{B}_l \mathbf{B}_l^\star$ uniformly in $n$. 
	}
	\end{remark}
	
	\begin{proof}[Proof of Lemma \ref{h1a}]
		For $m=0$, the assertion of the lemma follows directly from \eqref{h1} for any $q\geq 2$. 
		We continue the proof by an induction over the integer $m$ for some fixed $q\geq 2$. 
		\begin{align*}
			& \E \Big| \Big( \prod\limits_{k=1}^m \rd_j^\star \mathbf{A}_k \rd_j \Big)
		\lb \rd_j^\star \mathbf{A}_{m+1} \rd_j - n\inv \tr \T_n \mathbf{A}_{m+1} \rb \Big|^q \\
		\lesssim & \E \Big| \Big( \prod\limits_{k=1}^{m - 1} \rd_j^\star \mathbf{A}_k \rd_j \Big)
		\lb \rd_j^\star \mathbf{A}_{m} \rd_j - n\inv \tr \T_n \mathbf{A}_{m} \rb
		\lb \rd_j^\star \mathbf{A}_{m+1} \rd_j - n\inv \tr \T_n \mathbf{A}_{m+1} \rb \Big|^q \\
		& + \E \Big| \Big( \prod\limits_{k=1}^{m - 1} \rd_j^\star \mathbf{A}_k \rd_j \Big)
		 n\inv \tr \T_n \mathbf{A}_{m} 
		\lb \rd_j^\star \mathbf{A}_{m+1} \rd_j - n\inv \tr \T_n \mathbf{A}_{m+1} \rb \Big|^q \\
		\leq & \Big(  \E \Big| \Big( \prod\limits_{k=1}^{m - 1} \rd_j^\star \mathbf{A}_k \rd_j \Big)
		\lb \rd_j^\star \mathbf{A}_{m} \rd_j - n\inv \tr \T_n \mathbf{A}_{m} \rb \Big|^{2q} 
		\E \left| \lb \rd_j^\star \mathbf{A}_{m+1} \rd_j - n\inv \tr \T_n \mathbf{A}_{m+1} \rb \right|^{2q} \Big)^{\frac{1}{2}}\\
		& + \E \Big| \Big( \prod\limits_{k=1}^{m - 1} \rd_j^\star \mathbf{A}_k \rd_j \Big)
		 n\inv \tr \T_n \mathbf{A}_{m} 
		\lb \rd_j^\star \mathbf{A}_{m+1} \rd_j - n\inv \tr \T_n \mathbf{A}_{m+1} \rb \Big|^q.
		\end{align*}
		By applying the induction hypothesis to these three terms, we get the desired result for each case.
	\end{proof}
	
	Adapting the proof of (9.10.5) in \cite{bai2004}, we obtain under the strong moment condition \eqref{mom_cond} for $q \geq 2$
	\begin{align} \label{gamma_mom}
	 \E|\gamma_{j,t}(z)|^q \lesssim n^{-( (q/2) \wedge 5)}.
	\end{align}
    We need an estimate for moments of complex martingale difference schemes. We refer to Lemma 2.1 in \cite{li2003}, which is an corollary from Burkholder's inequality and can easily be extended to the complex case. 
	We are now in the position to give a proof of Lemma \ref{no_hat}.
		
		\begin{proof}[Proof of Lemma \ref{no_hat}]
			In the following, we will often make use of the decompositions
	\begin{align}
		\D_t\inv(z) & = \D_{j,t}\inv(z) - \beta_{j,t}(z) \D_{j,t}\inv(z) \rd_j \rd_j^\star \D_{j,t}\inv(z), \label{aa2}\\
		\beta_{j,t}(z) &= b_{j,t}(z) - \beta_{j,t}(z) b_{j,t}(z) \gamma_{j,t}(z).
		\nonumber
	\end{align}
%	Observing the representation in \eqref{mn_t}, we remind the reader of the decomposition
%	\begin{align} \label{aa1}
%	M_n^1(z,t_1) - M_n^1(z,t_2)
%	= & Z_n^1(z,t_1,t_2) + Z_n^2 (z,t_1,t_2), ~ t_2 > t_1, 
%	\end{align}
%	where $Z_n^1$ and $Z_n^2$ are defined in \eqref{det7} and \eqref{det8}. 
%	\begin{align*}
% 	Z_n^1(z,t_1,t_2) = & \sum\limits_{j=1}^{\nt} (\E_j - \E_{j - 1} ) \lb  \beta_{j,t_2}(z) \rd_j^\star \D_{j,t_2}^{-2}(z) \rd_j 
% 	- \beta_{j,t_1}(z) \rd_j^\star \D_{j,t_1}^{-2}(z) \rd_j \rb, \\
% 	Z_n^2 (z,t_1,t_2) = &  \sum\limits_{j=\nt + 1}^{\ntt} (\E_j - \E_{j - 1}) \beta_{j,t_2}(z) \rd_j^\star \D_{j,t_2}^{-2}(z) \rd_j .
% 	\end{align*}
Observing the decomposition \eqref{aa1}, our  aim is to show  the inequalities in \eqref{tight_t1} and \eqref{tight_t2}, where we assume $t_2 > t_1$ w.l.o.g.
	\\ \bigskip \\ 
	\textbf{Step 1:} \textit{ Analysis of $Z_n^2$}  \\ 
	Beginning with the proof of \eqref{tight_t2} for $Z_n^2$, we are able to show that (using Lemma 2.1 in \cite{li2003} with $q=4+\delta$)
	\begin{align*}
		\E | Z_n^2(z, t_1, t_2) |^{4+\delta} & =  \E \Big| \sum\limits_{j=\nt + 1}^{\ntt} (\E_j - \E_{j - 1}) \beta_{j,t_2}(z) \rd_j^\star \D_{j,t_2}^{-2}(z) \rd_j \Big|^{4+\delta} \\
		\lesssim  &  \lb \ntt - \nt \rb^{1+\delta/2} \sum\limits_{j=\nt + 1}^{\ntt} \E  \left|(\E_j - \E_{j - 1}) \beta_{j,t_2}(z) \rd_j^\star \D_{j,t_2}^{-2}(z) \rd_j \right|^{4+\delta} \\
		\lesssim &  \Big(\frac{\ntt - \nt}{n} \Big)^{2+\delta/2},
		\end{align*}
		since we can bound
		\begin{align}
		\E  \left|(\E_j - \E_{j - 1}) \beta_{j,t_2}(z) \rd_j^\star \D_{j,t_2}^{-2}(z) \rd_j \right|^{4+\delta} 
		\lesssim & 
		 \E  \left|(\E_j - \E_{j - 1}) b_{j,t_2}(z) \rd_j^\star \D_{j,t_2}^{-2}(z) \rd_j \right|^{4+\delta} \nonumber 
		\\ & + \E  \left|(\E_j - \E_{j - 1}) \beta_{j,t_2}(z) b_{j,t_2}(z) \rd_j^\star \D_{j,t_2}^{-2}(z) \rd_j \gamma_{j,t_2} (z) \right|^{4+\delta} \nonumber \\
		\lesssim & 
		\E  \left|(\E_j - \E_{j - 1})  \left\{ \rd_j^\star \D_{j,t_2}^{-2}(z) \rd_j 
		- n\inv \tr \T_n \D_{j,t_2}^{-2}(z) \right\} \right|^{4+\delta} \nonumber 
		\\ & +  \E  \left| \beta_{j,t_2}(z) b_{j,t_2}(z) \rd_j^\star \D_{j,t_2}^{-2}(z) \rd_j \gamma_{j,t_2} (z) \right|^{4+\delta} \label{gamma}
		\\
		\lesssim &  n^{-(2+\delta/2)} . \nonumber
	\end{align}
	% hier genügt viertes Moment nicht für Größenordnung n^{-3}
	We should explain the bound for \eqref{gamma} in more detail: First, note that we are able to bound the moments of $||\D_{j,t}\inv(z)||$ independent of $n,z,t$ (see Lemma \ref{mom_Dinv}). As a further preparation, we observe for $z\in\mathcal{C}_n, t\in [t_0,1]$ from Lemma  \ref{mom_Dinv} 
	\begin{align}
		|| \D_t\inv (z) ||
		\lesssim & 1
		+ n \varepsilon_n\inv I \{ ||\mathbf{B}_{n,t} || \geq \eta_{r,t} \textnormal{ or }
		\lambda_{\min}(\mathbf{B}_{n,t}) \leq \eta_{l,t} \} \nonumber \\
		\leq & 1 
		+ n^2 I \{ ||\mathbf{B}_{n,t} || \geq \eta_{r,t} \textnormal{ or }
		\lambda_{\min}(\mathbf{B}_{n,t}) \leq \eta_{l,t} \}, \label{d_inv_norm}
	\end{align}
	where we used the fact that $\varepsilon_n \geq n^{-\alpha}$ for some $\alpha \in (0,1)$. 
	Thus, since  $| \rd_j |^2 \leq n$, we obtain
	\begin{align}
		| \beta_{j,t}(z) | = &  | 1 - \rd_j^\star \D_t\inv(z) \rd_j | \leq 1 + |\rd_j|^2 || \D_t\inv(z) || 
		\nonumber \\
		\lesssim & 1 +  | \rd_j|^2+ n^3  I \{ ||\mathbf{B}_{n,t} || \geq \eta_{r,t} \textnormal{ or }
		\lambda_{\min}(\mathbf{B}_{n,t}) \leq \eta_{l,t} \}. \label{beta}
	\end{align}
	It is easy to see that the inequality (9.10.6) in \cite{bai2004} also holds for $\beta_{j,t}(z)$ and by the same arguments following (9.10.6), we obtain
	\begin{align} \label{h2}
		  |b_{j,t}(z) | \leq K.
	\end{align}
	Similarly to these bounds, using \eqref{c22} in Lemma \ref{mom_Dinv} for the matrix $\D_{j,t}\inv(z)$, we get for any $m\geq 1$
	\begin{align*}
	| \gamma_{j,t}(z) | 
	= & | \rd_j^\star \D_{j,t}\inv(z) \rd_j - n\inv \E [ \tr \T_n \D_{j,t}\inv(z) ]|    
	\lesssim  |\rd_j|^2 || \D_{j,t}\inv(z) || +    \E || \D_{j,t}\inv(z) || \nonumber \\
		\lesssim &   |\rd_j|^2
		 +     | \rd_j|^2 n \varepsilon_n\inv 
		 I \{ ||\mathbf{B}_{n,t}^{(-j)} || \geq \eta_{r,t} \textnormal{ or }
		\lambda_{\min}(\mathbf{B}_{n,t}^{(-j)}) \leq \eta_{l,t} \} \nonumber \\
		& +  | \rd_j|^2 n \varepsilon_n\inv 
		 \mathbb{P} \{ ||\mathbf{B}_{n,t}^{(-j)} || \geq \eta_{r,t} \textnormal{ or }
		\lambda_{\min}(\mathbf{B}_{n,t}^{(-j)}) \leq \eta_{l,t} \} \nonumber \\
		\leq &  | \rd_j|^2 +    n^3 I \{ ||\mathbf{B}_{n,t}^{(-j)} || \geq \eta_{r,t} \textnormal{ or }
		\lambda_{\min}(\mathbf{B}_{n,t}^{(-j)}) \leq \eta_{l,t} \}
		+  o\lb n^{-m} \rb, %\label{gamma_det_bound}
	\end{align*}
	where we used the fact that for any $m>0$
	 \begin{align}
	  & \PR \{ ||\mathbf{B}_{n,t_2}^{(-j)} || \geq \eta_{r,t_2} \textnormal{ or }
		\lambda_{\min}(\mathbf{B}_{n,t_2}^{(-j)}) \leq \eta_{l,t_2} \} = o \lb n^{-m} \rb, \nonumber \\
		& \PR  \{ ||\mathbf{B}_{n,t_2} || \geq \eta_{r,t_2} \textnormal{ or }
		\lambda_{\min}(\mathbf{B}_{n,t_2}) \leq \eta_{l,t_2} \}
		= o \lb n^{-m} \rb \label{ind_bound}
	 \end{align}
	and the notation 
	 \begin{align*}
	 \mathbf{B}_{n,t}^{(-j)} = \mathbf{B}_{n,t} - \rd_j \rd_j^\star.
	 \end{align*}
	
	Using \eqref{h1} and \eqref{gamma_mom}, we can also bound 
	\begin{align*}
		\E \Big|  | \rd_j | ^{2} \gamma_{j,t}(z) \Big|^{4+\delta} 
		= & \E | \rd_j^\star \rd_j  \gamma_{j,t}(z) |^{4+\delta}  
		\lesssim  \E | \lb \rd_j^\star \rd_j - n\inv \tr \T_n \rb \gamma_{j,t}(z) |^{4+\delta} 
		+  \E | n\inv \tr ( \T_n )\gamma_{j,t}(z) |^{4+\delta}  \\
		\leq &  \lb  \E | \rd_j^\star \rd_j - n\inv \tr \T_n |^{8 + 2\delta} \E| \gamma_{j,t}(z) |^{8+2\delta} \rb\sq
		+  \E | \gamma_{j,t}(z) |^{4+\delta}  \lesssim  n^{-(2+\delta/2)} .
	\end{align*}
	By induction, one can show for some $q \in\N_0$ and $\delta \geq 0$
	\begin{align} \label{gamma_r}
		\E \Big|  | \rd_j | ^{2q} \gamma_{j,t}(z) \Big|^{4+\delta}  \lesssim n^{-(2 + \delta /2) }. 
	\end{align}
	Combining these inequalities, we conclude
	\begin{align}
	&  \E  \left| \beta_{j,t_2}(z) b_{j,t_2}(z) \rd_j^\star \D_{j,t_2}^{-2}(z) \rd_j \gamma_{j,t_2} (z) \right|^{4+\delta}  \nonumber \\
	 \lesssim &  \E \Big| 
	 \Big  ( 1 +  | \rd_j|^2  + n^3  I  \{ ||\mathbf{B}_{n,t_2} || \geq \eta_{r,t_2} \textnormal{ or }
		\lambda_{\min}(\mathbf{B}_{n,t_2}) \leq \eta_{l,t_2} \} \Big  )
	 | \rd_j |^2 \nonumber \\
	 & \times \Big  ( 1 
		+ n^2 I \{ ||\mathbf{B}_{n,t_2}^{(-j)} || \geq \eta_{r,t_2} \textnormal{ or }
		\lambda_{\min}(\mathbf{B}_{n,t_2}^{(-j)}) \leq \eta_{l,t_2} \} \Big  ) ^2
	 \gamma_{j,t_2} (z) \Big|^{4+\delta} . \label{r1}
	 \end{align}

	 The expectation in \eqref{r1} can now be estimated by multiplying these terms out and using the inequalities \eqref{h2} and \eqref{ind_bound}.

	 Thus, we conclude that
	 \begin{align*}
	 	\E  \left| \beta_{j,t_2}(z) b_{j,t_2}(z) \rd_j^\star \D_{j,t_2}^{-2}(z) \rd_j \gamma_{j,t_2} (z) \right|^{4+\delta}  
	 	\lesssim n^{-(2+\delta /2)}. 
	 \end{align*} 
	% Note that for the analysis of $Z_n^2$, it is sufficient to assume uniformly bounded moments of order $6+2\delta$. 
	 \\ \bigskip \\
	 \textbf{Step 2:} \textit{Analysis of $ M_n^1(z_1,t) - M_n^1(z_2,t) $} \\
	 Before investigating the term $Z_n^1$ in the decomposition \eqref{aa1}, we show that \eqref{tight_z} holds true in a similar fashion to the considerations above. We write for $z_1,z_2\in\mathcal{C}_n, t\in[t_0,1]$
	 \begin{align*}
	 	M_n^1(z_1,t) - M_n^1(z_2,t) 
	 	& = \sum\limits_{j=1}^{\ntn} ( \E_j - \E_{j - 1} ) \tr \lb \D_t\inv(z_1) - \D_t\inv(z_2) \rb \\
	 	& = \sum\limits_{j=1}^{\ntn} ( \E_j - \E_{j - 1} ) 
	 	( z_1 - z_2) \tr \D_t\inv(z_1)\D_t\inv(z_2) \\
	 	& = G_{n1} + G_{n2} + G_{n3}, 
	 \end{align*}
	 where
	 \begin{align*}
	 	G_{n1} & = ( z_1 - z_2)  \sum\limits_{j=1}^{\ntn} ( \E_j - \E_{j - 1} )
	 	\beta_{j,t}(z_1) \beta_{j,t}(z_2) \lb \rd_j^\star \D_{j,t}\inv(z_1) \D_{j,t}\inv(z_2)
	 	\rd_j \rb^2, \\
	 	G_{n2} &=  - ( z_1 - z_2)  \sum\limits_{j=1}^{\ntn} ( \E_j - \E_{j - 1} )
	 	\beta_{j,t}(z_1) \rd_j^\star \D_{j,t}^{-2}(z_1) \D_{j,t}\inv(z_2) \rd_j, \\
	 	G_{n3} &= - ( z_1 - z_2)  \sum\limits_{j=1}^{\ntn} ( \E_j - \E_{j - 1} )
	 	\beta_{j,t}(z_2) \rd_j^\star \D_{j,t}^{-2}(z_2) \D_{j,t}\inv(z_1) \rd_j.
	 \end{align*}
	 The terms  $G_{n2}$ and $G_{n3}$ can be estimated using similar arguments as given  in the proof of \eqref{tight_t2}. More precisely, we obtain for the second term
	 \begin{align*}
	 	\E | G_{n2} |^{2 + \delta} \lesssim |z_1 - z_2|^{2 + \delta},
	 \end{align*}
	 and a similar inequality holds for the third term. 
	 For the first summand, we have
	 \begin{align*}
	 G_{n1} = G_{n11} + G_{n12} + G_{n13},
	 \end{align*}
	 where 
	 \begin{align*}
	 	G_{n11} 
	 	= & ( z_1 - z_2)   \sum\limits_{j=1}^{\ntn} ( \E_j - \E_{j - 1} )
	 	b_{j,t}(z_1) b_{j,t}(z_2) \lb \rd_j^\star \D_{j,t}\inv(z_1) \D_{j,t}\inv(z_2)
	 	\rd_j \rb^2, \\
	 	G_{n12}
	 	= & -  ( z_1 - z_2)  \sum\limits_{j=1}^{\ntn} ( \E_j - \E_{j - 1} )
	 	b_{j,t}(z_2) \beta_{j,t}(z_1) \beta_{j,t}(z_2) \lb \rd_j^\star \D_{j,t}\inv(z_1) \D_{j,t}\inv(z_2)
	 	\rd_j \rb^2 \gamma_{j,t}(z_2), \\
	 	G_{n13}
	 	= & -( z_1 - z_2)   \sum\limits_{j=1}^{\ntn} ( \E_j - \E_{j - 1} )
	 	b_{j,t}(z_1) b_{j,t}(z_2)  \beta_{j,t}(z_1) \lb \rd_j^\star \D_{j,t}\inv(z_1) \D_{j,t}\inv(z_2)
	 	\rd_j \rb^2 \gamma_{j,t}(z_1). \\
	 \end{align*}
	 Here, the terms $G_{n12}$ and $G_{n13}$ can be treated by similar arguments as in the derivation of \eqref{gamma} using Lemma 2.1 in \cite{li2003}, which gives for $l\in\{1,2\}$
	 \begin{align*}
	 	\E |G_{n1l}|^{2 + \delta} \lesssim |z_1 - z_2|^{2 + \delta}.
	 \end{align*}
	 Therefore, it remains to investigate the term $G_{n11}$:
	 \begin{align*}
	 	\E |G_{n11}|^{ 2 + \delta} 
	 	\lesssim &   |z_1 - z_2|^{2+\delta} n^{\delta / 2} \sum\limits_{j=1}^{\ntn} 
		\E \left| ( \E_j - \E_{j - 1} )
	 	b_{j,t}(z_1) b_{j,t}(z_2) \lb \rd_j^\star \D_{j,t}\inv(z_1) \D_{j,t}\inv(z_2)
	 	\rd_j \rb^2 \right|^{2+\delta}. 
	 \end{align*}
	 We obtain for the summands in $\E |G_{n11}|^{2+ \delta}$ observing \eqref{h2}
	 \begin{align*}
	 	& \E \left | ( \E_j - \E_{j - 1} )
	 	b_{j,t}(z_1) b_{j,t}(z_2) \lb \rd_j^\star \D_{j,t}\inv(z_1) \D_{j,t}\inv(z_2)
	 	\rd_j \rb^2 \right|^{2+\delta}  \\
	 	\lesssim &  \E \left | ( \E_j - \E_{j - 1} )
	 \lb \rd_j^\star \D_{j,t}\inv(z_1) \D_{j,t}\inv(z_2)
	 	\rd_j \rb^2 \right|^{2+\delta} \\
	 	= &  \E \left | ( \E_j - \E_{j - 1} ) \left[ 
	 \lb \rd_j^\star \D_{j,t}\inv(z_1) \D_{j,t}\inv(z_2)
	 	\rd_j \rb^2 - \lb n\inv\tr \T_n \D_{j,t}\inv(z_1) \D_{j,t}\inv(z_2) \rb^2 
	 	\right] \right|^{2+\delta} \\
	 	= &  \E \Big | ( \E_j - \E_{j - 1} ) \Big[ 
	  \lb \rd_j^\star \D_{j,t}\inv(z_1) \D_{j,t}\inv(z_2)
	 	\rd_j  
	 	-  n\inv\tr \T_n \D_{j,t}\inv(z_1) \D_{j,t}\inv(z_2)  \rb \\
	  & \times \lb \rd_j^\star \D_{j,t}\inv(z_1) \D_{j,t}\inv(z_2)
	 	\rd_j  
	 	+  n\inv\tr \T_n \D_{j,t}\inv(z_1) \D_{j,t}\inv(z_2) \rb 
	 	\Big] \Big|^{2+\delta} \\
	 	\lesssim &  \E \Big | ( \E_j - \E_{j - 1} ) \Big[ 
	  \lb \rd_j^\star \D_{j,t}\inv(z_1) \D_{j,t}\inv(z_2)
	 	\rd_j  
	 	-  n\inv\tr \T_n \D_{j,t}\inv(z_1) \D_{j,t}\inv(z_2)  \rb 
	 \rd_j^\star \D_{j,t}\inv(z_1) \D_{j,t}\inv(z_2)
	 	\rd_j  
	 	\Big] \Big|^{2+\delta} \\
	 	+ &  \E \Big | ( \E_j - \E_{j - 1} ) \Big[ 
	  \lb \rd_j^\star \D_{j,t}\inv(z_1) \D_{j,t}\inv(z_2)
	 	\rd_j  
	 	-  n\inv\tr \T_n \D_{j,t}\inv(z_1) \D_{j,t}\inv(z_2)  \rb 
	 	n\inv \tr \T_n \D_{j,t}\inv(z_1) \D_{j,t}\inv(z_2)
	 	\Big] \Big|^{2+\delta} \\
	 	\lesssim & n^{-(1+\delta /2)},
	 \end{align*}
	 where we used Lemma \ref{h1a} with $q=2+\delta$ and $m=1$ and Lemma \ref{mom_Dinv} for the last inequality.
	These considerations show that \eqref{tight_z} holds true. 
	% Note that for this analysis, it is sufficient to assume uniformly bounded moments of order $4+2\delta$. 
	\bigskip \\
	 \textbf{Step 3:} \textit{Analysis of $ Z_n^1 $} \\
	Next, we  show the estimate  \eqref{tight_t1} for the term $Z_n^1$. Doing so, we will need condition \eqref{mom_cond} on the moments of $x_{ij}$. For the following calculation, we will write $\beta_t$ instead of $ \beta_t(z)$, $\D_t\inv $ instead of $\D_t\inv(z)$ and further omit the $z$-argument for similar quantities. 
	We have for $j \leq \nt$
	\begin{align*}
	    & \beta_{j,t_2} \rd_j^\star \D_{j,t_2}^{-2}\rd_j
		- \beta_{j,t_1} \rd_j^\star \D_{j,t_1}^{-2} \rd_j
		=  ( \beta_{j,t_2} - \beta_{j,t_1} ) \rd_j^\star \D_{j,t_2}^{-2} \rd_j
		+ \beta_{j,t_1} \rd_j^\star \lb \D_{j,t_2}^{-2} - \D_{j,t_1}^{-2}\rb \rd_j \\
		= &  ( \beta_{j,t_2} - \beta_{j,t_1} ) \rd_j^\star \D_{j,t_2}^{-2} \rd_j
		+ \beta_{j,t_1} \rd_j^\star \lb \D_{j,t_2}^{-1} - \D_{j,t_1}^{-1}\rb \D_{j,t_1}\inv  \rd_j 
		+  \beta_{j,t_1} \rd_j^\star \D_{j,t_2}\inv \lb \D_{j,t_2}^{-1} - \D_{j,t_1}^{-1}\rb   \rd_j \\
		= & \lb \rd_j^\star \D_{t_1}\inv \rd_j - \rd_j^\star \D_{t_2}\inv \rd_j \rb 
		\rd_j^\star \D_{j,t_2}^{-2} \rd_j 
		- \beta_{j,t_1} \rd_j^\star \D_{j,t_1}\inv \Big( \sum\limits_{k=\nt + 1}^{\ntt} \rd_k \rd_k^\star \Big) \D_{j,t_2}\inv \D_{j,t_1}\inv \rd_j \\
		& - \beta_{j,t_1} \rd_j^\star \D_{j,t_2}\inv \D_{j,t_1}\inv \Big( \sum\limits_{k=\nt + 1}^{\ntt} \rd_k \rd_k^\star \Big) \D_{j,t_2}\inv  \rd_j \\
		= &  \rd_j^\star \D_{t_2}\inv \Big( \sum\limits_{k=\nt + 1}^{\ntt} \rd_k \rd_k^\star \Big) \D_{t_1}\inv \rd_j \rd_j^\star \D_{j,t_2}^{-2} \rd_j 
		- \beta_{j,t_1} \rd_j^\star \D_{j,t_1}\inv \Big( \sum\limits_{k=\nt + 1}^{\ntt} \rd_k \rd_k^\star \Big) \D_{j,t_2}\inv \D_{j,t_1}\inv \rd_j \\
		& - \beta_{j,t_1} \rd_j^\star \D_{j,t_2}\inv \D_{j,t_1}\inv \Big( \sum\limits_{k=\nt + 1}^{\ntt} \rd_k \rd_k^\star \Big) \D_{j,t_2}\inv  \rd_j \\
		= & \sum\limits_{k=\nt + 1}^{\ntt} \Big\{
		 \rd_j^\star \D_{t_2}\inv  \rd_k \rd_k^\star  \D_{t_1}\inv \rd_j \rd_j^\star \D_{j,t_2}^{-2} \rd_j 
		- \beta_{j,t_1} \rd_j^\star \D_{j,t_1}\inv  \rd_k \rd_k^\star  \D_{j,t_2}\inv \D_{j,t_1}\inv \rd_j 
		 - \beta_{j,t_1} \rd_j^\star \D_{j,t_2}\inv \D_{j,t_1}\inv  \rd_k \rd_k^\star \D_{j,t_2}\inv  \rd_j 
		\Big\}. 
	\end{align*}
	Hence, using the identity \eqref{aa2}, we obtain the representation
	\begin{align*}
		& Z_n^1(z,t_1,t_2) \\
		= & \sum\limits_{j=1}^{\nt} \sum\limits_{k=\nt + 1}^{\ntt} ( \E_j - \E_{j - 1})  \Big\{
		- \beta_{j,t_1} \rd_j^\star \D_{j,t_1}\inv  \rd_k \rd_k^\star  \D_{j,t_2}\inv
		\D_{j,t_1}\inv \rd_j 
		 - \beta_{j,t_1} \rd_j^\star \D_{j,t_2}\inv \D_{j,t_1}\inv  \rd_k \rd_k^\star \D_{j,t_2}\inv  \rd_j \\
		& + \rd_j^\star \D_{j,t_2}\inv  \rd_k \rd_k^\star  \D_{j,t_1}\inv \rd_j \rd_j^\star \D_{j,t_2}^{-2} \rd_j 
		- \beta_{j,t_1} \rd_j^\star \D_{j,t_2}\inv  \rd_k \rd_k^\star  \D_{j,t_1}\inv \rd_j \rd_j^\star \D_{j,t_1}\inv \rd_j \rd_j^\star \D_{j,t_2}^{-2} \rd_j \\
		& - \beta_{j,t_2} \rd_j^\star \D_{j,t_2}\inv \rd_j \rd_j^\star \D_{j,t_2}\inv  \rd_k \rd_k^\star  \D_{j,t_1}\inv \rd_j \rd_j^\star \D_{j,t_2}^{-2} \rd_j 
		+ \beta_{j,t_1} \beta_{j,t_2} \rd_j^\star \D_{j,t_2}\inv \rd_j \rd_j^\star \D_{j,t_2}\inv \rd_k \rd_k^\star  \D_{j,t_1}\inv  \rd_j \rd_j^\star \D_{j,t_1}\inv \rd_j \rd_j^\star \D_{j,t_2}^{-2} \rd_j 
		\Big\}.
	\end{align*}
	% jetzt alle Matrizen durch von r_k unabhängige Version ersetzen, und beta_{j,t_2} ersetzen ; beobachten ,dass einige Terme wegfallen und ich mindestens zwei Paare r_k r_k^\star in jedem Summanden habe !! 
%	Let
%	\begin{align*}
%		\gamma_{k,j,t}(z) = \rd_k^\star \D_{k,j,t}\inv(z) \rd_k - n\inv \E [ \tr \T_n \D_{k,j,t}\inv(z) ].
%	\end{align*}

We use the  substitutions
	\begin{align}
		\D_{j,t_2}\inv = \D_{k,j,t_2}\inv  - \beta_{k,j,t_2} \D_{k,j,t_2}\inv \rd_k \rd_k^\star \D_{k,j,t_2}\inv \label{aa3}
	\end{align}
	and 
	\begin{align*}
		\beta_{j,t} = b_{j,t} - b_{j,t} \beta_{j,t} \gamma_{j,t}, ~ 
		\beta_{k,j,t_2} 	=  b_{k,j,t_2} - b_{k,j,t_2} \beta_{k,j,t_2} \gamma_{k,j,t_2},
	\end{align*}
	where
$
		\gamma_{k,j,t}(z) = \rd_k^\star \D_{k,j,t}\inv(z) \rd_k - n\inv \E [ \tr \T_n \D_{k,j,t}\inv(z) ].
$
This yields  the representation
	\begin{align*}
		Z_n^1 (z, t_1, t_2) = \sum\limits \sum\limits_{j=1}^{\nt} \sum\limits_{k = \nt + 1}^{\ntt} 
		(\E_j - \E_{j - 1} ) T_{j,k}.
	\end{align*}		
	
 Here, the first sum corresponds to the summation with respect to a finite number of  different terms  $T_{j,k}$, which 
	%is 
	%,for the sake of brevity, 
%	not further specified. (  
%	More precisely, the  terms $T_{j,k}$ 
are of the form 
	\begin{align*}
		&  \prod\limits_{l_1=1}^{q} \Big( \rd_j^\star \mathbf{A}_{l_1} \Big( \prod\limits_{l_2=1}^{q_{l_1}}  \rd_k \rd_k^\star \mathbf{B}_{l_1,l_2} \Big) \rd_j \Big) , \\
		& 	\lb \beta_{j,t_1} \gamma_{j,t_1} \rb^{X_1}
		\lb  \beta_{j,t_2} \gamma_{j,t_2} \rb^{X_2} 
		\prod\limits_{l_1=1}^{q} \Big( \rd_j^\star \mathbf{A}_{l_1} \Big( \prod\limits_{l_2=1}^{q_{l_1}}  \rd_k \rd_k^\star \mathbf{B}_{l_1,l_2} \Big) \rd_j \Big), \\
		& \lb  \beta_{k,j,t_2} \gamma_{k,j,t_2} \rb^X 
		\prod\limits_{l_1=1}^{q} \Big( \rd_j^\star \mathbf{A}_{l_1} \Big( \prod\limits_{l_2=1}^{q_{l_1}}  \rd_k \rd_k^\star \mathbf{B}_{l_1,l_2} \Big) \rd_j \Big),  \\
		& \lb  \beta_{k,j,t_2} \gamma_{k,j,t_2} \rb^X 
		\lb  \beta_{j,t_1} \gamma_{j,t_1} \rb^{X_1} 
		\lb  \beta_{j,t_2} \gamma_{j,t_2} \rb^{X_2}
		\prod\limits_{l_1=1}^{q} \Big( \rd_j^\star \mathbf{A}_{l_1} \Big( \prod\limits_{l_2=1}^{q_{l_1}}  \rd_k \rd_k^\star \mathbf{B}_{l_1,l_2} \Big) \rd_j \Big).
	\end{align*}
	Here, $q\in\N$, $q_{l_1} \in\N_0$, $l_1 \in\{1, \ldots, q\}$, there exists an index $l_1\in \{1, \ldots, q\}$ such that $q_{l_1} \geq 1$, and the matrices $\mathbf{A}_{l_1}$ and $\mathbf{B}_{l_1,l_2}$ are products of the matrices $\D_{j,t_1}\inv, \D_{k,j,t_2}\inv$ and $\T_n$ for $l_2 \in \{1, \ldots, q_{l_1} \}$, $l_1 \in \{ 1, \ldots, q \}$ and of the deterministic scalars $b_{j,t_1}, b_{j,t_2}, b_{k,j,t_2}$. We assume that $X\in\N$ and that one of the exponents  $X_1\in\N_0$ and $X_2\in\N_0$ is positive, that is, $X_1 + X_2 \geq 1$.  
	Since, again by Lemma 2.1 in \cite{li2003},
	\begin{align*}
	& \E | Z_n^1 (z, t_1, t_2) |^4 = 
	\E \Big| \sum \sum\limits_{j=1}^{\nt} \sum\limits_{k=\nt + 1}^{\ntt} 
	(\E_j - \E_{j - 1}) T_{j,k}\Big|^4
	\lesssim n \sum \sum\limits_{j=1}^{\nt}  \E  \Big|  \sum\limits_{k=\nt + 1}^{\ntt} 
	(\E_j - \E_{j - 1}) T_{j,k}\Big|^4 ,
	\end{align*}
	in order to prove \eqref{tight_t1}, it suffices to show that for $j\in\{1,\ldots,\nt\}$ and $k\in\{\nt + 1, \ldots, \ntt\}$
	\begin{align*}
		\E \left| 
		(\E_j - \E_{j - 1}) T_{j,k}\right|^4 \lesssim n^{-6}.
	\end{align*} 

In order to derive this estimate, we note that we can ignore the deterministic and bounded terms $b_{j,t_1}, b_{j,t_2}, b_{k,j,t_2}$ and denote by $\mathbf{A}_l,{l\in\N},$ a $p\times p$ (random) matrix which is a product of $\D_{j,t_1}\inv, \D_{k,j,t_2}\inv$ and $\T_n$. 
%The concrete definition of $\A_l$ may change from line to line. 
For the sake of brevity, we only consider terms of the type
	\begin{align}
		R_1 = & \E | ( \E_j - \E_{j - 1})  \beta_{j,t_1} \rd_j^\star \A_1 \rd_j 			 	
	 	 \rd_j^\star  \A_2 \rd_k \rd_k^\star \A_3 \rd_j \gamma_{j,t_1} |^4,
		\label{t0} \\ 
		R_2 = & \E| (\E_j - \E_{j - 1} ) \rd_j^\star \A_1 \rd_k \rd_k^\star \A_2 \rd_k \rd_k^\star \A_3 \rd_j |^4, \label{t1} \\
		R_3 = & \E| (\E_j - \E_{j - 1} ) \beta_{j,t_2} \rd_j^\star \A_1 \rd_k \rd_k^\star \A_2 \rd_j \gamma_{j,t_2} |^4, \label{t2} \\
		R_ 4 = & \E | (\E_j - \E_{j - 1} ) \beta_{k,j,t_2} \rd_j^\star \A_1 \rd_k \rd_k^\star \A_2
		\rd_k \rd_k^\star  \A_3 \rd_j \gamma_{k,j,t_2} |^4, \label{t3} \\
		R_5 = & \E | (\E_j - \E_{j - 1} ) \beta_{k,j,t_2} \beta_{j,t_2} \rd_j^\star \A_1 \rd_k \rd_k^\star  \A_2
		\rd_k \rd_k^\star \A_3 \rd_j \gamma_{k,j,t_2} \gamma_{j,t_2} |^4. \label{t4} 
	\end{align}
	For further investigations, we observe that 
	\begin{align} \label{aa20}
		(\E_j - \E_{j - 1} ) \tr  \Big( \prod_{l=1}^{q_1} \rd_k \rd_k^\star \A_l \Big) 
		= 0,
	\end{align}
	since $\rd_k \rd_k^\star \A_l $, $l\in\{1,\ldots, q_1\}$ does not depend on $\rd_j$. 
	In order to estimate the term in \eqref{t0}, we note that due to independence
	\begin{align*}
	 	\E | ( \E_j - \E_{j - 1}) \beta_{j,t_1} \rd_j^\star \A_1 \rd_j 		 	
	 	 \rd_j^\star \A_2 \lb \rd_k \rd_k^\star \A_3 - n\inv \T_n \A_3 \rb \rd_j \gamma_{j,t_1}  	  |^4 =0,	 
	\end{align*}
	so that we obtain, using similar arguments as in the derivation of \eqref{gamma}, in particular the bound in \eqref{gamma_r}, 
	\begin{align*}
		R_1  
	 	 \lesssim &   \E | ( \E_j - \E_{j - 1}) \beta_{j,t_1}   \rd_j^\star \A_1 \rd_j 
	 	 \rd_j^\star \A_2 \lb   \rd_k \rd_k^\star \A_3 - n\inv  \T_n \A_3 \rb \rd_j \gamma_{j,t_1}|^4 \\
	 	 & +  n^{-4}  \E | ( \E_j - \E_{j - 1}) \beta_{j,t_1}   \rd_j^\star \A_1 \rd_j 
	 	 \rd_j^\star \A_2 \T_n \A_3 \rd_j \gamma_{j,t_1}|^4 \\ 
	 	 = &    n^{-4}  \E | ( \E_j - \E_{j - 1}) \beta_{j,t_1}   \rd_j^\star \A_1 \rd_j 
	 	 \rd_j^\star \A_2 \T_n \A_3 \rd_j \gamma_{j,t_1}|^4  
	 	 \lesssim   n^{-6}. 
	\end{align*}
	For \eqref{t1}, we have using Lemma \ref{h1a} and \eqref{aa20}
	\begin{align*}
		R_2 = & 
		\E\Big| (\E_j - \E_{j - 1} ) \Big[  \rd_j^\star \A_1 \rd_k \rd_k^\star \A_2 \rd_k \rd_k^\star \A_3 \rd_j 
		- n\inv \tr \T_n \A_1 \rd_k \rd_k^\star \A_2 \rd_k \rd_k^\star \A_3 \Big] \Big|^4 \\
		\lesssim &  \E\Big| (\E_j - \E_{j - 1} ) \Big[  \rd_j^\star \A_1 \rd_k \lb  \rd_k^\star \A_2 \rd_k - n\inv \tr \T_n \A_2 \rb  \rd_k^\star \A_3 \rd_j 
		\\ & - n\inv \tr \T_n \A_1 \rd_k \lb \rd_k^\star \A_2 \rd_k - n\inv \tr \T_n \A_2 \rb \rd_k^\star \A_3 \Big] \Big|^4 \\
		& +  \E\Big| (\E_j - \E_{j - 1} ) \Big[  \rd_j^\star \A_1 \rd_k \lb   n\inv \tr \T_n \A_2 \rb  \rd_k^\star \A_3 \rd_j  - n\inv \tr \T_n \A_1 \rd_k \lb  n\inv \tr \T_n \A_2 \rb \rd_k^\star \A_3 \Big] \Big|^4 \\
		= &  \E\Big| (\E_j - \E_{j - 1} ) \Big[  \rd_j^\star \A_1 \rd_k \lb  \rd_k^\star \A_2 \rd_k - n\inv \tr \T_n \A_2 \rb  \rd_k^\star \A_3 \rd_j 
		\\ & - n\inv \tr \T_n \A_1 \rd_k \lb \rd_k^\star \A_2 \rd_k - n\inv \tr \T_n \A_2 \rb \rd_k^\star \A_3 \Big] \Big|^4 \\
		& + n^{-4} \E\Big| (\E_j - \E_{j - 1} ) \Big[  \rd_j^\star \A_1 \T_n \lb  n\inv  \tr \T_n \A_2 \rb   \A_3 \rd_j  - n\inv \tr \T_n \A_1 \T_n \lb  n\inv \tr \T_n \A_2 \rb  \A_3 \Big] \Big|^4 \\
		\lesssim & n^{-4} \E \lb \tr 
		\lb \A_1 \rd_k \lb  \rd_k^\star \A_2 \rd_k - n\inv \tr \T_n \A_2 \rb  \rd_k^\star \A_3 \rb 
		\lb \A_1 \rd_k \lb  \rd_k^\star \A_2 \rd_k - n\inv \tr \T_n \A_2 \rb  \rd_k^\star \A_3 \rb^\star 
		\rb^2  +   n^{-6} \\
		= &  n^{-4} \E \lb \tr 
		\lb \A_1 \rd_k \lb  \rd_k^\star \A_2 \rd_k - n\inv \tr \T_n \A_2 \rb  \rd_k^\star \A_3 \rb 
		\lb \A_3^\star \rd_k \overline{\lb  \rd_k^\star \A_2 \rd_k - n\inv \tr \T_n \A_2 \rb } \rd_k^\star \A_1^\star \rb 
		\rb^2   +  n^{-6}  \\
		= &n^{-4} \E |
		\lb \rd_k^\star \A_2 \rd_k - n\inv \tr \T_n \A_2  \rb^2
		 \rd_k^\star \A_3  
		\A_3^\star \rd_k  \rd_k^\star \A_1^\star \A_1 \rd_k
		|^2 +   n^{-6}  \\
		\lesssim &  n^{-6} .
	\end{align*}
	% 8. Momente
	Next, we have for the term $R_4$ defined in \eqref{t3} by similar arguments as in the derivation of \eqref{gamma}
	\begin{align*}
		R_4
		=  & \E | (\E_j - \E_{j - 1} ) \beta_{k,j,t_2} \{\rd_j^\star \A_1 \rd_k \rd_k^\star \A_2
		\rd_k \rd_k^\star \A_3  \rd_j 
		- n\inv \tr \T_n \A_1 \rd_k \rd_k^\star \A_2
		\rd_k \rd_k^\star \A_3
		\}\gamma_{k,j,t_2} |^4 \\
		\lesssim &  n^{-4} \E \Big[ \lb \tr \lb \A_1 \rd_k \rd_k^\star \A_2
		\rd_k \rd_k^\star \A_3 \rb \lb \A_1 \rd_k \rd_k^\star \A_2
		\rd_k \rd_k^\star \A_3 \rb^\star \rb^2  
		| \beta_{k,j,t_2} \gamma_{k,j,t_2} |^4 \Big] \\ 
		\lesssim & n^{-6},
	\end{align*}
	where we used the bound in Lemma \ref{h1a} and the fact that $\rd_j$ is independent of $\gamma_{k,j,t_2}$ and $\beta_{k,j,t_2}. $ \\
	% 8. Momente 
	Concerning the term $R_3$ in \eqref{t2}, we first decompose using \eqref{aa3}
	\begin{align*}
		\gamma_{j,t_2}(z) = &  \gamma_{j,k,t_2}(z) 
		- \lb \beta_{k,j,t_2}(z)  \rd_j^\star \D_{k,j,t_2}\inv(z) \rd_k \rd_k^\star \D_{k,j,t_2}\inv(z) \rd_j 
		- n\inv \E \left[ \beta_{k,j,t_2}(z) \tr  \T_n \D_{k,j,t_2}\inv(z) \rd_k \rd_k^\star \D_{k,j,t_2}\inv(z) \rb \right] \nonumber \\
		= & \gamma_{j,k,t_2}(z) 
		-  \beta_{k,j,t_2}(z) \lb  \rd_j^\star \D_{k,j,t_2}\inv(z) \rd_k \rd_k^\star \D_{k,j,t_2}\inv(z) \rd_j  - n\inv \tr  \T_n \D_{k,j,t_2}\inv(z) \rd_k \rd_k^\star \D_{k,j,t_2}\inv(z) \rb \\
		& - n\inv \lb\beta_{k,j,t_2}(z)  \rd_k^\star \D_{k,j,t_2}\inv(z) \T_n \D_{k,j,t_2}\inv(z) \rd_k
		-\E \left[ \beta_{k,j,t_2}(z) \tr  \T_n \D_{k,j,t_2}\inv(z) \rd_k \rd_k^\star \D_{k,j,t_2}\inv(z)  \right] \rb  \\
		= & \gamma_{j,k,t_2}(z) 
		-  \beta_{k,j,t_2}(z) \lb  \rd_j^\star \D_{k,j,t_2}\inv(z) \rd_k \rd_k^\star \D_{k,j,t_2}\inv(z) \rd_j  - n\inv \tr  \T_n \D_{k,j,t_2}\inv(z) \rd_k \rd_k^\star \D_{k,j,t_2}\inv(z) \rb \\
		& - n\inv b_{k,j,t_2}(z) \lb  \rd_k^\star \D_{k,j,t_2}\inv(z) \T_n \D_{k,j,t_2}\inv(z) \rd_k
		-n\inv \E \left[ \tr  \T_n \D_{k,j,t_2}\inv(z) \T_n \D_{k,j,t_2}\inv(z)  \right] \rb  \\
		& +  n\inv b_{k,j,t_2}(z) \Big( \beta_{k,j,t_2}(z)  \rd_k^\star \D_{k,j,t_2}\inv(z) \T_n \D_{k,j,t_2}\inv(z) \rd_k \gamma_{k,j,t_2}(z) \\
		& -\E \left[ \beta_{k,j,t_2}(z) \tr  \T_n \D_{k,j,t_2}\inv(z) \rd_k \rd_k^\star \D_{k,j,t_2}\inv(z)  \gamma_{k,j,t_2}(z) \right] \Big) \\
			= & \gamma_{j,k,t_2}(z) 
		-  \beta_{k,j,t_2}(z) \lb  \rd_j^\star \D_{k,j,t_2}\inv(z) \rd_k \rd_k^\star \D_{k,j,t_2}\inv(z) \rd_j  - n\inv \tr  \T_n \D_{k,j,t_2}\inv(z) \rd_k \rd_k^\star \D_{k,j,t_2}\inv(z) \rb \\
		& - n\inv b_{k,j,t_2}(z) \tilde{\gamma}_{k,j,t_2}(z) 
		 +  n\inv b_{k,j,t_2}(z) \Big( \beta_{k,j,t_2}(z)  \rd_k^\star \D_{k,j,t_2}\inv(z) \T_n \D_{k,j,t_2}\inv(z) \rd_k \gamma_{k,j,t_2}(z) \\
		& -\E \left[ \beta_{k,j,t_2}(z)  \rd_k^\star \D_{k,j,t_2}\inv(z)  \T_n \D_{k,j,t_2}\inv(z) \rd_k \gamma_{k,j,t_2}(z) \right] \Big)\\
		 = & \gamma_{j,k,t_2}(z) 
		-  b_{k,j,t_2}(z) \lb  \rd_j^\star \D_{k,j,t_2}\inv(z) \rd_k \rd_k^\star \D_{k,j,t_2}\inv(z) \rd_j  - n\inv \tr  \T_n \D_{k,j,t_2}\inv(z) \rd_k \rd_k^\star \D_{k,j,t_2}\inv(z) \rb \\
		& + b_{k,j,t_2}(z) \beta_{k,j,t_2}(z) \lb  \rd_j^\star \D_{k,j,t_2}\inv(z) \rd_k \rd_k^\star \D_{k,j,t_2}\inv(z) \rd_j  - n\inv \tr  \T_n \D_{k,j,t_2}\inv(z) \rd_k \rd_k^\star \D_{k,j,t_2}\inv(z) \rb \gamma_{k,j,t_2}(z) \\
		& - n\inv b_{k,j,t_2}(z) \tilde{\gamma}_{k,j,t_2}(z) 
		 +  n\inv b_{k,j,t_2}(z) \Big( \beta_{k,j,t_2}(z)  \rd_k^\star \D_{k,j,t_2}\inv(z) \T_n \D_{k,j,t_2}\inv(z) \rd_k \gamma_{k,j,t_2}(z) \\
		& -\E \left[ \beta_{k,j,t_2}(z)  \rd_k^\star \D_{k,j,t_2}\inv(z)  \T_n \D_{k,j,t_2}\inv(z) \rd_k \gamma_{k,j,t_2}(z) \right] \Big)
		,
	\end{align*}
	where
	\begin{align*}
	\tilde{\gamma}_{k,j,t_2} (z) = 
	\rd_k^\star \D_{k,j,t_2}\inv(z) \T_n \D_{k,j,t_2}\inv(z) \rd_k
		-n\inv \E \left[ \tr  \T_n \D_{k,j,t_2}\inv(z) \T_n \D_{k,j,t_2}\inv(z) \right] .
	\end{align*}
	Thus, we conclude for \eqref{t2}, using the notations $\mathbf{A}_3 = \D_{k,j,t_2}\inv$ and $\mathbf{A}_4 = \D_{k,j,t_2}\inv \T_n \D_{k,j,t_2}\inv $ and the fact that $b_{k,j,t_2}$ is deterministic and bounded,
	\begin{align}
		R_3 
		\lesssim & 
		 \E| (\E_j - \E_{j - 1} ) \beta_{j,t_2} \rd_j^\star \A_1 \rd_k \rd_k^\star \A_2 \rd_j \gamma_{j,k,t_2} |^4 \nonumber\\
		& +  \E| (\E_j - \E_{j - 1} ) \beta_{j,t_2} b_{k,j,t_2}
		 \rd_j^\star \A_1 \rd_k \rd_k^\star \A_2 \rd_j
		 \lb  \rd_j^\star \A_3 \rd_k \rd_k^\star \A_3 \rd_j  - n\inv \tr  \T_n  \A_3 \rd_k \rd_k^\star \A_3  \rb |^4 \nonumber\\
		& +  \E| (\E_j - \E_{j - 1} ) \beta_{j,t_2}
		\beta_{k,j,t_2} b_{k,j,t_2} \rd_j^\star \A_1 \rd_k \rd_k^\star \A_2 \rd_j
		\lb  \rd_j^\star \A_3 \rd_k \rd_k^\star \A_3 \rd_j  - n\inv \tr  \T_n  \A_3 \rd_k \rd_k^\star \A_3  \rb \gamma_{k,j,t_2} |^4 \nonumber\\
		& +  n^{-4} 	\E| (\E_j - \E_{j - 1} ) \beta_{j,t_2}
		b_{k,j,t_2} \rd_j^\star \A_1 \rd_k \rd_k^\star \A_2 \rd_j	
		\tilde{\gamma}_{k,j,t_2}  |^4 \nonumber\\
		& +  n^{-4} 	\E| (\E_j - \E_{j - 1} ) \beta_{j,t_2}
		\beta_{k,j,t_2} b_{k,j,t_2} \rd_j^\star \A_1 \rd_k \rd_k^\star \A_2 \rd_j
  \rd_k^\star \A_4 \rd_k \gamma_{k,j,t_2}  |^4 \nonumber \\
  		 & + n^{-4} \E| (\E_j - \E_{j - 1} ) \beta_{j,t_2}
		 \rd_j^\star \A_1 \rd_k \rd_k^\star \A_2 \rd_j
		|^4 \E | \beta_{k,j,t_2} b_{k,j,t_2}  \rd_k^\star \A_4\rd_k  \gamma_{k,j,t_2} |^4
		\nonumber \\
		\lesssim & R_{31} + R_{32} + R_{33} + n^{-6},  \nonumber 
		\end{align}
		where
		\begin{align}		
		R_{31} = &  \E| (\E_j - \E_{j - 1} ) \beta_{j,t_2} \rd_j^\star \A_1 \rd_k \rd_k^\star \A_2 \rd_j \gamma_{j,k,t_2} |^4, \label{t5}\\
		R_{32} = &   \E| (\E_j - \E_{j - 1} ) \beta_{j,t_2}
		 \rd_j^\star \A_1 \rd_k \rd_k^\star \A_2 \rd_j
		\lb  \rd_j^\star \A_3 \rd_k \rd_k^\star \A_3 \rd_j  - n\inv \tr  \T_n  \A_3 \rd_k \rd_k^\star \A_3  \rb |^4, %\label{t51}
		\nonumber \\
		R_{33} = &  \E| (\E_j - \E_{j - 1} ) \beta_{j,t_2}
		\beta_{k,j,t_2} \rd_j^\star \A_1 \rd_k \rd_k^\star \A_2 \rd_j
		\lb  \rd_j^\star \A_3  \rd_k \rd_k^\star \A_3 \rd_j  - n\inv \tr  \T_n \A_3 \rd_k \rd_k^\star \A_3 \rb \gamma_{k,j,t_2} |^4 \label{t6}
	\end{align}
	and we used an analogue of the estimate \eqref{gamma_r} for the terms $\gamma_{k,j,t_2}$  and $\tilde{\gamma}_{k,j,t_2}$ in the last step. 
	The term $R_{31}$ in \eqref{t5} can be bounded using the bounds in \eqref{d_inv_norm}, \eqref{beta} and \eqref{ind_bound} as follows:
	\begin{align*}
	R_{31} \lesssim R_{311} + R_{312} + o \lb n^{-l} \rb,
	\end{align*}
	where
	\begin{align*}
		R_{311} =  &  \E | \rd_j^\star \A_1 \rd_k \rd_k^\star \A_2 \rd_j \gamma_{j,k,t_2} |^4, \\
		R_{312} = &   \E | \rd_j^\star \rd_j \rd_j^\star \A_1 \rd_k \rd_k^\star \A_2 \rd_j \gamma_{j,k,t_2} |^4.
	\end{align*}
	Since $R_{312}$ can be handled similarly to $R_{311}$, we only consider $R_{311}$ and obtain by Lemma \ref{h1a}
	\begin{align*}
R_{311} 
	\lesssim &  \E | \lb  \rd_k^\star \A_2 \rd_j \rd_j^\star \A_1 \rd_k  
	- n\inv \tr \T_n \A_2 \rd_j \rd_j^\star \A_1\rb 
	\gamma_{j,k,t_2} |^4 
	 + n^{-4}  \E | \rd_j^\star \A_1  \T_n \A_2 \rd_j 
	\gamma_{j,k,t_2} |^4 \\
	\lesssim & n^{-6} 
	+  n^{-4} \E \left[ |\gamma_{j,k,t_2}|^4 \lb \tr \lb \A_2 \rd_j \rd_j^\star \A_1 \rb 
	\lb \A_2 \rd_j \rd_j^\star \A_1 \rb^\star  \rb^2 \right] \\
	\leq & n^{-6}.
	\end{align*}
	Note that the term $R_{33}$ defined in \eqref{t6} can be bounded similarly. 
	% 8. Momente 
	Similarly to $R_{31}$ given in \eqref{t5} we bound $|\beta_{j,t_2}|$ and get
	\begin{align*}
		R_{32} \lesssim R_{321} + R_{322}  + o \lb n^{-l} \rb,
	\end{align*}
	where 
	\begin{align*}
		R_{321} = & \E| 
		 \rd_j^\star \A_1 \rd_k \rd_k^\star \A_2 \rd_j
		\lb  \rd_j^\star \A_3 \rd_k \rd_k^\star \A_3 \rd_j  - n\inv \tr  \T_n  \A_3 \rd_k \rd_k^\star \A_3  \rb |^4, \\
		R_{322} = &  \E| \rd_j^\star \rd_j
		 \rd_j^\star \A_1 \rd_k \rd_k^\star \A_2 \rd_j
		\lb  \rd_j^\star \A_3 \rd_k \rd_k^\star \A_3 \rd_j  - n\inv \tr  \T_n  \A_3 \rd_k \rd_k^\star \A_3  \rb |^4 . \\
	\end{align*}
	For the sake of brevity, we shall limit ourselves to investigating the summand $R_{321}$. 

	\begin{align*}
		R_{321} 
		\lesssim & \E| 
		 \lb \rd_k^\star \A_2 \rd_j \rd_j^\star \A_1 \rd_k - n\inv \tr \T_n \A_2 \rd_j \rd_j^\star \A_1 \rb 
		\lb  \rd_j^\star \A_3 \rd_k \rd_k^\star \A_3 \rd_j  - n\inv \tr  \T_n  \A_3 \rd_k \rd_k^\star \A_3  \rb |^4 \\
		& + n^{-4}  \E| 
		  \rd_j^\star \A_1 \T_n \A_2 \rd_j
		\lb  \rd_j^\star \A_3 \rd_k \rd_k^\star \A_3 \rd_j  - n\inv \tr  \T_n  \A_3 \rd_k \rd_k^\star \A_3  \rb |^4 \\
		\lesssim & n^{-6} 
		+  \lb \E| 
		  \rd_k^\star \A_2 \rd_j \rd_j^\star \A_1 \rd_k - n\inv \tr \T_n \A_2 \rd_j \rd_j^\star \A_1 |^8 
		\E | \rd_j^\star \A_3 \rd_k \rd_k^\star \A_3 \rd_j  - n\inv \tr  \T_n  \A_3 \rd_k \rd_k^\star \A_3   |^8\rb\sq \\
		\lesssim & n^{-6}. 
	\end{align*}

	Finally, invoking Lemma \ref{h1a} and \eqref{gamma_r}, we can show for the term $R_5$ defined in \eqref{t4} that
		\begin{align*}
		 	R_5 \lesssim R_{51} + R_{52}, 
		\end{align*}
		where
		\begin{align*}
		R_{51}
		= &    \E | \beta_{k,j,t_2} \beta_{j,t_2} \lb \rd_j^\star \A_1 \rd_k \rd_k^\star \A_2
		\rd_k \rd_k^\star \A_3 \rd_j  
		-n\inv \tr \T_n \A_1 \rd_k \rd_k^\star \A_2
		\rd_k \rd_k^\star \A_3 \rb 
		\gamma_{k,j,t_2} \gamma_{j,t_2} |^4 \\
		\leq & \lb \E | \rd_j^\star \A_1 \rd_k \rd_k^\star \A_2
		\rd_k \rd_k^\star \A_3 \rd_j  
		-n\inv \tr \T_n \A_1 \rd_k \rd_k^\star \A_2
		\rd_k \rd_k^\star \A_3 |^8
		\E |  \beta_{k,j,t_2} \beta_{j,t_2} 
		\gamma_{k,j,t_2} \gamma_{j,t_2} |^8 \rb^{\frac{1}{2}}
		 \lesssim  n^{-6}, \\
			R_{52} = &  
			n^{-4} \E | \beta_{k,j,t_2} \beta_{j,t_2} 
		\rd_k^\star \A_2
		\rd_k \rd_k^\star \A_3 \T_n \A_1 \rd_k 
		\gamma_{k,j,t_2} \gamma_{j,t_2} |^4 
		 \lesssim  n^{-6}.
	\end{align*}

		Thus, the moment inequalities \eqref{tight_z}, \eqref{tight_t1} and \eqref{tight_t2} for $M_n^1$ hold true.	
		\end{proof}

	For the proof of  Lemma \ref{a73}, we need the following identity, which can be proved similarly to (5.2) in \cite{baisilverstein1998}.   
	
	\begin{lemma} \label{a52}
	\begin{align*}
	& y_n \int \frac{dH_n(\lambda)}{1 + \lambda \frac{\ntn}{n} \E  [\tilde{\su}_{n,t}(z)] } + z y_n \E [ \tilde{s}_{n,t}(z) ] \\
	= & \frac{1}{n} \sum\limits_{j=1}^{\ntn} \E \Big\{ 
	\beta_{j,t}(z) \Big[  \rd_j^\star \D_{j,t}\inv(z) \Big  ( \frac{\ntn}{n} \E [ \sut_{n,t}(z) ] \T_n -  \mathbf{I} \Big  )\inv \rd_j \\
	& - \frac{1}{n} \tr \Big  ( \frac{\ntn}{n} \E [ \sut_{n,t}(z) ] \T_n -  \mathbf{I} \Big  )\inv \T_n \E [ \D_t\inv(z) ]  
	\Big]
	\Big\}
	\end{align*}
	\end{lemma}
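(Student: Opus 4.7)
The strategy is a resolvent identity of the type used throughout the Bai--Silverstein machinery: start from $\mathbf{I} = \D_t(z)\D_t\inv(z)$, multiply by a suitable deterministic matrix, take trace, and convert $\rd_j^\star \D_t\inv(z)$ into $\beta_{j,t}(z)\rd_j^\star \D_{j,t}\inv(z)$ via the rank-one (Sherman--Morrison) identity \eqref{sher_mor}. Write $\mathbf{M} = \tfrac{\ntn}{n}\E[\sut_{n,t}(z)]\T_n + \mathbf{I}$ for the deterministic matrix appearing inside the inverse (this matches the sign convention used, e.g., just before display \eqref{I}; the $-\mathbf{I}$ appearing in the statement is a typo). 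Observe first that the left--hand side of the lemma equals
\[
y_n \int \frac{dH_n(\lambda)}{1+\lambda \tfrac{\ntn}{n}\E[\sut_{n,t}(z)]} + z y_n \E[\tilde s_{n,t}(z)] \;=\; \frac{1}{n}\tr \mathbf{M}\inv + \frac{z}{n}\E\tr \D_t\inv(z),
\]
since $y_n = p/n$ and $y_n \E[\tilde s_{n,t}(z)] = (1/n)\E \tr \D_t\inv(z)$.

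\textbf{Main identity.} Apply $\tr \mathbf{M}\inv = \tr \mathbf{M}\inv \D_t(z)\D_t\inv(z)$ with $\D_t(z) = \sum_{j=1}^{\ntn} \rd_j \rd_j^\star - z\mathbf{I}$, so that after using the rank-one identity and taking expectations,
\[
\tr \mathbf{M}\inv \;=\; \sum_{j=1}^{\ntn}\E\bigl[\beta_{j,t}(z)\,\rd_j^\star \D_{j,t}\inv(z)\mathbf{M}\inv \rd_j\bigr] \;-\; z\,\tr \mathbf{M}\inv \E[\D_t\inv(z)].
\]
Dividing by $n$ and subtracting $\tfrac{z}{n}\tr \mathbf{M}\inv \E \D_t\inv(z)$ from both sides of the LHS identification shows that the lemma reduces to the single algebraic identity
\[
\frac{z}{n}\E \tr\bigl[(\mathbf{I} - \mathbf{M}\inv)\D_t\inv(z)\bigr] \;=\; -\,\frac{1}{n^2}\sum_{j=1}^{\ntn}\E[\beta_{j,t}(z)]\cdot \tr \mathbf{M}\inv \T_n \E[\D_t\inv(z)].
\]
Using $\mathbf{I} - \mathbf{M}\inv = \tfrac{\ntn}{n}\E[\sut_{n,t}(z)] \T_n \mathbf{M}\inv$ (which follows by multiplying $\mathbf{M} \mathbf{M}\inv = \mathbf{I}$ out), the left side becomes $\tfrac{z\,\ntn}{n^2}\E[\sut_{n,t}(z)]\,\tr \mathbf{M}\inv \T_n \E[\D_t\inv(z)]$, and it remains only to check that
\[
z\,\ntn\,\E[\sut_{n,t}(z)] \;=\; -\sum_{j=1}^{\ntn}\E[\beta_{j,t}(z)].
\]
This last identity is itself obtained by taking $\mathbf{M} = \mathbf{I}$ in the same trace argument: from $\tr \mathbf{I} = p$ and $\rd_j^\star \D_t\inv(z)\rd_j = 1 - \beta_{j,t}(z)$ one gets $\sum_j \E[\beta_{j,t}(z)] = \ntn - p - z\E \tr \D_t\inv(z)$, and after rewriting $\tilde s_{n,t}$ in terms of $\sut_{n,t}$ via $\sut_{n,t}(z) = -(1-y_{\ntn})/z + y_{\ntn}\tilde s_{n,t}(z)$ the right-hand side collapses to $-z\ntn \E[\sut_{n,t}(z)]$.

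\textbf{Main obstacle.} None of the individual steps is difficult; the whole argument is algebraic manipulation of resolvent identities. The principal source of confusion is the bookkeeping between the two Stieltjes transforms $\tilde s_{n,t}$ (of $\mathbf{B}_{n,t}$) and $\sut_{n,t}$ (of the companion $\underline{\mathbf{B}}_{n,t}$), and the need to fold the deterministic correction term $\tfrac{1}{n}\tr \mathbf{M}\inv \T_n \E \D_t\inv(z)$ inside the expectation $\E[\beta_{j,t}(z)(\cdots)]$, which is only valid because this correction is non-random and $\sum_j \E[\beta_{j,t}(z)]$ equals exactly the prefactor that arises. The sign/inverse discrepancy with the statement of Lemma~\ref{a52} is a typographical issue that is reconciled automatically once one uses the form of $\mathbf{M}$ consistent with the surrounding calculations in Section~\ref{sec_proof_bias}.
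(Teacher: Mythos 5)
Your proof is correct. Note that the paper gives no proof of Lemma \ref{a52} at all (it only points to (5.2) in \cite{baisilverstein1998}), so the relevant comparison is with the standard argument behind that identity, which is exactly what you carry out: the expansion $\tr\mathbf{M}\inv=\tr\mathbf{M}\inv\D_t(z)\D_t\inv(z)$ with $\D_t(z)=\sum_j\rd_j\rd_j^\star-z\mathbf{I}$, the rank-one identity $\rd_j^\star\D_t\inv(z)=\beta_{j,t}(z)\rd_j^\star\D_{j,t}\inv(z)$, and the closing trace identity $\sum_j\beta_{j,t}(z)=\ntn-p-z\tr\D_t\inv(z)=-z\ntn\,\E[\sut_{n,t}(z)]$ after taking expectations. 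I have checked each of these steps, including the last one (via $\ntn\sut_{n,t}(z)=\tr\D_t\inv(z)-(\ntn-p)/z$), and the observation that the correction term $\tfrac1n\tr\mathbf{M}\inv\T_n\E[\D_t\inv(z)]$ may be folded back inside $\E[\beta_{j,t}(z)(\cdots)]$ precisely because it is deterministic and the prefactor is $\sum_j\E[\beta_{j,t}(z)]$. Your reading of the sign is also the right one: the matrix must be $\mathbf{M}=\tfrac{\ntn}{n}\E[\sut_{n,t}(z)]\T_n+\mathbf{I}$, consistent with \eqref{repl_a47} and with the definition of $d_{j,t}$ in Lemma \ref{a73}, and with that reading your computation yields the right-hand side with the overall $+\tfrac1n\sum_j$ exactly as stated in the lemma. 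As a further consistency check, your version is the one that reproduces the identity $y_n\int(\cdots)\,dH_n+zy_n\E[\tilde{s}_{n,t}(z)]=-\tfrac{\ntn}{n}y_{\ntn}n\inv\sum_j\E[\beta_{j,t}(z)d_{j,t}(z)]$ actually needed in Section \ref{pra73}, since $d_{j,t}$ is itself defined with a leading minus sign; the intermediate displays there, which show $-n\inv\sum_j\E[\beta_{j,t}(z)(\cdots)]$ before $d_{j,t}$ is introduced, carry the same sign slip you identified.
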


	\begin{lemma} \label{a80}
		For any bounded subset $S \subset \mathbb{C}^+$, we have
		\begin{align*}
			\inf\limits_{\substack{z \in S, t\in [t_0,1]}} | \sut_t(z) | > 0.
		\end{align*}
	\end{lemma}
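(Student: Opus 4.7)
The plan is to argue by contradiction, exploiting the functional equation \eqref{repl_a47} that characterizes $\sut_t(z)$. Suppose the infimum were zero; then one could extract sequences $(z_n)_{n \in \N} \subset S$ and $(t_n)_{n \in \N} \subset [t_0,1]$ with $\sut_{t_n}(z_n) \to 0$ as $n \to \infty$. The goal is to derive a contradiction by showing that this forces an inconsistency in the two sides of the defining relation.

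The first step will be to multiply \eqref{repl_a47} by $\sut_t(z)$, yielding the equivalent identity
\begin{align*}
z\,\sut_t(z) = -1 + y \int \frac{\lambda\,\sut_t(z)}{1 + \lambda t\, \sut_t(z)}\, dH(\lambda),
\end{align*}
which will be evaluated along the sequence $(z_n, t_n)$. By assumption (b) of Theorem \ref{thm}, the spectral norms $\|\T_n\|$ are uniformly bounded, so $H$ has compact support, say contained in $[0, M]$. For $n$ sufficiently large that $|\sut_{t_n}(z_n)| \leq 1/(2M)$, the trivial bound $|1 + \lambda t_n \sut_{t_n}(z_n)| \geq 1 - M \cdot 1 \cdot |\sut_{t_n}(z_n)| \geq 1/2$ holds uniformly for $\lambda \in [0,M]$. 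Therefore the integrand is dominated in modulus by $2M\, |\sut_{t_n}(z_n)|$, and the integral converges to $0$ as $n \to \infty$.

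The contradiction will then emerge directly: the right-hand side of the identity tends to $-1$, whereas the left-hand side $z_n\, \sut_{t_n}(z_n)$ tends to $0$, since $(z_n)$ is confined to the bounded set $S$ and $\sut_{t_n}(z_n) \to 0$. This forces $0 = -1$, which is absurd, and establishes the lemma.

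No genuine obstacle is anticipated; the argument is purely analytic and uses only the functional equation \eqref{repl_a47} together with the compactness of the support of $H$. One minor point that deserves a remark in the write-up is that the integrand in \eqref{repl_a47} is indeed well-defined along the sequence: for $z \in \mathbb{C}^+$ the Stieltjes transform $\sut_t(z)$ lies in $\mathbb{C}^+$, so $1 + \lambda t\, \sut_t(z)$ has strictly positive imaginary part for $\lambda > 0$ and equals $1$ for $\lambda = 0$, so the denominator never vanishes.
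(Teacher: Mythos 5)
Your proposal is correct and follows essentially the same route as the paper: argue by contradiction, multiply the defining equation \eqref{repl_a47} by $\sut_t(z)$, and observe that boundedness of $S$ kills the left-hand side while compactness of the support of $H$ kills the integral, forcing $0=-1$. Your quantitative bound on the denominator $|1+\lambda t_n \sut_{t_n}(z_n)|\geq 1/2$ for large $n$ is a welcome explicit justification of a step the paper leaves implicit.
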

	\begin{proof}[Proof of Lemma \ref{a80}]
		Let us assume that the assertion does not hold. In this case, there exists sequences $(z_n)_{n\in\N}$ in $S$ and $(t_n)_{n\in\N}$ in $[t_0,1]$ with the property
		\begin{align*}
			\lim\limits_{n\to\infty} \sut_{t_n}(z_n) = 0.
		\end{align*}
		 By choosing approriate subsequences, we assume without loss of generality that $(z_n)_{n\in\N}$ converges to limit in the closure of $S$ and $(t_n)_{n\in\N}$ converges to a limit in $[t_0,1]$.
		%Lemma \ref{a47} says that
		From \eqref{repl_a47}, we conclude
% 	\begin{align*}
% 		y \int \frac{\lambda \sut_{t_n}(z_n) }{1+ \lambda t_n \tilde{\su}_{t_n}(z_n)} dH(\lambda) = 1+ z_n \sut_{t_n}(z_n)
% 	\end{align*}
% 	and hence, 
	\begin{align*}
		\lim\limits_{n\to\infty} 
		y \int \frac{\lambda \sut_{t_n}(z_n) }{1+ \lambda t_n \tilde{\su}_{t_n}(z_n)} dH(\lambda)
		= 1.
	\end{align*}
	
		But, using the fact that $H$ is compactly supported, we see that the expression above tends to 0. Thus, we get a contradiction. 
	\end{proof}

	\begin{lemma} \label{lem_cross_terms}
	In the real case, it holds for $i\neq l$ ($i,l \in\{1, \ldots, n\} \setminus \{j\}$) 
	\begin{align}
	& \sup\limits_{\substack{z\in\mathcal{C}_n, \\ t\in [t_0,1] }}
	\Big| \E \Big[ \tr \lb t \sut_t(z) \T_n + \mathbf{I} \rb^{-2} \T_n 
		  \lb \rd_i \rd_i^\star - n\inv \T_n\rb 
		  \beta_{l,i,j,t}(z) \D_{l,i,j,t}\inv(z) \rd_l \rd_l^\star \D_{l,i,j,t}\inv(z) 
	 \lb t \sut_{t}(z) \T_n + \mathbf{I} \rb\inv \T_n \nonumber \\
	 & \times \beta_{i,l,j,t}(z) \D_{i,l,j,t}\inv(z) \rd_i \rd_i^\star \D_{i,l,j,t}\inv(z)
		 \lb \rd_{l} \rd_{l}^\star - n\inv \T_n \rb		
		  \Big] \Big| \label{mean_cross_term}\\
		  = & o \lb n \inv \rb. \nonumber
		  \end{align}
	
	\end{lemma}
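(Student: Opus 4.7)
The plan is to reduce the trace to a scalar expression and exploit the independence of $\rd_i$ and $\rd_l$ together with their vanishing mean.

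Set $\mathbf{M} := \D_{l,i,j,t}^{-1}(z) = \D_{i,l,j,t}^{-1}(z)$ (these two symbols denote the same matrix), $\mathbf{K}_1 := (t\sut_t(z)\T_n + \mathbf{I})^{-2}\T_n$ and $\mathbf{K}_2 := (t\sut_t(z)\T_n + \mathbf{I})^{-1}\T_n$. Then $\mathbf{M}$ is independent of $(\rd_i,\rd_l)$, while $\mathbf{K}_1,\mathbf{K}_2$ are deterministic with uniformly bounded spectral norm on $\mathcal{C}_n\times[t_0,1]$ by Proposition \ref{a61} and Lemma \ref{a80}. Using the cyclic property of $\tr$ and the rank-one identity $\mathbf{M}\rd_l\rd_l^\star\mathbf{M}\mathbf{K}_2\mathbf{M}\rd_i\rd_i^\star\mathbf{M} = (\rd_l^\star\mathbf{M}\mathbf{K}_2\mathbf{M}\rd_i)\,\mathbf{M}\rd_l\rd_i^\star\mathbf{M}$, the integrand inside \eqref{mean_cross_term} can be rewritten as the scalar
\begin{equation*}
\beta_{l,i,j,t}\beta_{i,l,j,t}\,(\rd_l^\star\mathbf{M}\mathbf{K}_2\mathbf{M}\rd_i)\,\rd_i^\star\mathbf{M}(\rd_l\rd_l^\star-n^{-1}\T_n)\mathbf{K}_1(\rd_i\rd_i^\star-n^{-1}\T_n)\mathbf{M}\rd_l.
\end{equation*}

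Distributing the two centered factors $(\rd_l\rd_l^\star - n^{-1}\T_n)$ and $(\rd_i\rd_i^\star - n^{-1}\T_n)$ produces four scalar terms. The first reads $(\rd_i^\star\mathbf{M}\rd_l)^2(\rd_l^\star\mathbf{K}_1\rd_i)$, and together with the prefactor $\rd_l^\star\mathbf{M}\mathbf{K}_2\mathbf{M}\rd_i$ contains four off-diagonal forms $\rd_i^\star\mathbf{A}\rd_l$; the two middle terms carry a factor $n^{-1}$ and at least two off-diagonal factors; the last term carries $n^{-2}$ and two off-diagonal factors. I would bound each term in $L^1$ using H\"older's inequality together with the elementary moment estimate
\begin{equation*}
\E|\rd_i^\star\mathbf{A}\rd_l|^{2k} \lesssim n^{-k}\,\|\mathbf{A}\|^{2k},
\end{equation*}
valid for any deterministic (or $(\rd_i,\rd_l)$-independent) matrix $\mathbf{A}$; this follows by conditioning on $(\rd_i,\mathbf{A})$, invoking $\E[\rd_l\rd_l^\star]=n^{-1}\T_n$, and applying the standard quadratic-form inequality \eqref{h1}. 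The dominant term then gives an $L^1$-bound of order $n^{-4\cdot 1/2}=n^{-2}$; the two middle terms yield $n^{-1}\cdot n^{-1}=n^{-2}$; the last yields $n^{-2}\cdot n^{-1}=n^{-3}$. Thus $|\E[\cdot]| = O(n^{-2}) = o(n^{-1})$.

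The main obstacle is that $|\beta_{l,i,j,t}|,|\beta_{i,l,j,t}|$ are only a priori bounded by $|z|/\im(z)$, which for $z\in\mathcal{C}_n$ may be as large as $Kn/\varepsilon_n$. This is dealt with by splitting on the event $\mathcal{G}_{n,t} := \{\|\mathbf{B}_{n,t}\|\leq\eta_{r,t}\text{ and }\lambda_{\min}(\mathbf{B}_{n,t})\geq\eta_{l,t}\}$ used in the proof of Lemma \ref{mom_Dinv}. On $\mathcal{G}_{n,t}$, $\|\mathbf{M}\|\leq K$, so by the identity $\beta_{k,j,t}=1-\rd_k^\star\D_t^{-1}(z)\rd_k$ one has $|\beta_{l,i,j,t}|+|\beta_{i,l,j,t}|\leq K(1+|\rd_l|^2+|\rd_i|^2)$, and the $L^1$-bounds above survive an additional H\"older step because $\E|\rd_k|^{2q}\leq K$ for every $q$ under the moment assumption on $x_{ij}$; on $\mathcal{G}_{n,t}^c$ the crude polynomial bound $|\beta|\leq Kn^{1+\alpha}$ combined with $\PR(\mathcal{G}_{n,t}^c)=o(n^{-m})$ for every $m>0$ (by (9.7.8)--(9.7.9) of \cite{bai2004}) makes the contribution negligible. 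All bounds are uniform in $(z,t)\in\mathcal{C}_n\times[t_0,1]$, yielding the claimed supremum estimate.
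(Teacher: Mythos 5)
Your proposal is correct, and it takes a genuinely different route from the paper. The paper attacks \eqref{mean_cross_term} by substituting the expansion $\beta = \overline{\beta} - \overline{\beta}^2\hat{\gamma} + \overline{\beta}^2\beta\hat{\gamma}^2$ (formula \eqref{beta_quer}) for both $\beta_{l,i,j,t}$ and $\beta_{i,l,j,t}$, which produces nine terms $T(\zeta_1,\zeta_2)$; each is then handled by centering the quadratic forms in $\rd_i$ (resp.\ $\rd_l$) and invoking the exact second--moment identity (9.8.6) of \cite{bai2004} (whose fourth--cumulant term is controlled by $\E x_{ij}^4 = 3+o(1)$), together with Cauchy--Schwarz and the $\hat\gamma$-moment bounds. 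You instead exploit the observation that $\D_{l,i,j,t}^{-1}=\D_{i,l,j,t}^{-1}$ and the rank--one structure to collapse the trace to a single scalar, and then simply count: each of the four terms obtained by expanding the two centered factors contains either enough off--diagonal bilinear forms $\rd_i^\star \mathbf{A}\rd_l$ (each of size $O_{L^{2k}}(n^{-1/2})$) or enough explicit $n^{-1}$ prefactors that a generalized H\"older bound already yields $O(n^{-2})$ in absolute value --- no cancellation and no fourth--moment identity is needed. This is shorter, gives the stronger rate $O(n^{-2})$ rather than just $o(n^{-1})$, and works for any bounded fourth moment; the paper's route is the mechanical continuation of the Bai--Silverstein template and keeps all nine cases in a form where exact evaluation would be possible if needed. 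Two small points you should tighten in a full write--up: (i) the elementary estimate $\E|\rd_i^\star\mathbf{A}\rd_l|^{2k}\lesssim n^{-k}\|\mathbf{A}\|^{2k}$ is best derived by viewing $|\rd_i^\star\mathbf{A}\rd_l|^2=\rd_l^\star\mathbf{A}^\star\rd_i\rd_i^\star\mathbf{A}\rd_l$ as a quadratic form in $\rd_l$ and then applying \eqref{h1} conditionally, since \eqref{h1} as stated concerns a single vector; and (ii) the good event on which you bound $\|\D_{l,i,j,t}^{-1}\|$ and the $\beta$'s must be defined through $\mathbf{B}_{n,t}$ with the columns $i,l,j$ removed (as in \eqref{d_inv_norm} and \eqref{ind_bound}), not through $\mathbf{B}_{n,t}$ itself, so that the conditional moment estimates retain the independence they require; the complement still has probability $o(n^{-m})$ for every $m$, so nothing is lost.
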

	\begin{proof}[Proof of Lemma \ref{lem_cross_terms}]
	Denoting
	\begin{align*}
		\hat{\gamma}_{i,l,j,t}(z) 
		= & \rd_i^\star \D_{i,l,j,t}\inv(z) \rd_i - n\inv \tr \T_n \D_{i,l,j,t}\inv(z), \\
		\overline{\beta}_{i,j,l,t}^2(z) = & 
		\frac{1}{1 + n\inv \tr \T_n \D_{i,l,j,t}\inv(z) },
		\end{align*}
		we use the representation 
			\begin{align} \label{beta_quer}
		\beta_{j,t}(z) = \overline{\beta}_{j,t}(z) - \overline{\beta}_{j,t}^2(z) \hat{\gamma}_{j,t}(z) 
	+ \overline{\beta}_{j,t}^2(z) \beta_{j,t}(z) \hat{\gamma}_{j,t}^2(z),
	\end{align} 
in order to replace $\beta_{l,i,j,t}(z)$ and $\beta_{i,l,j,t}(z)$. Note that $\E [ || \D_{l,i,j,t}\inv(z) ||] \leq K$ and $||\lb t \sut_t(z) \T_n + \mathbf{I} \rb^{-1}||\leq K$
		which follows from Lemma \ref{mom_Dinv} and Proposition \ref{a61}. 
		% ausführlich: \eqref{a62} in anderem Dokument.
		By applying the triangle inequality, this gives us several summands for the mean in \eqref{mean_cross_term}. More precisely, we can write
		\begin{align*}
			& \Big| \E \Big[ \tr \lb t \sut_t(z) \T_n + \mathbf{I} \rb^{-2} \T_n 
		  \lb \rd_i \rd_i^\star - n\inv \T_n\rb 
		  \beta_{l,i,j,t}(z) \D_{l,i,j,t}\inv(z) \rd_l \rd_l^\star \D_{l,i,j,t}\inv(z) 
	 \lb t \sut_{t}(z) \T_n + \mathbf{I} \rb\inv \T_n \nonumber \\
	 & \times \beta_{i,l,j,t}(z) \D_{i,l,j,t}\inv(z) \rd_i \rd_i^\star \D_{i,l,j,t}\inv(z)
		 \lb \rd_{l} \rd_{l}^\star - n\inv \T_n \rb		
		  \Big] \Big| \\
		  \leq & \sum\limits_{\zeta_1, \zeta_2} | \E [ T(\zeta_1, \zeta_2) ]|,
		\end{align*}
		where $T(\zeta_1, \zeta_2)$ has the following form
		\begin{align*}
			& \tr \lb t \sut_t(z) \T_n + \mathbf{I} \rb^{-2} \T_n 
		  \lb \rd_i \rd_i^\star - n\inv \T_n\rb 
		  \zeta_1 \D_{l,i,j,t}\inv(z) \rd_l \rd_l^\star \D_{l,i,j,t}\inv(z) 
	 \lb t \sut_{t}(z) \T_n + \mathbf{I} \rb\inv \T_n \nonumber \\
	 & \times
	  \zeta_2 \D_{i,l,j,t}\inv(z) \rd_i \rd_i^\star \D_{i,l,j,t}\inv(z)
		 \lb \rd_{l} \rd_{l}^\star - n\inv \T_n \rb		, 
		\end{align*}		 
		and
		 $$\zeta_1 \in\{ \overline{\beta}_{l,i,j,t}(z), - \overline{\beta}_{l,i,j,t}^2(z) \hat{\gamma}_{l,i,j,t}(z), \beta_{l,i,j,t}(z)\overline{\beta}_{l,i,j,t}^2(z) \hat{\gamma}_{l,i,j,t}^2(z) \}, $$ 
		$$\zeta_2 \in \{ \overline{\beta}_{i,l,j,t}(z), - \overline{\beta}_{i,l,j,t}(z)^2 \hat{\gamma}_{i,l,j,t}(z), \beta_{i,l,j,t}(z)\overline{\beta}_{i,l,j,t}^2(z) \hat{\gamma}_{i,l,j,t}^2(z) \}.$$
		The assertion now follows, if we show that for all $\zeta_1, \zeta_2$
		\begin{align} \label{mean_t_zeta}
			| \E [ T(\zeta_1, \zeta_2)] |
			= o \lb n\inv \rb. 
		\end{align}
		In the following, we restrict ourselves to three different cases noting that the remaining cases can be handled similarly.
		 \\
		To begin with, let $\zeta_1= \overline{\beta}_{l,i,j,t}(z)$ and $\zeta_2 = \overline{\beta}_{i,l,j,t}(z)$. In this case, we have
		\begin{align*}
			 & \left| \E [ T (\zeta_1, \zeta_2 ) ] \right|  \\
			\leq  & K \Big| \E  
			 \tr \lb t \sut_t(z) \T_n + \mathbf{I} \rb^{-2} \T_n 
		  \lb \rd_i \rd_i^\star - n\inv \T_n\rb 
		  \D_{l,i,j,t}\inv(z) \rd_l \rd_l^\star \D_{l,i,j,t}\inv(z) 
	 \lb t \sut_{t}(z) \T_n + \mathbf{I} \rb\inv \T_n \nonumber \\
	 & \times
	   \D_{i,l,j,t}\inv(z) \rd_i \rd_i^\star \D_{i,l,j,t}\inv(z)
		 \lb \rd_{l} \rd_{l}^\star - n\inv \T_n \rb				
			\Big| \\
			\leq & 
			K  \Big| \E  
			 \tr \lb t \sut_t(z) \T_n + \mathbf{I} \rb^{-2} \T_n 
		   \rd_i \rd_i^\star 
		  \D_{l,i,j,t}\inv(z) \rd_l \rd_l^\star \D_{l,i,j,t}\inv(z) 
	 \lb t \sut_{t}(z) \T_n + \mathbf{I} \rb\inv \T_n 
	   \D_{i,l,j,t}\inv(z) \rd_i \rd_i^\star \D_{i,l,j,t}\inv(z)
		  \rd_{l} \rd_{l}^\star 		
			\Big| \\
			& + Kn\inv  \Big| \E  
			 \tr \lb t \sut_t(z) \T_n + \mathbf{I} \rb^{-2} \T_n^2
		  \D_{l,i,j,t}\inv(z) \rd_l \rd_l^\star \D_{l,i,j,t}\inv(z) 
	 \lb t \sut_{t}(z) \T_n + \mathbf{I} \rb\inv \T_n 
	   \D_{i,l,j,t}\inv(z) \rd_i \rd_i^\star \D_{i,l,j,t}\inv(z)
		  \rd_{l} \rd_{l}^\star				
			\Big| \\
			& +K  n\inv  \Big| \E  
			 \tr \lb t \sut_t(z) \T_n + \mathbf{I} \rb^{-2} \T_n 
		  \rd_i \rd_i^\star 
		  \D_{l,i,j,t}\inv(z) \rd_l \rd_l^\star \D_{l,i,j,t}\inv(z) 
	 \lb t \sut_{t}(z) \T_n + \mathbf{I} \rb\inv \T_n 
	   \D_{i,l,j,t}\inv(z) \rd_i \rd_i^\star \D_{i,l,j,t}\inv(z)
		 \T_n			
			\Big| \\
			& + Kn^{-2} 
			 \Big| \E  
			 \tr \lb t \sut_t(z) \T_n + \mathbf{I} \rb^{-2} \T_n^2
		  \D_{l,i,j,t}\inv(z) \rd_l \rd_l^\star \D_{l,i,j,t}\inv(z) 
	 \lb t \sut_{t}(z) \T_n + \mathbf{I} \rb\inv \T_n 
	   \D_{i,l,j,t}\inv(z) \rd_i \rd_i^\star \D_{i,l,j,t}\inv(z)
		\T_n 			
			\Big| \\
			= & K ( T_1 + T_2 + T_3 ) 
			+ o \lb n\inv \rb ,
		\end{align*}
		where
		\begin{align*}
			T_1 = & \Big| \E  \left[ 
			   \rd_i^\star 
		  \D_{l,i,j,t}\inv(z) \rd_l \rd_l^\star \D_{l,i,j,t}\inv(z) 
	 \lb t \sut_{t}(z) \T_n + \mathbf{I} \rb\inv \T_n 
	   \D_{i,l,j,t}\inv(z) \rd_i \rd_i^\star \D_{i,l,j,t}\inv(z)
		  \rd_{l} \rd_{l}^\star 	\lb t \sut_t(z) \T_n + \mathbf{I} \rb^{-2} \T_n 
		   \rd_i	\right] 
			\Big|, \\
			T_2 = & n\inv  \Big| \E  
			 \tr \lb t \sut_t(z) \T_n + \mathbf{I} \rb^{-2} \T_n^2
		  \D_{l,i,j,t}\inv(z) \rd_l \rd_l^\star \D_{l,i,j,t}\inv(z) 
	 \lb t \sut_{t}(z) \T_n + \mathbf{I} \rb\inv \T_n 
	   \D_{i,l,j,t}\inv(z) \rd_i \rd_i^\star \D_{i,l,j,t}\inv(z)
		  \rd_{l} \rd_{l}^\star				
			\Big| ,\\
			T_3 = & n\inv  \Big| \E  
			 \tr \lb t \sut_t(z) \T_n + \mathbf{I} \rb^{-2} \T_n 
		  \rd_i \rd_i^\star 
		  \D_{l,i,j,t}\inv(z) \rd_l \rd_l^\star \D_{l,i,j,t}\inv(z) 
	 \lb t \sut_{t}(z) \T_n + \mathbf{I} \rb\inv \T_n 
	   \D_{i,l,j,t}\inv(z) \rd_i \rd_i^\star \D_{i,l,j,t}\inv(z)
		 \T_n		
			\Big|.
		\end{align*}
		For the first summand, we obtain using (9.8.6) in \cite{bai2004} for the real case 
		% (similarly to \eqref{a5}) 
		\begin{align*}
			 T_1  \leq & 
			\Big| \E  \Big[ 
			   \big\{ \rd_i^\star 
		  \D_{l,i,j,t}\inv(z) \rd_l \rd_l^\star \D_{l,i,j,t}\inv(z) 
	 \lb t \sut_{t}(z) \T_n + \mathbf{I} \rb\inv \T_n 
	   \D_{i,l,j,t}\inv(z) \rd_i \\
	  &   - n\inv \tr \T_n \D_{l,i,j,t}\inv(z) \rd_l \rd_l^\star \D_{l,i,j,t}\inv(z) 
	 \lb t \sut_{t}(z) \T_n + \mathbf{I} \rb\inv \T_n 
	   \D_{i,l,j,t}\inv(z)
	   \big\} \\
	   & \times 
	   \big\{ \rd_i^\star \D_{i,l,j,t}\inv(z)
		  \rd_{l} \rd_{l}^\star 	\lb t \sut_t(z) \T_n + \mathbf{I} \rb^{-2} \T_n 
		   \rd_i	
		   - n\inv \tr \T_n \D_{i,l,j,t}\inv(z)
		  \rd_{l} \rd_{l}^\star 	\lb t \sut_t(z) \T_n + \mathbf{I} \rb^{-2} \T_n 
		   \big\} 
			\Big] \Big|
			+ o \lb n\inv \rb \\
			= & o\lb n\inv \rb. 
		\end{align*}
		With similar ideas, it can be shown that $T_2 = o\lb n\inv \rb$ and $T_3 = o\lb n \inv \rb$ and that \eqref{mean_t_zeta} holds true in the case $\zeta_1= \overline{\beta}_{l,i,j,t}(z)$ and $\zeta_2 = - \overline{\beta}_{i,l,j,t}^2(z) \hat{\gamma}_{i,l,j,t}(z).$

	Finally, we consider the case $\zeta_1= - \overline{\beta}_{l,i,j,t}^2(z) \hat{\gamma}_{l,i,j,t}(z)$ and $\zeta_2 = - \overline{\beta}_{i,l,j,t}^2(z) \hat{\gamma}_{i,l,j,t}(z).$ 
	Note that $\overline{\beta}_{i,l,j,t}(z) = \overline{\beta}_{l,i,j,t}(z)$. 
	We obtain \eqref{mean_t_zeta}, that is,
	\begin{align*}
		 &  | \E [ T(\zeta_1, \zeta_2) ] | \\
		  = & \Big| \E \Big[ \overline{\beta}_{i,j,l,t}^4(z) \tr \lb t \sut_t(z) \T_n + \mathbf{I} \rb^{-2} \T_n 
		  \lb \rd_i \rd_i^\star - n\inv \T_n\rb 
		  \D_{l,i,j,t}\inv(z) \rd_l \rd_l^\star \D_{l,i,j,t}\inv(z) 
	 \lb t \sut_{t}(z) \T_n + \mathbf{I} \rb\inv \T_n \hat{\gamma}_{l,i,j,t}(z) \nonumber \\
	 & \times  \D_{i,l,j,t}\inv(z) \rd_i \rd_i^\star \D_{i,l,j,t}\inv(z)
		 \lb \rd_{l} \rd_{l}^\star - n\inv \T_n \rb	\hat{\gamma}_{i,l,j,t}(z) 	
		  \Big] \Big| \\
		  \leq & \E \sq|E_1|^2 \E^{\frac{1}{4}}|E_2|^4 \E^{\frac{1}{4}}|E_3|^4= o \lb n\inv \rb,
	\end{align*}
	if
	\begin{align}
		\E \sq|E_1|^2 & \leq K n\inv , \label{est_e1}\\
		 \E^{\frac{1}{4}}|E_2|^4 & \leq K  \label{est_e2}, \\
		 \E^{\frac{1}{4}}|E_3|^4 & = o(1),
		 \label{est_e3}
	\end{align} 
	where
	\begin{align*}
	E_1 = & \overline{\beta}_{i,j,l,t}^4(z)  
		\hat{\gamma}_{i,l,j,t}(z) \hat{\gamma}_{l,i,j,t}(z)   ,\\ 
	E_2 = & \rd_l^\star \D_{l,i,j,t}\inv(z) 
	 \lb t \sut_{t}(z) \T_n + \mathbf{I} \rb\inv \T_n  \D_{i,l,j,t}\inv(z) \rd_i , \\
	 E_3 = & \tr \Big\{ \lb t \sut_t(z) \T_n + \mathbf{I} \rb^{-2} \T_n 
		  \lb \rd_i \rd_i^\star - n\inv \T_n\rb 
		 	\D_{l,i,j,t}\inv(z) \rd_l  \rd_i^\star \D_{i,l,j,t}\inv(z) 
		 	 \lb \rd_{l} \rd_{l}^\star - n\inv \T_n \rb \Big\}.
	\end{align*}
	We begin with a proof of \eqref{est_e1}. Note that $\hat{\gamma}_{l,i,j,t}(z)$ is independent of $\rd_i$ and $\overline{\beta}_{i,j,l,t}(z)$ is independent of $\rd_i$ and $\rd_j$. 
	% By using similar techniques as in the proof of \textcolor{red}{Theorem/Lemma \ref{no_hat}  } and 
	Using (9.9.6) in \cite{bai2004} twice, we obtain
	\begin{align*}
		 \E | E_1 |^2  
		  \leq  Kn^{-2},	  
	\end{align*}
	which proves \eqref{est_e1}. The estimate \eqref{est_e2} can be proven similarly to \cite{bai2004}, p. 290. 
	
	Finally, we will prove that \eqref{est_e3} holds true. We obtain 
	\begin{align*}
		E_3
		= & 
		\rd_i^\star \D_{i,l,j,t}\inv(z) 
		 	 \lb \rd_{l} \rd_{l}^\star - n\inv \T_n \rb 
		 	  \lb t \sut_t(z) \T + \mathbf{I} \rb^{-2} \T 
		  \lb \rd_i \rd_i^\star - n\inv \T\rb 
		 	\D_{l,i,j,t}\inv(z) \rd_l \\
		 	= & 
		E_{31}  + E_{32} + E_{33} + E_{34}, 
	\end{align*}
	where
	\begin{align*} 
		E_{31} = &
		 \rd_i^\star \D_{i,l,j,t}\inv(z) 
		 	 \rd_{l} \rd_{l}^\star 
		 	  \lb t \sut_t(z) \T + \mathbf{I} \rb^{-2} \T 
		   \rd_i \rd_i^\star 
		 	\D_{l,i,j,t}\inv(z) \rd_l \\
		 	 E_{32} = & - n\inv 
		 	\rd_i^\star \D_{i,l,j,t}\inv(z) 
		 	  \rd_{l} \rd_{l}^\star 
		 	  \lb t \sut_t(z) \T + \mathbf{I} \rb^{-2} \T^2 
		 	\D_{l,i,j,t}\inv(z) \rd_l , \\
		 	E_{33} = & - n\inv 
		 	\rd_i^\star \D_{i,l,j,t}\inv(z) 
		 	 \T_n 
		 	  \lb t \sut_t(z) \T + \mathbf{I} \rb^{-2} \T 
		   \rd_i \rd_i^\star 
		 	\D_{l,i,j,t}\inv(z) \rd_l, \\
		 		 E_{34} = & n^{-2} 
		 	\rd_i^\star \D_{i,l,j,t}\inv(z) 
		 	 \T_n 
		 	  \lb t \sut_t(z) \T + \mathbf{I} \rb^{-2} \T^2
		 	\D_{l,i,j,t}\inv(z) \rd_l.
	\end{align*}
	For $k\in\{2,3,4\}$, it holds
	\begin{align*}
		E | E_{3k} |^4 = o(1). 
	\end{align*}
	For the first summand, we conclude
	\begin{align*}
		E | E_{31} |^4
		\leq & K 
		\E\sq \Big| \rd_{l}^\star 
		 	  \lb t \sut_t(z) \T + \mathbf{I} \rb^{-2} \T 
		   \rd_i \rd_i^\star 
		 	\D_{l,i,j,t}\inv(z) \rd_l 
		 	 \Big|^8 \\
		 	 \leq & K \E\sq \Big| \rd_{l}^\star 
		 	  \lb t \sut_t(z) \T + \mathbf{I} \rb^{-2} \T 
		   \rd_i \rd_i^\star 
		 	\D_{l,i,j,t}\inv(z) \rd_l 
		 	- n\inv \tr \T \lb t \sut_t(z) \T + \mathbf{I} \rb^{-2} \T 
		   \rd_i \rd_i^\star 
		 	\D_{l,i,j,t}\inv(z)
		 	 \Big|^8 \\
		 	 & + K \E\sq \Big| 
		 	n\inv \tr \T \lb t \sut_t(z) \T + \mathbf{I} \rb^{-2} \T 
		   \rd_i \rd_i^\star 
		 	\D_{l,i,j,t}\inv(z)
		 	 \Big|^8 \\
		 	 \leq & K \E\sq \Big| \rd_{l}^\star 
		 	  \lb t \sut_t(z) \T + \mathbf{I} \rb^{-2} \T 
		   \rd_i \rd_i^\star 
		 	\D_{l,i,j,t}\inv(z) \rd_l 
		 	- n\inv \tr \T \lb t \sut_t(z) \T + \mathbf{I} \rb^{-2} \T 
		   \rd_i \rd_i^\star 
		 	\D_{l,i,j,t}\inv(z)
		 	 \Big|^8 \\
		 	 & + Kn^{-4} \\
		 	 \leq & Kn^{-\frac{1}{2}} + Kn^{-4} 
		 	 = o(1),
	\end{align*}
	which proves \eqref{est_e3}. 
	Hence, the proof of Lemma \ref{lem_cross_terms} is finished.
	\end{proof}

	\begin{lemma}\label{aux_im_z}
			It holds for sufficiently large $N \in\N$
			\begin{align*}
				\inf\limits_{n\geq N} \inf\limits_{\substack{ z\in\mathcal{C}_n, \\ t\in[t_0,1]}} 
				\lb \im (z) + \im (R_{n,t}(z)) \rb \geq 0. 
			\end{align*}
		\end{lemma}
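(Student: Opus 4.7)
The natural strategy is to derive an identity for $\im(R_{n,t}(z))$ by taking imaginary parts in the representation of $R_{n,t}(z)$ obtained in the proof of Lemma \ref{a73}, and then to check the sign using the already-established uniform smallness of $R_{n,t}$. Writing $s_n := \E[\tilde{\underline{s}}_{n,t}(z)]$ and $t_n := \ntn/n$, the identity
$$t_n R_{n,t}(z) = -z - \tfrac{1}{s_n} + y_n \int \tfrac{\lambda \, dH_n(\lambda)}{1+\lambda t_n s_n}$$
combined with $\im(1/w) = -\im(w)/|w|^2$ and the fact that $H_n$ is supported on $[0,\infty)$ yields, after a routine calculation,
$$\im(z) + t_n \im(R_{n,t}(z)) \;=\; \im(s_n) \cdot B_n(z,t), \qquad B_n(z,t) := \tfrac{1}{|s_n|^2} - y_n t_n \!\int\! \tfrac{\lambda^2 \, dH_n(\lambda)}{|1+\lambda t_n s_n|^2}.$$

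The next step is to verify that $B_n(z,t) > 0$ uniformly for large $n$. For this I apply the same imaginary-part manipulation to the fundamental equation \eqref{repl_a47} satisfied by the limit $\tilde{\underline{s}}_t(z)$, obtaining $\im(z) = \im(\tilde{\underline{s}}_t(z)) \cdot B_t(z)$; since $\im(z) > 0$ and $\im(\tilde{\underline{s}}_t(z)) > 0$, this forces $B_t(z) > 0$. Lemma \ref{a80} supplies a uniform lower bound on $|\tilde{\underline{s}}_t(z)|$, and Theorem \ref{thm_stieltjes} provides uniform convergence of $s_n$ to $\tilde{\underline{s}}_t(z)$ on $\mathcal{C}_n \times [t_0,1]$, so $B_n(z,t)$ inherits a uniform positive lower bound for $n$ large. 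Because $\im(s_n) \geq 0$ (as the expectation of a Stieltjes transform evaluated on the upper half plane), this immediately gives $\im(z) + t_n \im(R_{n,t}(z)) \geq 0$.

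To upgrade this to $\im(z) + \im(R_{n,t}(z)) \geq 0$, I split the contour. On $\mathcal{C}_u$, where $\im(z) = v_0$ is bounded below, the uniform vanishing $\sup_{z,t}|R_{n,t}(z)| \to 0$ established in \eqref{R_to_0} yields $\im(z) + \im(R_{n,t}(z)) \geq v_0 - |R_{n,t}(z)| \geq v_0/2$ for large $n$. On $\mathcal{C}_l \cup \mathcal{C}_r$ I use the decomposition
$$\im(z) + \im(R_{n,t}(z)) \;=\; \im(s_n) B_n(z,t) + (1 - t_n)\im(R_{n,t}(z)),$$
whose first summand is nonnegative by the previous step. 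For the second summand one exploits the alternative representation
$$t_n R_{n,t}(z) \;=\; (s_n - \tilde{\underline{s}}_{n,t}^0(z)) \cdot C_n(z,t)$$
obtained by subtracting the fundamental equation \eqref{a50} for $\tilde{\underline{s}}_{n,t}^0(z)$ from the one for $s_n$, together with the $O(1/n)$ control on $|s_n - \tilde{\underline{s}}_{n,t}^0(z)|$ implied by Theorems \ref{thm_stieltjes} and \ref{sn0_conv}, the contour lower bound $\im(z) \geq n^{-1}\varepsilon_n$, and the growth condition $\varepsilon_n \geq n^{-\alpha}$ with $\alpha \in (0,1)$.

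The main obstacle is the endpoint regime on $\mathcal{C}_l \cup \mathcal{C}_r$ where $\im(z)$ degenerates to the order $\varepsilon_n/n$. In that regime the identity from the first step only delivers $\im(R_{n,t}(z)) \geq -\im(z)/t_n$, which is weaker than the assertion since $t_n \leq 1$; closing the gap requires the finer comparison above between the sign of $(1-t_n)\im(R_{n,t}(z))$ and the positive slack $\im(s_n) B_n(z,t)$, with the prescribed contour rate $\varepsilon_n \geq n^{-\alpha}$ used decisively to dominate the perturbative $(1-t_n)$ residual. This is where the technical heart of the proof resides.
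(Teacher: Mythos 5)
Your opening identity is correct and attractive: taking imaginary parts in $\tfrac{\ntn}{n}R_{n,t}(z)=-z-1/s_n+y_n\int\lambda\,dH_n(\lambda)/(1+\lambda\tfrac{\ntn}{n}s_n)$ with $s_n=\E[\sut_{n,t}(z)]$ does give $\im(z)+\tfrac{\ntn}{n}\im(R_{n,t}(z))=\im(s_n)B_n(z,t)\geq 0$ once $B_n>0$ and $\im(s_n)\geq 0$ are in hand. But the lemma asserts the inequality with coefficient $1$, not $\ntn/n$, in front of $\im(R_{n,t})$, and your bridge does not close that gap. Rewriting your decomposition, the claim is equivalent to $\im(s_n)B_n(z,t)\geq (1-\tfrac{\ntn}{n})\im(z)$. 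On $\mathcal{C}_l\cup\mathcal{C}_r$ the positive slack satisfies only $\im(s_n)B_n\gtrsim \im(z)\geq \varepsilon_n/n$, whereas the residual you propose to absorb is controlled only by $(1-\tfrac{\ntn}{n})|\im(R_{n,t})|\leq |R_{n,t}|=O(1/n)$; since $\varepsilon_n\to 0$, we have $\varepsilon_n/n\ll 1/n$, so the domination runs in the \emph{wrong} direction — the contour rate $\varepsilon_n\geq n^{-\alpha}$ cannot be "used decisively" here, it makes things worse. The sharper bound $|\im(R_{n,t})|\leq K\im(z)$ that your own identity supplies reduces the claim to a comparison of fixed constants, $\inf_n \im(s_n)B_n/\im(z)\geq 1-\tfrac{\ntn}{n}$, and the right-hand side is as large as roughly $1-t_0$ for $t$ near $t_0$, so this does not follow from uniform positivity of $B_n$ alone. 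In addition, invoking Theorem \ref{sn0_conv} to get the $O(1/n)$ control is circular: the paper's proof of Theorem \ref{sn0_conv} uses precisely Lemma \ref{aux_im_z}.

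The paper closes the gap by a different mechanism that never introduces the $\ntn/n$ mismatch: it works directly with the definition $R_{n,t}(z)=\big(y_{\ntn}\ntn\inv\sum_{j}\E[\beta_{j,t}(z)d_{j,t}(z)]\big)\lb\E[\sut_{n,t}(z)]\rb\inv$. The first factor tends to zero uniformly (this is \eqref{conv2}/\eqref{R_to_0}, whose derivation does not rely on the present lemma), while $\re\lb\E[\sut_{n,t}(z)]\rb\inv$ is controlled and $\im\lb\E[\sut_{n,t}(z)]\rb\inv$ is itself of order $\im(z)$; consequently the potentially negative part of $\im(R_{n,t}(z))$ is $o(1)\cdot\im(z)$ uniformly on $\mathcal{C}_n\times[t_0,1]$, which is exactly what is needed to beat $\im(z)$ with coefficient $1$. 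If you want to salvage your route, you must prove the analogous statement $\im(R_{n,t}(z))\geq -o(1)\cdot\im(z)$ (equivalently $\im(s_n)B_n/\im(z)\to 1$ uniformly), and that requires this multiplicative structure of $R_{n,t}$ rather than the crude bounds $|R_{n,t}|=O(1/n)$ or $|\im(R_{n,t})|\leq K\im(z)$.
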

		\begin{proof}[Proof of Lemma \ref{aux_im_z}]
		We start by investigating real and imaginary part of  $1/ \E [ \sut_{n,t}(z) ]$. 
		As a preparation for the latter, one can show similarly to Lemma \ref{a80} that $\re ( \sut_t(z) ) $ is uniformly bounded away from 0. Thus, due to Theorem \ref{thm_stieltjes}, we also have for some sufficiently large $N\in\N$
		\begin{align*}
			\inf\limits_{n \geq N} 
			\inf\limits_{\substack{z\in\mathcal{C}_n, \\
			t\in[t_0,1]}} 
			| \re \E [ \sut_{n,t}(z) ] | > 0
			\textnormal{ and }
			 \inf\limits_{n \geq N} 
			\inf\limits_{\substack{z\in\mathcal{C}_n, \\
			t\in[t_0,1]}} 
			|  \E [ \sut_{n,t}(z) ] | > 0.
		\end{align*}
		Using also $| \E [ \sut_{n,t}(z)] | \leq 1 / \im(z)$, this implies for the real part of the inverse for some $K_1 > 0$
		\begin{align*}
			\re \lb \E [ \sut_{n,t}(z) ] \rb \inv 
			= &  \frac{ \re \lb \E [ \sut_{n,t}(z) ] \rb }{| \E [ \sut_{n,t}(z) ] |^2 } 
			\geq  K_1 \im^2(z).
		\end{align*}
		For the imaginary part, we conclude for some $K_2>0$
			\begin{align*}
			\im \lb \E [ \sut_{n,t}(z) ] \rb\inv 
			= & \frac{- \im \lb \E [ \sut_{n,t}(z) ] \rb }{| \E [ \sut_{n,t}(z) ] |^2 }
			= \frac{1}{ | \E [ \sut_{n,t}(z) ] |^2 }  \im \Big( \int \frac{ - 1}{ \lambda - z } 
			d F^{\mathbf{\underline{B}}_{n,t}} (\lambda ) \Big) \\
			= & \frac{1}{ | \E [ \sut_{n,t}(z) ] |^2 }  \int \frac{ - \im (z) }{ |z - \lambda|^2 } 
			d F^{\mathbf{\underline{B}}_{n,t}} (\lambda )
			\geq K \E \int \frac{ - \im(z)}{| \lambda - z|^2 } 
			d F^{\mathbf{\underline{B}}_{n,t}} (\lambda ) \\
			\geq & - K_2 \im(z) .
		\end{align*}
		By definition of $R_{n,t}(z)$, we have for all $n\geq N$
		\begin{align*}
		 \im (R_{n,t}(z)) 
		= &
		y_{\ntn} \ntn\inv \sum\limits_{j=1}^{\ntn} 
		\im \lb \E [ \beta_{j,t}( z) d_{j,t} (z) ] \lb \E [ \tilde{\su}_{n,t}(z) ] \rb \inv \rb \\
		= & y_{\ntn} \ntn\inv \sum\limits_{j=1}^{\ntn} 
		\im \lb \E [ \beta_{j,t}( z) d_{j,t} (z) ] \rb 
		\re \lb \E [ \tilde{\su}_{n,t}(z) ] \rb\inv  \\
		& + y_{\ntn} \ntn\inv \sum\limits_{j=1}^{\ntn} 
		\re \lb \E [ \beta_{j,t}( z) d_{j,t} (z) ] \rb 
		\im \lb \E [ \tilde{\su}_{n,t}(z) ]  \rb\inv \\
		\geq & K_1  \im^2 (z) \ntn\inv \im \Big(  y_{\ntn}\sum\limits_{j=1}^{\ntn} 
		 \E [ \beta_{j,t}( z) d_{j,t} (z) ] \Big) \\
		 & - K_2  \im(z) \ntn\inv \re \Big(  y_{\ntn}\sum\limits_{j=1}^{\ntn} 
		 \E [ \beta_{j,t}( z) d_{j,t} (z) ] \Big),
		\end{align*}
		which implies that
		\begin{align*}
			& \im ( R_{n,t}(z) ) + \im(z) \\
			\geq  & \im(z) 
			+  K_1 \im^2(z) \ntn\inv \im \Big(  y_{\ntn}\sum\limits_{j=1}^{\ntn} 
		 \E [ \beta_{j,t}( z) d_{j,t} (z) ] \Big) 
		 -  K_2  \im(z) \ntn\inv \re \Big(  y_{\ntn}\sum\limits_{j=1}^{\ntn} 
		 \E [ \beta_{j,t}( z) d_{j,t} (z) ] \Big) \\
		 	\geq &  \im(z) \Big\{ 
		 1 +K_1 \im(z) \ntn\inv \im \Big(  y_{\ntn}\sum\limits_{j=1}^{\ntn} 
		 \E [ \beta_{j,t}( z) d_{j,t} (z) ] \Big) 
		 -  K_2 \ntn\inv \re \Big(  y_{\ntn}\sum\limits_{j=1}^{\ntn} 
		 \E [ \beta_{j,t}( z) d_{j,t} (z) ] \Big) \Big\}.
		\end{align*}
		The real and imaginary part of $\beta_{j,t}(z) d_{j,t}(z)$ might be negative, but, due to \eqref{R_to_0}, we have for some $N\in\N$
		\begin{align*}
			\sup\limits_{\substack{n\geq N}} \sup\limits_{\substack{z\in \mathcal{C}_n, \\ t\in[t_0,1] }}
			\Big| K_1
			\im(z) \ntn\inv \im \Big(  y_{\ntn}\sum\limits_{j=1}^{\ntn} 
		 \E [ \beta_{j,t}( z) d_{j,t} (z) ] \Big) 
		 -   K_2 \ntn\inv \re \Big(  y_{\ntn}\sum\limits_{j=1}^{\ntn} 
		 \E [ \beta_{j,t}( z) d_{j,t} (z) ] \Big)
			\Big| 
			< 1 
		\end{align*}
		Thus, we conclude that 
		\begin{align*}
				\inf\limits_{n\geq N} \inf\limits_{\substack{ z\in\mathcal{C}_n, \\ t\in[t_0,1]}} 
				\lb \im (z) + \im (R_{n,t}(z)) \rb \geq 0. 
			\end{align*}
		\end{proof}

\section{Details for the proof of  Theorem \ref{thm:u} % oder Section \ref{sec4}
}

 \subsection{How to calculate mean and covariance in Theorem \ref{thm}}

The following result  provides essential formulas for the calculation of the  mean and covariance structure in Theorem \ref{thm} in the case $\T_n = \mathbf{I}$. 
It  generalizes the formulas given in Proposition A.1 in \cite{wang2013} and Proposition 3.6 in \cite{yao2015}.

	\begin{proposition} \label{prop_formula}
	 Let $h_t = \sqrt{y_t}\in (0,\infty)$ and  $\T_n = \mathbf{I}$ and let $f_1$ and $f_2$ be functions which are analytic on an open region containing the interval in \eqref{interval}. For the random variable $\big( X(f_1,t_1), X(f_2,t)\big)_{t\in[t_0,1]}$  given in Theorem \ref{thm}, we have the following formulas  
		\begin{align*}
			 \E [ X(f_i,t) ]  = & \frac{1}{2\pi i} \lim\limits_{r \searrow 1} \oint\limits_{|\xi|=1} f ( t ( 1 + h_t r \xi + h_t r\inv \xi\inv + h_t^2 ) ) \Big( \frac{\xi }{   \xi^2 - r^{-2} } - \frac{1}{\xi} \Big) d\xi,  \\
			 \cov (X(f_1,t_1) , X(f_2,t_2) ) 
			= &  \frac{1}{2 \pi^2}
			\lim\limits_{\substack{r_2 > r_1, \\ r_1, r_2 \searrow 1}}		
			\oint\limits_{|\xi_1|=1} \oint\limits_{|\xi_2|=1} 
			f_1 ( t_1 ( 1 + h_{t_1} r_1 \xi_1 + h_{t_1} r_1\inv \xi_1\inv + h_{t_1}^2 ) ) \nonumber \\
			& ~ ~ ~ ~ \times  
			\overline{f_2 ( t_2 ( 1 + h_{t_2} r_2 \xi_2\inv + h_{t_2} r_2\inv \xi_2 + h_{t_2}^2 ) ) }
			\frac{g_1 (\xi_1, \xi_2) }{ g_2 (\xi_1, \xi_2) }
			d\xi_2 d\xi_1 
		\end{align*}
		where $t, t_1, t_2 \in[t_0,1]$ with $t_2 \leq t_1$ 
		\begin{align*}
		g_1 (\xi_1, \xi_2) = & - \Big( h_1 h_2 r_1 r_2 \Big\{ h_2^4 r_1^2 r_2^2 t_2^2 \xi_1^2 \xi_2^2 + 
       2 h_2^3 r_1^2 r_2 t_2 \xi_1^2 \xi_2 (r_2^2 t_1 
+ t_ 2 \xi_2^2) 
 \\
            &  	- 2 h_1 h_2 r_1 r_2 t_1 \xi_1 \xi_2 (r_2^2 t_1 (2 + 
             h_1 r_1 \xi_1) +  r_1 t_2 \xi_1 (h_1 + 
             2 r_1 \xi_1) \xi_2^2 \Big\} \\
             & + h_1^2 t_1 \xi_2^2 
			\Big\{r_2^2 t_1 (1 + 2 h_1 r_1 \xi_1 
			+  3 r_1^2 \xi_1^2 + h_1^2 r_1^2 \xi_1^2 + 
             2 h_1 r_1^3 \xi_1^3) 
              + r_1^2 t_ 2 \xi_1^2 \
(-1 + r_1^2 \xi_1^2) \xi_2^2 \Big\} \\
			& + h_2^2 \Big\{ r_2^4 t_1^2 
- r_2^2 t_1 t_2 (1 + 2 h_1 r_1 \xi_1 - 
             3 r_1^2 \xi_1^2 + 2 h_1^2 r_1^2 \xi_1^2 
             + 2 h_1 r_1^3 \xi_1^3) \xi_2^2 
              + r_1^2 t_2^2 \xi_1^2 \xi_2^4 \Big\} \Big) , \\
		g_2(\xi_1, \xi_2) = & \big(h_2 r_2 - 
       h_1 r_1 \xi_1 \xi_2 \big)^2 \big( h_2^2 r_1 r_2 t_2 \xi_1 
\xi_2 - h_1 r_2 t_1 (1 + 
           h_1 r_1 \xi_1 + r_1^2 \xi_1^2) \xi_2 + 
       h_2 r_1 \xi_1 (r_2^2 t_1 + t_2 \xi_2^2)\big)^2.
		\end{align*}
		% g1 / g2 entspricht in meinem Programm Integr ; dateinamme: cov_for_f_id
		In the complex case, we have $\E [X(f_i,t)]=0, ~ i=1,2,$ and the covariance structure is given by $1/2$ times the covariance structure for the real case. 
	\end{proposition}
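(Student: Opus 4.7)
The plan is to specialize \eqref{repl_a47} to $H=\delta_1$ (the case $\T_n=\mathbf{I}$) and to exploit a Joukowski-type parametrization of the contour, under which $\tilde{\underline s}_t$ becomes an explicit rational function. With $H=\delta_1$ the equation \eqref{repl_a47} collapses to the quadratic
\begin{equation*}
tz\, m^2 + (z+t-y)\, m + 1 = 0, \qquad m=\tilde{\underline s}_t(z).
\end{equation*}
Writing $h_t=\sqrt{y_t}=\sqrt{y/t}$ and introducing
\begin{equation*}
z(\xi) \;=\; t\bigl(1+h_t^2+h_t(r\xi+r^{-1}\xi^{-1})\bigr),\qquad r>1,\;|\xi|=1,
\end{equation*}
a direct substitution verifies that the upper–half-plane solution is
$\tilde{\underline s}_t(z(\xi))=-1/[t(1+h_tr\xi)]$.
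As $r\searrow 1$ the curve $z(\xi)$ degenerates to (a double traversal of) the support $[t(1-h_t)^2, t(1+h_t)^2]$ of $\tilde F^{y_t,H}$; for $r$ slightly larger than $1$ it is a positively oriented Jordan contour enclosing the support and lying inside the domain of analyticity of $f_1,f_2$. A direct computation gives the Jacobian $dz = th_t\,(r^2\xi^2-1)/(r\xi^2)\,d\xi$.

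For the mean, setting $m=m(\xi)$ and $H=\delta_1$ in Theorem~\ref{thm}(1) yields
\begin{equation*}
\E[X(f_i,t)] \;=\; -\frac{1}{2\pi i}\int_{\mathcal C} f_i(z)\,\frac{ty\,m^3/(tm+1)^3}{\bigl(1-ty\,m^2/(tm+1)^2\bigr)^2}\,dz.
\end{equation*}
Using $tm+1 = h_tr\xi/(1+h_tr\xi)$ together with $y = th_t^2$, the integrand-times-Jacobian telescopes to $-d\xi/[\xi(r^2\xi^2-1)]$. The partial-fraction identity
\begin{equation*}
\frac{1}{\xi(r^2\xi^2-1)} \;=\; \frac{\xi}{\xi^2-r^{-2}} - \frac{1}{\xi}
\end{equation*}
then yields exactly the mean formula of the proposition after taking $r\searrow 1$.

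For the covariance, both integration contours are parametrized the same way, with radii $r_1,r_2>1$ and $r_2>r_1$ to keep $\mathcal C_1,\mathcal C_2$ non-overlapping. Since the image curve is invariant under complex conjugation and $\overline{z_2(\xi_2)}=z_2(\xi_2^{-1})$ for $|\xi_2|=1$, the conjugate variable $\overline{z_2}$ becomes $t_2\bigl(1+h_{t_2}^2+h_{t_2}(r_2\xi_2^{-1}+r_2^{-1}\xi_2)\bigr)$, which matches the form appearing in the proposition; consequently $\tilde{\underline s}_{t_2}(\overline{z_2})=-1/[t_2(1+h_{t_2}r_2\xi_2^{-1})]$, and its derivative is recovered via the chain rule from $d\overline{z_2}/d\xi_2$. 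Substituting these rational expressions (and the analogous ones in $\xi_1$) into the kernel $\sigma^2_{t_1,t_2}(z_1,\overline{z_2})$ defined in \eqref{def_sigma} and multiplying by the two Jacobians reduces the covariance integrand to the explicit quotient $g_1/g_2$ stated in the proposition. The complex case then follows immediately: the mean vanishes by Theorem~\ref{thm}(2), and the covariance is half the real-case value because the prefactor is $1/(4\pi^2)$ rather than $1/(2\pi^2)$.

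The principal obstacle is the covariance simplification in the third step. The kernel $\sigma^2_{t_1,t_2}$ is a lengthy rational expression in $\tilde{\underline s}_{t_i}$, $\tilde{\underline s}_{t_i}'$, $z_i$ and $\min(t_1,t_2)$ with many cross terms; once the Joukowski parametrization is applied each ingredient becomes an explicit rational function in $(\xi_1,\xi_2)$, and verifying that the combination equals $g_1/g_2$ reduces to checking a polynomial identity in two variables with parameters $t_1,t_2,y,r_1,r_2$. This is most reliably carried out via symbolic computer algebra rather than by hand.
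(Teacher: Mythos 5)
Your proposal follows essentially the same route as the paper: the Joukowski-type substitution $z(\xi)=t(1+h_t^2+h_t(r\xi+r^{-1}\xi^{-1}))$, the explicit rational forms $\tilde{\underline{s}}_t(z(\xi))=-1/(t(1+h_tr\xi))$ and $\tilde{\underline{s}}_t/(t\tilde{\underline{s}}_t+1)=-1/(th_tr\xi)$, the same Jacobian, the same conjugation trick $\overline{z_2(\xi_2)}=z_2(\xi_2^{-1})$ with $r_2>r_1$ to keep the contours disjoint, and a computer-algebra reduction of $\sigma^2_{t_1,t_2}$ to $g_1/g_2$. Your mean computation (the telescoping to $-d\xi/[\xi(r^2\xi^2-1)]$ and the partial-fraction identity) checks out and matches the paper's.

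There is, however, one step you skip that the paper treats as a separate preliminary and that is genuinely needed: justifying that the Joukowski contour is an admissible choice of $\mathcal{C}$ (resp. $\mathcal{C}_1,\mathcal{C}_2$) in Theorem \ref{thm}. When $y_{t}\geq 1$ the interval \eqref{interval} has left endpoint $0$ (the indicator $I_{(0,1)}(y_{t_0})$ kills the lower bound), the limiting law $\tilde F^{y_t}$ carries an atom at the origin, and Theorem \ref{thm} a priori requires the contour to enclose $0$ — whereas your curve $z(\xi)$ encloses only $[t(1-h_t)^2,\,t(1+h_t)^2]$, whose left endpoint is strictly positive. To shrink the contour one must argue that the signed measure $G_{n,t}=p(F^{\mathbf{B}_{n,t}}-\tilde F^{y_{\ntn},H_n})$ places no mass at the origin for large $n$; the paper does this via the exact separation theorem of Bai and Silverstein (1999), which guarantees that the atoms of $F^{\mathbf{B}_{n,t}}$ and of $\tilde F^{y_{\ntn}}$ at $0$ coincide exactly. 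Without this (or an equivalent) argument your proof only covers $y_{t_0}<1$, while the proposition is stated for all $h_t\in(0,\infty)$. The rest of your argument is sound.
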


	\begin{proof}[Proof of Proposition \ref{prop_formula}]
	If suffices to consider the real case.
		Since $H = \delta_{\{1\}}$, we obtain from Theorem \ref{thm} for $i\in \{1,2\}$
		\begin{align*}
			\E[ X(f_i,t)] = &  - \frac{1}{2\pi i} \oint\limits_{\mathcal{C}} f_i(z) 
		\frac{t y  \frac{\sut_t^3(z)}{(t \sut_t(z)  + 1)^3 }  }
			{\big( 1 - t y   \frac{\sut_t^2(z)}{( t \sut_t(z) + 1 )^2} \big)^2} 
			dz	 \\
		 \cov (X(f_1,t_1), X(f_2,t_2)) 
		= &   \frac{1}{2 \pi^2 } \oint_{\mathcal{C}_1} \oint_{\mathcal{C}_2} f_1(z_1) \overline{f_2(z_2)} 
	 \sigma_{ t_1, t_2} ^2(z_1,\overline{z_2})
	 \overline{dz_2} dz_1,
		\end{align*}
		where the contours $\mathcal{C}, \mathcal{C}_1, \mathcal{C}_2$ enclose the interval given in \eqref{interval} and $\mathcal{C}_1$ and $\mathcal{C}_2$ are assumed to be non-overlapping. \\
		\textbf{Step 1:} \textit{Specifying the contours}
		\\ We claim that 		 it suffices for $\mathcal{C}=\mathcal{C}_t$ to enclose the interval 
			$ \left[  t (1-\sqrt{y_{t}})^2 ,  
		t ( 1+ \sqrt{y_{t}} )^2 \right] $ and we will prove this assertion in a first step. Similar arguments hold true for contours $\mathcal{C}_1 = \mathcal{C}_{1,t_1}$ and $\mathcal{C}_2 = \mathcal{C}_{2,t_2}$.  
		
		\medskip
		
		The assertion is clear in the case $y_t < 1$. 
		In the case $y_t>1$, the transformed Mar\v{c}enko-Pastur distribution $\tilde{F}^{y_{\ntn}}$ has a discrete part at the origin for sufficiently large $n$. A priori, the contour should enclose the whole support   of $\tilde{F}^{y_t}$, including the origin.  However, by the exact separation theorem in \cite{baisilverstein1999}, we see that the mass at 0 of the spectral distribution $F^{\mathbf{B}_{n,t}}$ coincides with that of $\tilde{F}^{y_{\ntn}}$  for sufficiently large $n$. Thus, we can restrict the integration in \eqref{def_X} to the interval $ \left[  t (1-\sqrt{y_{t}})^2 ,  
		t ( 1+ \sqrt{y_{t}} )^2 \right] $ and neglect the discrete part at the origin. \\ 
		\textbf{Step 2:} \textit{Calculation of the mean} \\
	For calculation of the mean, we use a   change of variables
	\begin{align*}
		z (\xi) =z  =  t ( 1 + h_t r \xi + h_t r\inv \xi\inv + h_t^2 ),
	\end{align*}
	where $r>1$ is close to 1 and $|\xi| =1$. 
	It can be checked that when $\xi$ runs anticlockwise on the unit circle, $z$ will run a contour $\mathcal{C}$ enclosing the interval $ [ t (1 - h_t)^2, t (1+h_t)^2].$ 
Using the identity \eqref{repl_a47},
	%Lemma \ref{a47}, 
	we have for $z\in\mathcal{C}$ 
	\begin{align*}
		\sut_t(z) & = - \frac{1}{t ( 1 + h_t r \xi )}~,~ 
			\frac{ \sut_t(z)}{ t \sut_t(z) + 1} 
			= - \frac{1}{t h_t r \xi}~,~
		d z  = t h_t (r - r\inv \xi^{-2} ) d\xi. 
	\end{align*}
	%We calculate
	%\begin{align*}
%	= \frac{- \frac{1}{t (1 + h_t r \xi) } }{1 - t \frac{1}{t (1 + h_t r \xi) } }
%	\end{align*}
	Thus, we can write for $i \in \{1,2\}$
	\begin{align*}
			\E[ X(f_i,t)] = &
			\lim\limits_{r \searrow 1}  \frac{1}{2 \pi i} \oint\limits_{|\xi|=1} f_i(z(\xi)) t^2 h_t^2 \frac{\big( \frac{1}{t h_t r \xi}\big)^3}{\big( 1 - t^2 h_t^2 \big( \frac{1}{t h_t r \xi} \big)^2 \big)^2} t h_t \lb r - r\inv \xi^{-2} \rb d\xi \\
			& =  \lim\limits_{r \searrow 1}  \frac{t}{2 \pi i} \oint\limits_{|\xi|=1} f_i(z(\xi)) \frac{r^{-2}}{\xi t ( \xi^2 - r^{-2} ) }  d\xi \\
			& = - \lim\limits_{r \searrow 1}  \frac{t}{2 \pi i}  \oint\limits_{|\xi|=1}  f_i\lb z ( \xi ) \rb \big( \frac{1}{\xi t} - \frac{\xi}{ t (\xi^2 - r^{-2} ) } \big) d\xi \\
			& =  \lim\limits_{r \searrow 1}  \frac{1}{2 \pi i}  \oint\limits_{|\xi|=1}  f_i\lb z ( \xi ) \rb \big(  \frac{\xi}{  (\xi^2 - r^{-2} ) } - \frac{1}{\xi } \big) d\xi. \\
	\end{align*}
	% 	& =   - \lim\limits_{r \searrow 1}  \frac{1}{2 \pi i}  \oint\limits_{|\xi|=1}  f\lb t | 1 + h\xi|^2 \rb \lb \frac{1}{\xi t} - \frac{\xi }{ t (\xi^2 - r^{-2} ) } \rb d\xi . 
	% in den anderen Quellen gibt es noch die Vereinfachung: f(z(xi))= f(t|1+h_t xi|^2); bin mir nicht sicher, ob das rigoros stimmt! 
	\textbf{Step 3:} \textit{Calculation of the covariance function} \\ 
	In order to calculate the covariance structure, we define two non-overlapping contours through
	\begin{align*}
		z_j = z_j(\xi_j) = t \lb 1 + h_{t_j} \xi_j + h_{t_j} r_j\inv \overline{\xi}_j + h_{t_j}^2 \rb, ~j=1,2,
	\end{align*}		
	where $r_2 > r_1 > 1$.
	Thus, we have for $t_2 \leq t_1$ 
	\begin{align*}
		&  \cov (X(f_1,t_1), X(f_2,t_2)) 
		=  \lim\limits_{\substack{r_2 > r_1, \\ r_1, r_2 \searrow 1}}	 
		 \frac{1}{2 \pi^2 } \oint_{|\xi_1|=1} \oint_{|\xi_2|=1} f_1(z_1(\xi_1)) \overline{f_2(z_2(\xi_2))} \\
		 & ~~~~~~~~~~~~~~~~~ ~~~~~~~~~~~~~~ \times 
	 \sigma_{ t_1, t_2}^2 (z_1(\xi_1),\overline{z_2(\xi_2)})	
	 t_1 h_{t_1} (r_1 - r_1\inv \xi_1^{-2} )  t_2 h_{t_2} (r_2 - r_2\inv \xi_2^{-2} ) 
	 \overline{d\xi_2} d\xi_1\\
	 = &   \lim\limits_{\substack{r_2 > r_1, \\ r_1, r_2 \searrow 1}}	
	   \frac{1}{2 \pi^2 } \oint_{|\xi_1|=1} \oint_{|\xi_2|=1} f_1(z_1(\xi_1)) \overline{f_2(z_2(\xi_2\inv))} 
	    \\
		 & ~~~~~~~~~~~~~~~~~ ~~~~~~~~~~~~~~ \times 
	 \sigma_{ t_1, t_2}^2 (z_1(\xi_1),\overline{z_2(\xi_2\inv)}) 	
	 t_1 h_{t_1} (r_1 - r_1\inv \xi_1^{-2} )  t_2 h_{t_2} (r_2 - r_2\inv \xi_2^{2} ) 
	 d\xi_2 d\xi_1.
	\end{align*}
	By proceeding similarly as for the mean and additionally using
	\begin{align*}
			\sut_t(z) dz &  =  \frac{h_t r}{t (1 + h_t r \xi)^2 } d\xi,
	\end{align*}
	we get by straightforward but tedious algebra the desired formula for the covariance
	  (we partially used a computer algebra system). 
	\end{proof}

\subsection{Proof of  \eqref{det3} and  \eqref{det4}}
\label{sec_51}
    Recall that $f_1(x) =x$ and $f_2 (x) = x^2.$
	We begin determining  the centering term. Using the moments of the Mar\v{c}enko-Pastur distribution (e.g., see Example 2.12 in \cite{yao2015}), we get
	\begin{align*}
	\int f_1(x)  d \tilde{F}^{y_{\ntn}} (x) 
	=\int x d \tilde{F}^{y_{\ntn}} (x)
%	= \int x d F^{y_{\ntn}} \lb \frac{n}{\ntn} x \rb 
	= \frac{\ntn}{n} \int x d F^{y_{\ntn}} (x) = \frac{\ntn}{n},
	\end{align*}
	where $F^y$ denotes the Mar\v{c}enko-Pastur distribution with index parameter $y>0$ and scale parameter $\sigma^2 =1$. Similarly, we see that by using Proposition 2.13 in \cite{yao2015}
	\begin{align*}
		\int f_2(x)  d \tilde{F}^{y_{\ntn}} (x) 
	=  \int x^2 d \tilde{F}^{y_{\ntn}} (x) 
	%= \int x^2 d F^{y_{\ntn}} \lb \frac{n}{\ntn} x \rb 
	%= \lb \frac{\ntn}{n} \rb^2 \int x^2 d F^{y_{\ntn}} (x) \\
	=  \Big( \frac{\ntn}{n} \Big)^2 \lb 1 + y_{\ntn} \rb = \frac{\ntn}{n} \Big( \frac{\ntn}{n} + y_n \Big).
	\end{align*}
	We calculate the quantities given in Proposition \ref{prop_formula} by using the residue theorem. We find for the real case
	\begin{align}
	\E [ X(f_1,t) ]  
	%= & \frac{1}{2\pi i} \lim\limits_{r \searrow 1} %\oint\limits_{|\xi|=1}  t ( 1 + h_t r \xi + h_t %r\inv \xi\inv + h_t^2 )  \lb \frac{\xi }{   \xi^2 %- r^{-2} } - \frac{1}{\xi} \rb d\xi \nonumber \\ 
	 = & \frac{t}{2\pi i} \lim\limits_{r \searrow 1} \oint\limits_{|\xi|=1} 
	  \frac{ \xi + h_t r \xi^2 + h_t r\inv  + h_t^2 \xi  }{\xi} \Big( \frac{\xi }{   \xi^2 - r^{-2} } - \frac{1}{\xi} \Big) d\xi \nonumber \\ 
	   = & \frac{t}{2\pi i} \lim\limits_{r \searrow 1} \oint\limits_{|\xi|=1} 
	  \frac{ \xi + h_t r \xi^2 + h_t r\inv + h_t^2 \xi  }{  (\xi - r\inv) (\xi + r\inv) }   d\xi 
	 - \frac{t}{2\pi i} \oint\limits_{|\xi|=1} 
	  \frac{ \xi + h_t r \xi^2 + h_t r\inv + h_t^2 \xi  }{\xi^2}    d\xi \label{e1}\\ 
	  = & \lim\limits_{r \searrow 1} t\frac{ \xi + h_t r \xi^2 + h_t r\inv + h_t^2 \xi  }{  \xi + r\inv }  \Big|_{\xi = r\inv}
	 + \lim\limits_{r \searrow 1} t\frac{ \xi + h_t r \xi^2 + h_t r\inv + h_t^2 \xi  }{  \xi - r\inv }  \Big|_{\xi = - r\inv} \nonumber \\
	  & -  t \frac{\partial }{\partial \xi}\lb \xi + h_t r \xi^2 + h_t r\inv + h_t^2 \xi \rb      \Big|_{\xi = 0} \nonumber \\
	   %= & t \lim\limits_{r \searrow 1} \frac{r\inv + %h_t r^{-1} + h_t r\inv + h_t^2 r\inv}{2 r\inv} %
	   %- t \lim\limits_{r \searrow 1} \frac{- r\inv + %h_t r^{-1} + h_t r\inv - h_t^2 r\inv}{2 r\inv} % - t (1 + h_t^2) \nonumber \\
	   = &t  \lim\limits_{r \searrow 1} \frac{2 r\inv + 2 h_t^2 r\inv}{2 r\inv}  - t ( 1 + h_t^2) 
	   =  0. \nonumber 
	\end{align}
	Note that $\xi = \pm r\inv$ are poles of order 1 for the first integrand in \eqref{e1}, since $r>1$, while $\xi=0$ is a pole of order 2 for the second integrand in \eqref{e1}. For the complex case, we directly have $\E [ X(f_1,t) ] = \E [ X(f_2,t)] = 0.$
	\\ For $f_2(x) = x^2$, we have
	\begin{align*}
		\E [ X(f_2,t) ]  
		%= & \frac{1}{2\pi i} \lim\limits_{r \searrow 1} \oint\limits_{|\xi|=1}  
		 %t^2 ( 1 + h_t r \xi + h_t r\inv \xi\inv + h_t^2 )^2 \lb \frac{\xi }{   \xi^2 - r^{-2} } - %\frac{1}{\xi} \rb d\xi \\ 
	 %= & \frac{t^2}{2\pi i} \lim\limits_{r \searrow 1} \oint\limits_{|\xi|=1} 
	  %\frac{ \lb \xi + h_t r \xi^2 + h_t r\inv  + h_t^2 \xi \rb^2  }{\xi^2} \lb \frac{\xi }{   \xi^2 - r^{-2} } %- \frac{1}{\xi} \rb d\xi \\ 
	  =  I_1 - I_2,
	\end{align*}
	where
	\begin{align*}
		I_1 = & \frac{t^2}{2\pi i} \lim\limits_{r \searrow 1} \oint\limits_{|\xi|=1} 
	  \frac{ \lb \xi + h_t r \xi^2 + h_t r\inv  + h_t^2 \xi \rb^2  }{\xi \lb   \xi - r^{-1} \rb \lb \xi + r\inv \rb }  d\xi , \\
	   I_2 = &  \frac{t^2}{2\pi i} \lim\limits_{r \searrow 1} \oint\limits_{|\xi|=1} 
	  \frac{ \lb \xi + h_t r \xi^2 + h_t r\inv  + h_t^2 \xi \rb^2  }{\xi^3 }  d\xi. 
	\end{align*}
	The integrand in $I_1$ has  poles which are all of order 1  at the points $0, r\inv , - r\inv$. Thus, using the residue theorem,
	\begin{align*}
		I_1 = & t^2 \lim\limits_{r \searrow 1} 
	  \frac{ \lb \xi + h_t r \xi^2 + h_t r\inv  + h_t^2 \xi \rb^2  }{\lb   \xi - r^{-1} \rb \lb \xi + r\inv \rb }  \Big|_{\xi=0}
	  + t^2 \lim\limits_{r \searrow 1} 
	  \frac{ \lb \xi + h_t r \xi^2 + h_t r\inv  + h_t^2 \xi \rb^2  }{\xi  \lb \xi + r\inv \rb }  \Big|_{\xi=r \inv} \\
	  & + t^2 \lim\limits_{r \searrow 1} 
	  \frac{ \lb \xi + h_t r \xi^2 + h_t r\inv  + h_t^2 \xi \rb^2  }{\xi \lb   \xi - r^{-1} \rb }  \Big|_{\xi=- r\inv} \\
	  = & - t^2 h_t^2  
	  +  \frac{ t^2( 1 + h_t)^4}{2}
	  +   \frac{t^2( 1 - h_t)^4}{2}
	   =  -  t h^2  
	  +  \frac{ t^2( 1 + h_t)^4}{2}
	  +   \frac{t^2( 1 - h_t)^4}{2}.
	  \end{align*}
	  Using that the integrand in $I_2$ has a pole at $\xi=0$ of order 3, similar calculations   yield
	  $I_2 =  (1 + 4 h_t^2 + h_t^4) t^2$, which gives
	  \begin{align*}
	  	\E [ X(f_2,t)] = t h^2 = t y.
	  \end{align*}
	  % I_1 und I_2 habe ich mit Mathematica einmal nachgerechnet und mit dem Fall t=1 verglichen (siehe Datei mean_for_pow(x,k)  

	For the covariance function of $(X(f_1,t))_{t\in[t_0,1]}$, we have  for $t_2 \leq t_1$
	\begin{align*}
		 \cov (X(f_1,t_1) , X(f_1,t_2) )  
		= &  \frac{1}{2 \pi^2}
			\lim\limits_{\substack{r_2 > r_1, \\ r_1, r_2 \searrow 1}}		
			\oint\limits_{|\xi_1|=1} \oint\limits_{|\xi_2|=1} 
			 t_1 \lb 1 + h_{t_1} \xi_1 + h_{t_1} r_1\inv \xi_1\inv + h_{t_1}^2 \rb 
			 t_2  \\
			 & ~~~~~~~~~~~~~ \times  
			 \lb 1 + h_{t_2} \xi_2 + h_{t_2} r_2\inv \xi_2\inv + h_{t_2}^2 \rb
			\frac{g_1 (\xi_1, \xi_2) }{ g_2 (\xi_1, \xi_2) }
			d\xi_2 d\xi_1 \\
			= & - \frac{2 \pi i}{2 \pi^2 } \lim\limits_{\substack{r_2 > r_1, \\ r_1, r_2 \searrow 1}} \oint\limits_{|\xi_1|=1} 
			\frac{ h_{t_1} t_1 t_2 (h_1 + r_1 \xi_1 + h_1^2 r_1 \xi_1 + h_1 r_1^2 \xi_1^2)}{
 r_1^2 r_2^2 \xi_1^3}
			d\xi_1 \\
			= & - \frac{(2 \pi i)^2}{2 \pi^2} \lim\limits_{\substack{r_2 > r_1, \\ r_1, r_2 \searrow 1}} \frac{\partial^2}{\partial^2 \xi_1}
			\frac{ h_{t_1} t_1 t_2 (h_1 + r_1 \xi_1 + h_1^2 r_1 \xi_1 + h_1 r_1^2 \xi_1^2)}{
 r_1^2 r_2^2 } \Big|_{\xi_1 = 0} 
 \\ = & 2\lim\limits_{\substack{r_1 > r_2, \\ r_1, r_2 \searrow 1}} \frac{h_1^2 t_2}{r_2^2} 
 = 2 h_1^2 t_2 = 2 y t_2,
	\end{align*}
	where we used a computer algebra system for simplifying the first integrand and then applied the residue theorem twice. 
	%For the complex case, the covariance is given by
	%\begin{align*}
%		\cov (X(f_1,t_1) , X(f_1,t_2) )  = y t_2.
%	\end{align*}
	Considering the function $f_2$, we have for ($t_2 \leq t_1$)
	\begin{align}
		& \cov (X(f_2,t_1) , X(f_2,t_2) ) \nonumber \\
		= & \frac{1}{2 \pi^2}
			\lim\limits_{\substack{r_2 > r_1, \\ r_1, r_2 \searrow 1}}		
			\oint\limits_{|\xi_1|=1} \oint\limits_{|\xi_2|=1} 
			 t_1^2 \lb 1 + h_{t_1} \xi_1 + h_{t_1} r_1\inv \xi_1\inv + h_{t_1}^2 \rb^2 
			 t_2^2  \nonumber \\
			 &  ~~~~~~~~~~~~~ ~~~~~~~~~~~~~ ~~~~~~~~~~~~~ \times 
			 \lb 1 + h_{t_2} \xi_2 + h_{t_2} r_2\inv \xi_2\inv + h_{t_2}^2 \rb^2
			\frac{g_1 (\xi_1, \xi_2) }{ g_2 (\xi_1, \xi_2) }
			d\xi_2 d\xi_1  \nonumber \\
		= &  \frac{2 \pi i}{2 \pi^2 } \lim\limits_{\substack{r_2 > r_1, \\ r_1, r_2 \searrow 1}} \oint\limits_{|\xi_1|=1} 
		 \frac{2 h_1 t_1 t_2^2 }{r_1^4 r_2^4 \xi_1^5} 
		 \lb h_1 + r_1 x + h_1^2 r_1 x + h_1 r_1^2 \xi_1^2\rb^2 \nonumber \\
   & \times \lb -h_1 t_1 - h_1^2 r_1 t_1 \xi_1 - r_1 r_2^2 t_1 \xi_1 - 
   h_2^2 r_1 r_2^2 t_1 \xi_1 + h_2^2 r_1 t_2 \xi_1 + h_1^2 r_1^3 t_1 \xi_1^3 - 
   h_2^2 r_1^3 t_2 \xi_1^3\rb d \xi_1 \label{e2} \\
   		= &  \frac{ \lb 2 \pi i \rb^2}{2 \pi^2 } 
   		\lim\limits_{\substack{r_2 > r_1, \\ r_1, r_2 \searrow 1}}
   		 \frac{1}{(5-1)!} \frac{\partial^4}{\partial^4 \xi_1}  \Big\{
   		 \frac{2 h_1 t_1 t_2^2 }{r_1^4 r_2^4 } 
		 \lb h_1 + r_1 \xi_1 + h_1^2 r_1 \xi_1 + h_1 r_1^2 \xi_1^2\rb^2 \nonumber \\
    & \times \lb -h_1 t_1 - h_1^2 r_1 t_1 \xi_1 - r_1 r_2^2 t_1 \xi_1 - 
   h_2^2 r_1 r_2^2 t_1 \xi_1 + h_2^2 r_1 t_2 \xi_1 + h_1^2 r_1^3 t_1 \xi_1^3 - 
   h_2^2 r_1^3 t_2 \xi_1^3\rb d \xi_1 \Big\} \Big|_{\xi_1 = 0}
   		\nonumber \\
		= & 4 t_2 y \left\{ 2 t_1 t_2 + \left[ t_2 + 2 (t_1 + t_2)\right] y + 2 y^2 \right\}, ~ t_2 \leq t_1. \nonumber
	\end{align}
	Note that $\xi_1=0$ is a pole of order 5 for the integrand in \eqref{e2} and that 
%	For the complex case, we have
%	\begin{align*}
	%	\cov (X(g,t_1) , X(g,t_2) ) = 2 t_2 y \left\{ 2 t_1 t_2 + \left[ t_2 + 2 (t_1 + %t_2)\right] y + 2 y^2 \right\}, ~ t_2 \leq t_1.
%	\end{align*}
 in the special case $t_1 = t_2 = 1$ we recover the mean and covariance given in (9.8.14) and, respectively, (9.8.15) in \cite{bai2004}.  \\
	Finally, we want to calculate the dependence structure between $X(f_1,t_1)$ and $X(f_2,t_2)$. Using similar techniques as above, we obtain for $t_2 \leq t_1$ 
	\begin{align} 
		& \cov (X(f_1,t_1) , X(f_2,t_2) ) \nonumber \\
		= &  \frac{1}{2 \pi^2}
			\lim\limits_{\substack{r_2 > r_1, \\ r_1, r_2 \searrow 1}}		
			\oint\limits_{|\xi_1|=1} \oint\limits_{|\xi_2|=1} 
			 t_1 \lb 1 + h_{t_1} \xi_1 + h_{t_1} r_1\inv \xi_1\inv + h_{t_1}^2 \rb 
			 t_2^2 \nonumber  \\
			 &  ~~~~~~~~~~~~~~~~~~~~~~~~~ \times 
			 \lb 1 + h_{t_2} \xi_2 + h_{t_2} r_2\inv \xi_2\inv + h_{t_2}^2 \rb^2
			\frac{g_1 (\xi_1, \xi_2) }{ g_2 (\xi_1, \xi_2) }
			d\xi_2 d\xi_1  \label{integr1}\\ 
			= & \frac{2\pi i}{2 \pi^2} \lim\limits_{\substack{r_2 > r_1, \\ r_1, r_2 \searrow 1}}		
			\oint\limits_{|\xi_1|=1}
			\frac{1}{r_1^3 r_2^4 \xi_1^4}
   2 h_1 t_2^2 (h_1 + r_1 \xi_1 + h_1^2 r_1 \xi_1 + 
   h_1 r_1^2 \xi_1^2) \nonumber \\
   & \times 
   \left\{ -h_1 t_1 + h_1^2 r_1 t_1 \xi_1 (-1 + r_1^2 \xi_1^2) - 
   r_1 \xi_1 \left[ (1 + h_2^2) r_2^2 t_1 + h_2^2 t_2 (-1 + r_1^2 \xi_1^2)\right] \right\} d\xi_1 
			\label{integr2} \\
		= & 4 t_2 y (t_2 + y).\label{cov1}
	\end{align}
	After simplifying the integrand in \eqref{integr1} with a computer algebra program, we see that it has a pole at $\xi_2=0$ of order 2. Note that the pole at 
	\begin{align*}
		\xi_2 =\frac{h_2 r_2}{h_1 r_1 \xi_1} 
	\end{align*}
	is not relevant for an application of the residue theorem, since
	\begin{align*}
		\left| \frac{h_2 r_2}{h_1 r_1 \xi_1} \right| = \left| \frac{t_1 r_2}{ r_1 t_2 \xi_1} \right| 
		=   \frac{t_1 r_2}{ t_2 r_1  }  > 1.
	\end{align*}
	The integrand in \eqref{integr2} has a pole at $\xi_1 =0$ of order 4.
	Similarly, we have, again for $t_2 \leq t_1$,
	\begin{align} 
	  \cov ( X(f_2,t_1) , X(f_1,t_2)  )  
	%	= &  \frac{1}{2 \pi^2}
		%	\lim\limits_{\substack{r_2 > r_1, \\ r_1, r_2 \searrow 1}}		
		%	\oint\limits_{|\xi_1|=1} \oint\limits_{|\xi_2|=1} 
		%	 t_1^2 \lb 1 + h_{t_1} \xi_1 + h_{t_1} r_1\inv \xi_1\inv + h_{t_1}^2 \rb^2 
		%	 t_2 \nonumber \\
		%	 & ~~~~~~~~~~~~~~~~~~~~~~~~~~~~~~~~\times 
		%	 \lb 1 + h_{t_2} \xi_2 + h_{t_2} r_2\inv \xi_2\inv + h_{t_2}^2 \rb
		%	\frac{g_1 (\xi_1, \xi_2) }{ g_2 (\xi_1, \xi_2) }
		%	d\xi_2 d\xi_1  \label{integr3}\\ 
		%	= & \frac{2\pi i}{2 \pi^2} \lim\limits_{\substack{r_2 > r_1, \\ r_1, r_2 %\searrow 1}}		
		%	\oint\limits_{|\xi_1|=1}
		%	-\frac{h_1 t_1^2 t_2 (h_1 + r_1 \xi_1 + h_1^2 r_1 \xi_1 + h_1 r_1^2 %\xi_1^2)^2}
		%	{r_1^3 r_2^2 \xi_1^4}
		% d\xi_1 \label{integr4}\\
		=   4 t_2 y (t_1 + y).\label{cov2}
	\end{align}
	%After simplifying the integrand in \eqref{integr3} with a computer algebra program, %one can see that the corresponding integrand has a pole at $\xi_2 = 0$ of order 1, %while, as above, the pole at
	%\begin{align*}
	%	\xi_2 =\frac{h_2 r_2}{h_1 r_1 \xi_1} 
%	\end{align*}
%	is not relevant for the integration. 
%	The integrand in \eqref{integr4} has at pole at $\xi_1 = 0$ of order 4. 
	By combining \eqref{cov1} and \eqref{cov2}, we have for $t_1, t_2 \in[t_0,1]$
	\begin{align*}
		\cov (X(f_1,t_1) , X(f_2,t_2) ) = 4 \min(t_1,t_2) y (t_2 + y). 
	\end{align*}

\subsection{Proof of  Corollary \ref{thm:logdet} }

	We apply Theorem \ref{thm} for the choice $h(x) = \log (x)$. Note that, as $y \geq t_0$, the interval in \eqref{interval} contains the point 0. Thus, we have to impose $y< t_0$, since $h$ is not analytic in a neighborhood of 0. \\
	  Using Example 2.11 in \cite{yao2015}, we obtain for the centering term
	\begin{align*}
		\int \log x d \tilde{F}^{y_{\ntn}} (x)
		= &  \int \log x d F^{y_{\ntn}} \Big( \frac{n}{\ntn	} x \Big) 
		=  \int \log x d F^{y_{\ntn}} \lb  x \rb + \log \Big( \frac{\ntn}{n} \Big) \\
		= & 	\Big( - 1 + \frac{y_{\ntn} - 1}{y_{\ntn}} \log ( 1 - y_{\ntn} ) \Big) + \log \Big( \frac{\ntn}{n} \Big) \\ 
		= & - 1 - \frac{1}{y_{\ntn}} \log ( 1 - y_{\ntn} ) + \log \Big( \frac{\ntn}{n} - y_n \Big), 
	\end{align*}
	which implies
	\begin{align*}
		p \int \log x d \tilde{F}^{y_{\ntn}} (x) & = 
		- p  - \ntn \log ( 1 - y_{\ntn} ) + p \log \lb \frac{\ntn}{n} - y_n \rb.
	\end{align*}
	By Proposition \ref{prop_formula}, we have for the mean of the limiting process $\mathbb{D}$ in the real case
	\begin{align}\label{det11}
		\E [\mathbb{D}(t)] 
		% =  & \frac{1}{2\pi i} \lim\limits_{r \searrow 1} \oint\limits_{|\xi|=1} \log ( %t ( 1 + h_t r \xi + h_t r\inv \xi\inv + h_t^2 ) ) \lb \frac{\xi }{   \xi^2 - %r^{-2} } - \frac{1}{\xi} \rb d\xi \\
		 =   I_1 + I_2,
	\end{align}
	where 
	\begin{align}
		I_1= & \frac{1}{2\pi i} \lim\limits_{r \searrow 1} \oint\limits_{|\xi|=1} \log ( t ( 1 + h_t r \xi + h_t r\inv \xi\inv + h_t^2 ) )  \frac{\xi }{   \xi^2 - r^{-2} }  d\xi \nonumber \\
		= & \frac{1}{2\pi i} \lim\limits_{r \searrow 1} \oint\limits_{|\xi|=1} \log( t | 1 + h_t \xi |^2)  \frac{\xi }{   \xi^2 - r^{-2} }  d\xi, \label{e3} \\
		I_2  = & - \frac{1}{2\pi i} \lim\limits_{r \searrow 1} \oint\limits_{|\xi|=1} \log ( t ( 1 + h_t r \xi + h_t r\inv \xi\inv + h_t^2 ) )  \frac{1}{\xi}  d\xi \nonumber \\
		= & - \frac{1}{2\pi i} \lim\limits_{r \searrow 1} \oint\limits_{|\xi|=1} \log ( t | 1 + h_t \xi |^2 )  \frac{1}{\xi}  d\xi~ \nonumber 
	\end{align}
(see also \cite{wang2013} for a similar representation).
	Beginning with $I_1$, we further decompose (note that for  $|\xi|=1$, it holds $\xi\inv = \overline{\xi} $ )
	\begin{align*}
		I_1 = I_{11} + I_{12},
	\end{align*}
	where
	\begin{align*}
		I_{11} = & \frac{1}{2\pi i} \lim\limits_{r \searrow 1} \oint\limits_{|\xi|=1} \log( t ( 1 + h_t \xi ) )  \frac{\xi }{   (\xi - r\inv) (\xi + r\inv) }  d\xi , \\
	   = & \frac{1}{2} \lim\limits_{r \searrow 1}  \left\{ \log \lb t \lb 1 + h_t r\inv\rb \rb
		+ \log \lb t \lb 1 - h_t r\inv\rb \rb  \right\} 
		= \frac{1}{2} \log \lb t^2 \lb 1 - h_t^2 \rb \rb, \\
		I_{12} = &  \frac{1}{2\pi i} \lim\limits_{r \searrow 1} \oint\limits_{|\xi|=1} \log( t ( 1 + h_t \xi\inv ) )  \frac{\xi }{   (\xi - r\inv) (\xi + r\inv)}  d\xi \\
			= &  \frac{1}{2 \pi i } \lim\limits_{r \searrow 1} \oint\limits_{|z|=1} \log( t ( 1 + h_t z ) )  \frac{ r^2 }{  z (z -r ) (r + z )}  dz \\
		= & \lim\limits_{r \searrow 1} \log ( t( 1 + h_t z)) \frac{r^2}{(z - r) (z +r)} \Big|_{z=0}  
		= -   \log(t).
	\end{align*}
	
	\begin{align*}
	%	I_{12} = & 
	%	- \frac{1}{2 \pi i } \lim\limits_{r \searrow 1} \oint\limits_{|z|=1} \log( t ( 1 %+ h_t z ) )  \frac{ 1 }{  z^3 (z\inv - r\inv) (z\inv + r\inv)}  dz \\
	\end{align*}
	These calculations imply
	\begin{align}\label{I1}
		I_1 = \frac{1}{2} \log \lb t^2 \lb 1 - h_t^2 \rb \rb 
		-   \log(t).
	\end{align}
	The quantity $I_2$ in \eqref{det11} can be determined similarly using the decomposition
	\begin{align*}
		I_2 = I_{21} + I_{22},
	\end{align*}
	where
	\begin{align*}
		I_{21} = & - \frac{1}{2\pi i} \lim\limits_{r \searrow 1} \oint\limits_{|\xi|=1} \log ( t ( 1 + h_t \xi  ) )  \frac{1}{\xi}  d\xi = - \log t \\
		I_{22} = & - \frac{1}{2\pi i} \lim\limits_{r \searrow 1} \oint\limits_{|\xi|=1} \log ( t ( 1 + h_t \xi\inv  ) )  \frac{1}{\xi}  d\xi = \log t.
	\end{align*}
	This gives $I_2 = 0$, and by 
	  \eqref{I1} and \eqref{det11}, we obtain 
	\begin{align*}
		\E [ \mathbb{D}(t) ] = 
		\frac{1}{2} \log \lb t^2 \lb 1 - h_t^2 \rb \rb 
		-   \log(t)
		= \frac{1}{2} \log \lb 1 - h_t^2 \rb 
		= \frac{1}{2} \log \lb 1 -y_t \rb.
	\end{align*}		
	%For the complex case, we have $ \E [ \mathbb{D}(t) ] =  0$. 
	Next, we calculate the covariance structure. Similarly to \eqref{e3}, we obtain for $t_2 \leq t_1$ 
	\begin{align*}
	\cov (\mathbb{D}(t_1), \mathbb{D}(t_2) ) 
		= & \frac{1}{2 \pi^2}
			\lim\limits_{\substack{r_2 > r_1, \\ r_1, r_2 \searrow 1}}		
			\oint\limits_{|\xi_1|=1} \oint\limits_{|\xi_2|=1} 
			\log ( t_1 |1 + h_{t_1} \xi_1 |^2 ) 
			\overline{ \log ( t_2 |1 + h_{t_2} \xi_2|^2 ) }
			\frac{g_1 (\xi_1, \xi_2) }{ g_2 (\xi_1, \xi_2) }
			d\xi_2 d\xi_1 \\
			= &  \frac{1}{2 \pi^2}
			\lim\limits_{\substack{r_2 > r_1, \\ r_1, r_2 \searrow 1}}		
			\oint\limits_{|\xi_1|=1} \oint\limits_{|\xi_2|=1} 
			\log ( t_1 |1 + h_{t_1} \xi_1 |^2 ) 
			\log ( t_2 |1 + h_{t_2} \xi_2|^2 ) 
			\frac{g_1 (\xi_1, \xi_2) }{ g_2 (\xi_1, \xi_2) }
			d\xi_2 d\xi_1 \\
			= & I_3 + I_4,
			\end{align*}
			where (note that $|1 + h_{t_2} \xi_2|^2 = (1 + h_{t_2} \xi_2) (1 + h_{t_2} \xi_2\inv) $ )
			\begin{align*} 
			I_3 = &  \frac{1}{2 \pi^2}
			\lim\limits_{\substack{r_2 > r_1, \\ r_1, r_2 \searrow 1}}		
			\oint\limits_{|\xi_1|=1} \log ( t_1 |1 + h_{t_1} \xi_1 |^2 )
			\oint\limits_{|\xi_2|=1} 
			\log ( t_2 (1 + h_{t_2} \xi_2) ) 
			\frac{g_1 (\xi_1, \xi_2) }{ g_2 (\xi_1, \xi_2) }
			d\xi_2 d\xi_1, \\
			I_4 = &   \frac{1}{2 \pi^2}
			\lim\limits_{\substack{r_2 > r_1, \\ r_1, r_2 \searrow 1}}		
			\oint\limits_{|\xi_1|=1} \log ( t_1 |1 + h_{t_1} \xi_1 |^2 ) 
			\oint\limits_{|\xi_2|=1} 
			\log ( t_2 (1 + h_{t_2} \xi_2\inv ) ) 
			\frac{g_1 (\xi_1, \xi_2) }{ g_2 (\xi_1, \xi_2) }
			d\xi_2 d\xi_1. \\
	\end{align*}
	Using a computer algebra program for simplifying  $I_3$ and $I_4$, we see that $I_3=0$ and for $I_4$, and we perform the substitution $\xi_2 = z_2\inv$, which yields 
	\begin{align*}
		I_4 = \frac{2 \pi i}{2 \pi^2}
			\lim\limits_{\substack{r_2 > r_1, \\ r_1, r_2 \searrow 1}}		
			\oint\limits_{|\xi_1|=1} \log ( t_1 |1 + h_{t_1} \xi_1 |^2 )
			 \frac{h_1 r_1 }{r_2 + h_1 r_1 \xi_1} d\xi_1 
			 =   I_{41} + I_{42},
	\end{align*}
	where
	\begin{align*}
		I_{41} = & \frac{2 \pi i}{2 \pi^2}
			\lim\limits_{\substack{r_2 > r_1, \\ r_1, r_2 \searrow 1}}		
			\oint\limits_{|\xi_1|=1} \log ( t_1 ( 1 + h_{t_1} \xi_1 ) )
			 \frac{h_1 r_1 }{r_2 + h_1 r_1 \xi_1} d\xi_1, \\
		I_{42} = & \frac{2 \pi i}{2 \pi^2}
			\lim\limits_{\substack{r_2 > r_1, \\ r_1, r_2 \searrow 1}}		
			\oint\limits_{|\xi_1|=1} \log ( t_1 ( 1 + h_{t_1} \xi_1\inv ) )
			 \frac{h_1 r_1 }{r_2 + h_1 r_1 \xi_1} d\xi_1. \\
	\end{align*}
	It holds $I_{41} = 0$, since we have for the pole at $\xi_1 = -r_2/(h_1 r_1)$ that
 $	\left| \xi_1   \right|^2 > \frac{1}{h_{t_1}^2} = \frac{t_1}{y}\geq \frac{t_0}{y}  \geq 1. $

	As above, we perform for $I_{42}$ the substitution $\xi_1\inv =z_1$ and obtain
	\begin{align*}
	I_{42} = & - \frac{2 \pi i}{2 \pi^2}
			\lim\limits_{\substack{r_2 > r_1, \\ r_1, r_2 \searrow 1}}		
			\oint\limits_{|z_1|=1} \log(t_1 (1 + h_{t_1} z_1))
	\frac{h_{t_1} r_1 }{h_{t_1} r_1 z_1 + r_2 z_1^2} dz_1 \\
%	= & - \frac{2 \pi i}{2 \pi^2}
	%		\lim\limits_{\substack{r_2 > r_1, \\ r_1, r_2 \searrow 1}}		
	%		\oint\limits_{|z_1|=1} \log(t_1 (1 + h_{t_1} z_1))
%	\frac{h_{t_1} r_1 }{z_1 ( h_{t_1} r_1  + r_2 z_1) } dz_1 \\
	= & - \frac{( 2 \pi i )^2}{2 \pi } \lim\limits_{\substack{r_2 > r_1, \\ r_1, r_2 \searrow 1}}	 \Big\{ - \log(t_1) + \log \Big( t_1 \Big( 1 - \frac{h_{t_1}^2 r_1}{r_2} \Big) \Big) \Big\} \\
	= & - 2 \log ( 1 - h_{t_1}^2).
	\end{align*}	 
	Finally, we obtain for $t_2 \leq t_1$
	\begin{align*}
	\cov (\mathbb{D}(t_1), \mathbb{D}(t_2)  )  = I_3 + I_4
		= & - 2 \log (1 - h_{t_1}^2 )
			= - 2 \log (1 - y_{t_1} ).
	\end{align*}

	\end{appendices}

\end{document}